\title{Submodular setfunctions, matroids\\ and graph limits}
\author{László Lovász}
\newtheorem{theorem}{Theorem}[section]
\newtheorem{prop}[theorem]{Proposition}
\newtheorem{lemma}[theorem]{Lemma}
\newtheorem{claim}{Claim}
\newtheorem{corollary}[theorem]{Corollary}
\newtheorem{remark}[theorem]{Remark}
\newtheorem{example}[theorem]{Example}
\newtheorem{prob}[theorem]{Problem}
\newtheorem{exc}[theorem]{Exercise}
\newenvironment{proof}{\medskip\noindent{\bf Proof. }}{\hfill$\square$\medskip}
\newenvironment{proof*}[1]{\medskip\noindent{\bf Proof of #1.}}{\hfill$\square$\medskip}
\begin{document}

\addtolength{\baselineskip}{3pt}

\def\AA{\mathcal{A}}\def\BB{\mathcal{B}}\def\CC{\mathcal{C}}
\def\DD{\mathcal{D}}\def\EE{\mathcal{E}}\def\FF{\mathcal{F}}
\def\GG{\mathcal{G}}\def\HH{\mathcal{H}}\def\II{\mathcal{I}}
\def\JJ{\mathcal{J}}\def\KK{\mathcal{K}}\def\LL{\mathcal{L}}
\def\MM{\mathcal{M}}\def\NN{\mathcal{N}}\def\OO{\mathcal{O}}
\def\PP{\mathcal{P}}\def\QQ{\mathcal{Q}}\def\RR{\mathcal{R}}
\def\SS{\mathcal{S}}\def\TT{\mathcal{T}}\def\UU{\mathcal{U}}
\def\VV{\mathcal{V}}\def\WW{\mathcal{W}}\def\XX{\mathcal{X}}
\def\YY{\mathcal{Y}}\def\ZZ{\mathcal{Z}}

\def\Ab{\mathbf{A}}\def\Bb{\mathbf{B}}\def\Cb{\mathbf{C}}
\def\Db{\mathbf{D}}\def\Eb{\mathbf{E}}\def\Fb{\mathbf{F}}
\def\Gb{\mathbf{G}}\def\Hb{\mathbf{H}}\def\Ib{\mathbf{I}}
\def\Jb{\mathbf{J}}\def\Kb{\mathbf{K}}\def\Lb{\mathbf{L}}
\def\Mb{\mathbf{M}}\def\Nb{\mathbf{N}}\def\Ob{\mathbf{O}}
\def\Pb{\mathbf{P}}\def\Qb{\mathbf{Q}}\def\Rb{\mathbf{R}}
\def\Sb{\mathbf{S}}\def\Tb{\mathbf{T}}\def\Ub{\mathbf{U}}
\def\Vb{\mathbf{V}}\def\Wb{\mathbf{W}}\def\Xb{\mathbf{X}}
\def\Yb{\mathbf{Y}}\def\Zb{\mathbf{Z}}

\def\ab{\mathbf{a}}\def\bb{\mathbf{b}}\def\cb{\mathbf{c}}
\def\db{\mathbf{d}}\def\eb{\mathbf{e}}\def\fb{\mathbf{f}}
\def\gb{\mathbf{g}}\def\hb{\mathbf{h}}\def\ib{\mathbf{i}}
\def\jb{\mathbf{j}}\def\kb{\mathbf{k}}\def\lb{\mathbf{l}}
\def\mb{\mathbf{m}}\def\nb{\mathbf{n}}\def\ob{\mathbf{o}}
\def\pb{\mathbf{p}}\def\qb{\mathbf{q}}\def\rb{\mathbf{r}}
\def\sb{\mathbf{s}}\def\tb{\mathbf{t}}\def\ub{\mathbf{u}}
\def\vb{\mathbf{v}}\def\wb{\mathbf{w}}\def\xb{\mathbf{x}}
\def\yb{\mathbf{y}}\def\zb{\mathbf{z}}

\def\Abb{\mathbb{A}}\def\Bbb{\mathbb{B}}\def\Cbb{\mathbb{C}}
\def\Dbb{\mathbb{D}}\def\Ebb{\mathbb{E}}\def\Fbb{\mathbb{F}}
\def\Gbb{\mathbb{G}}\def\Hbb{\mathbb{H}}\def\Ibb{\mathbb{I}}
\def\Jbb{\mathbb{J}}\def\Kbb{\mathbb{K}}\def\Lbb{\mathbb{L}}
\def\Mbb{\mathbb{M}}\def\Nbb{\mathbb{N}}\def\Obb{\mathbb{O}}
\def\Pbb{\mathbb{P}}\def\Qbb{\mathbb{Q}}\def\Rbb{\mathbb{R}}
\def\Sbb{\mathbb{S}}\def\Tbb{\mathbb{T}}\def\Ubb{\mathbb{U}}
\def\Vbb{\mathbb{V}}\def\Wbb{\mathbb{W}}\def\Xbb{\mathbb{X}}
\def\Ybb{\mathbb{Y}}\def\Zbb{\mathbb{Z}}

\def\Af{\mathfrak{A}}\def\Bf{\mathfrak{B}}\def\Cf{\mathfrak{C}}
\def\Df{\mathfrak{D}}\def\Ef{\mathfrak{E}}\def\Ff{\mathfrak{F}}
\def\Gf{\mathfrak{G}}\def\Hf{\mathfrak{H}}\def\If{\mathfrak{I}}
\def\Jf{\mathfrak{J}}\def\Kf{\mathfrak{K}}\def\Lf{\mathfrak{L}}
\def\Mf{\mathfrak{M}}\def\Nf{\mathfrak{N}}\def\Of{\mathfrak{O}}
\def\Pf{\mathfrak{P}}\def\Qf{\mathfrak{Q}}\def\Rf{\mathfrak{R}}
\def\Sf{\mathfrak{S}}\def\Tf{\mathfrak{T}}\def\Uf{\mathfrak{U}}
\def\Vf{\mathfrak{V}}\def\Wf{\mathfrak{W}}\def\Xf{\mathfrak{X}}
\def\Yf{\mathfrak{Y}}\def\Zf{\mathfrak{Z}}

\def\alphab{{\boldsymbol\alpha}}\def\betab{{\boldsymbol\beta}}
\def\gammab{{\boldsymbol\gamma}}\def\deltab{{\boldsymbol\delta}}
\def\etab{{\boldsymbol\eta}}\def\zetab{{\boldsymbol\zeta}}
\def\kappab{{\boldsymbol\kappa}}
\def\lambdab{{\boldsymbol\lambda}}\def\mub{{\boldsymbol\mu}}
\def\nub{{\boldsymbol\nu}}\def\pib{{\boldsymbol\pi}}
\def\rhob{{\boldsymbol\rho}}\def\sigmab{{\boldsymbol\sigma}}
\def\taub{{\boldsymbol\tau}}\def\epsb{{\boldsymbol\varepsilon}}
\def\epsilonb{{\boldsymbol\epsilon}} \def\inb{{\boldsymbol\in}}
\def\phib{{\boldsymbol\varphi}}\def\psib{{\boldsymbol\psi}}
\def\xib{{\boldsymbol\xi}}\def\omegab{{\boldsymbol\omega}}
\def\intl{\int\limits}
\def\sqprod{\mathbin{\square}}

\def\ybb{\mathbbm{y}}
\def\one{{\mathbbm1}}
\def\two{{\mathbbm2}}
\def\R{\Rbb}\def\Q{\Qbb}\def\Z{\Zbb}\def\N{\Nbb}\def\C{\Cbb}
\def\wh{\widehat}
\def\wt{\widetilde}
\def\var{\omega}
\def\eps{\varepsilon}
\def\sgn{{\rm sign}}
\def\dd{{\sf d}}
\def\Rv{\overleftarrow}
\def\Pr{{\sf P}}
\def\E{{\sf E}}
\def\T{^{\sf T}}
\def\proofend{\hfill$\square$}
\def\id{\hbox{\rm id}}
\def\conv{\hbox{\rm conc}}
\def\lin{\hbox{\rm lib}}
\def\conv{\hbox{\rm conc}}
\def\Dim{\hbox{\rm Dim}}
\def\const{\hbox{\rm const}}
\def\vol{\text{\rm vol}}
\def\diam{\text{\rm diam}}
\def\corank{\hbox{\rm cork}}
\def\cork{\hbox{\rm cork}}
\def\OR{\mathcal{OR}}
\def\GOR{\mathcal{GOB}}
\def\NOR{\mathcal{NOR}}
\def\LGOR{\mathcal{ALGOR}}
\def\cro{\text{\rm CR}}
\def\supp{\text{\rm sup}}
\def\grad{\text{\rm grad}}
\def\rk{\overline{\rho}}
\def\srk{\hbox{\rm src}}
\def\diag{{\rm diag}}
\def\pw{{\sf w}_\text{\rm prod}}
\def\tw{{\sf w}_\text{\rm tree}}
\def\aw{{\sf w}_\text{\rm alb}}
\def\bw{{\sf bow}}
\def\ld{{\sf d}_{\rm loc}}
\def\hd{{\sf d}_{\rm har}}
\def\tv{\text{\rm tv}}
\def\Tr{\hbox{\rm Tar}}
\def\tr{\hbox{\rm tr}}
\def\Prob{\hbox{\rm Pr}}
\def\bl{\text{{\rm bl}}}
\def\abl{\hbox{{\rm abl}}}
\def\Id{\hbox{\rm Id}}
\def\aff{\text{\rm ra}}
\def\MC{\CC_{\max}}
\def\Inf{\text{\sf Inf}}
\def\Str{\text{\sf Str}}
\def\Rig{\text{\sf Rig}}
\def\Mat{\text{\sf Mat}}
\def\comm{{\sf comm}}
\def\maxcut{{\sf maxcut}}
\def\disc{\text{\sf disc}}
\def\cond{\Phi}
\def\val{\text{\sf val}}
\def\dist{d_{\rm qu}}
\def\dhaus{d_{\rm haus}}
\def\dlp{d_{\rm LP}}
\def\dact{d_{\rm act}}
\def\Ker{{\rm Ker}}
\def\Rng{{\rm Rng}}
\def\gdim{{\rm gdim}}
\def\gap{\text{\rm gap}}
\def\intl{\int\limits}
\def\et{\qquad\text{and}\qquad}
\def\fin{\text{\sf fin}}
\def\Bd{{\sf Bd}}
\def\ba{{\sf ba}}
\def\ca{{\sf ca}}
\def\matp{{\sf mm}}
\def\blim{{\rm blim}}
\def\basp{{\sf bmm}}
\def\sep{{\sf sep}}
\def\Hom{{\rm Hom}}
\def\bp{{\sf bp}}
\def\fg{\varphi}
\def\fgx{{\varphi}^\sqcap}
\def\lc{\bullet}
\def\li{{\rm li}}
\def\ui{{\rm ui}}
\def\ls{{\rm ls}}
\def\us{{\rm us}}
\long\def\ignore#1{}

\def\QR{{R^{cc}}}

\def\url{}
\maketitle

{\small \tableofcontents}

\newpage

\begin{abstract}
Submodular setfunctions play an important role in potential theory, and a
perhaps even more important role in combinatorial optimization. The analytic
line of research goes back to the work of Choquet; the combinatorial, to the
work of Rado and Edmonds. The two research lines have not had much interaction
though. Recently, with the development of graph limit theory, the question of a
limit theory for matroids has been considered by several people; such a theory
will, most likely, involve submodular setfunctions both on finite and infinite
sets.

The goal of this paper is to describe several connections between the analytic
and combinatorial theory, to show parallels between them, and to propose
problems arising by trying the generalize the rich theory of submodular
setfunctions on finite sets to the analytic setting. It is aimed more at
combinatorialists, and it spends more time on developing the analytic theory,
often referring to results in combinatorial optimization by name or sketch
only.
\end{abstract}

\section{Introduction}

Submodular setfunctions (also known as ``strongly subadditive'') play an
important role in potential theory, and perhaps an even more important role in
combinatorial optimization. The analytic line of research goes back to a paper
by Choquet \cite{Choq}; the combinatorial, to the work of Rado \cite{Rado} and
Edmonds \cite{Edm}. The two research lines have not had much interaction.

Recently, with the development of graph limit theory, the question of a limit
theory for matroids has been raised by several people. In fact, continuous
limit objects for some very special sequences of matroids were constructed
before. John von Neumann \cite{JvN1} introduced ``continuous geometries'' in
1936; these objects have a dimension function which is modular, with values in
the interval $[0,1]$. A limit object of partition lattices on increasing finite
sets (which can be considered as cycle matroids of complete graphs) was
constructed by Björner \cite{Bj87}. A related construction, closer to our
approach, was given by Haiman \cite{Hai}. This construction was extended to a
few other special sequences of matroids (Björner and Lovász \cite{BjLo}).

A rather natural question is to consider a sequence of finite graphs $G_n$ with
bounded degree, which converge to a graphing $\Gb$ (see \cite{HomBook} for the
definition and basic properties of graphings), and determine in what sense
their cycle matroids converge to a submodular setfunction on the edge set of
$\Gb$. In \cite{Lov23b} one possible notion of the matroid of a graphing has
been defined and an embryonic theory of convergence of the matroids of the
graphs $G_n$ has been developed. Perhaps the limit theory of cycle matroids can
be extended from bounded degree graph sequences to denser graphs; but this
possibility has not been explored (see Problems \ref{PROB:PSEMOD} and
\ref{PROB:INGLETON}).

Another recent limit theory for certain matroids (linearly representable and
satisfying a sparseness condition) was initiated by Kardo\v{s} et
al.~\cite{KKLM}, based on ideas of first order convergence introduced by
Ne\@{s}et\v{r}il and Ossona de Mendez \cite{Nes-Ossona}. This, however, leads
to a different concept of the limit object from our approach, which is based on
submodular setfunctions on measurable spaces.

The goal of this paper is to describe some connections between the analytic and
combinatorial theory of submodular setfunctions, to show parallels between
them, and to propose problems arising when trying the generalize the rich
theory of submodular setfunctions on finite sets to the analytic setting. The
generalization is quite heavily measure theoretic. I hope that this can
facilitate a more comprehensive limit theory of matroids.

Let $\AA$ be a family of subsets of a (finite or infinite) set $I$, closed
under union and intersection of two elements (briefly, a lattice family). A
setfunction $\fg:~\AA\to\R$ is called {\it submodular}, if
\begin{equation}\label{EQ:SUBMOD-DEF0}
\fg(X\cup Y)+ \fg(X\cap Y) \le \fg(X)+\fg(Y)
\end{equation}
for all $X,Y\in\AA$. It is called {\it modular}, if this condition holds with
equality, and {\it supermodular}, if it holds with the inequality reversed. In
this paper we will assume that $\emptyset\in\AA$, and $\AA$ is closed under
complementation. We can shift $\fg$ by a constant without changing
\eqref{EQ:SUBMOD-DEF0}; in particular, we can almost always assume that
$\fg(\emptyset)=0$ (and we are going to do so in this introduction). In this
case, a modular setfunction is just a finitely additive signed measure, also
called a {\it charge}.

Submodular setfunctions have important applications in combinatorial
optimization: they serve in providing common generalizations of many
combinatorial results in graph connectivity, flow theory, matching theory, the
theory of rigidity of frameworks, and many other places (see the monographs of
Oxley \cite{Oxley}, Schrijver \cite{Schr} and Frank \cite{Frank11}).

Extending the notion of submodular setfunctions on finite sets to an infinite
setting was initiated by Murota \cite{Mur1,Mur2}, and has been studied by
several authors \cite{Bach,Csir}. One line of these studies replace functions
defined on $0$-$1$ vectors in a finite dimensional space (i.e., subsets of a
finite set) by functions defined on all lattice (grid) points or on all vectors
in this space.

A different approach is to consider submodularity of setfunctions defined on
the sets in an infinite set algebra or sigma-algebra $(J,\BB)$. In most cases,
this condition is used to generalize integration with respect to a measure to
integration with respect to increasing submodular setfunctions $\fg$. This can
be done by using the Choquet integral or ``Layer Cake Representation'', giving
rise to the functional
\[
\wh\fg(f) = \intl_0^\infty \fg\{f\ge t\}\,dt
\]
on bounded measurable functions $f:~J\to\R_+$. (It takes some work to show that
$\fg\{f\ge t\}$ is an integrable function of $t$ for all bounded submodular
setfunctions; see Lemmas \ref{LEM:BD-VAR} and \ref{LEM:SUBMOD-FINVAR}.) If the
setfunction is modular, then this functional is linear (and vice versa). For a
submodular setfunction it is convex; this was shown by Choquet \cite{Choq} and
\v{S}ipo\v{s} \cite{Sipos} (for the case of increasing setfunctions) and in
\cite{LL83} (for the finite case). We prove this for arbitrary bounded
submodular setfunctions in Section \ref{SEC:UNCROSS}; see also \cite{Denn} for
a detailed treatment.

As another illustration that there are interesting generalizations of several
basic uses of submodular setfunctions in the finite case, we extend the
definition of the independent set polytope and the basis polytope of a matroid
(Section \ref{SEC:MATROID}). It turns out that in the infinite case independent
sets of a matroid correspond to charges minorizing the given submodular
setfunction. We also develop analogues of the notions of flats (closed subsets)
in Section \ref{SEC:BJ-DIST} and strong maps of matroids in Section
\ref{SSEC:DIVERGE}.

In the finite case, several important algorithmic methods are tied to
submodularity. Convexity of $\wh\fg$ can be proved by a technique called
``uncrossing'', which is generalized in Section \ref{SEC:UNCROSS}. Many
properties of matroid bases are proved using the Greedy Algorithm, which has
its infinite analogue in one of the basic results of Choquet (see Corollary
\ref{COR:FIN-ADD-LAT}).

We conclude with suggesting a number of further open problems, along with
Appendices giving some background on charges and matroids, and with a table of
the most often used notation.

I take the point of view of a combinatorialist talking to combinatorialists, so
I will spend more time on developing the analytic theory, sometimes referring
to results in combinatorial optimization by name or sketch only. When writing
about results coming from several independent lines of research, it is
difficult to determine what is ``known'', what is a ``straightforward
adaptation'', and what is ``new''. Some of the less trivial results I could not
find in the literature are the extension of Choquet integration to non-monotone
submodular setfunctions (however natural it is; Lemmas \ref{LEM:BD-VAR} and
\ref{LEM:SUBMOD-FINVAR}), the lopsided Fubini Theorem
\ref{THM:SUBMOD-UNCROSS3}, the intersection theorem for submodular setfunctions
(Theorem \ref{THM:INTERSECT}), the constructions for the positive part (Lemma
\ref{LEM:DILW}) and for the part continuous from below (Lemma
\ref{LEM:SEMIC-PART}), this form of the Björner distance and its application to
generalizing flats in matroids (Section \ref{SEC:BJ-DIST}), and the
representation of strongly submodular setfunctions in the algebra of finite
subsets of a set (Theorem \ref{THM:KOLM-BOR}).

\section{Preliminaries}

\subsection{Set families and measurable functions}

Let $J$ be a (finite or infinite) set, and $\FF\subseteq 2^J$, a family of its
subsets such that $\emptyset,J\in\FF$. We say that $\FF$ is a {\it lattice of
sets}, if it is closed under the union and intersection of two elements. A set
lattice closed under the operation of difference ($A\setminus B$) is called a
{\it ring of sets}. A ring of sets containing $J$ (equivalently, closed under
complementation) is a {\it set-algebra}. A set-algebra closed under countable
unions is a {\it sigma-algebra}. Classical combinatorial results concern the
algebra of all subsets of a finite set; our interest will be in infinite
set-algebras. In most cases, when we speak about a set-algebra, the reader may
think (without losing the main ideas) of the family of Borel subsets of
$[0,1]$.

Let $U$ and $V$ be two sets and $R\subseteq U\times V$, a relation. For
$X\subseteq U$, define
\begin{align}
&R(X)=\{y\in V:~(x,y)\in R\text{ for some }x\in X\},\label{EQ:RDEF}\\
&\QR(X)=R^c(X)^c=\{y\in V:~(x,y)\in R \text{ for all } x\in X\},\label{EQ:QDEF}
\end{align}
where $R^c$ is the complementary relation $(U\times V)\setminus R$.

If $\BB$ is a sigma-algebra, then a function $f:~J\to\R$ is said to be
$\BB$-measurable (or simply measurable) if $\{f\ge t\}\in\BB$ for all
$t\in\RR$. (Here $\{f\ge t\}$ is shorthand for $\{x\in J:~f(x)\ge t\}$.) In the
case of more general set families like set-algebras, this would not be a good
definition; for example, it would not imply that $-f$ is measurable. Even if we
add the condition that $\{f\le t\}\in\BB$, it would not follow that the sum of
two $\BB$-measurable functions is $\BB$-measurable. Hence we adopt the
following weaker notion: given a set family $\FF\subseteq 2^J$, we call $f$
{\it $\FF$-measurable}, if for every $s<t$ there is a set $A\in\FF$ such that
$\{f\ge t\}\subseteq A\subseteq\{f\ge s\}$. We denote by $\Bd(\FF)$ the set of
bounded $\FF$-measurable functions.

A closely related notion of {\it $\FF$-continuous functions} is introduced by
Rao and Rao \cite{Rao}, Section 4.7. This is defined by the condition that for
every $\eps>0$ there is a partition $J=B_1\cup\ldots\cup B_n$ into a finite
number of sets in $\FF$ such that $|f(x)-f(y)|<\eps$ for every $1\le i\le n$
and $x,y\in B_i$. For a set-algebra $(J,\FF)$ and a bounded function $f$, this
is equivalent to $\FF$-measurability.

If $\{f\ge t\}\in\FF$ for all $t\in\R$, then $f$ is $\FF$-measurable, but not
the other way around. If $\FF$ is closed under countable intersections (in
particular, if $(J,\FF)$ is a sigma-algebra), then this notion coincides with
the traditional definition of measurability. It also known \cite{Rao} that for
a set-algebra $(J,\BB)$, bounded $\BB$-measurable functions form a linear
space, which we denote by $\Bd(\BB)$ (or simply by $\Bd$ if $\BB$ is
understood). With the norm $\|f\|=\sup_{x\in J} |f(x)|$, the space $\Bd$ is a
Banach space.

We need to extend this fact a little.

\begin{lemma}\label{LEM:FF-LINEAR}
Let $\FF$ be a lattice family containing $\emptyset$ and $J$. Then $\Bd(\FF)$
is a linear space of functions.
\end{lemma}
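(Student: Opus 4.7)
The plan is to verify the two required closure properties: closure under scalar multiplication and closure under addition. The former is essentially bookkeeping; the latter is where real work is needed.

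For scalar multiplication, the case $\alpha\ge 0$ is immediate by a change of threshold: $\{\alpha f \ge t\} = \{f \ge t/\alpha\}$ (with the $\alpha=0$ case trivial), so the sandwich for $\alpha f$ between levels $s<t$ follows from the sandwich for $f$ between $s/\alpha<t/\alpha$. To handle $\alpha=-1$, I would rely on the paper's standing assumption that lattice families are closed under complementation. Given $s<t$, pick any $r$ with $-t<r<-s$, apply $\FF$-measurability of $f$ at the pair $r<-s$ to produce $B\in\FF$ with $\{f\ge -s\}\subseteq B\subseteq\{f\ge r\}$, and set $A=J\setminus B\in\FF$. The chain $\{-f\ge t\}=\{f\le -t\}\subseteq\{f<r\}\subseteq A\subseteq\{f<-s\}\subseteq\{-f\ge s\}$ then does the job.

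The main work is closure under addition. Let $f,g\in\Bd(\FF)$ with $|f|,|g|\le M$ and fix $s<t$. I plan to discretize the range: choose $\delta<(t-s)/4$ and a partition $-2M=c_0<c_1<\cdots<c_N=2M$ of mesh less than $\delta$. For each $k$, $\FF$-measurability produces $F_k,G_k\in\FF$ with $\{f\ge c_k\}\subseteq F_k\subseteq\{f\ge c_{k-1}\}$ and analogously for $g$. Set
\[
A \;:=\; \bigcup_{(k,\ell):\;c_k+c_\ell\,\ge\,s+2\delta} (F_k\cap G_\ell),
\]
which lies in $\FF$ by lattice closure alone (finite unions and intersections). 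For $A\subseteq\{f+g\ge s\}$, any $x\in F_k\cap G_\ell$ has $f(x)+g(x)\ge c_{k-1}+c_{\ell-1}\ge c_k+c_\ell-2\delta\ge s$. For the reverse inclusion $\{f+g\ge t\}\subseteq A$, take $k^\star=\max\{k:c_k\le f(x)\}$ and $\ell^\star$ analogously; then $x\in F_{k^\star}\cap G_{\ell^\star}$ by the upper envelopes, and $c_{k^\star}+c_{\ell^\star}>(f+g)(x)-2\delta\ge t-2\delta\ge s+2\delta$, so $(k^\star,\ell^\star)$ sits in the indexing set.

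The main obstacle is the addition step: the discretization has to have margin on both sides, which is why $\delta$ is a strict fraction of $t-s$. The key trick is to exploit the two sides of the envelope differently — the \emph{upper} envelope $F_k\supseteq\{f\ge c_k\}$ guarantees definite membership in the witness set for the forward inclusion, while the \emph{lower} envelope $F_k\subseteq\{f\ge c_{k-1}\}$ provides the $2\delta$ error control for the reverse inclusion. Only finite unions and intersections appear, so the lattice structure suffices here; complementation is invoked only to dispose of the scalar $-1$.
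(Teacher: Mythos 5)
Your closure-under-addition argument is essentially the paper's: discretize the ranges, use both sides of the $\FF$-measurability sandwich (the upper envelope $\{f\ge c_k\}\subseteq F_k$ for definite membership, the lower envelope $F_k\subseteq\{f\ge c_{k-1}\}$ for error control), and build the witness set as a finite union of intersections of sets in $\FF$; only the indexing differs (all pairs with $c_k+c_\ell$ above a threshold, versus the paper's diagonal selection $A_n\cap B_{m-n-2}$). Your treatment of scalar multiplication is, if anything, more careful than the paper's, which simply calls it ``trivial'': you rightly observe that closure under $\alpha=-1$ genuinely requires complementation in $\FF$ (indeed, for the chain $\FF=\{\emptyset,\{1\},\{0,1\}\}$ in $2^{\{0,1\}}$, the function $\one_{\{1\}}$ is $\FF$-measurable while $-\one_{\{1\}}$ is not), so this is a real hypothesis and not mere lattice structure.
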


\begin{proof}
It is trivial that $\Bd(\FF)$ is closed under multiplication by a scalar. Let
$f,g\in\Bd(\FF)$, we want to prove that $f+g\in\Bd(\FF)$. Clearly $f+g$ is
bounded, so we want to show that it is $\FF$-measurable.

Let $s<t$. We want to construct a set $C\in\FF$ such that
\begin{equation}\label{EQ:FG-C}
\{f+g\ge t\}\subseteq C\subseteq\{f+g\ge s\}.
\end{equation}
Let $\eps=(t-s)/4$ and $m=\lceil t/\eps\rceil$. There is a $K\in\Nbb$ such that
$|f(x)|,\,|g(x)|\le K\eps$ for all $x\in J$. Since $f$ is $\FF$-measurable, for
every $-K\le n\le K$ there is a set $A_n\in\FF$ such that $\{f\ge
n\eps\}\subseteq A_n\subseteq \{f\ge (n-1)\eps\}$. Similarly, there is a set
$B_r\in\FF$ such that $\{g\ge r\eps\}\subseteq B_r\subseteq \{g\ge
(r-1)\eps\}$. Consider the set
\[
C=\bigcup_{n=-K}^K A_n\cap B_{m-n-2},
\]
which is clearly in $\FF$. Let $x\in C$, then $x\in A_n\cap B_{m-n-2}$ for some
$n$, and
\[
f(x)+g(x) \ge (n-1)\eps + (m-n-3)\eps = (m-4)\eps \ge t-4\eps= s,
\]
and so $C\subseteq \{f+g\ge s\}$. On the other hand, let $x\in J$ be such that
$f(x)+g(x)\ge t$. Set $n=\lfloor f(x)/\eps\rfloor$, then $f(x)\ge n\eps$ and
hence $x\in A_n$. Furthermore, $g(x)\ge t-f(x) \ge (m-1)\eps-(n+1)\eps
=(m-n-2)\eps$, and hence $x\in B_{m-n-2}$. So $x\in C$, showing that $\{f+g\ge
t\}\subseteq C$.
\end{proof}

\subsection{Setfunctions}

A {\it setfunction} is any (finite) real valued function defined on a family
$\FF$ of subsets of an underlying set $J$. If we say ``setfunction'' without
further specification, we mean a setfunction on the set family $\BB$ for the
set algebra denoted by $(J,\BB)$.

A setfunction is {\it increasing}, if $\fg(X)\le\fg(Y)$ for $X\subseteq Y$; the
phrases {\it decreasing} and {\it strictly increasing} are defined in the
obvious way. (Whenever writing $\fg(X)$, we tacitly assume that $X\in\BB$.) For
every set $X\subseteq J$, we denote its complement by $X^c=J\setminus X$, and
we define the ``complementary'' setfunction by $\fg^c(X)=\fg(X^c)$. We set
$\|\fg\| = \sup_X |\fg(X)|$. If $\fg$ is increasing, then $\|\fg\| =
\max\{\fg(J), -\fg(\emptyset)\}$, and so any increasing (or decreasing)
setfunction is necessarily bounded.

Let $\FF\subseteq 2^J$ be a family of sets containing $\emptyset$ and $J$. (For
our purposes, $\FF$ will always be a lattice family.) For a setfunction
$\fg:~\FF\to\R$, we define four ``monotonized'' setfunctions
$\fg^\li,\fg^\ls,\fg^\ui,\fg^\us:~2^J\to\RR$ by
\begin{align*}
\fg^\li(X)&=\inf_{Y\in\FF\atop Y\subseteq X}\fg(Y),\qquad \fg^\ui(X)=\inf_{Y\in\FF\atop Y\supseteq X}\fg(Y),\\
\fg^\ls(X)&=\sup_{Y\in\FF\atop Y\subseteq X}\fg(Y),\qquad \fg^\us(X)=\sup_{Y\in\FF\atop Y\supseteq X}\fg(Y).
\end{align*}
(Here the superscipt ``$\li$'' refers to ``lower-infimum'' etc.) Not all four
of these operations will play equally important roles below, but it is easiest
to introduce all of them. The setfunction $\fg^\ui$ is the exterior measure
defined by $\fg$, and it is often denoted by $\fg^*$.

It is straightforward to see that $\fg^\ui$ and $\fg^\ls$ are increasing
setfunctions on $2^J$, and $\fg^\li$ and $\fg^\us$ are decreasing. If $\fg$ is
increasing, then $\fg^\li(X)=\fg(\emptyset)$ and $\fg^\us(X)=\fg(J)$ for all
$X\subseteq J$, and $\fg^\ui$ and $\fg^\ls$ are extensions of $\fg$ (they agree
with $\fg$ on $\FF$). Also note that if $\FF$ is closed under complementation,
then
\begin{equation}\label{EQ:UP-DOWN}
\fg^\li=((\fg^c)^\ui)^c.
\end{equation}

The next operations generalize a construction known for charges. For two
bounded setfunctions $\fg,\psi$ on a set-algebra $(J,\BB)$, let us define the
setfunctions
\begin{align}
(\fg\lor\psi)(X) &= \sup_{Y\subseteq X}\big( \fg(Y)+\psi(X\setminus Y)\big),\label{EQ:LOR-DEF}\\
(\fg\land\psi)(X) &= \inf_{Y\subseteq X} \big(\fg(Y)+\psi(X\setminus Y)\big)\label{EQ:LAND-DEF}.
\end{align}
If $\fg(\emptyset)=\psi(\emptyset)=0$, then
$\fg\land\psi\le\min(\fg,\psi)\le\max(\fg,\psi)\le\fg\lor\psi$. It is easy to
see that these operations are commutative and associative. Trivially,
\[
\fg^\li=\fg\land 0\et \fg^\ls=\fg\lor 0.
\]
For every constant $c$, we have
\begin{equation}\label{EQ:LAND-C}
(\fg+c)\land \psi = (\fg\land \psi) + c, \et (\fg+c)\lor \psi = (\fg\lor \psi) + c.
\end{equation}

\subsection{Setfunctions with bounded variation}

Let $(J,\BB)$ be a set-algebra. We say that a setfunction $\fg$ on $\BB$ has
{\it bounded variation}, if there is a $K\in\R$ such that
\[
\sum_{i=1}^n |\fg(X_i)-\fg(X_{i-1})| \le K
\]
for every chain of subsets $\emptyset=X_0\subseteq X_1\subseteq\ldots\subseteq
X_n=J$. We denote the smallest number $K$ for which this holds by $\var(\phi)$.
It is clear that every bounded increasing or decreasing setfunction has bounded
variation. Every charge has bounded variation. In Section \ref{SEC:SUBMOD} we
will encounter a large class of setfunctions with bounded variation, namely
submodular setfunctions.

\begin{lemma}\label{LEM:BD-VAR}
A setfunction on a set-algebra $(J,\BB)$ can be written as the difference of
two increasing setfunctions if and only if it has bounded variation.
\end{lemma}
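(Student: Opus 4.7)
The plan is to prove the two directions separately, with the easy direction essentially a telescoping argument and the harder direction a Jordan-style decomposition tailored to chains in $\BB$.

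For the ``if'' direction, suppose $\fg=\fg_1-\fg_2$ with $\fg_1,\fg_2$ increasing setfunctions on $\BB$. Note that any increasing setfunction on $\BB$ is bounded (as remarked in the paper). Given a chain $\emptyset=X_0\subseteq\cdots\subseteq X_n=J$ in $\BB$, the triangle inequality gives
\[
\sum_{i=1}^n |\fg(X_i)-\fg(X_{i-1})| \le \sum_{j=1}^2\sum_{i=1}^n (\fg_j(X_i)-\fg_j(X_{i-1})) = \sum_{j=1}^2 \bigl(\fg_j(J)-\fg_j(\emptyset)\bigr),
\]
which is a fixed constant independent of the chain. So $\fg$ has bounded variation.

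For the ``only if'' direction, assume $\var(\fg)<\infty$. Mimicking the classical Jordan decomposition of real functions of bounded variation, I would define, for each $X\in\BB$,
\[
\fg^+(X)=\sup_C \sum_{i=1}^n (\fg(X_i)-\fg(X_{i-1}))^+, \qquad \fg^-(X)=\sup_C \sum_{i=1}^n (\fg(X_i)-\fg(X_{i-1}))^-,
\]
where the supremum ranges over all finite chains $C:\emptyset=X_0\subseteq X_1\subseteq\cdots\subseteq X_n=X$ with $X_i\in\BB$. Extending such a chain by appending $J$ shows $\fg^+(X)+\fg^-(X)\le\var(\fg)<\infty$, so both functions are bounded. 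Monotonicity $\fg^\pm(X)\le \fg^\pm(Y)$ for $X\subseteq Y$ is immediate: any chain ending at $X$ extends to one ending at $Y$ by adjoining $Y$, which adds a nonnegative term to each sum.

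The crucial identity is $\fg(X)-\fg(\emptyset)=\fg^+(X)-\fg^-(X)$. For any single chain $C$ from $\emptyset$ to $X$, splitting each difference into its positive and negative parts and telescoping gives
\[
\fg(X)-\fg(\emptyset) = P(C)-N(C),
\]
where $P(C)$ and $N(C)$ denote the two sums appearing under the suprema. Hence $P(C)=N(C)+(\fg(X)-\fg(\emptyset))$ for every chain $C$; taking suprema on both sides over $C$ yields $\fg^+(X)=\fg^-(X)+(\fg(X)-\fg(\emptyset))$. Therefore
\[
\fg(X)=\bigl(\fg^+(X)+\fg(\emptyset)\bigr)-\fg^-(X),
\]
which exhibits $\fg$ as the difference of two increasing setfunctions on $\BB$.

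The only delicate point is the identity $\fg^+(X)-\fg^-(X)=\fg(X)-\fg(\emptyset)$, because a priori the two suprema might be attained (or approached) along different chains. The observation that $P(C)-N(C)$ equals the same constant $\fg(X)-\fg(\emptyset)$ on every chain removes this obstacle: shifting by this constant inside the supremum is legitimate, and so the two suprema differ by exactly this constant. Everything else (boundedness, monotonicity, telescoping) is routine.
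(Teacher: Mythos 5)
Your proof is correct and follows essentially the same route as the paper's: both directions use the telescoping argument and the Jordan-type positive/negative variation over chains, with the key identity $\fg^+(X)-\fg^-(X)=\fg(X)-\fg(\emptyset)$ established (as in the paper) by noting that $P(C)-N(C)$ is the same constant for every chain $C$, so the suprema shift by that constant. The only cosmetic difference is that the paper normalizes $\fg(\emptyset)=0$ at the outset while you carry the constant through.
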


For a charge, $\alpha=\alpha_+-\alpha_-$ provides such a decomposition.

\begin{proof}
First, assume that $\fg=\mu-\nu$, where $\mu$ and $\nu$ are increasing
setfunctions. We may assume that
$\fg(\emptyset)=\mu(\emptyset)=\nu(\emptyset)=0$. Let $\emptyset=X_0\subseteq
X_1\subseteq\ldots\subseteq X_n=J$. Then
\begin{align*}
\sum_{i=1}^n &|\fg(X_i)-\fg(X_{i-1})| = \sum_{i=1}^n |\mu(X_i)-\mu(X_{i-1}) - \nu(X_i) + \nu(X_{i-1})|\\
&\le \sum_{i=1}^n  |\mu(X_i)-\mu(X_{i-1})| +  \sum_{i=1}^n |\nu(X_i)-\nu(X_{i-1})|\\
&= \sum_{i=1}^n  (\mu(X_i)-\mu(X_{i-1})) +  \sum_{i=1}^n (\nu(X_i)-\nu(X_{i-1})) = \mu(J)+\nu(J).
\end{align*}
So $\fg$ satisfies the definition of bounded variation with $K=\mu(J)+\nu(J)$.

Second, assume that $\fg$ has bounded variation. We may assume that
$\fg(\emptyset)=0$. For $S\in\BB$, let
\begin{align}\label{EQ:ALPHA-BETA-DEF}
\mu(S)&=\sup \sum_{i=1}^n |\fg(X_i)-\fg(X_{i-1})|_+,\nonumber\\
\nu(S)&=\sup \sum_{i=1}^n |\fg(X_i)-\fg(X_{i-1})|_-,
\end{align}
where the suprema are taken over all chains $\emptyset=X_0\subset
X_1\subset\ldots\subset X_n=S$. Note that
\[
\sum_{i=1}^n |\fg(X_i)-\fg(X_{i-1})|_+ - \sum_{i=1}^n |\fg(X_i)-\fg(X_{i-1})|_- = \sum_{i=1}^n (\fg(X_i)-\fg(X_{i-1})) = \fg(S),
\]
which implies that in the suprema above the same chains of sets approximate the
optima.

The setfunction $\mu$ is increasing. Indeed, let $S\subseteq T$; whenever a
sequence $\emptyset=X_0\subseteq X_1\subseteq\ldots\subseteq X_n=S$ competes in
the definition of $\mu(S)$, the sequence $\emptyset=X_0\subseteq
X_1\subseteq\ldots\subseteq X_n\subseteq X_{n+1}=T$ competes in the definition
of $\mu(T)$. Similarly $\nu$ is increasing.

We can rewrite the definition of $\nu$ as
\begin{align*}
\nu(S)&=\sup \sum_{i=1}^n |\fg(X_i)-\fg(X_{i-1})|_+ - (\fg(X_i)-\fg(X_{i-1}))\\
&= \sup \sum_{i=1}^n |\fg(X_i)-\fg(X_{i-1})|_+ - \fg(S) = \mu(S)-\fg(S),
\end{align*}
showing that $\mu-\nu=\fg$.
\end{proof}

Let $X=(X_0,X_1,\dots, X_n)$ range over all finite chains of sets in $\BB$ such
that $X_0=\emptyset$ and $X_n=J$. Then for the setfunctions $\mu$ and $\nu$
defined above,
\begin{align*}
\mu(J)+\nu(J) &= \sup_X \sum_{i=1}^n |\fg(X_i)-\fg(X_{i-1})|_+ + \sup_X \sum_{i=1}^m |\fg(X_i)-\fg(X_{i-1})|_-\\
&= \sup_X \Big(\sum_{i=1}^n |\fg(X_i)-\fg(X_{i-1})|_+ + \sum_{i=1}^m |\fg(X_i)-\fg(X_{i-1})|_-\Big)\\
&=\sup_X \sum_{i=1}^n |\fg(X_i)-\fg(X_{i-1})| = \var(\fg).
\end{align*}
(we can take the supremum over the same chains $X$, as remarked above). In
other words, $\|\mu\|+\|\nu\|=\|\mu+\nu\|=\var(\fg)$. Similar computation shows
that this is the most ``economical'' decomposition of $\fg$ as the difference
of two increasing setfunctions, in the sense that if $\fg=\gamma-\delta$, where
$\gamma$ and $\delta$ are increasing setfunctions with
$\gamma(\emptyset)=\delta(\emptyset)=0$, then
$\gamma(J)+\delta(J)\ge\var(\fg)$.

\subsection{Choquet integrals}\label{SSEC:CHOQ-INT}

Let $(J,\BB)$ be a sigma-algebra and $f\in\Bd_+(\BB)$. The ``Layer Cake
Representation'' of $f$ is the following formula:
\[
f(x) = \int_0^\infty \one(f(x)\ge t))\,dt.
\]
Let $\fg$ be an increasing setfunction with $\fg(\emptyset)=0$. The {\it
Choquet integral} was introduced by Choquet \cite{Choq}, and it is defined by
\begin{equation}\label{EQ:CHOQ-DEF}
\wh\fg(f) = \intl_0^\infty \fg\{f\ge t\}\,dt,
\end{equation}
where $\{f\ge t\}$ is shorthand for $\{x\in J:~f(x)\ge t\}$. This integral is
well defined, since $\fg\{f\ge t\}$ is a bounded monotone decreasing function
of $t$, and the integrand is zero for sufficiently large $t$.

More generally, if $f\in\Bd$ may have negative values, then we select any
$c\le\inf(f)$, and define
\begin{equation}\label{EQ:WH-DEF}
\wh\fg(f) = \intl_c^\infty \fg\{f\ge t\}\,dt + c\fg(J) = \wh\fg(f-c)+c\fg(J).
\end{equation}
It is easy to see that this value is independent of $c$ once $c\le\inf(f)$.

The formula defining $\wh\fg(f)$ makes sense whenever $\fg\{f\ge t\}$ is an
integrable function of $t$. One sufficient condition for this is that
$\fg\{f\ge t\}$ is the difference of two bounded monotone functions of $t$. In
turn, a sufficient condition for this is that $\fg$ is the difference of two
increasing setfunctions. By Lemma \ref{LEM:BD-VAR}, this means that $\fg$ has
bounded variation. Thus for a setfunction $\fg$ with bounded variation and
$\fg(\emptyset)=0$, we can define $\wh\fg(f)$ by the integral formula
\eqref{EQ:WH-DEF}. We call $\wh\fg$ the {\it Choquet extension} of $\fg$.

\begin{remark}\label{RE:NONLIN-INT}
The Choquet integral is usually denoted by $\int f\,d\fg$ instead of
$\wh\fg(f)$. However, to avoid confusion, we prefer to use the notation
$\wh\fg$. In the case of a charge $\fg$, the integral notation is well
established, along with the simple notation $\fg(f)$, and it coincides with the
appropriate special case of our definition.
\end{remark}

Let $\BB$ be a set-algebra (rather than a sigma-algebra), $\fg$, an increasing
setfunction on $\BB$, and $f\ge0$, a bounded function that is $\BB$-measurable
in the sense defined above. Formula \eqref{EQ:CHOQ-DEF} is not necessarily
meaningful, because the level sets $\{f\ge t\}$ may not belong to $\BB$. One
remedy is to replace $\fg$ by either $\fg^\ls$ or by $\fg^\ui$, which agree
with $\fg$ on $\BB$ and are defined everywhere. But can we get different
integrals? Luckily, we have the following lemma.

\begin{lemma}\label{LEM:SET-ALG-INT}
Let $(J,\BB)$ be a set-algebra, let $\fg$ be an increasing setfunction on $\BB$
with $\fg(\emptyset)=0$, and let $f:~J\to\R$ be a bounded $\BB$-measurable
function. Then $\fg^\ui\{f\ge t\} = \fg^\ls\{f\ge t\}$ for almost all real
numbers $t$.
\end{lemma}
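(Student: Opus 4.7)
The plan is to show that the two quantities $h_\ui(t):=\fg^\ui\{f\ge t\}$ and $h_\ls(t):=\fg^\ls\{f\ge t\}$ are sandwiched tightly enough that they agree off a countable set of $t$. The overall structure has three steps.

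First I would record the trivial inequality $h_\ls(t)\le h_\ui(t)$, which holds for every $t$: any $Y\subseteq\{f\ge t\}\subseteq Z$ with $Y,Z\in\BB$ satisfies $\fg(Y)\le\fg(Z)$ because $\fg$ is increasing, and taking sup over such $Y$ and inf over such $Z$ gives the desired inequality.

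Second, I would use $\BB$-measurability of $f$ to bound $h_\ui(t)$ from above in terms of $h_\ls$ at a slightly smaller argument. For every $s<t$, the measurability assumption gives some $A\in\BB$ with $\{f\ge t\}\subseteq A\subseteq\{f\ge s\}$. The first inclusion yields $h_\ui(t)\le\fg(A)$, and the second yields $\fg(A)\le h_\ls(s)$. Combining this with the first step,
\begin{equation*}
h_\ls(t)\;\le\;h_\ui(t)\;\le\;h_\ls(s)\qquad\text{for all } s<t.
\end{equation*}

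Third, I would exploit monotonicity. Both $h_\ui$ and $h_\ls$ are monotone decreasing functions of $t\in\R$ (since $\{f\ge t\}$ shrinks as $t$ grows, and both monotonizations $\fg^\ui$ and $\fg^\ls$ are increasing in their set argument). Letting $s\uparrow t$ in the sandwich gives $h_\ls(t)\le h_\ui(t)\le h_\ls(t-)$, where $h_\ls(t-)$ is the left-hand limit. At every $t$ where $h_\ls$ is left-continuous the two values coincide; a monotone function has at most countably many discontinuities, so $h_\ui(t)=h_\ls(t)$ for all $t$ outside a countable set, hence for almost every $t$ (in fact Lebesgue-almost everywhere with a null set of type ``countable'').

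No serious obstacle is anticipated; the only slightly subtle point is that the $\FF$-measurability adopted in this paper is strictly weaker than requiring every level set to lie in $\BB$, so one must be careful to extract the approximating set $A\in\BB$ only for pairs $s<t$ and not claim $\{f\ge t\}\in\BB$ itself. This is exactly the mechanism that forces the equality to hold for \emph{almost} all $t$ rather than for every $t$.
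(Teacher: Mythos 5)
Your argument is correct, and it takes a slightly different route than the paper's. Both proofs start from the same two observations: the trivial inequality $\fg^\ls\{f\ge t\}\le\fg^\ui\{f\ge t\}$, and the sandwich $\fg^\ui\{f\ge t\}\le\fg(A)\le\fg^\ls\{f\ge s\}$ for $s<t$ coming from $\BB$-measurability. Where you diverge is in how you close the argument. The paper integrates the sandwich inequality over $t$ to show $\int_0^\infty\fg^\ui\{f\ge t\}\,dt\le\eps\fg(J)+\int_0^\infty\fg^\ls\{f\ge t\}\,dt$, lets $\eps\to0$, and then uses the fact that two integrable functions with $g\le h$ pointwise and equal integrals must agree a.e. You instead stay pointwise: letting $s\uparrow t$ in the sandwich traps $h_\ui(t)$ between $h_\ls(t)$ and $h_\ls(t^-)$, which coincide at every left-continuity point of the monotone function $h_\ls$, and a monotone function has only countably many discontinuities. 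Your argument is arguably more transparent and yields a stronger conclusion: the exceptional set is not merely Lebesgue-null but countable. Both approaches are elementary and correct; yours avoids any appeal to Choquet integrals, which is a small structural advantage since the lemma is used precisely to make those integrals well-defined.
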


\begin{proof}
Trivially $\fg^\ls\le \fg^\ui$, and $\fg^\ls\le\fg\le \fg^\ui$ on $\BB$.
$\BB$-measurability of $\fg$ implies that for every $t$ and $\eps>0$ there is a
set $A\in\BB$ such that
\[
\{f\ge t\}\subseteq A \subseteq\{f\ge t-\eps\},
\]
which implies that
\[
\fg^\ui\{f\ge t\} \le \fg(A) \le \fg^\ls\{f\ge t-\eps\},
\]
and hence (assuming for simplicity that $f\ge 0$)
\begin{align*}
\int_0^\infty \fg^\ui\{f\ge t\}\,dt &\le \int_0^\infty \fg^\ls\{f\ge t-\eps\}\,dt
= \int_{-\eps}^\infty \fg^\ls\{f\ge t\}\,dt\\
&= \eps\fg(J) + \int_0^\infty \fg^\ls\{f\ge t\}\,dt.
\end{align*}
Letting $\eps\to 0$, we get that $\wh{\fg^\ui}=\wh{\fg^\ls}$, which implies the
lemma.
\end{proof}

We will denote this common value by $\fg\{f\ge t\}$ (keeping in mind that it is
defined for almost all $t$ only). Thus the integral in \eqref{EQ:CHOQ-DEF} is
well defined, and it gives that $\wh\fg(f)=\wh{\fg^\ui}(f)=\wh{\fg^\ls}(f)$ for
nonnegative functions $f$. Formula \eqref{EQ:WH-DEF} remains valid to define
$\wh\fg(f)$ for all bounded functions.

\begin{example}\label{EXA:FINADD-INT}
If $\fg$ is a charge on a sigma-algebra $(J,\BB)$, then $\wh\fg$ is a linear
functional on $\Bd$, and conversely, if the functional $\wh\fg$ is linear, then
$\fg$ is a charge.
\end{example}

\begin{example}\label{EXA:LOV-EXT1}
Let $r$ be the rank function of a matroid $(E,r)$, and let $w:~E\to\R_+$. We
can write, uniquely, $w=\sum_{u=1}^k a_i \one_{A_i}$, where $k\ge 0$, $a_i>0$,
and $\emptyset\subset A_1\subset A_2\subset\ldots\subset A_k\subseteq E$. Then
\[
\wh{r}(w)=\sum_{i=1}^k a_i r(A_i).
\]
We can interpret this as the result of the Greedy Algorithm for finding a basis
$B$ with maximum weight. We start with finding the most valuable elements
(those with maximal weight $w$, namely the elements of $A_1$), and insert into
$B$ as many of them as possible, which is $r(A_1)$. Then look at the next best
elements etc.
\end{example}

It is easy to see that the map $\fg\mapsto\wh\fg$ is linear:
\begin{equation}\label{EQ:WHFI-LIN}
\wh{\fg_1+\fg_2} = \wh\fg_1 +\wh\fg_2,\et \wh{c\fg}=c\wh{\fg}
\end{equation}
for any two setfunctions $\fg_1$ and $\fg_2$ with bounded variation and for any
$c\in\R$. In particular, if $\fg = \fg_1 -\fg_2$ is a representation of $\fg$
as the difference of two increasing setfunctions, then
\[
\wh\fg(f) = \wh\fg_1(f)-\wh\fg_2(f).
\]
We have to warn the reader that the functional $f\mapsto\wh\fg(f)$ is not
linear in general (it is linear if $\fg$ is a charge).

For $S\in\BB$, we have $\wh\fg(\one_S)=\fg(S)$. So $\wh\fg$ can be considered
as an extension of $\fg$ from $0$-$1$ valued measurable functions to all
bounded measurable functions (in the finite case, this was introduced in
\cite{LL83}). It is also trivial that $\wh\fg$ is positive homogeneous: for
every real number $a\ge0$, we have $\wh\fg(af)=a\,\wh\fg(f)$. We also have the
identity
\begin{equation}\label{EQ:ADD-CONST}
\wh\fg(f+a)=\wh\fg(f)+a\fg(J)\qquad(a\in\R).
\end{equation}
Indeed, notice that $\{f+a\ge t\} = \{f\ge t-a\} = J$ if $t<a$, and so for a
sufficiently small $c$,
\[
\wh\fg(f+a)= \intl_c^a \fg(J)\,dt + \intl_a^\infty \fg\{f\ge t-a\}\,dt = a\fg(J)+\wh\fg(f).
\]

The following inequality follows easily from the definition. Let $\fg$ be a
setfunction with bounded variation and with $\fg(\emptyset)=0$. Let $f\in\Bd$,
$a\le f\le b$ (where $a\le b\in\R$). Then
\begin{equation}\label{EQ:WHFI-BOUND}
(b-a) \inf_{X\in\BB} \fg(X) \le \wh\fg(f) \le (b-a) \sup_{X\in\BB} \fg(X).
\end{equation}
Hence
\begin{equation}\label{EQ:WHFI-NORM}
|\wh\fg(f)|\le \|\fg\|\,\|f\|.
\end{equation}
If $f\le g$, then $\{f\ge t\}\subseteq \{g\ge t\}$; this implies that for an
increasing setfunction $\fg$, the functional $f\mapsto \wh\fg(f)$ is monotone
increasing. The following lemma shows that even though the functional
$f\mapsto\wh\fg(f)$ is not linear, it is continuous (even Lipschitz).

\begin{lemma}\label{LEM:WHPHI-CONT}
Let $\fg$ be a setfunction with bounded variation on a set-algebra $(J,\BB)$.
Then
\begin{equation*}\label{EQ:WHF-CONT}
|\wh\fg(f)-\wh\fg(g)| \le \var(\fg)\, \|f-g\| \qquad(\forall f,g\in\Bd).
\end{equation*}
\end{lemma}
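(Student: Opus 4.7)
The plan is to reduce the claim to the case of an increasing setfunction, using the decomposition given by Lemma \ref{LEM:BD-VAR} together with the linearity \eqref{EQ:WHFI-LIN} of the map $\fg \mapsto \wh\fg$.

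First, I would invoke the proof of Lemma \ref{LEM:BD-VAR} (not just its statement) to write $\fg = \mu - \nu$, where $\mu, \nu$ are increasing setfunctions with $\mu(\emptyset) = \nu(\emptyset) = 0$ and, crucially, $\mu(J) + \nu(J) = \var(\fg)$. This economical decomposition is precisely what is needed to get the sharp constant $\var(\fg)$ in the final bound. By \eqref{EQ:WHFI-LIN}, $\wh\fg = \wh\mu - \wh\nu$, so by the triangle inequality it suffices to prove the Lipschitz bound separately for $\mu$ and $\nu$, each with constant $\mu(J)$ and $\nu(J)$ respectively.

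Next, for an increasing setfunction $\mu$ with $\mu(\emptyset)=0$, set $\eps = \|f-g\|$, so that $g-\eps \le f \le g+\eps$ pointwise. As noted in the paragraph preceding the lemma, for increasing setfunctions the functional $f \mapsto \wh\mu(f)$ is monotone, hence
\[
\wh\mu(g-\eps) \le \wh\mu(f) \le \wh\mu(g+\eps).
\]
Applying the identity \eqref{EQ:ADD-CONST} to the two extreme terms gives $\wh\mu(g\pm\eps) = \wh\mu(g) \pm \eps\,\mu(J)$, and therefore $|\wh\mu(f) - \wh\mu(g)| \le \eps\,\mu(J) = \|f-g\|\,\mu(J)$. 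The same argument applied to $\nu$ yields $|\wh\nu(f) - \wh\nu(g)| \le \|f-g\|\,\nu(J)$. Summing the two bounds and using $\mu(J) + \nu(J) = \var(\fg)$ finishes the proof.

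I do not expect a serious obstacle: the only subtle point is making sure the constant is exactly $\var(\fg)$ rather than some larger quantity, which is why the proof must rely on the specific decomposition from \eqref{EQ:ALPHA-BETA-DEF} rather than an arbitrary representation as a difference of increasing setfunctions. The monotonicity of $\wh\mu$ for increasing $\mu$ and the translation identity \eqref{EQ:ADD-CONST} do the rest of the work mechanically.
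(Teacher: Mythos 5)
Your proof is correct and follows essentially the same strategy as the paper: reduce to the increasing case via the economical decomposition $\fg=\mu-\nu$ with $\mu(J)+\nu(J)=\var(\fg)$, then prove the Lipschitz bound for each increasing piece. The only difference is in the increasing case, where you appeal to monotonicity of $\wh\mu$ together with the translation identity \eqref{EQ:ADD-CONST}, whereas the paper performs the underlying integral shift by hand; your packaging is marginally slicker but substantively the same argument.
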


In particular, if $\fg$ is increasing, then the following more explicit bound
holds:
\begin{equation}\label{EQ:WHF-CONT1}
|\wh\fg(f)-\wh\fg(g)| \le \|\fg\|\, \|f-g\|.
\end{equation}

\begin{proof}
First, assume that $\fg$ is increasing. We may assume that $\fg(\emptyset)=0$,
and that $\wh\fg(f)\ge \wh\fg(g)$. Let $\eps=\|f-g\|$, then $\{f\ge
t+\eps\}\subseteq \{g\ge t\}$, and so $\fg\{f\ge t+\eps\}\le\fg\{g\ge t\}$. Let
$c\le \min\{\inf(f),\inf(g)\}$, then
\begin{align*}
\wh\fg(f)-\wh\fg(g) &= \intl_c^\infty \fg\{f\ge t\}\,dt - \intl_c^\infty \fg\{g\ge t\}\,dt\\
&=\intl_{c-\eps}^\infty \fg\{f\ge t+\eps\}\,dt  - \intl_c^\infty \fg\{g\ge t\}\,dt\\
&= \intl_{c-\eps}^c \fg\{f\ge t+\eps\}\,dt + \intl_c^\infty \fg\{f\ge t+\eps\}\,dt  - \intl_c^\infty \fg\{g\ge t\}\,dt\\
&\le \intl_{c-\eps}^c \fg\{f\ge t+\eps\}\,dt \le \fg(J)\, \eps= \|\fg\|\, \|f-g\|.
\end{align*}
This proves the lemma in the increasing case.

Second, let $\fg=\fg_1-\fg_2$, where $\fg_1$ and $\fg_2$ are increasing. As
remarked above, $\var(\fg)=\|\fg_1\|+\|\fg_2\|$, and so
\begin{align*}
|\wh\fg(f)-\wh\fg(g)| &= |\wh\fg_1(f)-\wh\fg_2(f)-\wh\fg_1(g)+\wh\fg_2(g)|\\
&\le |\wh\fg_1(f)-\wh\fg_1(g)|+|\wh\fg_2(f)-\wh\fg_2(g)|\\
&\le \|\fg_1\|\,\|f-g\|+\|\fg_2\|\,\|f-g\| = \var(\fg)\,\|f-g\|.
\end{align*}
This completes the proof.
\end{proof}

We consider the continuity of $\fg\mapsto\wh\fg(f)$ in one more sense. Let
$(J,\BB)$ be a set-algebra, and consider the space $T=\R^\BB$, with the product
topology. A setfunction on $\BB$ is the same as a point in $T$.

\begin{lemma}\label{LEM:SETS-CLOSED}
The sets of submodular setfunctions, of nonnegative setfunctions, of charges,
of setfunctions with values in a closed interval $[a,b]$, and also of
setfunctions $\fg$ with $\var(\fg)\le 1$, are closed.
\end{lemma}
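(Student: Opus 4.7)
The plan is to exploit the fact that the product topology on $T=\R^\BB$ is exactly the topology of pointwise convergence on $\BB$, so for every fixed $X\in\BB$ the evaluation functional $e_X:\fg\mapsto\fg(X)$ is continuous. Each of the listed families can then be described as an intersection of preimages of closed sets under finite combinations of these evaluation functionals, and the intersection of any family of closed sets is closed.

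Concretely, I would treat the easy four first. The set of nonnegative setfunctions is $\bigcap_{X\in\BB} e_X^{-1}([0,\infty))$. The set of setfunctions valued in $[a,b]$ is $\bigcap_{X\in\BB} e_X^{-1}([a,b])$, i.e.\ the closed product $[a,b]^\BB$. For submodularity, for each pair $X,Y\in\BB$ the map $\fg\mapsto \fg(X\cup Y)+\fg(X\cap Y)-\fg(X)-\fg(Y)$ is a continuous linear combination of evaluations, so the set of $\fg$ on which this is $\le 0$ is closed, and submodular setfunctions are the intersection of these closed sets over all pairs. For charges, one uses the equivalent description that $\fg(\emptyset)=0$ and $\fg(X\cup Y)=\fg(X)+\fg(Y)$ for every disjoint pair $X,Y\in\BB$; each such equation cuts out a closed hyperplane in $T$, and intersecting over all instances yields a closed set.

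The only slightly more delicate case is $\{\fg:\var(\fg)\le 1\}$, but this still fits the pattern. By definition $\var(\fg)=\sup_C \sum_{i=1}^n |\fg(X_i)-\fg(X_{i-1})|$, where $C=(\emptyset=X_0\subseteq X_1\subseteq\ldots\subseteq X_n=J)$ ranges over finite chains in $\BB$. For each individual chain $C$ the functional $\fg\mapsto \sum_{i=1}^n |e_{X_i}(\fg)-e_{X_{i-1}}(\fg)|$ is continuous on $T$ (finite sum of absolute values of continuous functions), so the set where it is $\le 1$ is closed; taking the intersection over all chains gives $\{\fg:\var(\fg)\le 1\}$, which is therefore closed. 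There is no real obstacle here: everything reduces to the observation that a supremum of continuous functions, considered as a constraint $\sup\le 1$, is an intersection of the individual closed constraints, and no limit exchange is needed.
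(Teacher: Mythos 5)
Your proof is correct and rests on the same core observation as the paper's, namely that each defining constraint involves only finitely many evaluations $\fg\mapsto\fg(X)$, which are continuous in the product topology. The paper presents it dually for the variation case (showing the complement is open by explicitly constructing a basic open neighborhood around any $\psi$ with $\var(\psi)>1$, then noting the other cases follow the same pattern), whereas your intersection-of-closed-preimages framing is a cleaner way to organize the same argument and handles all five families uniformly.
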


\begin{proof}
Let us prove that the set $Z$ of setfunctions with $\var(\fg)\le 1$ is closed;
all the other assertions follow by the same argument. Consider a setfunction
$\psi\notin Z$, then $\var(\psi)>1$. By the definition of $\var$, there is a
chain $\emptyset=X_0\subset X_1\subset\ldots\subset X_n=S$ such that
\[
\sum_{i=1}^n |\fg(X_i)-\fg(X_{i-1}|>1.
\]
For sufficiently small open intervals $U_0,\dots,U_n$ such that $\fg(X_i)\in
U_i$, we have $\sum_{i=1}^n |y_i-y_{i-1}|>1$ for all choices of $y_i\in U_i$.
This means that $\prod_{B\in \BB} W_B$, where
\[
W_B=
  \begin{cases}
    U_i, & \text{if $B=X_i$, $i=0,\dots,n$}, \\
    \R, & \text{otherwise},
  \end{cases}
\]
is an open set containing $\psi$ but avoiding $Z$.
\end{proof}

\begin{lemma}\label{LEM:T-CONT}
For every $f\in\Bd$, the mapping $\fg\mapsto\wh\fg(f)$ is continuous on
setfunctions with uniformly bounded variation.
\end{lemma}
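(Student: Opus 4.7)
The plan is to reduce to the case of simple $\BB$-measurable functions, where $\wh\fg(f)$ depends on $\fg$ only through finitely many set-values and is therefore trivially continuous in the product topology on $T = \R^\BB$; Lemma \ref{LEM:WHPHI-CONT} will then transfer the conclusion to arbitrary $f \in \Bd$, provided the variation stays uniformly bounded. So fix $K$ with $\var(\fg) \le K$ on the set we work with, a base point $\fg_0$ with $\var(\fg_0) \le K$, and $\eps > 0$. The $\FF$-continuity characterization of $\Bd(\BB)$ due to Rao and Rao (recalled in the preliminaries) provides a finite partition $J = B_1 \cup \dots \cup B_n$ into sets of $\BB$ on which $f$ oscillates by less than $\eps/(3K)$; picking $y_i \in B_i$ and setting $f_m = \sum_i f(y_i)\one_{B_i}$ yields $\|f - f_m\| < \eps/(3K)$.

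Next I would rewrite $\wh\fg(f_m)$ as an explicit finite linear combination of fixed set-values. After reindexing so that $f(y_1) \le \dots \le f(y_n)$ and setting $A_j = \bigcup_{i \ge j} B_i \in \BB$, the definition \eqref{EQ:WH-DEF} of the Choquet extension collapses to
\[
\wh\fg(f_m) = f(y_1)\,\fg(J) + \sum_{j=2}^n \bigl(f(y_j) - f(y_{j-1})\bigr)\,\fg(A_j),
\]
whose coefficients and sets depend only on $f_m$, not on $\fg$. The basic product-topology neighborhood $U$ of $\fg_0$ defined by requiring $|\psi(A_j) - \fg_0(A_j)|$ and $|\psi(J) - \fg_0(J)|$ to be small enough then satisfies $|\wh\psi(f_m) - \wh{\fg_0}(f_m)| < \eps/3$ for every $\psi \in U$.

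The proof will then be completed by a triangle inequality: for any $\fg \in U$ with $\var(\fg) \le K$, Lemma \ref{LEM:WHPHI-CONT} bounds each of $|\wh\fg(f) - \wh\fg(f_m)|$ and $|\wh{\fg_0}(f) - \wh{\fg_0}(f_m)|$ by $K\|f - f_m\| < \eps/3$, giving $|\wh\fg(f) - \wh{\fg_0}(f)| < \eps$. I expect no real obstacle here; the only delicate point is that the uniform variation hypothesis is essential, since the passage from $f_m$ to $f$ costs a factor of $\var(\fg)$ that would blow up along a converging net without it.
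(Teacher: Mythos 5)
Your proof is correct and follows essentially the same strategy as the paper's: reduce to a step\-function approximation $f_m$ close to $f$ in sup norm, observe that $\wh\fg(f_m)$ is a finite linear combination of coordinate projections $\fg\mapsto\fg(A_j)$ (hence continuous in the product topology), and transfer back to $f$ via the Lipschitz bound of Lemma \ref{LEM:WHPHI-CONT} together with the uniform variation bound. The paper phrases the final step as a uniform limit of continuous functions rather than an explicit $\eps/3$ argument, but this is only a cosmetic difference.
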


\begin{proof}
Let $\fg$ be a setfunction with $\var(\fg)\le 1$ (say). The map
$\fg\mapsto\wh\fg(X)$ is trivially continuous for any $X\in\BB$ (it is the
projection onto a coordinate). Next we argue that $\fg\mapsto\wh\fg(g)$ is
continuous for every nonnegative stepfunction $g$. Indeed, we can write
\[
g=\sum_{i=1}^m a_i\one_{X_i},\quad a_i>0,\quad X_1\subset \dots\subset X_m,
\]
and then
\[
\wh\fg(g)= \sum_{i=1}^m a_i\fg(X_i),
\]
which depends continuously on $\fg$. For a general stepfunction, we can use
\eqref{EQ:WH-DEF} to reduce it to the nonnegative case.

Finally, consider any bounded function $f$. For any $n\ge 1$ there is a
stepfunction $g_n$ such that $|f(x)-g_n(x)|\le 1/n$ for every $x\in J$, and
hence by Lemma \ref{LEM:WHPHI-CONT},
\[
|\wh\fg(f)-\wh\fg(g_n)|\le \var(\fg)\|f-g_n\| \le \frac1n.
\]
This shows that $\fg\mapsto\wh\fg(f)$ is the uniform limit of continuous
functions, and so it continuous.
\end{proof}

\section{Submodular setfunctions}\label{SEC:SUBMOD}

\subsection{Definition and examples}\label{SSEC:SUBMOD-DEF}

A setfunction $\fg$ defined on a lattice family $\FF$ of subsets of a set $J$
is called {\it submodular}, if
\begin{equation}\label{EQ:SUBMOD-DEF}
\fg(S\cup T)+\fg(S\cap T)\le \fg(S)+\fg(T)
\end{equation}
for all $S,T\in\FF$. An equivalent way of saying this is that the difference
$\fg(X\cup A)-\fg(X)$ is a decreasing function of $X\in\FF$ for every fixed
$A\in\FF$. Adding a constant (which does not change the validity of
\eqref{EQ:SUBMOD-DEF}), we can usually assume for free that $\fg(\emptyset)=0$.
If $\fg$ is submodular on a set-algebra, then so is $\fg^c(X)=\fg(X^c)$, which
implies that most results about increasing submodular setfunctions can be
translated into results about decreasing ones.

A setfunction $\fg$ is {\it supermodular}, if \eqref{EQ:SUBMOD-DEF} holds with
the inequality reversed. We say that $\fg$ is {\it modular}, if it is both
submodular and supermodular, i.e., \eqref{EQ:SUBMOD-DEF} holds with equality
for all $S,T\in\BB$. As mentioned in the introduction, modular setfunctions
with $\fg(\emptyset)=0$ are exactly the charges on $\BB$.

Every submodular setfunction with $\fg(\emptyset)=0$ is {\it subadditive} in
the sense that
\begin{equation}\label{EQ:SUBADD-DEF}
\fg(X\cup Y)\le \fg(X)+\fg(Y)\qquad(X\cap Y=\emptyset)
\end{equation}
(but not the other way around). Monotone submodular setfunctions are similar
to, but different from, outer measures. Both are subadditive, but submodular
setfunctions are not necessarily countably subadditive (even modular
setfunctions are not, see any finitely additive but not countably additive
measure). Outer measures are subadditive, but not necessarily submodular.

Increasing submodular setfunctions on a set-algebra $(J,\BB)$ can be
characterized by the inequality
\begin{equation}\label{EQ:ALTERNATE}
\fg(A_0)-\fg(A_0\cup A_1)-\fg(A_0\cup A_2)+\fg(A_0\cup A_1\cup A_2)\le 0,
\end{equation}
which should hold for all $A_0,A_1,A_2\in\BB$. The increasing property follows
by taking $A_1=A_2$, and submodularity follows by taking $A_0=A_1\cap A_2$. It
is also easy to see that increasing submodular setfunctions satisfy
\eqref{EQ:ALTERNATE}: Indeed, we may replace $A_1$ by $A_1\cup A_0$ and $A_2$
by $A_2\cup A_0$ (which does not change the quantity in \eqref{EQ:ALTERNATE}),
and then replace $A_0$ by $A_1\cap A_2$ (which makes the inequality tighter),
to get the submodular inequality.

Inequality \ref{EQ:ALTERNATE} was introduced by Choquet \cite{Choq}, who calls
setfunctions satisfying it {\it 2-alternating}. See Section
\ref{SEC:STRONG-SUBMOD} for generalizations of this inequality.

We conclude this section with a number of interesting examples.

\begin{example}[Supremum]\label{EXA:SUP}
The setfunction $\fg(X)=\sup(X)$ is submodular on Borel subsets of $[0,1]$.
\end{example}

\begin{example}[Concave function of a measure]\label{EXA:CONCAVE}
Let $f:~[0,1]\to\R$ be a bounded function, and let $\lambda$ denote the
Lebesgue measure. Then $\fg(X)=f(\lambda(X))$ is submodular on Borel sets if
and only if $f$ is concave. (Note the almost paradoxical contrast with
Corollary \ref{COR:SUBMOD-CONV}.) To see this, let $X,Y\in\BB$, and
$\ell:~[0,1]\to\R$ be a linear function such that
$\ell(\lambda(X))=f(\lambda(X))$ and $\ell(\lambda(Y))=f(\lambda(Y))$. By the
concavity of $f$, we have $\ell(\lambda(X\cap Y))\ge f(\lambda(X\cap Y))$ and
$\ell(\lambda(X\cup Y))\ge f(\lambda(X\cup Y))$, and so by the linearity of
$\ell$, we get
\begin{align*}
f(\lambda(X\cup Y))+f(\lambda(X\cap Y)) &\le \ell(\lambda(X\cup Y))+\ell(\lambda(X\cap Y))\\
&=\ell(\lambda(X))+\ell(\lambda(Y)) = f(\lambda(X))+f(\lambda(Y)).
\end{align*}

In particular, $\fg=\sqrt{\lambda}$ is submodular. In this case, $\wh\fg$ is
the {\it Lorentz norm} with parameters $(1,2)$.
\end{example}

\begin{example}[Ideals]\label{EXA:IDEAL}
Let $\II\subseteq\BB$ be an ideal. For example, $\II$ could be the set of
finite subsets of $J$, or countable subsets, or Borel subsets of $[0,1]$ with
zero Lebesgue measure, etc. Then $\chi_\II=\one(X\notin\II)$ is an increasing
submodular setfunction. If $f\in\Bd$, then $\wh\chi_\II(f)$ is the supremum of
all values $t$ such that $\{x\in J:~f(x)\ge t\}\in\II$. In particular, if
$\II=\{\emptyset\}$, then $\wh\chi_\II(f)=\sup_{x\in J} f(x)$. If $\II$
consists of Borel subsets of $[0,1]$ with zero Lebesgue measure, then
$\wh\chi_\II(f)=\text{\rm sup~ess}_{x\in J} f(x)$.
\end{example}

\begin{example}[Dimension]\label{EXA:LIN-DEP}
Let $L$ be any finite dimensional linear space; for any $X\subseteq L$, let
$r(X)$ denote the maximum number of linearly independent elements in $X$. Then
$r(X)$ is an increasing submodular setfunction with $r(\emptyset)=0$. In
addition, it is integer valued.

A matroid is an abstraction of this example, although usually studied over
finite underlying sets. The rank function of any matroid is nonnegative,
increasing, submodular, integer valued, and bounded by the cardinality (see
Appendix \ref{APP-MATROIDS}). If we drop the last condition, we get {\it
polymatroids}.
\end{example}

\begin{example}[Image under a relation]\label{EXA:TRANSVERSAL}
Let $R\subseteq[0,1]^2$ be a Borel measurable relation and $\lambda$, the
Lebesgue measure on $[0,1]$. Then $\fg(X)=\lambda(R(X))$ is an increasing
submodular setfunction on the Borel sets in $[0,1]$. Indeed, trivially $R(X\cup
Y)=R(X)\cup R(Y)$ and $R(X\cap Y)\subseteq R(X)\cap R(Y)$, so
\begin{align*}
\lambda(R(X\cup Y)) + \lambda(R(X\cap Y)) &\le \lambda(R(X)\cup R(Y)) + \lambda(R(X)\cap R(Y))\\
&= \lambda(R(X))+\lambda(R(Y)).
\end{align*}
(The set $R(X)$ is not necessarily a Borel set, but it is analytic and
therefore Lebesgue measurable.) Example \ref{EXA:SUP} is a special case when
$R=\{(x,y)\in[0,1]^2: x\ge y\}$.

The finite analogue of this construction plays an important role in matching
theory. This goes back to the work of Rado \cite{Rado}. Let $G$ be a bipartite
graph with bipartition $V(G)=U\cup W$. For $X\subseteq U$, let $N(X)$ denote
the set of its neighbors, then $\gamma(X)=|N(X)|$ is a submodular setfunction.
The important role of this construction in Choquet's {\it theory of capacities}
will be sketched in Section \ref{SEC:STRONG-SUBMOD}.
\end{example}

\begin{example}[Measure of closure]\label{EXA:CLOSURE}
Let $\BB$ be the family of Borel sets in $[0,1]$. For $X\subseteq [0,1]$, let
$\overline{X}$ denote its closure. Define
$\overline{\lambda}(X)=\lambda(\overline{X})$. Then $\overline{\lambda}$ is
submodular. Indeed, let $S,T\in\BB$, then using that $\overline{S\cup
T}=\overline{S}\cup\overline{T}$ and $\overline{S\cap T}\subseteq
\overline{S}\cap\overline{T}$, we obtain the submodularity of
$\overline{\lambda}$ by a similar computation as in Example
\ref{EXA:TRANSVERSAL}.

This example can be generalized in several directions. We can replace $\BB$ by
the sigma-algebra of Borel sets in any Polish space. We can also replace
$\lambda$ by any increasing submodular setfunction on $\BB$. (These facts are
all special cases of Lemma \ref{LEM:STAR} below.)
\end{example}

\begin{example}[Capacity of cuts]\label{EXA:CUT}
Let $\Phi$ be a nonnegative charge on $\BB\times\BB$. Then the setfunction
$\fg(X)=\Phi(X\times X^c)$ is submodular. Indeed, it is straightforward to
check that
\begin{align*}
\fg(S)+\fg(T)&-\fg(S\cup T)-\fg(S\cap T)\\
&= \Phi((T\setminus S)\times(S\setminus T)) +\Phi((S\setminus T)\times(T\setminus S)) \ge 0.
\end{align*}
Since $\fg(\emptyset) =\fg(J)=0$, the setfunction $\fg$ is neither increasing
nor decreasing unless it is identically zero.

The finite analogue of this setfunction $\fg$ is an important setfunction in
graph theory, low theory, and probability theory. If $G=(V,E)$ is a finite
graph and $\Phi$ is the counting measure on the set of edges, then $\fg(X)$ as
defined above is the size of the cut defined by $X\subseteq V$. If the edges
are assigned nonnegative capacities, and $\Phi(Y)$ is the sum of capacities of
edges in $Y$, then $\fg(X)$ is the capacity of the cut. See \cite{LLflow} for
its use in extensions of flow theory to measures.
\end{example}

\begin{example}[Graph homomorphisms]\label{EXA:HOMS}
For two finite graphs $F$ and $G$, let $\hom(F,G)$ denote the number of
homomorphisms (adjacency preserving maps) $V(F)\to V(G)$. For every subset
$X\subseteq E(F)$, $F_X=(V(F),X)$ and $\fg(X)=\hom(F_X,G)$. Then $\fg$ is a
supermodular setfunction. Indeed, let $\Hom(F,G)$ denote the set of maps
$V(F)\to V(G)$ that are homomorphisms. Then
\begin{align*}
\Hom(F_{X\cup Y},G)&= \Hom(F_X,G)\cap\Hom(F_Y,G),\quad\text{and}\\
\Hom(F_{X\cap Y},G)&\supseteq \Hom(F_X,G)\cup\Hom(F_Y,G).
\end{align*}
Taking cardinalities and adding up, we get the supermodularity of $\fg$.

In the limit theory of dense graphs, we often normalize this number to define
the {\it homomorphism density} $t(F,G)=\hom(F,G)/|V(G)|^{|V(F)|}$. Then
$\psi(X)=t(F_X,G)$ is a supermodular setfunction. Since $t(F,G)$ does not
depend on the isolated nodes of $F$, the setfunction $\psi$ can be considered
as a setfunction on the finite subsets of $\binom{J}{2}$ (the set of unordered
pairs of elements of $J$) for any (possibly infinite) set $J$.
\end{example}

\begin{example}[Hitting probabilities]\label{EXA:PROB}
Let $(J,\BB,\pi)$ be a standard Borel probability space, and let $(P_x:~x\in
J)$ be a Markov kernel. Consider a random walk $(v^0,v^1,\ldots)$ defined by
this Markov kernel, where $v^0\in J$ is from a fixed distribution $\sigma$. For
$X\in\BB$, let $A_X$ be the event that $(v^0,v^1,\ldots)$ hits $X$. Then
$\Pr(A_X)$ is an increasing submodular function of $X$. Indeed, clearly
$A_{X\cup Y} = A_X\lor A_Y$ and $A_{X\cap Y}\Rightarrow (A_X\land A_Y)$, so
\begin{equation}\label{EQ:HIT}
\Pr(A_{X\cup Y}) + \Pr(A_{X\cap Y}) \le \Pr(A_X\lor A_Y) + \Pr(A_X\land A_Y) =
\Pr(A_X) + \Pr(A_Y).
\end{equation}

A related setfunction is the expected number $\HH(\pi,S)$ of steps before a
random walk, started (say) at a random point from the stationary distribution,
hits a set $S$. Then $\HH(\pi,S)$ is a supermodular setfunction of $S$, at
least on sets $S$ for which it is finite.

This example has an analogue in analysis, more exactly in the theory of
capacities, which is obtained (roughly speaking) by replacing the random walk
above by Brownian motion in an open domain in the plane. This is the structure
studied by Choquet \cite{Choq} leading to the introduction of the Choquet
integral and perhaps the first work studying submodular setfunctions in an
infinite setting.
\end{example}

\begin{example}[Submodular functions on a lattice]\label{EXA:LATTICE}
Let $(L;\lor,\land)$ be a lattice with top and bottom elements, and let
$(L,2^L)$ be the sigma-algebra consisting of subsets of $L$. Let $\mu$ be an
increasing submodular function on $L$. For $X\subseteq L$, define
$\fg(X)=\inf\{\mu(a):~a\ge x ~\forall x\in X\}$. Then $\fg$ is submodular on
$2^L$; indeed, let $X,Y\in 2^L$, $a\in X$ and $b\in Y$, then $a\lor b\ge X\cup
Y$ and $a\land b\ge X\cap Y$, and hence
\[
\mu(a)+\mu(b)\ge\mu(a\lor b)+\mu(a\land b)\ge \fg(X\cup Y)+\fg(X\cap Y).
\]
Taking the infimum over $a$ and $b$, we get the submodularity of $\fg$.

A notable special case is when $L$ consists of all linear subspaces of $\R^d$
and $\mu=\dim$; then $\fg$ (restricted to a finite family of subspaces) is an
archetypal example of a polymatroid. Another important example is the lattice
defined on either $\Zbb^d$ or $\R^d$ by the partial order $x\le y$ ($x,y\in
[0,1]^d$) meaning that $y$ dominates $x$ entry-by-entry. Submodular
setfunctions on these lattices have been considered as generalizations of the
finite case (see \cite{Mur1,Mur2,Bach}).
\end{example}

We close with two examples of submodular setfunctions which have been studied
and used (so far) in the finite case only.

\begin{example}[Entropy]\label{EXA:SHANNON}
Let $S,V$ be a finite a sets, and let $\eta$ be a probability distribution on
$S^V$. For $U\subseteq V$, let $\eta^U$ denote the marginal of $\eta$ on $S^U$.
If $H(\eta)$ denotes the entropy of $\eta$, then $H(\eta^U)$ is a submodular
setfunction of $U$; this fact is called {\it Shannon's Inequality}.
\end{example}

\begin{example}[Logarithms of determinants]\label{EXA:LOG-DET}
Let $A$ be a (symmetric) positive definite matrix, and for every subset $S$ of
its rows, let $A_S$ denote the submatrix formed by the rows and columns in $S$.
Then $\log\det(A_S)$ is a submodular setfunction of $S$.
\end{example}

\subsection{Basics}\label{SSEC:SUBMOD-BASICS}

In this section we collect some basic properties of submodular setfunctions. We
start with defining some simple operations creating submodular setfunctions
from other ones under appropriate conditions.

\smallskip

$\bullet$ The {\it truncation} of $\fg$ by a real number $c$ is defined by
$X\mapsto\min(c,\fg(X))$.

\smallskip

$\bullet$ For $A\in\BB$, the {\it restriction} $\fg_A:~\BB|_A\to\R$ and the
{\it projection} $\fg^A:~\BB|_A\to\R$ of $\fg$ to $A$ are defined by
$\fg_A(X)=\fg(X)$ and $\fg^A(X) = \fg(A^c\cup X)-\fg(A^c)$, respectively. The
latter operation is also called the {\it contraction} of $A^c$.

\smallskip

$\bullet$ If $(I,\AA)$ is another set-algebra, and $\Pi:~J\to I$ is a
measurable map, then the {\it quotient} of $\fg$ by $\Pi$ is the setfunction
$\fg\circ\Pi^{-1}$ on $\AA$ defined by
$(\fg\circ\Pi^{-1})(X)=\fg(\Pi^{-1}(X))$. We call a quotient {\it finite}, if
$I$ is finite. (In measure-theoretic language, a quotient could be called
``pushforward'', but this could be confusing considering the ``pullback'' to be
defined below.)

\smallskip

$\bullet$ Again, let $(I,\AA)$ is another set-algebra, and let
$\Gamma:~\AA\to\BB$ be a monotone map that preserves unions. The {\it pullback}
of $\fg$ by $\Gamma$ is the setfunction $\fg\circ\Gamma:~\AA\to\R$ defined by
$(\fg\circ\Gamma)(X)=\fg(\Gamma(X))$. The quotient by a measurable map
$\Pi:~J\to I$ is the special case of a pullback by $\Pi^{-1}$. Other special
cases are the measure of closure (Example \ref{EXA:CLOSURE}) and the image
under a relation (Example \ref{EXA:TRANSVERSAL}).

\smallskip

$\bullet$ Let $\fg$ be a submodular setfunction on $(J,\BB)$. Create a new
element $a\notin J$, and define $\BB'=\BB\cup\{U\cup\{a\}:~U\in\BB\}$. We
define a setfunction $\fg':~\BB'\to\R$ by
\[
\fg'(X)=
  \begin{cases}
    \fg(X) & \text{if $a\notin X$}, \\
    \fg((X\setminus\{a\})\cup A) & \text{if $a\in X$}.
  \end{cases}
\]
We call this extension {\it adding a representative of $A$}.

\smallskip

For the next lemma, we omit the proofs as they are straightforward.

\begin{lemma}\label{LEM:SUBMOD-TRIV}
Let $\fg$ be a submodular setfunction on a set-algebra $(J,\BB)$.

\medskip

{\rm(a)} Decreasing the value of $\fg(\emptyset)$ and/or $\fg(J)$ preserves
submodularity.

\smallskip

{\rm(b)} The restriction and projection of $\fg$ to a set $A\in\BB$ are
submodular.

\smallskip

{\rm(c)} For $A\in\BB$, the setfunction $X\mapsto \fg(A\cup X)$ is submodular.

\smallskip

{\rm(d)} The complementary setfunction $\fg^c$ is submodular.

\smallskip

{\rm(e)} Every quotient of $\fg$ is submodular.

\smallskip

{\rm(f)} If $\fg$ is increasing, then truncation by any constant preserves
submodularity.

\smallskip

{\rm(g)} If $\fg$ is increasing, then every pullback of $\fg$ is submodular.

\smallskip

{\rm(h)} If $\fg$ is increasing, then adding a representative of a set
$A\in\BB$ preserves submodularity.
\end{lemma}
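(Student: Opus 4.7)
The plan is to verify each item by direct substitution into the defining submodular inequality \eqref{EQ:SUBMOD-DEF}, tracking per item which sets to plug in and precisely where the increasing hypothesis is used. Since the claims are parallel and elementary, I would present them in the order given, grouped by whether monotonicity is needed.

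For (a), I would do a case analysis on the inequality $\fg'(S\cup T)+\fg'(S\cap T)\le \fg'(S)+\fg'(T)$ for the modified setfunction $\fg'$. When $\emptyset\notin\{S,T,S\cup T,S\cap T\}$ nothing changes; if $S\cap T=\emptyset$ with $S,T\neq\emptyset$, the LHS strictly decreases while the RHS is unchanged; if $S=\emptyset$ or $T=\emptyset$ the inequality is a tautological equality that is preserved under any common shift. The argument for lowering $\fg(J)$ is the dual one, using $S\cup T=J$ cases. Parts (b)--(e) are one-line computations: for the projection write $\fg^A(X\cup Y)+\fg^A(X\cap Y)=\fg(A^c\cup X\cup Y)+\fg(A^c\cup(X\cap Y))-2\fg(A^c)$ and note that $(A^c\cup X)\cap(A^c\cup Y)=A^c\cup(X\cap Y)$, so the submodular inequality applied to $A^c\cup X, A^c\cup Y$ is exactly what we need; (c) is the same with $A$ in place of $A^c$; (d) uses $(X\cup Y)^c=X^c\cap Y^c$ and $(X\cap Y)^c=X^c\cup Y^c$; (e) uses that $\Pi^{-1}$ commutes with union and intersection.

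The three items (f), (g), (h) genuinely require the increasing hypothesis, and I would treat them more carefully. For (f), let $\psi(X)=\min(c,\fg(X))$ and split on how many of $\fg(S),\fg(T)$ exceed $c$: if both do, then monotonicity forces $\fg(S\cup T)\ge c$, so $\psi(S\cup T)+\psi(S\cap T)\le 2c=\psi(S)+\psi(T)$; if neither does, write $\psi(S\cup T)\le\fg(S\cup T)$ and apply submodularity of $\fg$; the mixed case combines these. For (g), expand $(\fg\circ\Gamma)(X\cup Y)+(\fg\circ\Gamma)(X\cap Y)=\fg(\Gamma(X)\cup\Gamma(Y))+\fg(\Gamma(X\cap Y))$ using that $\Gamma$ preserves unions, then use monotonicity $\Gamma(X\cap Y)\subseteq\Gamma(X)\cap\Gamma(Y)$ together with the increasing hypothesis on $\fg$ to bound $\fg(\Gamma(X\cap Y))\le\fg(\Gamma(X)\cap\Gamma(Y))$, and finish by submodularity of $\fg$ at $\Gamma(X),\Gamma(Y)$.

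For (h), I would split on membership of the new element $a$ in $X,Y\in\BB'$. If $a\notin X,Y$, the inequality is submodularity of $\fg$. If $a\in X,Y$, write $X=X'\cup\{a\},Y=Y'\cup\{a\}$; then all four evaluations are $\fg$ of the corresponding sets with $A$ adjoined, and submodularity of $\fg$ at $X'\cup A,Y'\cup A$ is literally the required inequality (no monotonicity needed here). The asymmetric case $a\in X,a\notin Y$ is the one where monotonicity enters: there $\fg'(X\cap Y)=\fg(X'\cap Y)$ while $(X'\cup A)\cap Y\supseteq X'\cap Y$, so monotonicity gives $\fg((X'\cup A)\cap Y)\ge\fg(X'\cap Y)$ and then submodularity of $\fg$ at $X'\cup A,Y$ closes the argument. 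The main (minor) obstacle is the bookkeeping in (h) across the four membership cases and keeping clear, in (f) and (h), at which step the increasing hypothesis is actually invoked; everything else is routine unpacking of definitions.
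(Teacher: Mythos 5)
The paper omits the proof of this lemma as routine, and your argument correctly supplies the intended straightforward verifications: the set-theoretic identities in (b)--(e), the case analysis in (a), and the precise points where the increasing hypothesis is invoked in (f), (g), (h) are all correct. One small redundancy worth noting: in (f), the case where both $\fg(S)\ge c$ and $\fg(T)\ge c$ does not actually need monotonicity, since $\psi\le c$ everywhere already gives $\psi(S\cup T)+\psi(S\cap T)\le 2c=\psi(S)+\psi(T)$; the increasing hypothesis is only genuinely used in the mixed case (where, as you observe, one needs $\fg(S\cap T)\le\fg(T)$ and $\fg(S\cup T)\ge\fg(S)\ge c$), and indeed it is easy to build a three-atom example showing that truncation of a non-monotone submodular setfunction can fail to be submodular.
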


We continue with a couple of less trivial lemmas of the same nature. First, a
general fact about extending submodular setfunctions.

\begin{lemma}\label{LEM:STAR}
If $(J,\FF)$ is a lattice family and $\fg:~\FF\to\R$ is submodular, then
$\fg^\li$ and $\fg^\ui$ are submodular (as functions $2^J\to\R$).
\end{lemma}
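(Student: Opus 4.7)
The plan is to prove both statements by essentially the same infimum-chasing argument, exploiting the crucial fact that $\FF$ is closed under both unions and intersections, so that witnesses to $\fg^\ui$ or $\fg^\li$ can be combined through $\cup$ and $\cap$ without leaving $\FF$.

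First I would handle $\fg^\ui$. Fix $X_1,X_2\subseteq J$ and $\eps>0$. Choose $Y_1,Y_2\in\FF$ with $Y_i\supseteq X_i$ and $\fg(Y_i)\le \fg^\ui(X_i)+\eps$. Because $\FF$ is a lattice family, $Y_1\cup Y_2$ and $Y_1\cap Y_2$ lie in $\FF$; they contain $X_1\cup X_2$ and $X_1\cap X_2$ respectively. Applying the definition of $\fg^\ui$ to each and then the submodularity of $\fg$ on $\FF$ gives
\[
\fg^\ui(X_1\cup X_2)+\fg^\ui(X_1\cap X_2)\le \fg(Y_1\cup Y_2)+\fg(Y_1\cap Y_2)\le \fg(Y_1)+\fg(Y_2)\le \fg^\ui(X_1)+\fg^\ui(X_2)+2\eps.
\]
Letting $\eps\to 0$ yields submodularity of $\fg^\ui$.

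The argument for $\fg^\li$ is perfectly dual. Given $X_1,X_2$ and $\eps>0$, pick $Y_i\in\FF$ with $Y_i\subseteq X_i$ and $\fg(Y_i)\le \fg^\li(X_i)+\eps$. Now $Y_1\cup Y_2\subseteq X_1\cup X_2$ and $Y_1\cap Y_2\subseteq X_1\cap X_2$, and both are in $\FF$, so the same chain of inequalities (infimum bound, then submodularity of $\fg$, then the choice of $Y_i$) yields the result after $\eps\to 0$.

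There is no real obstacle here; the only thing to be careful about is that the argument genuinely uses both closure under $\cup$ \emph{and} under $\cap$, and that for $\fg^\li$ one must take the infimum of $\fg(Y)$ (not supremum), so the witnesses bound $\fg^\li$ from above in the right direction. Because the manipulation is symmetric in ``up'' and ``down,'' I would write it once in general and then indicate the dual case.
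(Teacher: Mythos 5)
Your proof is correct and follows essentially the same approach as the paper's: combine witnesses for the infima via $\cup$ and $\cap$ (legitimate since $\FF$ is a lattice family) and apply the submodularity of $\fg$ on $\FF$. The only superficial differences are that you make the $\eps$-approximation explicit rather than "taking the infimum over all choices," and you treat $\fg^\ui$ first where the paper treats $\fg^\li$ first; both texts dispose of the dual case by symmetry.
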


Recall that $\fg^\li$ (lower-infimum) is decreasing and $\fg^\ui$
(upper-infimum) is increasing.

\begin{proof}
To prove the assertion about $\fg^\li$, let $X,Y\subseteq J$, and let
$U,V\in\FF$ such that $U\subseteq X$ and $V\subseteq Y$. Then $U\cap V\subseteq
X\cap Y$ and $U\cup V\subseteq X\cup Y$, and hence
\[
\fg(U)+\fg(V) \ge \fg(U\cap V)+\fg(U\cup V) \ge \fg^\li(X\cap Y)+\fg^\li(X\cup Y).
\]
Taking the infimum of the left hand side over all choices of $U$ and $V$, we
get the submodularity of $\fg^\li$. Submodularity of $\fg^\ui$ follows
similarly (or by using \eqref{EQ:UP-DOWN}).
\end{proof}

It is well known (and easy to check) that the setfunctions $\alpha\land\beta$
and $\alpha\lor\beta$ are charges if $\alpha$ and $\beta$ are charges. This
implies that they are modular if $\alpha$ and $\beta$ are modular. However,
neither $\fg\land\psi$ nor $\fg\lor\psi$ is submodular in general even if both
$\fg$ and $\psi$ are submodular. But if one of them is modular, then at least
one of these operations preserves submodularity:

\begin{lemma}\label{LEM:LAND1}
Let $\fg:~\BB\to\R_+$ be a submodular setfunction on a set-algebra $(J,\BB)$,
and let $\mu:~\BB\to\R_+$ be a modular setfunction. Then $\fg\land\mu$ is
submodular. Furthermore, if $\fg$ and $\mu$ are increasing, then $\fg\land\mu$
is increasing.
\end{lemma}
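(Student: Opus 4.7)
The plan is a standard uncrossing argument in which modularity of $\mu$ supplies a conservation law that makes the $\mu$-values balance exactly. Fix $A,B\in\BB$ and $\eps>0$, and choose near-optimal witnesses $Y_A,Y_B\in\BB$ with $Y_A\subseteq A$, $Y_B\subseteq B$, and
\[
\fg(Y_A)+\mu(A\setminus Y_A)\le (\fg\land\mu)(A)+\eps, \qquad \fg(Y_B)+\mu(B\setminus Y_B)\le (\fg\land\mu)(B)+\eps.
\]
Since $Y_A\cup Y_B\subseteq A\cup B$ and $Y_A\cap Y_B\subseteq A\cap B$, these ``uncrossed'' sets compete in the infima defining $(\fg\land\mu)(A\cup B)$ and $(\fg\land\mu)(A\cap B)$, yielding the upper bound
\[
(\fg\land\mu)(A\cup B)+(\fg\land\mu)(A\cap B)\le \fg(Y_A\cup Y_B)+\fg(Y_A\cap Y_B)+\mu(U)+\mu(V),
\]
where $U=(A\cup B)\setminus(Y_A\cup Y_B)$ and $V=(A\cap B)\setminus(Y_A\cap Y_B)$.

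The first two terms on the right are handled by submodularity of $\fg$: $\fg(Y_A\cup Y_B)+\fg(Y_A\cap Y_B)\le\fg(Y_A)+\fg(Y_B)$. For the last two, the key observation is the pointwise indicator identity
\[
\one_U+\one_V=\one_{A\setminus Y_A}+\one_{B\setminus Y_B},
\]
which I would verify by case analysis on the memberships of a point $x$ in $A,B,Y_A,Y_B$ (using $Y_A\subseteq A$ and $Y_B\subseteq B$ to rule out the impossible combinations). Finite additivity of the charge $\mu$ then upgrades this to the exact numerical equality $\mu(U)+\mu(V)=\mu(A\setminus Y_A)+\mu(B\setminus Y_B)$. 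Combining, the upper bound becomes $(\fg\land\mu)(A)+(\fg\land\mu)(B)+2\eps$, and letting $\eps\to 0$ gives submodularity of $\fg\land\mu$.

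For the monotonicity claim, assume $\fg$ and $\mu$ are increasing and $A\subseteq B$. For any competitor $Y\subseteq B$ in the infimum defining $(\fg\land\mu)(B)$, the set $Y\cap A\subseteq A$ competes in the infimum for $(\fg\land\mu)(A)$; monotonicity of $\fg$ gives $\fg(Y\cap A)\le\fg(Y)$, and monotonicity of the nonnegative charge $\mu$ applied to $A\setminus Y\subseteq B\setminus Y$ gives $\mu(A\setminus Y)\le\mu(B\setminus Y)$. Taking the infimum over $Y$ produces $(\fg\land\mu)(A)\le(\fg\land\mu)(B)$. The crucial ingredient in the whole argument is the indicator identity: it relies on $\mu$ being modular, not merely submodular, which is why the hypothesis of the lemma cannot be weakened to submodular $\mu$ (and matches the comment before the lemma that $\fg\land\psi$ need not be submodular for two submodular setfunctions).
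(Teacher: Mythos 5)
Your proof is correct, and for the submodularity part it takes a genuinely different route than the paper. The paper exploits modularity of $\mu$ algebraically: since $\mu(X\setminus Y)=\mu(X)-\mu(J)+\mu(Y^c)$ for $Y\subseteq X$, it rewrites $(\fg\land\mu)(X)=\inf_{Y\subseteq X}\big(\fg(Y)+\mu(Y^c)\big)+\mu(X)-\mu(J)$; the inner function of $Y$ is submodular (sum of $\fg$ and the modular $Y\mapsto\mu(Y^c)$), so Lemma \ref{LEM:STAR} makes the infimum submodular in $X$, and adding the modular tail $\mu(X)-\mu(J)$ preserves this. You instead run a direct uncrossing argument on near-optimal witnesses $Y_A,Y_B$. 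Your key indicator identity is right and can be seen in one line: using $\one_{S\cup T}+\one_{S\cap T}=\one_S+\one_T$ and $Y_A\subseteq A$, $Y_B\subseteq B$,
\[
\one_U+\one_V=(\one_{A\cup B}+\one_{A\cap B})-(\one_{Y_A\cup Y_B}+\one_{Y_A\cap Y_B})=(\one_A+\one_B)-(\one_{Y_A}+\one_{Y_B})=\one_{A\setminus Y_A}+\one_{B\setminus Y_B}.
\]
From this identity one reads off $U\cup V=(A\setminus Y_A)\cup(B\setminus Y_B)$ and $U\cap V=(A\setminus Y_A)\cap(B\setminus Y_B)$, so modularity of $\mu$ gives the exact balance $\mu(U)+\mu(V)=\mu(A\setminus Y_A)+\mu(B\setminus Y_B)$ even without assuming $\mu(\emptyset)=0$ (you say ``finite additivity of the charge $\mu$'', which would implicitly require $\mu(\emptyset)=0$; modularity alone suffices and is what the hypothesis actually provides). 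Your proof is somewhat longer but more self-contained and makes the role of modularity transparent: the conservation law fails for merely submodular $\mu$, consistent with the paper's remark. The monotonicity argument you give is essentially the paper's, with $Z$ renamed $Y$.
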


We don't claim that $\fg\lor\mu$ is submodular. Note that $\fg^\li=\fg\land 0$,
so Lemma \ref{LEM:STAR} follows from this lemma, at least for the case
$\FF=\BB$.

\begin{proof}
Using the modularity of $\mu$, we can write
\[
(\fg\land\mu)(X) = \inf_{Y\subseteq X} \fg(Y)+\mu(X\setminus Y)
= \inf_{Y\subseteq X} (\fg(Y)+\mu(Y^c) +\mu(X)-\mu(J).
\]
Here $\fg(Y)+\mu(Y^c)$ is a submodular function of $Y$, and hence
$\inf_{Y\subseteq X} (\fg(Y)+\mu(Y^c)$ is a submodular function of $X$ by Lemma
\ref{LEM:STAR}. Since $\mu(X)-\mu(J)$ is a modular function of $X$,
submodularity of $\fg\land\mu$ follows.

To prove monotonicity, let $U,V\in\BB$, $U\subseteq V$, and consider any Borel
set $Z\subseteq V$. Then
\[
\fg(Z)+\mu(V\setminus Z) \ge \fg(U\cap Z) + \mu(U\setminus(U\cap Z)) \ge (\fg\land\mu)(U).
\]
Taking the infimum of the left side over all $Z$, we get $(\fg\land\mu)(V)\ge
(\fg\land\mu)(U)$.
\end{proof}

\begin{remark}\label{REM:MATROID-INTERSECT}
In the finite case, if $\fg$ and $\psi$ are rank functions of two matroids on
the same underlying set, then $\fg\land\psi$ is the rank function of their
intersection (family of common independent sets). Since this intersection is
not a matroid in general, this setfunction is not necessarily submodular even
in this very special case. (We will say more about this ``intersection''
operation in Section \ref{SSEC:COMMON-MEAS}.)

The special case of $\fg\land\mu$, where $\mu(X)=|X|$, is also well known from
matroid theory. If $\fg$ is an increasing integer valued submodular setfunction
on a finite set, satisfying $f(\emptyset)=0$, then $\fg\land\mu$ is a matroid.
For example, if $G$ is a bipartite graph with bipartition $V=U\cup W$, and
$\fg(X)$ denotes the number of neighbors of a set $X\subseteq U$, the
$\fg\land\mu$ is the transversal matroid of $G$ on $U$. In particular,
$(\fg\land\mu)(U)$ is the matching number of $G$.
\end{remark}

Monotonicity of submodular setfunctions is often easy to check using the
following simple lemma:

\begin{lemma}\label{LEM:SUBMOD-MON}
A submodular setfunction $\fg$ on a set-algebra is increasing $[\text{resp.\
decreasing}]$ if and only if $\fg(X)\le\fg(J)$ $[\text{resp.}\
\fg(X)\le\fg(\emptyset)]$ for every $X\in\BB$.
\end{lemma}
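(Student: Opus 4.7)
The forward direction is immediate from the definition, so I would concentrate on the converse. The plan is to extract monotonicity from a single clever application of the submodular inequality, using the hypothesized bound as the final input.

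For the increasing case, suppose $\fg(X)\le \fg(J)$ for all $X\in\BB$, and let $X\subseteq Y$. I would apply submodularity to the pair $Y$ and $X\cup Y^c$. The key observation is that these two sets have union $J$ and intersection exactly $X$, so
\[
\fg(J)+\fg(X) \;=\; \fg(Y\cup(X\cup Y^c)) + \fg(Y\cap(X\cup Y^c)) \;\le\; \fg(Y)+\fg(X\cup Y^c).
\]
Invoking the hypothesis $\fg(X\cup Y^c)\le \fg(J)$ on the right-hand side and cancelling $\fg(J)$ yields $\fg(X)\le \fg(Y)$, which is the desired monotonicity.

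The decreasing case is symmetric. Given $X\subseteq Y$ with $\fg(Z)\le \fg(\emptyset)$ for all $Z$, apply submodularity to $X$ and $Y\cap X^c$: these have union $Y$ and intersection $\emptyset$, giving
\[
\fg(Y)+\fg(\emptyset) \;\le\; \fg(X)+\fg(Y\cap X^c) \;\le\; \fg(X)+\fg(\emptyset),
\]
so $\fg(Y)\le \fg(X)$. Alternatively, one could simply apply the increasing case to the complementary setfunction $\fg^c$, which is submodular by Lemma~\ref{LEM:SUBMOD-TRIV}(d) and satisfies the dual hypothesis.

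There is no real obstacle here: the entire content is choosing the right pair of sets to plug into the submodular inequality so that one of the four terms is $\fg(J)$ (respectively $\fg(\emptyset)$) and another equals the one being dominated. I do not expect any measurability or approximation issues to arise, since the argument manipulates only finitely many sets already assumed to lie in $\BB$.
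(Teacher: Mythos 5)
Your proof is correct and is essentially identical to the paper's: the paper also applies submodularity to the pair $Y$ and $Y^c\cup X$, obtaining $\fg(Y)+\fg(Y^c\cup X)\ge\fg(X)+\fg(J)$ and then invoking $\fg(Y^c\cup X)\le\fg(J)$. The decreasing case is treated the same way (the paper simply says it "follows similarly").
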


\begin{proof}
We prove the first assertion; the second follows similarly. The necessity of
the condition is trivial. To prove the sufficiency, let $X,Y\in\BB$,
$X\subseteq Y$. Then $Y\cap(Y^c\cup X)= X$ and $Y\cup(Y^c\cup X)= J$, whence by
submodularity,
\[
\fg(Y)+\fg(Y^c\cup X) \ge \fg(X)+\fg(J),
\]
or
\[
\fg(Y)-\fg(X)\ge \fg(J)- \fg(Y^c\cup X) \ge 0,
\]
proving that $\fg$ is increasing.
\end{proof}

\subsection{Bounded variation}

The following lemma is important because it implies that we can perform Choquet
integration and define the functional $\wh\fg$ for every bounded (but not
necessarily increasing or decreasing) submodular setfunction $\fg$.

\begin{lemma}\label{LEM:SUBMOD-FINVAR}
Every bounded submodular setfunction on a set-algebra has bounded variation.
\end{lemma}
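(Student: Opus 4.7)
Fix a chain $\emptyset = X_0 \subseteq X_1 \subseteq \ldots \subseteq X_n = J$ in $\BB$. Split the indices as $P = \{i : \fg(X_i) > \fg(X_{i-1})\}$ and $N = \{i : \fg(X_i) < \fg(X_{i-1})\}$, write $S_+ = \sum_{i\in P}(\fg(X_i)-\fg(X_{i-1}))$ and $S_- = \sum_{i\in N}(\fg(X_{i-1})-\fg(X_i))$, so that $\sum_{i=1}^n|\fg(X_i)-\fg(X_{i-1})| = S_+ + S_-$ and $S_+ - S_- = \fg(J)-\fg(\emptyset)$. Since the second identity lets us bound $S_-$ in terms of $S_+$ (using $\|\fg\|<\infty$), it suffices to bound $S_+$ uniformly in the chain.

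The plan for bounding $S_+$ is a telescoping argument driven by the diminishing-returns form of submodularity. Enumerate $P = \{i_1 < i_2 < \ldots < i_k\}$ and define the nested sets
\[
Z_0 = \emptyset,\qquad Z_j = Z_{j-1}\cup\bigl(X_{i_j}\setminus X_{i_j-1}\bigr)\quad(j=1,\dots,k),
\]
so that $Z_k = \bigcup_{i\in P}(X_i\setminus X_{i-1})\in\BB$. Because $i_{j-1} < i_j$, each $X_{i_l}\setminus X_{i_l-1}$ with $l<j$ lies inside $X_{i_{j-1}}\subseteq X_{i_j-1}$, so $Z_{j-1}\subseteq X_{i_j-1}$. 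Moreover $B_j := X_{i_j}\setminus X_{i_j-1}$ is disjoint from $X_{i_j-1}$. Applying submodularity to $A'=X_{i_j-1}$, $Y=Z_{j-1}\cup B_j$ (noting $A'\cap Y = Z_{j-1}$ since $A'\cap B_j=\emptyset$ and $Z_{j-1}\subseteq A'$) yields the standard diminishing-returns inequality
\[
\fg(Z_j)-\fg(Z_{j-1})\ \ge\ \fg(X_{i_j})-\fg(X_{i_j-1}).
\]
Summing over $j$ collapses to $\fg(Z_k)-\fg(\emptyset)\ge S_+$, so $S_+\le 2\|\fg\|$.

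Combining, $S_- = S_+ -\fg(J)+\fg(\emptyset)\le S_+ + 2\|\fg\|$, hence
\[
\sum_{i=1}^n |\fg(X_i)-\fg(X_{i-1})| \ =\ S_+ + S_- \ \le\ 2S_+ + 2\|\fg\| \ \le\ 6\|\fg\|,
\]
independently of the chain. Thus $\var(\fg)\le 6\|\fg\|<\infty$, as claimed.

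The main technical point I anticipate is the bookkeeping that makes the telescoping work: one must arrange the ``positive'' increments $B_j$ into a nested chain $(Z_j)$ whose anchors $Z_{j-1}$ sit inside the original $X_{i_j-1}$ so that diminishing returns fires in the right direction. Everything else (the identities for $S_\pm$ and the reduction from $S_-$ to $S_+$) is routine bookkeeping using boundedness of $\fg$.
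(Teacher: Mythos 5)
Your proof is correct, and it takes a genuinely different route from the paper's. The paper's argument is an iterative uncrossing: if $X_i$ is a local minimum of $\fg$ along the chain, replace it by $X_i' = X_{i-1}\cup(X_{i+1}\setminus X_i)$, show by submodularity that this exchange never decreases $\sum_i|\fg(X_i)-\fg(X_{i-1})|$, and conclude that after finitely many exchanges the chain becomes unimodal, for which the variation is visibly $2\fg(Y_j)-\fg(J)-\fg(\emptyset)\le 4\|\fg\|$. You instead select the positive increments $B_j = X_{i_j}\setminus X_{i_j-1}$, pack them into a single nested chain $Z_0\subseteq Z_1\subseteq\ldots\subseteq Z_k$, and telescope one diminishing-returns inequality to bound $S_+$ directly; the set identities $X_{i_j-1}\cup Z_j = X_{i_j}$ and $X_{i_j-1}\cap Z_j = Z_{j-1}$ do hold as claimed, since $Z_{j-1}\subseteq X_{i_{j-1}}\subseteq X_{i_j-1}$ and $B_j\cap X_{i_j-1}=\emptyset$, so the submodular inequality fires in the right direction. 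Your route is a one-shot construction and therefore sidesteps the termination analysis that the paper's exchange procedure implicitly requires. The paper's approach pays off in a different way: it gives a slightly sharper constant ($4\|\fg\|$ versus your $6\|\fg\|$), and more importantly it shows that the variation is approached by unimodal chains, a structural fact the paper immediately uses to identify the increasing part $\mu$ of the Jordan-type decomposition from Lemma \ref{LEM:BD-VAR} with $\fg^\ls = \fg\lor 0$. For the lemma as stated, both proofs are valid.
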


\begin{proof}
Consider a chain of subsets $\emptyset=X_0\subseteq X_1\subseteq\ldots\subseteq
X_n=J$ $(X_i\in\BB)$. Suppose that there is an index $1\le i\le n-1$ such that
$\fg(X_i)\le \fg(X_{i-1})$ and $\fg(X_i)\le \fg(X_{i+1})$. Let
$X_i'=X_{i-1}\cup(X_{i+1}\setminus X_i)$. Then $X_i\cap X_i'= X_{i-1}$ and
$X_i\cup X_i'= X_{i+1}$, so by submodularity,
\[
\fg(X_i)+\fg(X_i') \ge \fg(X_{i-1})+\fg(X_{i+1}).
\]
This implies that $\fg(X_i')\ge \fg(X_{i-1})$ and $\fg(X_i')\ge \fg(X_{i+1})$,
and also that
\begin{align*}
|\fg(X_{i+1})&-\fg(X_i)| + |\fg(X_i)-\fg(X_{i-1})|
= \fg(X_{i+1})+\fg(X_{i-1})-2\fg(X_i)\\
&\le \fg(X_{i+1})+\fg(X_{i-1})-2(\fg(X_{i-1})+\fg(X_{i+1}) -\fg(X'_i))\\
&= 2\fg(X_i')-\fg(X_{i+1})-\fg(X_{i-1})\\
&=|\fg(X_{i+1})-\fg(X_i')| + |\fg(X_i')-\fg(X_{i-1})|.
\end{align*}
So replacing $X_i$ by $X_i'$ does not decrease the sum $\sum_i
|\fg(X_i)-\fg(X_{i-1})|$. Repeating this exchange procedure a finite number of
times, we get a sequence $\emptyset=Y_0\subseteq Y_1\subseteq\ldots\subseteq
Y_n= J$ for which $\sum_i |\fg(Y_i)-\fg(Y_{i-1})|\ge\sum_i
|\fg(X_i)-\fg(X_{i-1})|$ and there is a $0\le j\le n$ such that
$\fg(Y_0)\le\ldots\le\fg(Y_j)\ge\fg(Y_{j+1})\ge\ldots\ge\fg(Y_n)$. For such a
sequence,
\[
\sum_{i=1}^n |\fg(Y_i)-\fg(Y_{i-1})| = 2\fg(Y_j)-\fg(J)-\fg(\emptyset).
\]
Since $\fg$ is bounded, this proves that $\fg$ has bounded variation.
\end{proof}

The argument above, combined with \eqref{EQ:ALPHA-BETA-DEF} in the proof of
Lemma \ref{LEM:BD-VAR}, gives the decomposition $\fg=\alpha+\beta$, where
$\alpha = \fg\lor 0= \fg^\ls$ and $\beta = \fg-\alpha$. Let us state the main
properties of this decomposition:

\begin{lemma}\label{LEM:VARPHI-DECOMP}
Let $\fg$ be a bounded submodular setfunction on a set-algebra with
$\fg(\emptyset)=0$. Then $\fg^\ls$ is increasing, $\beta=\fg-\fg^\ls$ is
decreasing, and both $\fg^\ls$ and $\fg-\fg^\ls$ are subadditive.
\end{lemma}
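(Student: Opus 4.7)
My plan is to verify the four claims by exploiting submodularity of $\fg$, handling the monotonicity claims first and then the subadditivity claims. The assertion that $\fg^\ls$ is increasing is immediate from the definition $\fg^\ls(X) = \sup_{Y \in \BB, Y \subseteq X} \fg(Y)$, since the class of admissible $Y$'s grows with $X$. For subadditivity of $\fg^\ls$ on disjoint $X, Y$, I would take any $A \subseteq X \cup Y$ and decompose $A = (A \cap X) \sqcup (A \cap Y)$; then submodularity of $\fg$ applied to this disjoint decomposition (using $\fg(\emptyset) = 0$) yields $\fg(A) \leq \fg(A \cap X) + \fg(A \cap Y) \leq \fg^\ls(X) + \fg^\ls(Y)$, and taking the supremum over $A$ finishes this part.

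For the claim that $\beta = \fg - \fg^\ls$ is decreasing, I would fix $X \subseteq X'$ and, for $\eps > 0$, choose $Y_0 \subseteq X$ with $\fg(Y_0) \geq \fg^\ls(X) - \eps$. Setting $Z = Y_0 \cup (X' \setminus X)$, the pair $(X, Z)$ has intersection $Y_0$ and union $X'$, so submodularity gives $\fg(X) + \fg(Z) \geq \fg(Y_0) + \fg(X')$. Since $Z \subseteq X'$ implies $\fg^\ls(X') \geq \fg(Z)$, rearranging produces $\fg^\ls(X') - \fg(X') \geq \fg(Y_0) - \fg(X) \geq \fg^\ls(X) - \fg(X) - \eps$, and letting $\eps \to 0$ yields $\beta(X') \leq \beta(X)$.

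The subtlest part, and the one I expect to be the main obstacle, is subadditivity of $\beta$: the difference of two subadditive setfunctions need not be subadditive in general, so I cannot just subtract the corresponding inequalities for $\fg$ and $\fg^\ls$. The key observation I would establish is that the ``subadditivity defect'' $\delta(X, Y) := \fg(X) + \fg(Y) - \fg(X \cup Y)$ (defined on disjoint pairs, nonnegative by submodularity) is monotone under refinement: if $B \subseteq X$ and $C \subseteq Y$ with $X \cap Y = \emptyset$, then $\delta(B, C) \leq \delta(X, Y)$. This follows from applying submodularity to the pair $(X, B \cup Y)$ (intersection $B$, union $X \cup Y$) and to the pair $(Y, B \cup C)$ (intersection $C$, union $B \cup Y$), then adding the two resulting inequalities so that the term $\fg(B \cup Y)$ cancels. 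With this lemma in hand, for disjoint $X, Y$ I would pick $B \subseteq X$ and $C \subseteq Y$ with $\fg(B), \fg(C)$ within $\eps$ of $\fg^\ls(X), \fg^\ls(Y)$ respectively, and write
\[
\fg^\ls(X) + \fg^\ls(Y) \leq \fg(B) + \fg(C) + 2\eps = \fg(B \cup C) + \delta(B, C) + 2\eps \leq \fg^\ls(X \cup Y) + \delta(X, Y) + 2\eps,
\]
which on rearrangement and $\eps \to 0$ gives $\beta(X \cup Y) \leq \beta(X) + \beta(Y)$, as required.
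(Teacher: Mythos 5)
Your proof is correct and follows essentially the same route as the paper's. All four parts use the same submodularity manipulations: the paper too establishes, for disjoint $X_1,X_2$ and $U_i\subseteq X_i$, the inequality $\fg(X_1)-\fg(U_1)+\fg(X_2)-\fg(U_2)\ge\fg(X_1\cup X_2)-\fg(U_1\cup U_2)$ (which is exactly your monotonicity-of-the-defect lemma $\delta(U_1,U_2)\le\delta(X_1,X_2)$, rearranged) and then takes the infimum over $U_1,U_2$; your packaging of this step as a stand-alone lemma about the subadditivity defect is a slightly cleaner presentation of the same algebra, and your two-application telescope around $\fg(B\cup Y)$ is marginally more economical than the paper's three applications of submodularity, but mathematically the argument is the same.
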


\begin{proof}
It is trivial that $\fg^\ls$ is increasing. Let $X_1,X_2\in\BB$, $X_1\cap
X_2=\emptyset$. Set $X=X_1\cup X_2$, and consider any $U\subseteq X$,
$U\in\BB$. Let $U_i=U\cap X_i$. Then
\[
\fg(U) \le \fg(U_1)+\fg(U_2) \le \fg^\ls(X_1)+\fg^\ls(X_2).
\]
Taking the supremum in $U$, it follows that
$\fg^\ls(X)\le\fg^\ls(X_1)+\fg^\ls(X_2)$, so $\alpha$ is subadditive.

Turning to $\beta=\fg-\fg^\ls$, let $X,Y\in\BB$ and $X\subseteq Y$. For any
$U\subseteq X$, submodularity gives that
\[
\fg(Y)-\fg^\ls(Y) \le \fg(Y)-\fg(U\cup(Y\setminus X)) \le \fg(X)-\fg(U),
\]
and taking the infimum in $U$, it follows that $\beta(Y)\le\beta(X)$.

To prove subadditivity, let $X_1,X_2\in\BB$, $X_1\cap X_2=\emptyset$ and
$U_i\subseteq X_i$, $U_i\in\BB$ ($i=1,2$). Set $X=X_1\cup X_2$ and $U=U_1\cup
U_2$. Then by submodularity,
\begin{equation}\label{EQ:FX1U2}
\fg(X_1\cup U_2) + \fg(X_2\cup U_1) \ge \fg(U) + \fg(X).
\end{equation}
Here again by submodularity,
\[
\fg(X_1\cup U_2) = \fg(X_1\cup U)\le \fg(X_1)+\fg(U)-\fg(U_1),
\]
and similarly,
\[
\fg(X_2\cup U_1) \le \fg(X_2)+\fg(U)-\fg(U_2).
\]
Combining with \eqref{EQ:FX1U2}, we get
\[
\fg(X_1)-\fg(U_1) + \fg(X_2)-\fg(U_2)\ge \fg(X)-\fg(U)\ge \beta(X).
\]
Taking the infimum of the left hand side in $U_1$ and $U_2$, we get that
$\beta(X_1)+\beta(X_2)\ge \beta(X)$.
\end{proof}

The setfunctions $\fg^\ls$ and $\fg-\fg^\ls$ in Lemma \ref{LEM:VARPHI-DECOMP}
are not submodular in general. As an example, let $S=\{a,b,c\}$, and
\[
\fg(X)=
  \begin{cases}
    0, & \text{if $X=\emptyset$ or $X=S$}, \\
    2, & \text{if $X=\{a,b\}$}, \\
    1, & \text{otherwise}.
  \end{cases}
\]
Then $\fg$ is submodular, and $\fg^\ls=\fg$ except that $\fg^\ls(J)=2$, and so
$X=\{a,c\}$ and $Y=\{b,c\}$ violate the submodular inequality. An example where
$\fg-\fg^\ls$ is not submodular is similarly easy: with $S=\{a,b,c\}$, let
\[
\fg(X)=
  \begin{cases}
    1, & \text{if $X=\{a\}$ or $X=\{b\}$ or $X=\{a,b\}$}, \\
    0, & \text{otherwise}.
  \end{cases}
\]

In the finite case, it is easy to write a submodular setfunction as the sum of
an increasing modular setfunction and a decreasing submodular setfunction: For
any sufficiently large $K>0$, $\alpha=K|X|$ is increasing and modular, while
the difference $\fg-K|X|$ is decreasing and submodular. In the general case, an
analogous construction works if there is a charge $\alpha\in\ba_+$ such that
$\fg\le\alpha$ (see Section \ref{SSEC:SBOUNDED} for a discussion of this
property). I don't know if more general (perhaps all) bounded submodular
setfunctions can be written as the sum of two submodular setfunctions, one
increasing and one decreasing.

\subsection{Weighting}

We define the {\it weighting} of a setfunction $\fg$ with bounded variation by
a function $w\in\Bd_+$ as the setfunction $(w\cdot\fg)(X)=\wh\fg(w\one_X)$.
Explicitly, assuming $\fg\ge0$,
\begin{equation}\label{EQ:WCDOT}
(w\cdot\fg)(X)=\int_0^\infty \fg(X\cap \{w\ge t\})\,dt.
\end{equation}
It is easy to see that if $\fg$ is a charge, then so is $w\cdot\fg$. Some
important properties of this operation are summarized in the next lemma.

\begin{lemma}\label{LEM:CDOT}
The weighting operation is distributive over its second factor:
\begin{equation}\label{EQ:CDOT-ADD}
w\cdot(\fg+\psi)=w\cdot \fg + w\cdot \psi.
\end{equation}
For charges, it is distributive over the first factor as well:
\begin{equation}\label{EQ:CDOT-CHARGE1}
(u+w)\cdot\alpha=u\cdot\alpha+w\cdot\alpha\qquad(\alpha\in\ba).
\end{equation}
For charges, it is associative in the following sense:
\begin{equation}\label{EQ:CDOT-PROD}
u\cdot(w\cdot\alpha)=(uw)\cdot\alpha\qquad(\alpha\in\ba).
\end{equation}
\end{lemma}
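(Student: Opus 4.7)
The plan is to dispatch parts \eqref{EQ:CDOT-ADD} and \eqref{EQ:CDOT-CHARGE1} by direct appeal to linearity, and to prove \eqref{EQ:CDOT-PROD} by extending a pointwise identity from indicator functions through step functions to all of $\Bd$.

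For \eqref{EQ:CDOT-ADD}, I would simply invoke the linearity $\widehat{\fg+\psi} = \wh\fg + \wh\psi$ from \eqref{EQ:WHFI-LIN}: evaluating at $w\one_X$ yields the distributivity over the second factor. For \eqref{EQ:CDOT-CHARGE1}, since $\alpha$ is a charge, $\wh\alpha$ is a linear functional on $\Bd$ by Example \ref{EXA:FINADD-INT}; writing $(u+w)\one_X = u\one_X + w\one_X$ and applying $\wh\alpha$ gives the identity.

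The associativity \eqref{EQ:CDOT-PROD} is where the content lies. First I would confirm the remark in the text that $w\cdot\alpha$ is itself a charge: for disjoint $X,Y \in \BB$, the relation $w\one_{X\cup Y} = w\one_X + w\one_Y$ combined with linearity of $\wh\alpha$ gives finite additivity, and $w\one_\emptyset = 0$ handles the empty set. Then the strategy is to prove the stronger identity
\[
\widehat{w\cdot\alpha}(f) = \wh\alpha(wf) \qquad (f \in \Bd),
\]
which specializes at $f = u\one_X$ to \eqref{EQ:CDOT-PROD}.

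This identity is verified in three stages. For $f = \one_Y$ both sides equal $(w\cdot\alpha)(Y) = \wh\alpha(w\one_Y)$ by definition. Both sides are linear in $f$ — the left because $w\cdot\alpha$ is a charge and hence $\widehat{w\cdot\alpha}$ is a linear functional, the right because $\wh\alpha$ is linear and multiplication by $w$ is linear — so the identity extends to all step functions. Finally, step functions are dense in $\Bd$ in sup norm (as used in the proof of Lemma \ref{LEM:T-CONT}), and both sides are sup-norm continuous in $f$ by Lemma \ref{LEM:WHPHI-CONT}, using $\|w(f-g)\| \le \|w\|\,\|f-g\|$ on the right to propagate the approximation through the extra multiplication. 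The only real bookkeeping is keeping the two continuity estimates in sync, which is routine; there is no substantive obstacle.
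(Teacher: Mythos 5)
Your proposal is correct, and it takes a genuinely different route from the paper for the associativity identity \eqref{EQ:CDOT-PROD}. The paper proves the identity for indicator functions by direct computation (getting $\alpha(X\cap Y\cap Z)$ on both sides), extends to step functions via \eqref{EQ:CDOT-ADD} and \eqref{EQ:CDOT-CHARGE1}, and then, \emph{assuming $\alpha\ge 0$}, sandwiches a general $u,w$ between step functions and uses the monotonicity of $u\mapsto u\cdot\alpha$ to squeeze the result; it finally handles signed $\alpha$ by splitting $\alpha=\alpha_+-\alpha_-$ and invoking \eqref{EQ:CDOT-ADD} once more. Your argument instead isolates the cleaner intermediate identity $\widehat{w\cdot\alpha}(f)=\wh\alpha(wf)$, propagates it from indicators to step functions by linearity of $\widehat{w\cdot\alpha}$ and $\wh\alpha$ (both are charges), and then passes to the limit using the Lipschitz estimate of Lemma \ref{LEM:WHPHI-CONT} on both sides rather than the order sandwich. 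Two advantages of your route: it handles signed charges uniformly, avoiding the Jordan decomposition step entirely, and it yields the stronger change-of-variables formula $\widehat{w\cdot\alpha}(f)=\wh\alpha(wf)$, which is conceptually the real content here (it identifies $w\cdot\alpha$ as the charge with density $w$ relative to $\alpha$). The one bookkeeping point worth stating explicitly is that linearity of $\wh\alpha$ on a set-algebra (not just a sigma-algebra) is what you are relying on; this does hold --- it is the equality case of Theorem \ref{THM:SUBMOD-UNCROSS2} applied to $\alpha$ and $-\alpha$ --- but Example \ref{EXA:FINADD-INT}, which you cite, is stated only for sigma-algebras.
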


\begin{proof}
Identity \eqref{EQ:CDOT-ADD} follows by \eqref{EQ:WCDOT} immediately. Identity
\eqref{EQ:CDOT-CHARGE1} is immediate by the additivity of $\alpha$. To prove
\eqref{EQ:CDOT-PROD}, first we assume that $\alpha\ge0$ and $u$ and $w$ are
indicator functions:
\begin{align*}
\big(\one_X\cdot(\one_Y\cdot\alpha)\big)(Z)&= \intl_Z \one_X\, d(\one_Y\cdot\alpha) = \intl_J \one_{X\cap Z}\, d(\one_Y\cdot\alpha)\\
&= (\one_Y\cdot\alpha)(X\cap Z) = \intl_{X\cap Z}\one_Y\,d\alpha = \alpha(X\cap Y\cap Z).
\end{align*}
Similarly,
\begin{align*}
\big((\one_X\one_Y)\cdot\alpha \big)(Z)&=\big(\one_{X\cap Y}\cdot\alpha \big)(Z) = \intl_Z \one_{X\cap Y}\,d\alpha)
= \alpha(X\cap Y\cap Z).
\end{align*}
So \eqref{EQ:CDOT-PROD} holds true for indicator functions. Hence by the
distributivity properties \eqref{EQ:CDOT-CHARGE1} and \eqref{EQ:CDOT-ADD}, we
obtain that it holds true if $u$ and $w$ are stepfunctions. Let $u,w$ be
arbitrary functions in $\Bd$ and $\eps>0$, then there are stepfunctions
$f,g\in\Bd$ such that $f\le u\le f+\eps$ and $g\le w \le g+\eps$. Then
(recalling that $\alpha\ge 0$)
\begin{align*}
u\cdot(w\cdot\alpha)&\le (f+\eps)\cdot\big((g+\eps)\cdot\alpha\big)
= f\cdot(g\cdot\alpha) + \eps(g\cdot\alpha) + \eps (f\cdot\alpha) + \eps^2\alpha \\
&= (fg)\cdot\alpha) + \eps(g\cdot\alpha) + \eps (f\cdot\alpha) + \eps^2\alpha\\
&\le (uw)\cdot\alpha) + \eps(w\cdot\alpha) + \eps (u\cdot\alpha) + \eps^2\alpha,
\end{align*}
and by a similar argument
\[
u\cdot(w\cdot\alpha)\ge (uw)\cdot\alpha - \eps(w\cdot\alpha) - \eps (u\cdot\alpha).
\]
Letting $\eps\to 0$ we obtain \eqref{EQ:CDOT-PROD}. Finally, if $\alpha$ is an
arbitrary signed charge, then the identity follows by applying the special case
proved above to $\alpha_+$ and $\alpha_-$, and using the distributivity
\eqref{EQ:CDOT-ADD} once more.
\end{proof}

While identities \eqref{EQ:CDOT-ADD} and \eqref{EQ:CDOT-CHARGE1} do not hold in
general if $\alpha$ is an arbitrary submodular setfunction, we do have the
following simple fact about those:

\begin{lemma}\label{LEM:WEIGHTING}
Let $\fg$ be a bounded submodular setfunction on a set-algebra $(J,\BB)$ with
$\fg(\emptyset)=0$, and let $w\in\Bd_+(\BB)$. Then $w\cdot \fg$ is submodular.
\end{lemma}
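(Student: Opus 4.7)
The plan is to prove submodularity first for a dense class of ``nice'' weight functions $w$, and then obtain the general case by uniform approximation, using the Lipschitz continuity of $\wh\fg$ (Lemma \ref{LEM:WHPHI-CONT}), which applies since $\fg$ has bounded variation by Lemma \ref{LEM:SUBMOD-FINVAR}. Note that for any $X\in\BB$ the function $w\one_X$ lies in $\Bd_+(\BB)$, so $(w\cdot\fg)(X)=\wh\fg(w\one_X)$ is well defined via \eqref{EQ:CHOQ-DEF}.

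First I would handle the case when $w$ is a nested nonnegative step function in $\Bd_+(\BB)$, i.e.\ $w=\sum_{j=1}^k c_j\one_{C_j}$ with $c_j>0$ and $C_1\supseteq C_2\supseteq\dots\supseteq C_k$ in $\BB$. For such a $w$, and any $X\in\BB$, the product $w\one_X=\sum_j c_j\one_{C_j\cap X}$ is again a nested step function, and its level sets are $\{w\one_X\ge t\}=C_j\cap X$ on suitable intervals of $t$ (cf.\ Example \ref{EXA:LOV-EXT1}); so the integral formula \eqref{EQ:CHOQ-DEF} gives
\[
(w\cdot\fg)(X)=\sum_{j=1}^k c_j\,\fg(C_j\cap X).
\]
Submodularity now reduces to a pointwise check: for each fixed $C_j\in\BB$, the setfunction $X\mapsto\fg(C_j\cap X)$ is submodular because $C_j\cap(X\cup Y)=(C_j\cap X)\cup(C_j\cap Y)$ and $C_j\cap(X\cap Y)=(C_j\cap X)\cap(C_j\cap Y)$, so that submodularity of $\fg$ applied to $C_j\cap X$ and $C_j\cap Y$ yields the desired inequality. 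Multiplying by $c_j\ge 0$ and summing preserves submodularity.

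Next I would pass from nested step functions to arbitrary $w\in\Bd_+(\BB)$ by uniform approximation. Since $w$ is bounded and $\BB$-measurable, for every $n\ge 1$ there is a $\BB$-simple function that approximates $w$ to within $1/n$ in supremum norm; by collecting terms (as in Example \ref{EXA:LOV-EXT1}) this simple function can be rewritten in the nested form $w_n=\sum_j c_j^{(n)}\one_{C_j^{(n)}}$ with $C_1^{(n)}\supseteq C_2^{(n)}\supseteq\dots$ in $\BB$ and $c_j^{(n)}>0$. Then $\|w\one_X-w_n\one_X\|\le 1/n$ for every $X\in\BB$, and Lemma \ref{LEM:WHPHI-CONT} gives
\[
\bigl|(w\cdot\fg)(X)-(w_n\cdot\fg)(X)\bigr|=\bigl|\wh\fg(w\one_X)-\wh\fg(w_n\one_X)\bigr|\le \var(\fg)/n,
\]
so $w_n\cdot\fg\to w\cdot\fg$ uniformly on $\BB$. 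Since each $w_n\cdot\fg$ is submodular by the previous step, and the submodular inequality is preserved under pointwise limits, the limit $w\cdot\fg$ is submodular.

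I do not expect a serious obstacle: the only mild point is verifying the closed-form expression $(w\cdot\fg)(X)=\sum_j c_j\fg(C_j\cap X)$ for nested step functions, which follows directly from the Choquet integral formula (no linearity of $\wh\fg$ is needed, thanks to the nested structure). Everything else is approximation and continuity.
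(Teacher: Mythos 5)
Your proof is correct, but it takes a more roundabout route than the paper. The paper works directly with the integral formula \eqref{EQ:WCDOT}: writing $(w\cdot\fg)(X)=\int_0^\infty\fg(X\cap\{w\ge t\})\,dt$, it applies the submodular inequality to the sets $X\cap\{w\ge t\}$ and $Y\cap\{w\ge t\}$ \emph{inside the integrand} for each fixed $t$ (observing that $(X\cap\{w\ge t\})\cup(Y\cap\{w\ge t\})=(X\cup Y)\cap\{w\ge t\}$ and similarly for intersection), then integrates. That's the whole proof — three lines. Your argument discretizes the same idea: for a nested step weight $w=\sum_j c_j\one_{C_j}$ the integral collapses to the finite sum $\sum_j c_j\fg(C_j\cap X)$, each summand $X\mapsto\fg(C_j\cap X)$ is submodular by the identical set-theoretic observation, and the general case follows by uniform approximation plus the Lipschitz bound of Lemma \ref{LEM:WHPHI-CONT} and the fact that submodularity is preserved under pointwise limits (Lemma \ref{LEM:SETS-CLOSED}). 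So the two proofs rest on the same observation; yours packages it as ``pointwise for finitely many levels, then approximate,'' while the paper's applies it to all levels simultaneously under the integral sign. The direct proof is tighter and avoids having to establish the uniform approximation of an arbitrary $w\in\Bd_+(\BB)$ by nested $\BB$-step functions (which is true, but on a general set-algebra requires a small argument to re-nest the approximating sets that your sketch elides). Nothing is wrong with your proposal, but the detour through approximation buys you nothing here.
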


\begin{proof}
Using \eqref{EQ:WCDOT},
\begin{align*}
(w\cdot\fg)(X) &+ (w\cdot\fg)(Y) = \int_0^\infty \fg(X\cap \{w\ge t\})+ \fg(Y\cap \{w\ge t\})\,dt\\
&\ge \int_0^\infty \fg((X\cup Y)\cap \{w\ge t\})+ \fg((X\cap Y)\cap \{w\ge t\})\,dt\\
&= (w\cdot\fg)(X\cup Y) + (w\cdot\fg)(X\cap Y).
\end{align*}
\end{proof}

\section{Generalizing some matroid constructions}

Matroid theory has a rich supply of operations on matroids, which lead to new
matroids. Most of the time, formulas for the results of these operations are
nontrivial and have important applications. In this section, we describe
generalizations of three of these basic matroid operations: Strong maps,
Dilworth truncations, and linear extensions.

\subsection{Diverging pairs and strong maps}\label{SSEC:DIVERGE}

The minimum of two submodular setfunctions, or even the minimum of two signed
measures, is not submodular in general. The measures $\fg(X)=\one(0\in
X)-\one(1\in X)$ and $\psi\equiv 0$ provide a counterexample. But we do have
such a conclusion under a monotonicity hypothesis. Let us say that a pair of
setfunctions $(\fg,\psi)$, defined on a set-algebra $(J,\BB)$, is a {\it
diverging pair}, is $\fg-\psi$ is an increasing setfunction. If
$\fg(\emptyset)=\psi(\emptyset)=0$, then this implies that $\psi\le\fg$.

Some easy examples: if $\fg$ is increasing and $\psi$ is decreasing, then
$(\fg,\psi)$ is a diverging pair. If $\fg$ is increasing, then
$(\fg,\min(c,\fg))$ is a diverging pair for every constant $c$. For every
increasing submodular setfunction $\fg$ and $A\in\BB$, the pair $(\fg,\fg_A)$
is diverging; this follows easily by submodularity.

A simple but important property of diverging pairs is the following.

\begin{lemma}\label{LEM:MON-DIFF}
If $(\fg,\psi)$ is a diverging pair of submodular setfunctions, then
$\min(\fg,\psi)$ is submodular.
\end{lemma}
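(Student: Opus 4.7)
The plan is a case analysis on which of $\fg,\psi$ attains the minimum at $S$ and at $T$. Write $\eta=\min(\fg,\psi)$ and set $h=\fg-\psi$, which is increasing by hypothesis. We want to check
\[
\eta(S)+\eta(T)\ge \eta(S\cup T)+\eta(S\cap T)
\]
for all $S,T\in\BB$.

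In the two ``aligned'' cases, where $\eta(S)=\fg(S),\,\eta(T)=\fg(T)$ (or the symmetric one for $\psi$), the inequality is immediate: submodularity of $\fg$ gives $\fg(S)+\fg(T)\ge\fg(S\cup T)+\fg(S\cap T)$, and $\eta\le\fg$ pointwise finishes it. The real work is the ``mixed'' case, where (say) $\eta(S)=\fg(S)$ and $\eta(T)=\psi(T)$, i.e.\ $h(S)\le 0\le h(T)$.

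Here the first step is to identify the minimum on $S\cap T$ and $S\cup T$ using monotonicity of $h$. Since $h$ is increasing and $h(S)\le 0$, we have $h(A)\le 0$ for every $A\subseteq S$; applied to $A=S\cap T$ this gives $\eta(S\cap T)=\fg(S\cap T)$. Symmetrically, $h(T)\ge 0$ and $B\supseteq T$ yields $h(B)\ge 0$; applied to $B=S\cup T$ this gives $\eta(S\cup T)=\psi(S\cup T)$. So the inequality to prove reduces to
\[
\fg(S)+\psi(T)\ge \psi(S\cup T)+\fg(S\cap T).
\]

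The second step is to substitute $\fg=\psi+h$ and regroup:
\[
\fg(S)+\psi(T)-\psi(S\cup T)-\fg(S\cap T)
=\bigl[\psi(S)+\psi(T)-\psi(S\cup T)-\psi(S\cap T)\bigr]+\bigl[h(S)-h(S\cap T)\bigr].
\]
The first bracket is nonnegative by submodularity of $\psi$, and the second is nonnegative by monotonicity of $h$ (since $S\cap T\subseteq S$). This closes the mixed case, and the symmetric subcase is identical.

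No step looks genuinely hard; the only subtlety is recognizing that the diverging hypothesis does two jobs at once: it locates which of $\fg,\psi$ realizes $\eta$ on $S\cap T$ and on $S\cup T$, and then supplies the extra nonnegative term $h(S)-h(S\cap T)$ needed to absorb the ``mismatch'' between using $\fg$ at $S$ and $\psi$ at $T$. Without monotonicity of $\fg-\psi$, both of these steps fail, consistent with the counterexample mentioned just before the lemma.
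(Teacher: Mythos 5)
Your proof is correct and follows essentially the same route as the paper's: the key algebraic move in the mixed case is to write $\fg = \psi + h$ and combine submodularity of $\psi$ with $h(S) - h(S\cap T)\ge 0$. The only cosmetic difference is that you first identify exactly which branch of the $\min$ is active at $S\cap T$ and $S\cup T$, whereas the paper simply bounds $\min(\fg,\psi)(S\cup T)\le\psi(S\cup T)$ and $\min(\fg,\psi)(S\cap T)\le\fg(S\cap T)$ without needing equality there.
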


\begin{proof}
Let $\alpha=\min(\fg,\psi)$ and $X,Y\in\BB$. If $\alpha(X)=\fg(X)$ and
$\alpha(Y)=\fg(Y)$, then
\[
\alpha(X\cup Y)+\alpha(X\cap Y) \le \fg(X\cup Y)+\fg(X\cap Y)
\le \fg(X)+\fg(Y) = \alpha(X) + \alpha(Y).
\]
The case when $\alpha(X)=\psi(X)$ and $\alpha(Y)=\psi(Y)$ follows similarly.
Finally, if (say) $\alpha(X)=\fg(X)$ but $\alpha(Y)=\psi(Y)$, then
\begin{align*}
\alpha(X&\cup Y)+\alpha(X\cap Y) \le \psi(X\cup Y)+\fg(X\cap Y)\\
&= \psi(X\cup Y)+\psi(X\cap Y) + [\fg(X\cap Y)-\psi(X\cap Y)]\\
&\le  \psi(X)+\psi(Y) + \fg(X)-\psi(X) = \fg(X)+\psi(Y) = \alpha(X)+\alpha(Y).
\end{align*}
So $\alpha$ is submodular.
\end{proof}

Diverging pairs of increasing submodular setfunctions have a reasonably simple
characterization, generalizing analogous results from matroid theory.
Informally, they can be constructed by extending the first setfunction to a
larger set, and then projecting back to the original set.

\begin{lemma}\label{LEM:DIVERGING}
Let $\fg_1$ and $\fg_2$ be increasing submodular setfunctions on $(J,\BB)$.
Then $(\fg_1,\fg_2)$ is a diverging pair if and only if here is a set-algebra
$(I,\AA)$ and an increasing submodular setfunction $\psi$ on $\AA$ such that
$J\subseteq I$, $\BB=\AA|_J$, $\fg_1(X)=\psi_J(X)$ and $\fg_2(X)=\psi^J(X)$.
\end{lemma}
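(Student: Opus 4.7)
The plan splits into a short verification (the ``if'' direction) and an explicit construction (the ``only if'' direction).

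For the ``if'' direction, assume $\psi$ is given on $(I,\AA)$ with $\fg_1=\psi_J$ and $\fg_2=\psi^J$. Only the diverging condition needs checking. For $X\subseteq Y$ in $\BB$, I would apply submodularity of $\psi$ to $A=Y$ and $B=(I\setminus J)\cup X$. A direct computation gives $A\cup B=(I\setminus J)\cup Y$ and $A\cap B=X$, so
\[
\psi(Y)-\psi(X)\;\ge\;\psi((I\setminus J)\cup Y)-\psi((I\setminus J)\cup X),
\]
which, after subtracting the constant $\psi(I\setminus J)$ on the right, reads $(\fg_1-\fg_2)(Y)\ge(\fg_1-\fg_2)(X)$.

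For the ``only if'' direction, I will extend by adjoining a single new element rather than a disjoint copy of $J$. Normalize so that $\fg_1(\emptyset)=\fg_2(\emptyset)=0$, pick $*\notin J$, set $I=J\cup\{*\}$, and let $\AA=\BB\cup\{X\cup\{*\}:X\in\BB\}$; a short check shows that $\AA$ is a set-algebra on $I$ with $\AA|_J=\BB$. Fix a constant $c\ge\|\fg_1\|$ and define
\[
\psi(X)=\fg_1(X),\qquad \psi(X\cup\{*\})=c+\fg_2(X)\quad(X\in\BB).
\]
Then $\psi_J=\fg_1$ by inspection, and $\psi^J(X)=\psi(\{*\}\cup X)-\psi(\{*\})=\fg_2(X)$. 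Monotonicity of $\psi$ reduces to three cases; the only one that imposes a nontrivial constraint is $X\subseteq Y\cup\{*\}$ with $X,Y\in\BB$, which requires $\fg_1(X)\le c+\fg_2(Y)$ and is secured by the choice of $c$ together with $\fg_2(Y)\ge 0$.

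The key step is the submodular check in the mixed case $A=X\in\BB$, $B=Y\cup\{*\}$. Here $A\cup B=X\cup Y\cup\{*\}$ and $A\cap B=X\cap Y$, and after cancelling $c$ the submodular inequality becomes
\[
\fg_1(X)-\fg_1(X\cap Y)\;\ge\;\fg_2(X\cup Y)-\fg_2(Y).
\]
I would prove this by chaining two inequalities: the diverging hypothesis applied to $X\cap Y\subseteq X$ yields $\fg_1(X)-\fg_1(X\cap Y)\ge\fg_2(X)-\fg_2(X\cap Y)$, and submodularity of $\fg_2$ gives $\fg_2(X)-\fg_2(X\cap Y)\ge\fg_2(X\cup Y)-\fg_2(Y)$; composition yields the required inequality. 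The remaining submodular cases ($A,B\in\BB$ and $A,B$ both $*$-containing) reduce directly to submodularity of $\fg_1$ and $\fg_2$ respectively.

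The main obstacle is designing the extension correctly. A naive symmetric attempt, namely adjoining a disjoint copy $J'$ of $J$ and setting $\psi(X\cup Y')=\fg_1(X)+\fg_2(Y)$, recovers $\psi_J=\fg_1$ but gives $\psi^J=\fg_1$ (not $\fg_2$) upon contraction. The asymmetric single-point extension succeeds because the added value $\psi(\{*\})=c$ cancels in the contraction while the shift by $c$ simultaneously absorbs the monotonicity constraint at the interface between $\BB$ and the $*$-containing sets.
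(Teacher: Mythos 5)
Your proof is correct and uses the same construction as the paper: adjoin a single new point $*$, define $\psi$ to agree with $\fg_1$ on $\BB$ and with $\fg_2$ plus a constant on $*$-containing sets, and verify the mixed-case submodular inequality $\fg_1(X)+\fg_2(Y)\ge\fg_1(X\cap Y)+\fg_2(X\cup Y)$ by chaining the diverging hypothesis with one submodularity inequality. The only cosmetic differences are that the paper takes the tight offset $\fg_1(J)-\fg_2(J)$ (which makes monotonicity checkable via Lemma~\ref{LEM:SUBMOD-MON} as $\psi(I)\ge\psi(X)$), whereas you take any $c\ge\|\fg_1\|$ and check monotonicity directly, and that the paper's chain for the mixed case uses submodularity of $\fg_1$ followed by the diverging property while yours uses the diverging property followed by submodularity of $\fg_2$ --- a dual but equally valid route.
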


The proof will show that we could require that $I\setminus J$ is a singleton.
Furthermore, if $(J,\BB)$ is a sigma-algebra, then $(I,\AA)$ can be required to
be a sigma-algebra.

\begin{proof}
({\it The ``if'' part.}) Let $X,Y\in\BB$ with $Y\subseteq X$, then by
submodularity,
\[
\psi(X)+\psi(Y\cup K) \ge \psi(X\cup K) + \psi(Y),
\]
and hence
\begin{align*}
\fg_1(X)-\fg_2(X)&= \psi(X)-\psi(X\cup K)-\psi(K) -\fg_2(\emptyset)\\
&\ge \psi(Y)-\psi(Y\cup K)-\psi(K)-\fg_2(\emptyset)\\
&= \fg_1(Y)-\fg_2(Y).
\end{align*}
So $(\fg_1,\fg_2)$ is a diverging pair.

\medskip

({\it The ``only if'' part.}) Assume that $(\fg_1,\fg_2)$ is a diverging pair.
Create a new element $a$, and a new set-algebra $(I,\AA)$ by
\[
I=J\cup\{a\} \et \AA=\BB\cup\big\{U\cup\{a\}:~U\in\BB\big\}.
\]
Define a setfunction $\psi$ on $(I,\AA)$ by
\[
\psi(X)=
  \begin{cases}
    \fg_1(X), & \text{if $a\notin X$}, \\
    \fg_2(X\setminus a)+\fg_1(J)-\fg_2(J), & \text{if $a\in X$}.
  \end{cases}
\]
Note that $\psi(I)=\fg_1(J)$.

We claim that $\psi$ is submodular: for $X,Y\in\AA$, we have
\begin{equation}\label{EQ:PSI-SUBMOD}
\psi(X)+\psi(Y)\ge \psi(X\cup Y)+\psi(X\cap Y).
\end{equation}
This is trivial if $a\notin X$ and $a\notin Y$ (by the submodularity of
$\fg_1$) and also if $a\in X$ and $a\in Y$ (by the submodularity of $\fg_2$).
Assume that (say) $a\notin X$ but $a\in Y$, which means that $Y=Z\cup \{a\}$
for a set $Z\in\BB$. Then \eqref{EQ:PSI-SUBMOD} is equivalent to
\begin{equation}\label{EQ:PSI-CROSSMOD}
\fg_1(X)+\fg_2(Z) \ge \fg_1(X\cap Z) + \fg_2(X\cup Z).
\end{equation}
By the submodularity of $\fg_1$ and the diverging property of $(\fg_1,\fg_2)$,
we have
\begin{align*}
\fg_1(X)+\fg_2(Z) &\ge  \fg_1(X\cup Z) + \fg_1(X\cap Z) - \fg_1(Z) + \fg_2(Z)\\
&\ge \fg_1(X\cup Z) + \fg_1(X\cap Z) - \fg_1(X\cup Z) + \fg_2(X\cup Z)\\
&= \fg_1(X\cap Z) + \fg_2(X\cup Z),
\end{align*}
proving \eqref{EQ:PSI-CROSSMOD} and the submodularity of $\psi$. To verify that
$\psi$ is increasing, we use lemma \ref{LEM:SUBMOD-MON}: for $X\in\BB$, we have
\[
\psi(I)-\psi(X)=\fg_1(J)-\fg_1(X)\ge 0,
\]
while if $X\in\AA\setminus\BB$, then
\[
\psi(I)-\psi(X)=\fg_1(J) -\fg_2(X\setminus a)-\fg_1(J)+\fg_2(J) = \fg_2(J)-\fg_2(X\setminus a)\ge 0.
\]
The expression for $\fg_1$ and $\fg_2$ in terms of $\psi$ follow by the
definition of $\psi$.

\end{proof}

\begin{remark}\label{REM:S-MAP}
Diverging pairs play an important role in matroid theory. Consider two matroids
$(E,r_1)$ and $(E,r_2)$ on the same underlying set. If $r_1-r_2$ is increasing,
then we say that the identity map $\id_E$, considered as a map from $(E,r_1)$
to $(E,r_2)$ is a {\it strong map} (see e.g.~\cite{Recski}). If we only assume
that $r_1\ge r_2$, then we call $\id_E$ a {\it weak map}. If $(E,r_1)$ is
determined by a multiset of vectors in (say) $\R^d$, then projecting these
vectors to a subspace is a strong map. If $\id_E$ is a strong map, then every
closed set (flat) of $(E,r_2)$ is also closed in $(E,r_1)$. If $\id_E$ is a
weak map, then every independent se of $(E,r_2)$ is also independent in
$(E,r_1)$.

The definition can be extended to maps between matroids on possibly different
underlying sets, but this case is easily reduced to the case described above.
\end{remark}

As an application of the diverging condition, let us describe a construction,
motivated by geometry. Let $\GG$ and $\HH$ be finite families of linear
subspaces in the linear spaces $K$ and $L$, respectively. Let $A\in\GG$ and
$B\in\GG$ with $\dim(A)=\dim(B)=a$. Let us embed $K$ and $L$ into a linear
space $M$ by taking $K\oplus L$, and identifying $A$ and $B$ along a
``generic'' linear bijection $\Gamma:~A\to B$. The families $\GG$ and $\HH$ can
be considered as families of subspaces of $M$, so $\GG\cup\HH$

Due to the ``generic'' nature of $\Gamma$, for $X\in\GG$ and $Y\in\HH$, $X\cap
A$ and $Y\cap B$ will have either $0$ intersection, or they will span $A$ (or
both). Hence the subspace of $M$ spanned by $X\cup Y$ will have dimension
either $\dim(X)+\dim(Y)$, or $\dim(X\cup A)+\dim(Y\cup B)-\dim(A)$, whichever
is smaller.

This formula for generic gluing works for all increasing submodular
setfunctions. Let $\fg$ and $\psi$ be increasing setfunctions on the
set-algebras $(I,\AA)$ and $(J,\BB)$, with $\fg(\emptyset)=\psi(\emptyset)=0$.
Let $a\in I$ and $b\in J$ such that $\fg(a)=\psi(b)$. We define the {\it
splicing of $\fg$ and $\psi$ along $a$ and $b$} as the setfunction defined on
the set-algebra $(I,\AA)\oplus(J,\BB)$ by
\begin{equation}\label{EQ:SPLICING}
\sigma(X\cup Y)=\min\big(\fg(X)+\psi(Y),\,\fg(X\cup\{a\})+\psi(Y\cup\{b\})-\fg(a)\big)
\end{equation}
for $X\in\AA$ and $Y\in\BB$. It is straightforward to check that
$\sigma|_I=\fg$ and $\sigma|_J=\psi$.

\begin{lemma}\label{LEM:BLOW-UP}
If $\fg$ and $\psi$ are increasing submodular setfunctions on the set-algebras
$(I,\AA)$ and $(J,\BB)$ with $\fg(\emptyset)=\psi(\emptyset)=0$, then their
splicing defined by \eqref{EQ:SPLICING} is submodular.
\end{lemma}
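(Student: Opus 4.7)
The plan is to express $\sigma$ as the pointwise minimum of two submodular setfunctions forming a diverging pair, and then invoke Lemma~\ref{LEM:MON-DIFF}. On the direct sum algebra $(I,\AA)\oplus(J,\BB)$ I would define
\[
\fg_1(X\cup Y) = \fg(X)+\psi(Y),\qquad \fg_2(X\cup Y) = \fg(X\cup\{a\})+\psi(Y\cup\{b\})-\fg(a),
\]
so that $\sigma=\min(\fg_1,\fg_2)$ directly from \eqref{EQ:SPLICING}.

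First I would verify that both $\fg_1$ and $\fg_2$ are submodular. For $\fg_1$ this is immediate, since the two summands depend on disjoint coordinates (each being a pullback of an already submodular setfunction along a projection). For $\fg_2$, Lemma~\ref{LEM:SUBMOD-TRIV}(c) gives that $X\mapsto\fg(X\cup\{a\})$ is submodular on $\AA$ and $Y\mapsto\psi(Y\cup\{b\})$ is submodular on $\BB$; a short direct computation using $(X_1\cup\{a\})\cup(X_2\cup\{a\})=(X_1\cup X_2)\cup\{a\}$ (and the analogous intersection identity) then transfers submodularity to the direct sum, the constant $-\fg(a)$ being harmless.

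The substantive step is to check that $(\fg_1,\fg_2)$ is a diverging pair, i.e.\ that $\fg_1-\fg_2$ is increasing. Writing
\[
(\fg_1-\fg_2)(X\cup Y) = -\bigl[\fg(X\cup\{a\})-\fg(X)\bigr] - \bigl[\psi(Y\cup\{b\})-\psi(Y)\bigr] + \fg(a),
\]
I recall from Section~\ref{SSEC:SUBMOD-DEF} that submodularity of $\fg$ is equivalent to saying that $X\mapsto\fg(X\cup\{a\})-\fg(X)$ is a decreasing function of $X$, so its negation is increasing in $X$; likewise for $\psi$ in $Y$. Since any containment $X_1\cup Y_1\subseteq X_2\cup Y_2$ in the direct sum forces $X_1\subseteq X_2$ and $Y_1\subseteq Y_2$ separately, $\fg_1-\fg_2$ is increasing on the whole algebra.

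With these three facts, Lemma~\ref{LEM:MON-DIFF} applied to the diverging pair $(\fg_1,\fg_2)$ of submodular setfunctions yields that $\sigma=\min(\fg_1,\fg_2)$ is submodular. I do not foresee any real obstacle; the only conceptual point is spotting the right decomposition, after which every step reduces to a lemma already established. Incidentally, the hypothesis $\fg(a)=\psi(b)$ plays no role in the submodularity argument; it is used only to check the claimed restrictions $\sigma|_I=\fg$ and $\sigma|_J=\psi$.
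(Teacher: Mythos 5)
Your proof is correct and follows essentially the same route as the paper's: decompose $\sigma$ as the pointwise minimum of two submodular setfunctions on $\AA\oplus\BB$ and invoke Lemma~\ref{LEM:MON-DIFF}. One small remark: the paper asserts that $\beta-\alpha$ (in your notation, $\fg_2-\fg_1$) is increasing, whereas your computation correctly shows the opposite difference $\fg_1-\fg_2$ is increasing; this sign slip in the paper is immaterial because $\min$ is symmetric, so Lemma~\ref{LEM:MON-DIFF} applies either way. Your closing observation that the hypothesis $\fg(a)=\psi(b)$ is used only to make $\sigma$ restrict correctly to $I$ and $J$, and not for submodularity, is also accurate.
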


\begin{proof}
The setfunctions
\[
\alpha(Z)=\fg(Z\cap I)+\fg(Z\cap J)\quad\text{and}\quad \beta(Z)=\phi((Z\cap I)\cup A)+\psi((Z\cap J)\cup B)-\fg(A)
\]
are submodular on $\AA\oplus\BB$, and $\beta-\alpha$ is increasing. Hence
$\min(\alpha,\beta)$ is submodular by Lemma \ref{LEM:MON-DIFF}.
\end{proof}

Two special cases of splicing are worth mentioning. Let $(I,\AA)$ be a
set-algebra, $A\in\AA$ and $\fg$ an increasing submodular setfunction on
$(I,\AA)$ with $\fg(\emptyset)=0$.

\smallskip

$\bullet$ Let $(J,\BB)$ be a one-point set-algebra, $J=\{b\}$, and let
$\psi(\emptyset)=0$ and $\psi(J)=\fg(A)$. Then $(I,\AA)\oplus(J,\BB)$ endowed
with $\sigma$ can be thought of representing the subset $A$ by a new node $b$.
If $A=\{a\}$ is a singleton, them $a$ and $b$ are ``twins''. If $A=I$, then $b$
is a new ``container'' element, i.e., a point such that
$\sigma(X)=\sigma(\{b\})$ if $b\in X$ and $\sigma(X)\le\sigma(\{b\})$ if
$b\notin X$.

\smallskip

$\bullet$ Let $A=\{a\}$ be a singleton set, $J=B=\{b,c\}$, and
$\psi(\{b\})=\psi(\{b\})=\fg(A)$. In $((I,\AA)\oplus(J,\BB),\sigma)$, $c$ can
be thought of as a new ``generic'' point below $a$ with $\sigma(c)=\psi(c)$.
This means that adding $c$ to a set $X\in\AA$ increases $\sigma(X)$ by as much
as possible, namely by $\min(\psi(c),\fg(X\cup A)-\fg(X)$.

\subsection{Positive part and Dilworth truncation}

For a setfunction $\fg:~\BB\to\R$ defined on a set-algebra $(J,\BB)$, and for
$U\in\BB$, define
\begin{equation}\label{EQ:FGSTAR}
\fg_{\circ}(U)=\inf_\XX \sum_{X\in\XX} |\fg(X)|_+,
\end{equation}
where $\XX$ ranges over all partitions of $U$ into a finite number of sets in
$\BB$. We define $\fg_{\circ}(\emptyset)=|\fg(\emptyset)|_+$ (note that we have
not assumed that $\fg(\emptyset)=0$). Clearly $\fg_{\circ}\ge0$, and
$\fg_{\circ}(U)\le|\fg(U)|_+$. If $\fg$ is increasing, then so is
$\fg_{\circ}$. We may allow in $\XX$ the empty set, or not, without changing
$\fg_{\circ}$.

\begin{lemma}\label{LEM:DILW}
If $\fg$ is an increasing submodular setfunction, then $\fg_{\circ}$ is
submodular.
\end{lemma}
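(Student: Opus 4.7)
The plan is to prove $\fg_\circ(A\cap B)+\fg_\circ(A\cup B)\le \fg_\circ(A)+\fg_\circ(B)$ for every $A,B\in\BB$ by an $\eps$-approximation. Fix $\eps>0$ and choose partitions $\XX=\{X_1,\dots,X_m\}$ of $A$ and $\YY=\{Y_1,\dots,Y_n\}$ of $B$ in $\BB$ with $\sum_i|\fg(X_i)|_+\le\fg_\circ(A)+\eps$ and $\sum_k|\fg(Y_k)|_+\le\fg_\circ(B)+\eps$. It suffices to construct partitions $\WW$ of $A\cup B$ and $\ZZ$ of $A\cap B$ in $\BB$ satisfying
\[
\sum_{W\in\WW}|\fg(W)|_++\sum_{Z\in\ZZ}|\fg(Z)|_+\le \sum_i|\fg(X_i)|_++\sum_k|\fg(Y_k)|_+,
\]
since then $\fg_\circ(A\cup B)+\fg_\circ(A\cap B)\le \fg_\circ(A)+\fg_\circ(B)+2\eps$ and we let $\eps\to 0$.

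The main tool is iterated submodularity. Starting from $\emptyset$ and successively taking unions with $X_1,\dots,X_m,Y_1,\dots,Y_n$ in this order, I apply $\fg(U\cup S)+\fg(U\cap S)\le\fg(U)+\fg(S)$ at each stage. Because the sets within $\XX$ are pairwise disjoint, the intersection terms collapse to $\fg(\emptyset)$ while we add the $X_i$'s; because the sets within $\YY$ are pairwise disjoint, each intersection term equals $\fg(A\cap Y_k)$ while we add $Y_k$. Telescoping yields the signed chain inequality
\[
\fg(A\cup B)+\sum_{k=1}^n\fg(A\cap Y_k)\le \sum_{i=1}^m\fg(X_i)+\sum_{k=1}^n\fg(Y_k)-(m-1)\fg(\emptyset).
\]

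The natural candidates are $\WW=\{A\cup B\}$ and $\ZZ=\{A\cap Y_k:k=1,\dots,n\}$ (a partition of $A\cap B$ because $\YY$ partitions $B$). When $\fg(\emptyset)\ge 0$, monotonicity forces $\fg\ge 0$, so $|\fg(\cdot)|_+=\fg(\cdot)$; combined with $-(m-1)\fg(\emptyset)\le 0$, the chain inequality gives the required positive-part bound immediately. For $\fg(\emptyset)<0$ the correction term is positive, so one must exploit the freedom to include ``neutral'' pieces (sets $S\in\BB$ with $\fg(S)\le 0$, which contribute $0$ to $\sum|\fg|_+$) in $\WW$ and $\ZZ$: for each $Y_k$, one adaptively chooses either to split it, placing $Y_k\setminus A$ in $\WW$ and $Y_k\cap A$ in $\ZZ$, or to keep $Y_k$ whole inside $\WW$ and let $Y_k\cap A$ serve as a neutral covering piece of $\ZZ$, the choice depending on the sign pattern of $\fg$ on these sets; an analogous decision is made for the $X_i$'s in $\WW$.

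The main obstacle is precisely this last step: the signed chain inequality does not automatically yield a positive-part inequality, because the operation $|\cdot|_+$ on a sum is larger than the positive part of the sum whenever individual terms have opposite signs. The heart of the proof is therefore the case analysis, on the sign pattern of $\fg$ on the intersections $A\cap Y_k$, on the differences $Y_k\setminus A$ and $X_i\setminus B$, and on $\fg(\emptyset)$ itself, that selects a compatible adaptive refinement of $\WW$ and $\ZZ$ so that the positive-part bookkeeping still closes.
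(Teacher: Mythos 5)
Your telescoping chain is correct as a signed inequality, and it does dispose of the easy case $\fg(\emptyset)\ge 0$ (though the paper handles that case even faster: subadditivity then gives $\fg_\circ=\fg$ outright). But in the substantive case $\fg(\emptyset)<0$ your chain inequality carries the positive error term $-(m-1)\fg(\emptyset)=(m-1)|\fg(\emptyset)|$, which grows with the number of parts of $\XX$, and passing from $\fg$ to $|\fg|_+$ only widens the gap. The paragraph where you promise an ``adaptive refinement'' driven by a case analysis on the signs of $\fg(A\cap Y_k)$, $\fg(Y_k\setminus A)$, $\fg(X_i\setminus B)$ is precisely where the proof has to happen, and it is not carried out: as written the argument stops short of the inequality it needs. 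It is also doubtful the plan works in this form. If $A\cup B$ stays a single block of $\WW$, then $\fg_\circ(A\cup B)$ is only controlled through $\fg(A\cup B)$, which the chain does not bound by $\sum_i|\fg(X_i)|_++\sum_k|\fg(Y_k)|_+$ on its own; and once you allow $\WW$ to refine $A\cup B$, the telescope ceases to be the relevant estimate, because the $\fg$-values of different pieces of $A\cup B$ must be traded off against those of $\ZZ$, which a single chain cannot do.

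The missing idea is a genuine uncrossing argument, not a chain. Pass to the atoms of the finite algebra generated by $\XX\cup\YY$ and split off from each part every atom on which $\fg<0$; because $\fg$ is increasing, this can only decrease the positive-part sums, and on what remains $\fg$ is nonnegative. Then collect the surviving parts into a multiset covering $U\cap V$ twice and $(U\cup V)\setminus(U\cap V)$ once, and repeatedly replace a crossing pair $Z_1,Z_2$ by $Z_1\cup Z_2$ and $Z_1\cap Z_2$: this preserves the covering multiplicities, does not increase $\sum_Z\fg(Z)$ by submodularity, and terminates because $\sum_Z|Z|^2$ strictly increases. The terminal laminar multiset separates into its maximal sets, which partition $U\cup V$, and the rest, which partition $U\cap V$; reattaching the split-off negative atoms produces the partitions certifying $\fg_\circ(U\cup V)+\fg_\circ(U\cap V)\le\fg_\circ(U)+\fg_\circ(V)+2\eps$. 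Your approach produces neither the laminar structure nor the isolation of the negative atoms, and both are essential to close the positive-part bookkeeping.
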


\begin{proof}
If $\fg(\emptyset)\ge 0$, then $\fg$ is subadditive, and hence
$\fg_{\circ}=\fg$. So we may assume that $\fg(\emptyset)<0$. Then
$\fg_{\circ}(\emptyset)=0$.

Let $U,V\in\BB$, $\eps>0$, and let $\XX$ and $\YY$ be finite partitions of $U$
and $V$, respectively, into nonempty sets in $\BB$ such that
\[
\sum_{X\in\XX} |\fg(X)|_+ < \fg_{\circ}(U)+\eps,\et  \sum_{Y\in\YY} |\fg(Y)|_+ < \fg_{\circ}(V)+\eps.
\]
Let $\RR$ be the (finite) set-algebra generated by $\XX\cup\YY$, let $\AA$ be
the set of its atoms, and let
\[
\AA_0=\{X\in\AA:~\phi(X)<0\}, \et \AA_1=\{X\in\AA:~\phi(X)\ge 0\}.
\]
Let $J_i=\cup\AA_i$ and $\RR_1=\RR|_{J_1}$. For $X\in\RR$, let $X'=X\cap J_1$.

If an atom $A\in\AA_0$ is a proper subset of $X\in\XX$, then splitting $X$ into
$A$ and $X\setminus A$ can only decrease the sum $\sum_{X\in\XX} |\fg(X)|_+$.
So we may assume that every $X\in\XX$ is either an atom in $\AA_0$, or a union
of atoms in $\AA_1$, and similarly for $\YY$.

On the set-algebra $\RR_1$, the setfunction $\fg$ is nonnegative on nonempty
sets.
Let $\XX'=\XX\cap\RR_1$, $\YY'=\YY\cap\RR_1$, $U'=\cup\XX'$ and $V'=\cup\YY'$.
Let $\ZZ_0=\XX'\cup\YY'$, considered as a multiset. Then $\ZZ_0$ covers every
point in $U'\cap V'$ exactly twice, and every other point in $U'\cup V'$
exactly once, and
\[
\fg_{\circ}(U)+\fg_{\circ}(V) \ge \sum_{Z\in\ZZ_0} \fg(Z)-2\eps.
\]
Let $Z_1$ and $Z_2$ be two sets in $\ZZ_0$ such that $Z_1\not\subseteq Z_2$,
$Z_2\not\subseteq Z_1$, and $Z_1\cap Z_2\not=\emptyset$. Deleting $Z_1$ and
$Z_2$ from $\ZZ_0$, but adding $Z_1\cup Z_2$ and $Z_1\cap Z_2$, we get a
multiset $\ZZ_1$ of nonempty sets in $\RR_1$, which covers every point in
$U'\cap V'$ exactly twice, and every other point in $U'\cup V'$ exactly once.
By submodularity, we have $\sum_{Z\in\ZZ_1} \fg(Z)\le \sum_{Z\in\ZZ_0} \fg(Z)$.

We repeat this procedure, to get a sequence of multisets $\ZZ_1,\ZZ_2,\ldots$.
In a finite number of steps we must get stuck, since $\RR_1$ is finite, each
set occurs at most twice in $\ZZ_i$, and $\sum_{Z\in\ZZ_i}|Z|^2$ is increasing
all the time. So we end up with a multiset $\ZZ_n$ of nonempty sets in $\RR_1$,
in which every pair of sets $Z_1,Z_2\in\ZZ_n$ is either disjoint or comparable,
$\ZZ_n$ covers every point in $U'\cap V'$ exactly twice, and every other point
in $U'\cup V'$ exactly once, and
\[
\fg_{\circ}(U)+\fg_{\circ}(V) \ge \sum_{Z\in\ZZ_n} \fg(Z)-2\eps.
\]
Let $\ZZ_n'$ be the family of maximal sets in $\ZZ_n$, and $\ZZ_n''$, the rest.
(If a set occurs twice in $\ZZ_n$, then we put one copy in $\ZZ_n'$, and one
copy in $\ZZ_n''$.) It is easy to see that $\cup\ZZ_n'=U'\cup V'$,
$\cup\ZZ_n''=U'\cap V'$, and the families $\ZZ_n'$ and $\ZZ_n''$ consist of
disjoint sets. Hence $\ZZ_n'$, together with some atoms in $\AA_0$, forms a
partition of $U\cup V$, and so
\[
\fg_{\circ}(U\cup V)\le \sum_{X\in \ZZ_n'} \fg(X),
\]
and similarly,
\[
\fg_{\circ}(U\cap V)\le \sum_{X\in \ZZ_n''} \fg(X).
\]
Hence
\[
\fg_{\circ}(U)+\fg_{\circ}(V) \ge \sum_{Z\in\ZZ_n} \fg(Z)-2\eps \ge \fg_{\circ}(U\cup V)+\fg_{\circ}(U\cap V)-2\eps.
\]
Since this holds for every $\eps>0$, this proves the lemma.
\end{proof}

The last proof is an application of the ``uncrossing method'' of combinatorial
optimization; see \cite{Schr}, Theorem 48.2, and also the proof of Theorem
\ref{THM:SUBMOD-UNCROSS2}.

If $\psi$ is a submodular setfunction such that
\begin{equation}\label{EQ:PSI-TRUNC}
0\le \psi(X) \le |\fg(X)|_+\qquad(X\in\BB),
\end{equation}
then $\psi\le \fg_{\circ}$. Indeed, $\psi$ is nonnegative, and hence
subadditive, so for any $U\in\BB$, and any finite partition $\XX$ of $U$ into
sets in $\BB$, we have
\[
\psi(U)\le \sum_{X\in\XX} \psi(X) \le  \sum_{X\in\XX} |\fg(X)|_+,
\]
and taking the infimum  on the right, we get that $\psi(U)\le\fg_{\circ}(U)$.
Thus $\fg_{\circ}$ is the unique largest submodular setfunction with properties
\eqref{EQ:PSI-TRUNC}. This justifies calling it the {\it positive part} of
$\fg$. (Note: this is different from the ``positive part'' $\alpha_+$ of a
charge.)

\begin{example}\label{EXA:EDGE2GRAPH}
Let $G=(V,E)$ be a finite graph, and define $\fg(X)=|\cup X|$ for $X\subseteq
E$. Then $\fg$ is submodular (a finite version of Example
\ref{EXA:TRANSVERSAL}). The setfunction $\fg-1$ is negative on $\emptyset$ and
$1$ on singletons. Its positive part $(\fg-1)_{\circ}$ is the rank function of
the cycle matroid of $G$.

More generally, let $(E,r)$ be a matroid, and let $E_2$ be the set of its flats
of rank $2$. For $X\subseteq E_2$, let $\rho(X) = r(\cup X)$. Then $\rho-1$ is
an increasing submodular setfunction, with $\rho(\emptyset)=-1$ and $\rho(x)=1$
for $x\in E_2$. Thus $(E_2,(\rho-1)_{\circ})$ is a matroid, which is called the
{\it Dilworth truncation} of $(E,r)$. For the trivial matroid in which
$r(X)=|X|$, its Dilworth truncation is the cycle matroid of the complete graph.
\end{example}

\begin{example}\label{EXA:INTERSECT}
Consider a finite family $\FF$ of linear subspaces in $\R^d$, along with a
``generic'' $d-k$-dimensional subspace $K$, and family of intersections
$\FF'=\{A\cap K:~A\in \FF\}$. If $(\FF,\rho)$ is the polymatroid defined by
$\FF$, then the polymatroid defined by $\FF'$ is $(\FF',(\rho-k)_{\circ})$.
(The proof of this fact is somewhat involved; see e.g.~\cite{GeomBook}, Lemma
19.20.)
\end{example}

\subsection{Convexity and uncrossing}\label{SEC:UNCROSS}

In this section, we describe an application of a basic technique in
combinatorial optimization called ``uncrossing'', generalizing some results in
\cite{Sipos,LL83,Gal}.

We start with an easy observation: for every submodular setfunction $\fg$ on a
set-algebra $(J,\BB)$, the functional $\wh\fg$ is submodular on the lattice of
functions in $\Bd(\BB)$. Defining
\[
(f\lor g)(x)=\max\{f(x),g(x)\} \et (f\land g)(x)=\min\{f(x),g(x)\},
\]
we have the following inequality:
\begin{equation}\label{EQ:FUNC-LAT}
\wh\fg(f\lor g)+\wh\fg(f\land g) \le \wh\fg(f)+\wh\fg(g)
\end{equation}
Indeed, by \eqref{EQ:WH-DEF} we may assume that $f,g\ge 0$. The identities
$\{f\lor g \ge t\} = \{f\ge t\}\cup \{g\ge t\}$ and $\{f\land g \ge t\} =
\{f\ge t\}\cap \{g\ge t\}$ imply that
\begin{align*}
\wh\fg(f\lor g)+\wh\fg(f\land g) &= \int_0^\infty \fg\{f\lor g\ge t\}\,dt
+ \int_0^\infty \fg\{f\land g\ge t\}\,dt\\
&=\int_0^\infty \fg\{f\lor g\ge t\}+\fg\{f\land g\ge t\}\,dt\\
&\le \int_0^\infty \fg\{f\ge t\}+\fg\{g\ge t\}\,dt= \wh\fg(f)+\wh\fg(g).
\end{align*}

A less trivial inequality asserts that $\wh\fg$ is a subadditive (or,
equivalently, convex) functional. For the increasing case, this was proved in
\cite{Sipos}; see also \cite{Denn}, Chapter 6. Two functions $f,g:~I\to\R$
defined on the same set $I$ are called {\it comonotonic}, if
$(f(x)-f(y))(g(x)-g(y))\ge 0$ for all $x,y\in I$. This is equivalent to saying
that all level sets $\{f\ge t\}$ and $g\ge s$ are comparable.

\begin{theorem}\label{THM:SUBMOD-UNCROSS2}
Let $\fg$ be a bounded submodular setfunction on a set-algebra $(J,\BB)$ with
$\fg(\emptyset)=0$, and let $f,g\in\Bd$. Then
\[
\wh\fg(f+g) \le  \wh\fg(f) + \wh\fg(g).
\]
If $f$ and $g$ are comonotonic, then equality holds.
\end{theorem}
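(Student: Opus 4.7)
The plan is to reduce to nonnegative step functions (jointly discretized so as to preserve comonotonicity) and then bound $\wh\fg(f+g)$ by rewriting the sum on a chain via an uncrossing argument. Using \eqref{EQ:ADD-CONST}, translating $f,g$ by $-\inf f,-\inf g$ shifts both sides of the inequality by the same amount $(\inf f+\inf g)\fg(J)$, so we may assume $f,g\ge 0$. Lemma \ref{LEM:SUBMOD-FINVAR} gives $\var(\fg)<\infty$, and the Lipschitz bound of Lemma \ref{LEM:WHPHI-CONT} then lets us approximate $f$ and $g$ uniformly by $\BB$-step functions (as in the proof of Lemma \ref{LEM:T-CONT}). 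For the comonotonic part, the joint discretization $\tilde f=\eps\lfloor f/\eps\rfloor$, $\tilde g=\eps\lfloor g/\eps\rfloor$ (implemented with $\BB$-measurable level-set approximants) preserves comonotonicity: $\tilde f(x)>\tilde f(y)$ forces $f(x)>f(y)$, hence $g(x)\ge g(y)$, hence $\tilde g(x)\ge\tilde g(y)$.

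The heart of the argument is an uncrossing lemma: in the finite set-algebra $\BB_0$ generated by the relevant level sets, every nonnegative representation $h=\sum_i c_i\one_{X_i}$ of a step function satisfies
\[
\wh\fg(h)\;\le\;\sum_i c_i\fg(X_i),
\]
with equality whenever $\{X_i:c_i>0\}$ is a chain. The equality for chains is a direct telescoping from \eqref{EQ:CHOQ-DEF}. For the inequality, consider the bounded polytope $P=\{c:\BB_0\to\R_+\ :\ \sum_X c(X)\one_X=h\}$ (bounded via the per-atom constraint $\sum_{X\ni a}c(X)=h(a)$) and the linear cost $c\mapsto\sum_X c(X)\fg(X)$. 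Let $F\subseteq P$ be the face where this cost is minimized, and let $c^\star\in F$ maximize the auxiliary potential $\Phi(c)=\sum_X c(X)|X|^2$, where $|X|$ counts atoms of $\BB_0$ contained in $X$. I claim the support of $c^\star$ is a chain: otherwise, for incomparable $X,Y$ in the support the uncrossing perturbation $c(X),c(Y)\mapsto c(X)-\delta,c(Y)-\delta$ and $c(X\cap Y),c(X\cup Y)\mapsto c(X\cap Y)+\delta,c(X\cup Y)+\delta$ stays in $P$ for small $\delta>0$ (via $\one_X+\one_Y=\one_{X\cap Y}+\one_{X\cup Y}$), weakly decreases cost by submodularity (hence stays in $F$), and strictly increases $\Phi$ by $2\delta(|X|-|X\cap Y|)(|Y|-|X\cap Y|)>0$, contradicting the choice of $c^\star$. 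So $c^\star$ is a chain representation of $h$, and by the equality case its cost is $\wh\fg(h)$, as required.

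To conclude the theorem, take chain representations $f=\sum_i c_i\one_{A_i}$ and $g=\sum_j d_j\one_{B_j}$. The combined representation $f+g=\sum_i c_i\one_{A_i}+\sum_j d_j\one_{B_j}$ is generally not a chain, so the uncrossing lemma gives
\[
\wh\fg(f+g)\;\le\;\sum_i c_i\fg(A_i)+\sum_j d_j\fg(B_j)\;=\;\wh\fg(f)+\wh\fg(g).
\]
If $f,g$ are comonotonic, every pair of level sets $\{f\ge s\},\{g\ge t\}$ is comparable (otherwise a witness pair $(x,y)$ would give $(f(x)-f(y))(g(x)-g(y))<0$), so $\{A_i\}\cup\{B_j\}$ is itself a chain and the combined representation \emph{is} a chain representation; equality in the uncrossing lemma then yields $\wh\fg(f+g)=\wh\fg(f)+\wh\fg(g)$. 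The main obstacle is the polytope step: verifying that the chosen $c^\star$ has chain support (via the face-plus-potential construction) and that any chain representation of $h$ evaluates via \eqref{EQ:CHOQ-DEF} to the unambiguous quantity $\wh\fg(h)$ regardless of which chain is used.
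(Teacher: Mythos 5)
Your proof is correct and follows the paper's high-level outline (reduce to nonnegative step functions via \eqref{EQ:ADD-CONST}, establish the uncrossing inequality $\wh\fg(h)\le\sum_i c_i\fg(X_i)$ with equality on chains, then pass to general $f,g$ by the Lipschitz bound of Lemma~\ref{LEM:WHPHI-CONT}), but your argument for the key sub-lemma --- the paper's Lemma~\ref{LEM:SUBMOD-UNCROSS0} --- is genuinely different. The paper iteratively replaces an incomparable pair $H_1,H_2$ by $H_1\cup H_2,H_1\cap H_2$, transferring the full weight $\min(b_{H_1},b_{H_2})$, and must cite \cite{HLST} for the non-obvious fact that a suitable order of such moves terminates; you argue statically instead: over the bounded polytope of nonnegative representations, take the cost-minimizing face $F$, and within $F$ a point $c^\star$ maximizing $\Phi(c)=\sum_X c(X)|X|^2$. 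An infinitesimal uncrossing keeps you inside $F$ (cost weakly decreases by submodularity, but it is already minimal) yet strictly increases $\Phi$ by $2\delta(|X|-|X\cap Y|)(|Y|-|X\cap Y|)$ --- you compute this correctly --- so $c^\star$ has chain support, the minimum cost equals $\wh\fg(h)$, and the inequality follows for every representation. This compactness argument is self-contained and avoids the termination issue entirely, a real advantage over the paper's route. Two small points to tidy: fix $c(\emptyset)=0$ (it is otherwise unconstrained by the representation equation, so $P$ is unbounded in that coordinate, although it contributes nothing to cost or $\Phi$); and when $X\cap Y=\emptyset$, route the transferred weight only to $X\cup Y$, where subadditivity of $\fg$ still gives a weak cost decrease and $\Phi$ increases by $2\delta|X||Y|>0$. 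Your closing worry about ambiguity of chain representations is unfounded: a nonnegative step function has a unique representation as a positive combination of indicators of a chain of distinct sets, namely its layer cake, so the value $\wh\fg(h)$ is automatic. Finally, the comonotonicity-preserving discretization for the equality case is treated at the same level of sketch as the paper's own ``routine approximation argument,'' so you are not losing rigor relative to the source there.
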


Of course, the inequality generalizes to the sum of any finite number of
functions in $\Bd$. It will be useful to state and prove the following special
case first:

\begin{lemma}\label{LEM:SUBMOD-UNCROSS0}
Let $\fg$ be a bounded submodular setfunction on a set-algebra $(J,\BB)$ with
$\fg(\emptyset)=0$, let $H_1,\ldots,H_n\in\BB$, $a_1,\ldots a_n\in\R_+$, and
$h=\sum_i a_i\one_{H_i}$. Then
\[
\wh\fg(h) \le \sum_{i=1}^n a_i \fg(H_i).
\]
If the sets $H_i$ form a chain, then equality holds.
\end{lemma}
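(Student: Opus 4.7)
I first verify the equality in the chain case by a direct Layer Cake computation, and then reduce the general inequality to a finite linear program in the set-algebra generated by the $H_i$.

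For the chain case, reindex so that $H_1\supseteq H_2\supseteq\dots\supseteq H_n$ and set $s_0=0$, $s_i=a_1+\dots+a_i$. Since $h\ge 0$ and $\{h\ge t\}=H_i$ for $t\in(s_{i-1},s_i]$ (and is empty for $t>s_n$), formula \eqref{EQ:CHOQ-DEF} immediately gives $\wh\fg(h)=\sum_{i=1}^n (s_i-s_{i-1})\fg(H_i)=\sum_i a_i\fg(H_i)$; no monotonicity of $\fg$ is needed here.

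For the general inequality, let $\RR$ denote the (finite) set-algebra generated by $H_1,\dots,H_n$. All quantities on both sides depend only on $\fg|_\RR$, so one may work inside $\RR$. Consider the polytope
\[
\PP_h=\bigl\{c\in\R_{\ge 0}^{\RR}:\sum_{S\in\RR} c(S)\one_S=h\bigr\},
\]
which is nonempty (it contains the representation coming from the $H_i$ after merging duplicates) and compact (if $S$ is in the support of $c$, then picking any $x\in S$ gives $c(S)\le h(x)\le\|h\|$). The linear functional $L(c)=\sum_{S\in\RR}c(S)\fg(S)$ therefore attains its minimum on a nonempty compact face $M\subseteq\PP_h$. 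Among $c\in M$, pick one, $c^*$, that additionally maximizes the secondary functional $\mu_2(c)=\sum_{S\in\RR}c(S)\,|S|^2$, where $|S|$ counts atoms of $\RR$ contained in $S$; the maximum is attained by compactness.

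The crux is that the support of $c^*$ is a chain, whence $c^*$ is the unique layer-cake representation of $h$, so $L(c^*)=\wh\fg(h)$ by the chain case, and the original representation satisfies $\sum_i a_i\fg(H_i)\ge L(c^*)=\wh\fg(h)$. Suppose for contradiction that incomparable sets $P,Q$ lie in the support of $c^*$. Let $m=\min(c^*(P),c^*(Q))>0$ and define $c'$ by uncrossing: $c'$ agrees with $c^*$ outside $\{P,Q,P\cup Q,P\cap Q\}$, and on these four sets $c'(P)=c^*(P)-m$, $c'(Q)=c^*(Q)-m$, $c'(P\cup Q)=c^*(P\cup Q)+m$, $c'(P\cap Q)=c^*(P\cap Q)+m$. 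The identity $\one_P+\one_Q=\one_{P\cup Q}+\one_{P\cap Q}$ shows $c'\in\PP_h$, and submodularity yields $L(c^*)-L(c')=m\bigl[\fg(P)+\fg(Q)-\fg(P\cup Q)-\fg(P\cap Q)\bigr]\ge 0$, so $c'\in M$ as well. Writing $\alpha=|P\setminus Q|$ and $\beta=|Q\setminus P|$ (both $\ge 1$ because $P,Q$ are incomparable in $\RR$), a direct expansion gives $\mu_2(c')-\mu_2(c^*)=m\bigl(|P\cup Q|^2+|P\cap Q|^2-|P|^2-|Q|^2\bigr)=2m\alpha\beta>0$, contradicting the choice of $c^*$.

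The main obstacle, as in any uncrossing proof, is termination: iterative uncrossing can both introduce new sets (namely $P\cup Q$ and $P\cap Q$) into the support and produce arbitrarily small uncrossing amounts $m$. The ``double optimization'' device above---first minimize $L$, then break ties by maximizing $\mu_2$---bypasses this difficulty by replacing an iterative termination argument with a compactness argument inside the finite-dimensional polytope $\PP_h$.
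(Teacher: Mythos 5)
Your argument is correct and reaches the same conclusion, but by a genuinely different route from the paper. The paper starts from the given representation and iteratively uncrosses, appealing to the result of Hurkens--Lov\'asz--Schrijver--Tardos (cited there as \cite{HLST}) to guarantee that the iteration terminates in a chain representation; termination is precisely the nontrivial part, and it is outsourced. You instead avoid iteration altogether: compactness of the (finite-dimensional) representation polytope plus a lexicographic optimization (minimize $L$, then break ties by maximizing $\mu_2=\sum_S c(S)|S|^2$) produces a chain-supported representation directly, with the single uncrossing move used only as a local contradiction argument. This is self-contained and sidesteps the termination question in a clean way, at the cost of a slightly more elaborate setup. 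It is worth noting that the potential $\sum|S|^2$ is exactly the one the paper uses to force termination in the proof of Lemma~\ref{LEM:DILW}, so you are in effect importing a technique from a neighboring proof and combining it with a variational argument instead of an iterative one.

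One small technical point to repair: your polytope $\PP_h$ as written is not compact, because $\emptyset\in\RR$ and $c(\emptyset)$ is a free nonnegative coordinate (you cannot pick $x\in\emptyset$ to bound it). Since $\fg(\emptyset)=0$ and $|\emptyset|=0$, this coordinate has no effect on $L$, $\mu_2$, or the constraint; the fix is simply to define $\PP_h$ over nonempty $S\in\RR$ (or impose $c(\emptyset)=0$). After this adjustment the compactness claim and the rest of the argument go through. You should also say explicitly, when deducing $L(c^*)=\wh\fg(h)$, that a nonnegative combination $h=\sum b_i\one_{T_i}$ with positive coefficients supported on a chain is \emph{unique} and coincides with the layer cake representation --- this is true and easy (the distinct values of $h$ determine the partial sums $b_1+\dots+b_j$ and hence the $b_i$ and $T_i$), but it is the hinge on which the reduction to the chain case turns, so it deserves a sentence.
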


\begin{proof}
Let $\HH$ be the (finite) set-algebra generated by the sets $H_i$. Then we can
express $h$ as
\begin{equation}\label{EQ:FBF}
h(x) = \sum_{H\in \HH} b_H \one_H(x)\qquad(b_H\ge 0)
\end{equation}
in several ways. Let $\HH_b$ be the family of sets $H\in\HH$ with $b_H>0$.
Starting with the representation defining $h$ (where $b_{H_i}=a_i$), we modify
it repeatedly as follows. Suppose that we find two sets $H_1,H_2\in\HH_b$ such
that neither one of them contains the other. Let (say) $b_{H_1}\le b_{H_2}$.
For $H\in\HH$, define
\[
b'_H=
  \begin{cases}
    b_{H_1}+b_{H_1\cup H_2}, & \text{if $H=H_1\cup H_2$}, \\
    b_{H_1}+b_{H_1\cap H_2}, & \text{if $H=H_1\cap H_2$},\\
    0, & \text{if $H=H_1$},\\
    b_{H_2}-b_{H_1}, & \text{if $H=H_2$},\\
    b_H, & \text{otherwise}.
  \end{cases}
\]
(Informally, we replace as much as possible of $H_1$ and $H_2$ by $H_1\cup H_2$
and $H_1\cap H_2$.) Let $\HH'=\{H\in\HH:~b'_H>0\}$. Then it is easy to verify
that
\[
\sum_{H\in\HH'} b'_H\one_H(x) = h(x),
\]
and
\[
\sum_{H\in\HH'} b'_H\fg(H) \le \sum_{H\in\HH} b_H\fg(H)
\]
by submodularity.

At each step of the above procedure we have removed one set from $\HH_b$, but
may have added two. Nevertheless, it was proved in \cite{HLST} that repeating
this procedure a finite number of times (choosing $H_1$ and $H_2$
appropriately), we get a representation $b$ for which the sets in $\HH_b$ form
a chain. Then \eqref{EQ:FBF} is just the ``Layer Cake Representation'' of $h$,
showing that
\[
\wh\fg(h) = \sum_{H\in\HH} b_H\fg(H) \le \sum_i a_i\fg(H_i).
\]
This proves the lemma.
\end{proof}

\begin{proof*}{Theorem \ref{THM:SUBMOD-UNCROSS2}}
In the special case when $f$ and $g$ are nonnegative stepfunctions, we express
them in their layer cake representation, and apply Lemma
\ref{LEM:SUBMOD-UNCROSS0} to get the inequality as stated. The more general
case of nonnegative $f$ and $g$ follows via a routine approximation by
stepfunctions. Let $\eps>0$, and let $f',g'$ be stepfunctions such that $f'\le
f\le f'+\eps$ and $g'\le g\le g'+\eps$. Using Lemma \ref{LEM:WHPHI-CONT}, we
get
\[
\wh\fg(f+g) \le \wh\fg(f'+g')+ 2 c_\fg \eps \le \wh\fg(f')+ \wh\fg(g')+ 2c_\fg\eps
\le \wh\fg(f)+ \wh\fg(g)+ 4c_\fg\eps
\]
Since $\eps>0$ is arbitrary, it follows that $\wh\fg(f+g) \le  \wh\fg(f) +
\wh\fg(g)$.

If $f$ and $g$ may have negative values, then for any sufficiently large $C>0$,
using \eqref{EQ:WH-DEF},
\begin{align*}
\wh\fg(f+g)&= \wh\fg(f+g+2C)-2C\fg(J) \le \wh\fg(f+C)+\wh\fg(g+C) -2C\fg(J)\\
&= \wh\fg(f)+\wh\fg(g).
\end{align*}
This completes the proof of the first assertion.

The second one follows for stepfunctions from the observation that for two
comonotonic stepfunctions, their level sets form a single chain, and hence
their layer cake representations can be added up. The general case follows by a
routine approximation argument as above.
\end{proof*}

This theorem easily implies the following fact:

\begin{corollary}\label{COR:SUBMOD-CONV}
Let $\fg$ be a setfunction with bounded variation on a set-algebra. Then
$\wh\fg$ is a convex functional if and only if $\fg$ is submodular.
\end{corollary}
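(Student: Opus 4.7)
The plan is to prove the two implications separately, and essentially all the work is already done by Theorem \ref{THM:SUBMOD-UNCROSS2}. Throughout I would assume without loss of generality that $\fg(\emptyset)=0$, since this is the standing hypothesis under which $\wh\fg$ is defined by the integral formula \eqref{EQ:WH-DEF}, and replacing $\fg$ by $\fg-\fg(\emptyset)$ preserves submodularity as well as convexity of the Choquet extension.

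For the ``if'' direction, suppose $\fg$ is submodular. Theorem \ref{THM:SUBMOD-UNCROSS2} gives subadditivity of $\wh\fg$ on $\Bd$, namely $\wh\fg(f+g)\le\wh\fg(f)+\wh\fg(g)$. Combined with the positive homogeneity $\wh\fg(af)=a\wh\fg(f)$ for $a\ge 0$ recorded in the discussion following \eqref{EQ:ADD-CONST}, this at once yields convexity: for $\lambda\in[0,1]$ and $f,g\in\Bd$,
\[
\wh\fg(\lambda f+(1-\lambda)g)\le \wh\fg(\lambda f)+\wh\fg((1-\lambda)g)=\lambda\wh\fg(f)+(1-\lambda)\wh\fg(g).
\]

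For the ``only if'' direction, fix $X,Y\in\BB$ and test convexity on the indicator functions $f=\one_X$ and $g=\one_Y$. The midpoint $h=\tfrac12(f+g)$ takes the value $1$ on $X\cap Y$, the value $\tfrac12$ on the symmetric difference, and $0$ elsewhere, so its superlevel sets are $\{h\ge t\}=X\cup Y$ for $0<t\le\tfrac12$ and $\{h\ge t\}=X\cap Y$ for $\tfrac12<t\le 1$. The integral formula \eqref{EQ:WH-DEF} (with $c=0$, since $h\ge 0$) then evaluates directly to
\[
\wh\fg(h)=\tfrac12\fg(X\cup Y)+\tfrac12\fg(X\cap Y).
\]
Using $\wh\fg(\one_X)=\fg(X)$ and the analogous identity for $Y$, the convexity inequality $\wh\fg(h)\le \tfrac12\wh\fg(f)+\tfrac12\wh\fg(g)$ becomes exactly the submodular inequality \eqref{EQ:SUBMOD-DEF}.

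There is no real obstacle here; the only mild point of care is verifying that the integral representation of $\wh\fg$ applies to our test functions (which it does since indicators are nonnegative and bounded) and that the reduction to $\fg(\emptyset)=0$ is harmless. Both are immediate given the material already developed in Sections~\ref{SSEC:CHOQ-INT} and~\ref{SSEC:SUBMOD-DEF}.
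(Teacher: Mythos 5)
Your proof is correct and takes essentially the same approach as the paper: for the "if" direction, subadditivity from Theorem \ref{THM:SUBMOD-UNCROSS2} plus positive homogeneity give convexity; for the "only if" direction, both arguments evaluate the Choquet integral of a two-level step function built from $\one_X$ and $\one_Y$ and compare it to the corresponding combination of $\fg(X)$ and $\fg(Y)$. The paper phrases the converse via subadditivity at $\one_S+\one_T$ (invoking homogeneity once more) while you test convexity directly at the midpoint $\tfrac12(\one_X+\one_Y)$; these are the same computation up to a factor of two.
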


\begin{proof}
The ``if'' part follows immediately by the homogeneity of $\wh\fg$ and Theorem
\ref{THM:SUBMOD-UNCROSS2}. To prove the ``only if'' part, suppose that $\wh\fg$
is convex. Since it is also homogeneous, we have
\[
\fg(S\cup T)+\fg(S\cap T) =
\wh\fg(\one_S+\one_T) \le \wh\fg(\one_S)+\wh\fg(\one_T) = \fg(S)+\fg(T).
\]
proving that $\fg$ is submodular.
\end{proof}

\section{Minorizing and separating measures}\label{SEC:MATROID}

Our goal in this section is to generalize (as far as possible) the notions of
bases and independence polytopes from matroid theory. Matroid independent sets
will correspond (in some sense) to charges minorizing the given submodular
setfunction. Such minorizing measures have been considered in the
measure-theoretic literature on submodularity as well, so here the interaction
between the two areas is quite rich.

\subsection{Separation by modular setfunctions}

Using the convexity of the functional $\wh\fg$ (Corollary
\ref{COR:SUBMOD-CONV}), it is easy to prove the following separation theorem.

\begin{theorem}\label{THM:SEPARATE}
Let $\fg$ be a submodular setfunction on a set-algebra $(J,\BB)$, and let $\xi$
be a supermodular setfunction on the same set-algebra. Assume that $\xi\le\fg$,
then there is a modular setfunction $\mu$ on $(J,\BB)$ such that
$\xi\le\mu\le\fg$. More generally, given $a,b\in\R$ such that
$\xi(\emptyset)\le a\le\fg(\emptyset)$ and $\xi(J)\le b\le\fg(J)$, there is a
modular setfunction $\mu$ on $(J,\BB)$ such that $\xi\le\mu\le\fg$,
$\mu(\emptyset)=a$ and $\mu(J)=b$.
\end{theorem}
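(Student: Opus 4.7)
The plan is to apply the Hahn--Banach extension theorem, with the convexity of $\wh\fg$ (Corollary~\ref{COR:SUBMOD-CONV}) and the companion concavity of $\wh\xi$ as the driving inputs. First I would reduce the second (``more general'') statement to the first. Let $\tilde\fg$ (resp.\ $\tilde\xi$) agree with $\fg$ (resp.\ $\xi$) off $\{\emptyset,J\}$ but satisfy $\tilde\fg(\emptyset)=\tilde\xi(\emptyset)=a$ and $\tilde\fg(J)=\tilde\xi(J)=b$. By Lemma~\ref{LEM:SUBMOD-TRIV}(a) and its supermodular dual, these modifications preserve submodularity of $\tilde\fg$ and supermodularity of $\tilde\xi$, and clearly $\tilde\xi\le\tilde\fg$ pointwise. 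Any modular $\mu$ sandwiched between them is automatically forced to satisfy $\mu(\emptyset)=a$ and $\mu(J)=b$, and then $\xi\le\tilde\xi\le\mu\le\tilde\fg\le\fg$. After a constant shift of both setfunctions, I may further assume $\fg(\emptyset)=\xi(\emptyset)=0$, which is the regime in which the Choquet integrals $\wh\fg,\wh\xi$ are well defined.

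On the space $V$ of bounded $\BB$-simple functions I would then introduce the functional
\[
p(f)=\inf\Big\{\sum_i a_i\fg(X_i)-\sum_j b_j\xi(Y_j)\;:\;f=\sum_i a_i\one_{X_i}-\sum_j b_j\one_{Y_j},\ a_i,b_j\ge 0,\ X_i,Y_j\in\BB\Big\}.
\]
Positive homogeneity and subadditivity are immediate from rescaling and concatenating decompositions, so $p$ is sublinear once we know it is finite. The heart of the proof is the identity $p(0)=0$. The bound $p(0)\le 0$ comes from the empty decomposition; for $p(0)\ge 0$, any decomposition of $0$ amounts to two nonnegative representations of the same function $h:=\sum_i a_i\one_{X_i}=\sum_j b_j\one_{Y_j}\ge 0$, and Lemma~\ref{LEM:SUBMOD-UNCROSS0} applied to the submodular $\fg$ and (dually) to the submodular $-\xi$ gives $\sum_i a_i\fg(X_i)\ge\wh\fg(h)$ and $\sum_j b_j\xi(Y_j)\le\wh\xi(h)$, while the layer-cake formula combined with the pointwise inequality $\xi\le\fg$ on $\BB$ yields $\wh\xi(h)\le\wh\fg(h)$; chaining these three inequalities is exactly $p(0)\ge 0$. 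Finiteness of $p$ follows from $p(f)\ge -p(-f)$ (a consequence of subadditivity and $p(0)\ge 0$) together with the explicit upper bound $p(-f)\le\wh\fg(f_-)-\wh\xi(f_+)$ obtained by plugging in the chain (layer-cake) decompositions of $f_+$ and $f_-$ into the definition of $p$, these values being finite by boundedness of $\fg$ and $\xi$.

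Hahn--Banach then extends the zero functional on $\{0\}\subseteq V$ to a linear $L\colon V\to\R$ with $L\le p$. Setting $\mu(X)=L(\one_X)$, the identity $\one_{X\cup Y}+\one_{X\cap Y}=\one_X+\one_Y$ and linearity of $L$ make $\mu$ a modular setfunction. The trivial single-term decompositions of $\one_X$ and $-\one_X$ give $\mu(X)=L(\one_X)\le p(\one_X)\le\fg(X)$ and $-\mu(X)=L(-\one_X)\le p(-\one_X)\le -\xi(X)$, i.e., $\xi(X)\le\mu(X)\le\fg(X)$. The main obstacle is the identity $p(0)=0$: it is what packages together submodularity of $\fg$, supermodularity of $\xi$, and $\xi\le\fg$, and rests squarely on the subadditivity of $\wh\fg$ from Theorem~\ref{THM:SUBMOD-UNCROSS2}; once it is in hand, the remainder is a routine application of Hahn--Banach.
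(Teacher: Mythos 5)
Your proof is correct, and it takes a genuinely different route from the paper's. The paper doubles the space: it works on $\Bd\oplus\Bd$ with the convex functional $\Phi(f,g)=\wh\fg(f)-\wh\xi(g)$ (convexity via Corollary~\ref{COR:SUBMOD-CONV}, nonnegativity on the diagonal via $\wh\fg\ge\wh\xi$ after the same normalization $\fg(\emptyset)=\xi(\emptyset)=0$, $\fg(J)=\xi(J)$), applies Hahn--Banach to get a continuous linear functional vanishing on the diagonal and dominated by $\Phi$, and then invokes $\Bd^*=\ba$ to realize that functional as a charge. You instead stay on a single copy of the simple functions and build an explicit sublinear majorant $p$ as an infimum over signed decompositions, with the whole weight of the argument resting on the identity $p(0)=0$, which you correctly derive from Lemma~\ref{LEM:SUBMOD-UNCROSS0} applied to $\fg$ and to $-\xi$ together with the pointwise inequality $\xi\le\fg$ (the chain $\sum a_i\fg(X_i)\ge\wh\fg(h)\ge\wh\xi(h)\ge\sum b_j\xi(Y_j)$ for the common nonnegative $h$). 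Your version then only needs the algebraic Hahn--Banach theorem and avoids any appeal to $\Bd^*=\ba$, since $\mu$ is read off directly from $L(\one_X)$; the price is that you must verify finiteness and sublinearity of $p$ by hand, which you do. The paper's version is shorter once one accepts Corollary~\ref{COR:SUBMOD-CONV} and the duality theorem as black boxes, while yours is more self-contained and makes the role of the uncrossing lemma explicit rather than routing it through the abstract convexity statement. Both reductions of the $(a,b)$-normalization are handled correctly, and both ultimately rest on the same core input (Theorem~\ref{THM:SUBMOD-UNCROSS2} / Lemma~\ref{LEM:SUBMOD-UNCROSS0}).
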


\begin{proof}
We may assume that $a=0$ by subtracting $a$ from $\fg$, $\xi$ and $b$. We may
also assume that $\fg(\emptyset)=\xi(\emptyset)=0$ by lowering resp.\ raising
the original values of $\fg(\emptyset)$ and $\xi(\emptyset)$. Similarly, we may
assume that $\fg(J)=\xi(J)=b$. Consider the Banach space $\Bd\oplus\Bd$, and
the functional $\Phi(f,g)=\wh\fg(f)-\wh\xi(g)$. Clearly $\Phi$ is bounded,
convex, and on the linear subspace $L=\{(f,f):~f\in\Bd\}$ it is nonnegative.
Hence by the Hahn--Banach Theorem, there is a continuous linear functional
$\Lambda:~\Bd\oplus\Bd\to\R$ such that $\Lambda|_L=0$ and $\Lambda\le\Phi$. The
first condition implies that $\Lambda(f,g)=-\Lambda(g,f)$.

Consider the linear functional $\Lambda_1:~\Bd\to\R$ defined by
$\Lambda_1(f)=\Lambda(f,0)$. Since $\Bd^*=\ba$, there is an $\mu\in\ba(\Bd)$
such that $\Lambda_1(f)=\mu(f)$ for $f\in\Bd$. Then
\[
\wh\fg(f) =\Phi(f,0) \ge \Lambda(f,0) = \mu(f),
\]
and similarly $\wh\xi(f) \le \mu(f)$. Trivially $\mu(\emptyset)=0$ and
$\mu(J)=b$.
\end{proof}

\begin{remark}\label{REM:FRANK}
The finite version of Theorem \ref{THM:SEPARATE} has the (substantially deeper)
supplement, due to Frank \cite{Frank}, that if $\xi$ and $\fg$ are integer
valued, then there is an integer valued modular setfunction $\mu$ separating
them. A generalization to the infinite case would be interesting to obtain.
\end{remark}

\subsection{Minorizing and the Greedy Algorithm}\label{SSEC:MINORIZE}

Given a submodular setfunction $\fg$, we say that a charge $\alpha\in\ba(\BB)$
is {\it minorizing $\fg$}, if $\alpha\le\fg$. A minorizing charge $\alpha$ is
called {\it basic}, if $\alpha(J)=\fg(J)$. Minorizing charges and basic
minorizing charges form convex sets $\matp(\fg) \subseteq \ba$ and
$\basp(\fg)\subseteq \ba$. Often we are interested in nonnegative charges, and
study the sets $\matp_+(\fg)=\matp(\fg)\cap\ba_+$ and
$\basp_+(\fg)=\basp(\fg)\cap\ba_+$.

It will be useful to notice that charges $\alpha\in\basp(\fg)$ cannot be too
wild: from above we have $\alpha\le\fg$, while from below,
\[
\alpha(X)=\alpha(J)-\alpha(X^c) = \fg(J)-\alpha(X^c) \ge \fg(J)-\fg(X^c).
\]
This implies that $\|\alpha\|\le 2\|\fg\|$.

Minorizing nonnegative charges will be natural generalizations of fractional
independent sets (convex combinations of indicator functions of independent
sets) of a matroid. Fractional bases of a matroid correspond to basic
minorizing charges.

Theorem \ref{THM:SEPARATE} has some corollaries about minorizing measures.

\begin{corollary}\label{COR:FIN-ADDX}
Let $\fg$ be a bounded submodular setfunction with $\fg(\emptyset)=0$. {\rm(a)}
The set $\basp(\fg)$ is nonempty. {\rm(b)} For every $\beta\in\matp(\fg)$ there
is a charge $\alpha\in\basp(\fg)$ such that $\alpha\ge \beta$.
\end{corollary}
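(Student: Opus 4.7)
Both parts follow by invoking the separation theorem (Theorem~\ref{THM:SEPARATE}), with only the choice of supermodular minorant differing between the two cases. I will treat (b) first, and then deduce (a) as (essentially) a special case.

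For (b), let $\beta\in\matp(\fg)$. Since $\beta$ is a charge, it is modular, hence in particular supermodular, and by assumption $\beta\le\fg$. The normalization convention for charges gives $\beta(\emptyset)=0=\fg(\emptyset)$, so the value $a=0$ satisfies $\beta(\emptyset)\le a\le\fg(\emptyset)$; likewise $\beta(J)\le\fg(J)$, so $b=\fg(J)$ satisfies $\beta(J)\le b\le\fg(J)$. Theorem~\ref{THM:SEPARATE} then produces a modular setfunction $\alpha$ with $\beta\le\alpha\le\fg$, $\alpha(\emptyset)=0$, and $\alpha(J)=\fg(J)$. Being modular with $\alpha(\emptyset)=0$, the setfunction $\alpha$ is a charge, and the two equalities place it in $\basp(\fg)$, while $\alpha\ge\beta$ is exactly the inequality requested.

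For (a), the same template applies but we need a supermodular minorant of $\fg$ to start with. Since $\fg$ is bounded, we may choose $K\in\R$ so large that $\fg(X)\ge -K$ for every $X\in\BB$, and take $\xi\equiv -K$. A constant setfunction is modular, hence supermodular, and satisfies $\xi\le\fg$. Applying Theorem~\ref{THM:SEPARATE} with $a=0$ (note $\xi(\emptyset)=-K\le 0=\fg(\emptyset)$) and $b=\fg(J)$ (note $\xi(J)=-K\le\fg(J)$), we obtain a modular $\mu$ with $\mu(\emptyset)=0$, $\mu(J)=\fg(J)$, and $\mu\le\fg$; this $\mu$ is a charge in $\basp(\fg)$, proving nonemptiness.

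I do not foresee a real obstacle here: the whole content is the separation theorem, which has already been proved via the Hahn--Banach theorem. The only thing to watch is bookkeeping with the parameters $a,b$ and the normalization $\beta(\emptyset)=0$ of a charge, which is why the theorem's flexibility in prescribing $\mu(\emptyset)$ and $\mu(J)$ is crucial---without it one could not force $\alpha(J)=\fg(J)$, which is what turns a minorizing charge into a basic one.
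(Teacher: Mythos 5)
Your proof is correct and rests on the same foundation as the paper's: Theorem~\ref{THM:SEPARATE} applied to a supermodular minorant of $\fg$. The small difference is stylistic. The paper uses only the basic form of the separation theorem and instead manufactures a supermodular $\psi$ whose endpoint values are already $0$ and $\fg(J)$ (for part (a), a step function equal to the lower bound $K$ except at $\emptyset$ and $J$; for part (b), $\beta$ with its value at $J$ raised to $\fg(J)$). You instead invoke the ``more generally'' clause of Theorem~\ref{THM:SEPARATE}, which lets you prescribe $\mu(\emptyset)=a$ and $\mu(J)=b$ directly; then the constant $\xi\equiv-K$ for (a) and $\xi=\beta$ itself for (b) suffice as supermodular minorants with no modification. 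This is a genuine simplification of the bookkeeping, and your verification that $\xi(\emptyset)\le a\le\fg(\emptyset)$ and $\xi(J)\le b\le\fg(J)$ in each case is what makes it go through (in (a) you implicitly use $K\ge 0$, which follows from $0=\fg(\emptyset)\ge-K$). No gap.
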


In particular, part (b) can be applied with $\beta=0$ if $\fg$ is nonnegative,
to get that $\basp_+(\fg)$ is nonempty.

\begin{proof}
(a) Let $K\le 0$ be a lower bound on $\fg$, and consider setfunction
\[
\psi(X)=
  \begin{cases}
    K, & \text{if $X\notin\{\emptyset, J\}$},\\
    0, & \text{if $X=\emptyset$},\\
    \fg(J), & \text{if $X=J$}.
  \end{cases}
\]
Clearly $\psi$ is supermodular, and $\psi\le\fg$. Hence by Theorem
\ref{THM:SEPARATE}, there is a modular setfunction $\alpha$ such that
$\psi\le\alpha\le\fg$. It follows that $\alpha(\emptyset)=0$, so $\alpha$ is a
charge, and trivially $0\le\alpha\le\fg$ and $\alpha(J)=\fg(J)$.

The proof of (b) is essentially the same, replacing the definition of $\psi$ by
\[
\psi(X)=
  \begin{cases}
    \beta(X), & \text{if $X\not=\emptyset$},\\
    \fg(J), & \text{if $X=J$}.
  \end{cases}
\]
\end{proof}

Corollary \ref{COR:FIN-ADDX} says nothing if $\fg\ge0$ and $\fg(J)=0$ (we can
take $\alpha=0$), but we can prove a related result for this case.

\begin{lemma}\label{LEM:FIN-ADD1}
Let $\fg$ be a submodular setfunction with $\fg(\emptyset)=\fg(J)=0$. Then for
every $A\in\BB$ there is an $\alpha\in\ba(\BB)$ such that $-\fg^c \le \alpha
\le \fg$ and $\alpha(A)=\fg(A)$.
\end{lemma}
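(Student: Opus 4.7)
The plan is to reduce to Corollary \ref{COR:FIN-ADDX}(a) applied on the two halves of $J$ determined by $A$, and then to glue the resulting charges together. First I would observe that, because $\fg(\emptyset)=\fg(J)=0$, any charge $\alpha$ satisfying $\alpha\le\fg$ must satisfy $\alpha(J)\le 0$ and $\alpha(J)=-\alpha(J^c)\ge -\fg(\emptyset^c)$... more directly, once we arrange $\alpha(\emptyset)=\alpha(J)=0$, substituting $X\mapsto X^c$ in $\alpha(X)\le\fg(X)$ gives $\alpha(X^c)\le\fg(X^c)$, i.e.\ $-\alpha(X)\le\fg(X^c)$, i.e.\ $\alpha(X)\ge -\fg^c(X)$. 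Thus the two-sided sandwich $-\fg^c\le\alpha\le\fg$ will be automatic, and it suffices to build a charge $\alpha$ with $\alpha(\emptyset)=\alpha(J)=0$, $\alpha\le\fg$, and $\alpha(A)=\fg(A)$.

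Next I would split along $A$. Let $\fg_1:=\fg_A$ be the restriction of $\fg$ to $\BB|_A$, and let $\fg_2$ be the projection to $\BB|_{A^c}$, namely $\fg_2(C):=\fg(A\cup C)-\fg(A)$. Both are bounded submodular setfunctions with $\fg_i(\emptyset)=0$ by Lemma \ref{LEM:SUBMOD-TRIV}(b), and
\[
\fg_1(A)=\fg(A),\qquad \fg_2(A^c)=\fg(J)-\fg(A)=-\fg(A).
\]
By Corollary \ref{COR:FIN-ADDX}(a) there exist basic minorizing charges $\alpha_1\in\basp(\fg_1)$ on $\BB|_A$ and $\alpha_2\in\basp(\fg_2)$ on $\BB|_{A^c}$. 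Define $\alpha$ on $\BB$ by
\[
\alpha(X)=\alpha_1(X\cap A)+\alpha_2(X\cap A^c).
\]
This is a charge by construction, and one checks $\alpha(\emptyset)=0$, $\alpha(A)=\alpha_1(A)=\fg(A)$, and $\alpha(J)=\alpha_1(A)+\alpha_2(A^c)=\fg(A)-\fg(A)=0$.

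The one nontrivial step, and the place where submodularity of $\fg$ really enters, is verifying $\alpha\le\fg$. For $X\in\BB$ set $B=X\cap A$ and $C=X\cap A^c$. Then
\[
\alpha(X)\le\fg_1(B)+\fg_2(C)=\fg(B)+\fg(A\cup C)-\fg(A),
\]
so we need $\fg(B)+\fg(A\cup C)\le\fg(A)+\fg(X)$. But $A\cup X=A\cup C$ (since $B\subseteq A$) and $A\cap X=B$, so this is exactly the submodular inequality $\fg(A\cup X)+\fg(A\cap X)\le\fg(A)+\fg(X)$. The desired upper bound follows, and by the opening remark the lower bound $\alpha\ge -\fg^c$ comes for free. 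I expect the main obstacle to be simply getting this one-line submodular calculation aligned correctly; the rest is bookkeeping on restrictions and projections.
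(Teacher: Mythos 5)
Your proof is correct and follows essentially the same approach as the paper: split along $A$, invoke Corollary \ref{COR:FIN-ADDX} on each half, glue the resulting charges, and derive the lower bound $\alpha\ge -\fg^c$ from $\alpha\le\fg$ together with $\alpha(J)=0$. The only cosmetic difference is that you build the $A^c$-component as a charge minorizing the projection $\fg^{A^c}$, whereas the paper minorizes the restriction $(\fg^c)_{A^c}$ and then negates; these are the same object up to a sign flip, and the submodularity step verifying $\alpha\le\fg$ is the identical one-line computation.
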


Note that $\fg(X)+\fg(X^c)\ge \fg(\emptyset)+\fg(J)=0$ implies that
$-\fg^c\le\fg$, so the lower and upper bounds on $\alpha$ are not
contradictory. If $\fg\not=0$, then applying the assertion to any $A$ with
$\fg(A)\not=0$, we obtain a nonzero charge $\alpha$.

\begin{proof}
Let $\mu_1$ be a charge on $\BB|_A=\{X\in\BB:~X\subseteq A\}$ such that
$\mu_1\le \fg$ and $\mu_1(A)=\fg(A)$. Let $\mu_2$ be a charge on $\BB|_{A^c}$
such that
$\mu_2\le \fg^c$ and $\mu_2(A^c)=\fg^c(A^c)=\fg(A)$. Such measures $\mu_1$ and
$\mu_2$ exist by Corollary \ref{COR:FIN-ADDX}. Define
\[
\alpha(X)=\mu_1(A\cap X) - \mu_2(A^c\cap X).
\]
Clearly $\alpha$ is a charge, which satisfies
\[
\alpha(\emptyset)=\alpha(J)=0,\qquad \alpha(A)=-\alpha(A^c)=\fg(A)\not=0.
\]
Let $X\in\BB$, then
\begin{align*}
\alpha(X)&= \mu_1(A\cap X)-\mu_2(A^c\cap X) = \mu_1(A\cap X)-\mu_2(A^c) + \mu_2(A^c\cap X^c)\\
&\le \fg(A\cap X) - \fg(A) + \fg^c(A^c\cap X^c) = \fg(A\cap X) - \fg(A) + \fg(A\cup X)\\
&\le \fg(X)
\end{align*}
by submodularity. The lower bound on $\alpha$ follows easily:
\begin{align*}
\alpha(X)&=-\alpha(X^c)\ge -\fg(X^c)=-\fg^c(X).
\end{align*}
\end{proof}

One can prove substantially more than Corollary \ref{COR:FIN-ADDX}: A
minorizing measure can be required to agree with $\fg$ on many more sets.

\begin{theorem}\label{THM:FIN-ADD-LAT}
Let $(J,\BB)$ be a set-algebra, let $\fg$ be a bounded submodular setfunction
on $\BB$ with $\fg(\emptyset)=0$, and let $\LL\subseteq\BB$ be a lattice family
containing $\emptyset$ and $J$. Assume that $\fg$ is modular on $\LL$. Then
there is an $\alpha\in\basp(\fg)$ such that $\alpha(S)=\fg(S)$ for $S\in\LL$.
If $\fg$ is increasing, then we can require that $\alpha\in \basp_+$.
\end{theorem}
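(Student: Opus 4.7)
The plan is to construct $\alpha$ by applying the Hahn--Banach theorem to a linear functional defined on the subspace spanned by indicator functions of $\LL$. Let $M \subseteq \Bd$ be the linear span of $\{\one_L : L \in \LL\}$, and define $\mu : M \to \R$ by $\mu(\one_L) = \fg(L)$, extended by linearity. Once I verify that $\mu$ is well defined on $M$ and satisfies $\mu \le \wh\fg$ there, I extend $\mu$ by Hahn--Banach to a linear functional $\tilde\mu$ on $\Bd$ dominated by the sublinear functional $\wh\fg$ (sublinear by positive homogeneity and Theorem~\ref{THM:SUBMOD-UNCROSS2}). By \eqref{EQ:WHFI-NORM} the extension $\tilde\mu$ is bounded, so it corresponds via $\Bd^* = \ba$ to a charge $\alpha \in \ba(\BB)$.

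The main technical step, and the place where the modularity hypothesis on $\LL$ is used crucially, is the well-definedness of $\mu$. For a nonnegative combination $f = \sum a_i \one_{L_i}$ with $a_i \ge 0$ and $L_i \in \LL$, I would run the uncrossing procedure from the proof of Lemma~\ref{LEM:SUBMOD-UNCROSS0}: replace an incomparable pair $L_i, L_j$ by $L_i \cup L_j$ and $L_i \cap L_j$ with adjusted coefficients. The represented function is preserved (from $\one_A + \one_B = \one_{A \cup B} + \one_{A \cap B}$), and all sets stay in $\LL$ because $\LL$ is a lattice. The change in $\sum a_i \fg(L_i)$ at each step equals $b\bigl(\fg(L_i \cup L_j) + \fg(L_i \cap L_j) - \fg(L_i) - \fg(L_j)\bigr)$ for some $b \ge 0$; this is merely $\le 0$ under submodularity, but is exactly $0$ under the stronger hypothesis of modularity on $\LL$. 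The procedure terminates (by the argument used in that lemma) in a chain, at which point the representation is the layer-cake form and $\sum a_i \fg(L_i)$ coincides with $\wh\fg(f)$. Thus for nonnegative combinations $\mu(f) = \wh\fg(f)$, which depends only on $f$. For general $f = g - h \in M$ with $g, h$ nonnegative combinations, set $\mu(f) = \wh\fg(g) - \wh\fg(h)$; independence of the splitting follows from $g_1 + h_2 = g_2 + h_1$ and the additivity of $\wh\fg$ on nonnegative combinations (again by uncrossing). The bound $\mu \le \wh\fg$ on $M$ is then a one-line consequence of subadditivity of $\wh\fg$: $\wh\fg(g) \le \wh\fg(g - h) + \wh\fg(h)$ rearranges to $\mu(f) \le \wh\fg(f)$.

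Once Hahn--Banach is applied, the charge $\alpha$ obtained from $\tilde\mu$ satisfies $\alpha(S) = \tilde\mu(\one_S) \le \wh\fg(\one_S) = \fg(S)$ for every $S \in \BB$ and $\alpha(L) = \mu(\one_L) = \fg(L)$ for every $L \in \LL$; in particular, $\alpha(J) = \fg(J)$ since $J \in \LL$, so $\alpha \in \basp(\fg)$. For the increasing case, $\wh\fg$ is monotone on $\Bd$, so $\wh\fg(-\one_S) \le \wh\fg(0) = 0$ for every $S$, and linearity of $\tilde\mu$ combined with $\tilde\mu \le \wh\fg$ yields $\alpha(S) = -\tilde\mu(-\one_S) \ge -\wh\fg(-\one_S) \ge 0$, giving $\alpha \in \basp_+(\fg)$. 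The main obstacle I expect is the bookkeeping in the uncrossing-based well-definedness of $\mu$; the enabling observation is precisely that modularity on $\LL$ upgrades the submodular uncrossing inequality into an equality, which is exactly the hypothesis we are given.
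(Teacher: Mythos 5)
Your argument is correct, and it reaches the same Hahn--Banach plus $\Bd^*=\ba$ endgame as the paper, but the technical heart---identifying a subspace on which $\wh\fg$ is linear---is handled by a genuinely different route. The paper invokes Proposition~\ref{PROP:EXTEND} to extend $\fg|_\LL$ to a charge $\psi$ on the set-algebra generated by $\LL$, and then uses Lemma~\ref{LEM:FF-LINEAR} and Lemma~\ref{LEM:SET-ALG-INT} to see that $\wh\fg$ coincides with integration against $\psi$ on the larger subspace $\Bd(\LL)$ of $\LL$-measurable functions. You instead work with the purely algebraic subspace $M$ spanned by $\{\one_L:L\in\LL\}$ and verify directly that $\wh\fg$ is additive on nonnegative combinations by running the uncrossing of Lemma~\ref{LEM:SUBMOD-UNCROSS0} entirely inside the lattice $\LL$, where modularity upgrades the uncrossing inequalities to equalities; the passage to differences and the bound $\mu\le\wh\fg$ then follow from subadditivity. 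This trades the modular-extension and $\FF$-measurability machinery for a second appeal to the termination result of~\cite{HLST}, and is arguably more elementary since it only uses $\wh\fg$ at the level of stepfunctions. One small streamlining: since $\one_J\in M$ and $\wh\fg(f+c)=\wh\fg(f)+c\fg(J)$ by~\eqref{EQ:ADD-CONST}, the additivity on nonnegative combinations already forces $\mu=\wh\fg|_M$, so well-definedness, linearity, and domination all collapse into the single statement that $\wh\fg$ is linear on $M$---exactly the shape of the paper's argument, just on the smaller subspace.
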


\begin{proof}
Let $\NN$ be the set-algebra generated by $\LL$. By Proposition
\ref{PROP:EXTEND}, there is a charge $\psi$ on $\NN$ such that $\psi=\fg$ on
$\LL$. (Note that $\psi\not=\fg$ on $\NN\setminus\LL$ in general.) By Lemma
\ref{LEM:FF-LINEAR}, the set $\Bd(\LL)$ of bounded $\LL$-measurable functions
is a linear space. For $f\in \Bd(\LL)$, using Lemma \ref{LEM:SET-ALG-INT}, we
have
\[
\wh\fg(f) = \intl_0^\infty \fg\{f\ge t\} \,dt
= \intl_0^\infty \psi\{f\ge t\} \,dt = \intl_J f \,d\psi,
\]
and the integral on the right is a linear functional. This implies that the
functional $\wh\fg$ is linear on $\Bd(\LL)$.

The Hahn--Banach Theorem applies, and gives a continuous linear functional
$\Lambda$ on $\Bd(\BB)$ such that $\Lambda\le\wh\fg$, and $\Lambda=\wh\fg$
on $\Bd(\LL)$.

By Proposition \ref{PROP:DUAL-BA} in the Appendix, the functional $\Lambda$ can
be represented by a charge $\alpha$ in the sense that
\[
\Lambda(h)=\intl_J h \,d\alpha=\wh\alpha(h)
\]
for all $h\in\Bd(\BB)$. In particular, we have
\[
\alpha(X)=\intl \one_X \,d\alpha = \Lambda(\one_X)\le \wh\fg(\one_X)=\fg(X)
\]
for $X\in\BB$, with equality for $X\in\LL$. Since $J\in\LL$, we have
$\alpha(J)=\fg(J)$.

If $\fg$ is increasing, then nonnegativity of $\alpha$ follows by a simple
computation:
\[
\alpha(X) = \alpha(J)-\alpha(X^c) \ge \fg(J)-\fg(X^c)\ge 0\qquad(X\in\BB)
\]
by the monotonicity of $\fg$.
\end{proof}

An important special case is the following. A family of sets $\SS\subseteq\BB$
totally ordered by inclusion will be called a {\it chain} (in $\BB$). A chain
that is maximal will be called a {\it full chain} (in $\BB$). Clearly every
full chain contains $\emptyset$ and $J$.

If $\BB$ is the family of Borel subsets of $(0,1]$, then initial intervals
(open or closed from the right) form a full chain in $\BB$. Let us call this
the {\it standard full chain}. A non-standard example of a full chain of Borel
sets is the following: Assume the Continuum Hypothesis, and order the points in
$J$ to the initial ordinal of $\aleph_1$. Then all beginning sections are
countable, and hence Borel, and they form a full chain.

Since a chain is trivially a lattice, and every function on a chain is modular,
we have the following corollary \cite{Choq}, \cite{Sipos}, \cite{Denn}.

\begin{corollary}\label{COR:FIN-ADD-LAT}
Let $\fg$ be a submodular setfunction on a set-algebra $(J,\BB)$ with
$\fg(\emptyset)=0$, and let $\SS\subseteq\BB$ be a full chain of sets. Then
there is a charge $\alpha\le\fg$ such that $\alpha(S)=\fg(S)$ for $S\in\SS$. If
$\fg$ is increasing, then $\alpha\ge 0$.
\end{corollary}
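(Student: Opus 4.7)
The plan is to obtain this corollary as a direct specialization of Theorem \ref{THM:FIN-ADD-LAT}, taking $\LL=\SS$. The work consists almost entirely of verifying that a full chain $\SS$ satisfies the hypotheses of that theorem, after which the conclusion is immediate.

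First I would check that $\SS$ is a lattice family containing $\emptyset$ and $J$. Closure under binary union and intersection is trivial for a chain: if $S,T\in\SS$, then one of $S\subseteq T$ or $T\subseteq S$ holds, so $\{S\cup T, S\cap T\}=\{S,T\}\subseteq\SS$. Maximality of $\SS$ forces both $\emptyset$ and $J$ to lie in $\SS$, since otherwise one could enlarge $\SS$ by adjoining them (they are comparable with every subset of $J$). Next I would observe that any setfunction is modular on a chain: for comparable $S,T$, say $S\subseteq T$, both sides of the modular identity \eqref{EQ:SUBMOD-DEF} equal $\fg(S)+\fg(T)$. In particular, $\fg|_\SS$ is modular.

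With these hypotheses verified, Theorem \ref{THM:FIN-ADD-LAT} applied to $\LL=\SS$ yields a charge $\alpha\in\basp(\fg)$ such that $\alpha(S)=\fg(S)$ for every $S\in\SS$; membership in $\basp(\fg)$ gives $\alpha\le\fg$, which is the first conclusion of the corollary. The second conclusion, $\alpha\ge 0$ when $\fg$ is increasing, is likewise the second part of Theorem \ref{THM:FIN-ADD-LAT} applied in this setting.

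Since the entire argument is a reduction to a theorem that has already been proved, there is no real obstacle; the only point that requires a moment's care is ensuring that a \emph{full} (i.e., maximal) chain actually contains $\emptyset$ and $J$, but this is clear from maximality. No separate handling of the standard versus non-standard full chains described before the corollary is needed, since Theorem \ref{THM:FIN-ADD-LAT} is stated for an abstract lattice family.
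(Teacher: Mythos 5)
Your reduction is correct and is exactly the argument the paper intends: a full chain is a lattice family containing $\emptyset$ and $J$, any setfunction restricted to a chain is modular, and so Theorem~\ref{THM:FIN-ADD-LAT} with $\LL=\SS$ gives the conclusion directly. This matches the paper's own one-line justification.
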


In a sense, this corollary is a generalization of the Greedy Algorithm in
combinatorial optimization. In the ordering of the points defining a full chain
$\SS$, every beginning section $S$ has as large an $\alpha$-measure as it
possibly can, namely $\fg(S)$. We can think of building up $\alpha$ in
infinitesimally small steps starting from $\emptyset$, adding always as much to
it as possible.

\begin{lemma}\label{LEM:PHI-ALPHA}
Let $\fg$ and $\psi$ be increasing setfunctions on a set-algebra $(J,\BB)$ with
$\fg(\emptyset)=\psi(\emptyset)=0$, and let $\SS\subseteq\BB$ be a chain of
sets. Assume that $\fg(S)=\psi(S)$ for every $S\in\SS$. Then
$\wh\fg(f)=\wh\psi(f)$ for every $\SS$-measurable function $f\in\Bd$.
\end{lemma}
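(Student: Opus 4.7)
The strategy is to approximate $f$ uniformly by a ``layer cake'' step function built from $\SS$, then use Lipschitz continuity of the Choquet extension (Lemma~\ref{LEM:WHPHI-CONT}) to transfer the trivial equality for such step functions to $f$. First I observe that $\SS$-measurability of any $f\in\Bd$ forces $J\in\SS$: for $s<t$ both below $\inf f$, the sandwich $J=\{f\ge t\}\subseteq S\subseteq\{f\ge s\}=J$ requires $S=J$. In particular $\fg(J)=\psi(J)$.

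Given $\eps>0$, pick levels $t_0<t_1<\cdots<t_n$ with $t_0<\inf f$, $t_n>\sup f$, and $t_i-t_{i-1}<\eps$. By $\SS$-measurability, choose $S_i\in\SS$ with $\{f\ge t_i\}\subseteq S_i\subseteq\{f\ge t_{i-1}\}$. Since $\SS$ is totally ordered and, for $i<j$, one has $S_j\subseteq\{f\ge t_{j-1}\}\subseteq\{f\ge t_i\}\subseteq S_i$, the $S_i$ automatically form a descending chain $S_1\supseteq S_2\supseteq\cdots\supseteq S_n$. Set
\[
g=t_0\one_J+\sum_{i=1}^n(t_i-t_{i-1})\one_{S_i}.
\]
A point $x\in S_k\setminus S_{k+1}$ (with $S_0=J$, $S_{n+1}=\emptyset$) satisfies $g(x)=t_k$ and $t_{k-1}\le f(x)<t_{k+1}$, so $\|f-g\|_\infty<\eps$.

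Because the $S_i$ form a descending chain with positive coefficients, the level sets $\{g\ge t\}$ lie in this chain, and direct evaluation of \eqref{EQ:CHOQ-DEF} (combined with \eqref{EQ:ADD-CONST} for the $t_0$ shift) gives
\[
\wh\fg(g)=t_0\fg(J)+\sum_{i=1}^n(t_i-t_{i-1})\fg(S_i),
\]
and the analogous identity for $\psi$; combining $\fg(J)=\psi(J)$ with $\fg(S_i)=\psi(S_i)$ yields $\wh\fg(g)=\wh\psi(g)$. Finally, \eqref{EQ:WHF-CONT1} applied to $\fg$ and to $\psi$ (both increasing and hence bounded) gives
\[
|\wh\fg(f)-\wh\psi(f)|\le|\wh\fg(f)-\wh\fg(g)|+|\wh\psi(g)-\wh\psi(f)|\le(\|\fg\|+\|\psi\|)\eps,
\]
and letting $\eps\to 0$ completes the argument. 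The only mildly subtle point is the automatic chain structure of the $S_i$, which is forced by the nesting of the level sets together with the total order on $\SS$; everything else is routine once this observation is made.
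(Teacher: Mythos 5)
Your proof is correct. The key ideas — using the sandwich property of $\SS$-measurability to produce, for each level, a set of $\SS$ trapped between nearby level sets of $f$, and then controlling the resulting $\eps$-error — are the same as the paper's. The implementation differs: the paper works level-by-level inside the Choquet integral, proving the pointwise inequality $\fg^\ui\{f\ge t\}\le\fg(S)=\psi(S)\le\psi^\ls\{f\ge t-\eps\}$ and then integrating and invoking Lemma~\ref{LEM:SET-ALG-INT} to identify $\wh{\fg^\ui}$ with $\wh{\fg^\ls}$; you instead package the sandwich into a single $\SS$-based stepfunction $g$ uniformly $\eps$-close to $f$, compute $\wh\fg(g)=\wh\psi(g)$ exactly on that chain, and appeal to the Lipschitz bound of Lemma~\ref{LEM:WHPHI-CONT}. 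Your route is arguably more transparent and avoids the $\fg^\ui/\psi^\ls$ bookkeeping, at the mild cost of invoking the Lipschitz lemma (which the paper has already established). Your explicit observation that $\SS$-measurability of a bounded $f$ forces $J\in\SS$, hence $\fg(J)=\psi(J)$, is a point the paper glosses over when it reduces to $f\ge0$ via \eqref{EQ:WH-DEF}; making it explicit is a small improvement. One minor prose remark: the nesting $S_1\supseteq\cdots\supseteq S_n$ already follows from $S_j\subseteq\{f\ge t_{j-1}\}\subseteq\{f\ge t_i\}\subseteq S_i$ without appealing to the total order on $\SS$, so the clause ``Since $\SS$ is totally ordered'' is not actually doing any work there.
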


\begin{proof}
If $(J,\BB)$ is a sigma-algebra, then this lemma is trivial by the integral
formula defining $\wh\fg$ and $\wh\psi$. In the general case, however, we have
to work with the more involved definition of $\SS$-measurability. It suffices
to consider that case when $f\ge 0$, because \eqref{EQ:WH-DEF} implies then the
general case. Let $t,\eps\ge 0$. Since $f$ is $\SS$-measurable, there is a set
$S\in\SS$ such that $\{f\ge t\}\subseteq S\subseteq \{f\ge t-\eps\}$. Then
\[
\fg^\ui\{f\ge t\} \le \fg(S) = \psi(S)
\le \psi^\ls\{f\ge t-\eps\},
\]
and so
\[
\wh{\fg^\ui}(f) \le \wh{\psi^\ls}(f+\eps) = \wh{\psi^\ls}(f)+\eps\psi(J).
\]
Since $\eps>0$ is arbitrary, this implies that $\wh{\fg^\ui}(f)
\le\wh{\psi^\ls}(f)$. Combining with the trivial inequality
$\psi^\ls\le\psi^\ui$, we get that $\wh{\fg^\ui}(f) \le\wh{\psi^\ui}(f)$. The
reverse inequality follows by the same argument, and so Lemma
\ref{LEM:SET-ALG-INT} completes the proof.
\end{proof}

In general, we cannot require in Theorem \ref{THM:FIN-ADD-LAT}, or even in
Corollary \ref{COR:FIN-ADD-LAT}, that the charge $\alpha$ be countably
additive; for example, the setfunction $\fg$ can be any finitely but not
countably additive nonnegative measure, then the only choice for $\alpha$ is
$\alpha=\fg$ (see, however, Corollary \ref{COR:FIN-ADD2}). Nor can we claim
that this charge $\alpha$ is unique, as the following example shows.

\begin{example}\label{EXA:NON-UNIQUE}
Let $(J,\BB)=(0,1]$, and consider the submodular setfunction
$\overline{\lambda}(X)=\lambda(\overline{X})$, as defined in Example
\ref{EXA:CLOSURE}. Let $\SS$ be the standard full chain. Trivially,
$\lambda(S)=\overline{\lambda}(S)$ for $S\in\SS$, and
$\lambda\le\overline{\lambda}$. Thus $\alpha=\lambda$ satisfies the conclusion
of Theorem \ref{THM:FIN-ADD-LAT}.

To construct another solution, let $Q=\Qbb\cap(0,1]$ and
$Q_n=\{\frac1n,\frac2n,\ldots,\frac nn\}$ ($n=1,2,\ldots$). We use the Banach
limit $\blim$. For $X\in\BB$, define
\[
\alpha(X)= \blim_n\, \frac1n|Q_n\cap X|.
\]
It is easy to see that $\alpha$ is nonnegative and finitely additive. If $X$ is
an interval, then the sequence $|Q_n\cap X|/n$ is convergent, and it converges
to $\lambda(X)$, so $\alpha(X)=\lambda(X)$ for intervals; in particular,
$\alpha(S)=\lambda(S)=\overline{\lambda}(S)$ for $S\in\SS$.

To verify that $\alpha\le\overline{\lambda}$, notice that for any $X\in\BB$,
the set $J\setminus \overline{X}$ is the union of countably many disjoint open
intervals: $J\setminus \overline{X}=I_1\cup I_2\cup\ldots$. Hence for any $n\ge
1$,
\[
\alpha(X)\le \alpha(\overline{X})\le \alpha(J\setminus I_1\setminus\ldots\setminus I_n)
= 1-\sum_{j=1}^n \alpha(I_j) = 1-\sum_{j=1}^n \lambda(I_j).
\]
Letting $n\to\infty$, the right hand side tends to
$\lambda(\overline{X})=\overline{\lambda}(X)$, proving that
$\alpha\le\overline{\lambda}$.

It is clear that $\alpha\not=\lambda$: the set $Q$ is countable, so
$\lambda(Q)=0$, but $\alpha(Q)=1$.
\end{example}

\begin{corollary}\label{COR:FINADD-EXTEND}
Let $(J,\BB)$ be a sigma-algebra, and let $\LL\subseteq\BB$ be a
lattice family containing $\emptyset$ and $J$. Let $\alpha:~\LL\to\R$ be an
increasing modular setfunction with $\alpha(\emptyset)=0$. Then there is a
charge $\beta\in\ba_+(J,\BB)$ such that $\beta(S)=\alpha(S)$ for $S\in\LL$.
\end{corollary}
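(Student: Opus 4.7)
The plan is to reduce this to Theorem \ref{THM:FIN-ADD-LAT} by first extending $\alpha$ from the lattice $\LL$ to a submodular setfunction on all of $\BB$. Concretely, I would define
\[
\fg(X) = \alpha^\ui(X) = \inf\{\alpha(Y):~Y\in\LL,\ Y\supseteq X\}\qquad(X\in\BB).
\]
Since $J\in\LL$ and $\alpha(J)$ is finite, this infimum is always well-defined, and by monotonicity of $\alpha$ we have $0\le\fg(X)\le\alpha(J)$, so $\fg$ is bounded. It is trivially increasing, and $\fg(\emptyset)=\alpha(\emptyset)=0$.

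The next step is to verify that $\fg$ is submodular on $\BB$ and agrees with $\alpha$ on $\LL$. Submodularity on $2^J$ (hence on $\BB$) is immediate from Lemma \ref{LEM:STAR}, applied to $\alpha$ regarded as a submodular setfunction on the lattice family $\LL$ (modular implies submodular). For $X\in\LL$, choosing $Y=X$ in the infimum gives $\fg(X)\le\alpha(X)$, while monotonicity of $\alpha$ on $\LL$ gives $\alpha(Y)\ge\alpha(X)$ for every $Y\in\LL$ with $Y\supseteq X$; hence $\fg(X)=\alpha(X)$. In particular $\fg$ is modular on $\LL$, since $\alpha$ is.

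With these properties in hand, Theorem \ref{THM:FIN-ADD-LAT} applies directly to the pair $(\fg,\LL)$: there exists $\beta\in\basp_+(\fg)$ such that $\beta(S)=\fg(S)$ for every $S\in\LL$ (we use the ``increasing'' clause of the theorem to obtain $\beta\ge 0$). Since $\beta\in\ba_+(J,\BB)$ and $\fg(S)=\alpha(S)$ for $S\in\LL$, the charge $\beta$ is the desired extension.

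There is no real obstacle here; the content of the corollary is essentially packaged into Theorem \ref{THM:FIN-ADD-LAT} together with the extension device of Lemma \ref{LEM:STAR}. The only mildly delicate point is checking that $\alpha^\ui$ really does restrict to $\alpha$ on $\LL$ (rather than giving something smaller), and this is exactly where monotonicity of $\alpha$ on $\LL$ is used. The sigma-algebra hypothesis in the statement is not actually needed for the argument; a set-algebra would suffice.
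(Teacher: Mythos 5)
Your proof is correct and takes essentially the same approach as the paper's: extend $\alpha$ to an increasing submodular setfunction on all of $\BB$ via Lemma \ref{LEM:STAR}, then invoke Theorem \ref{THM:FIN-ADD-LAT}. (In fact the paper's proof writes $\alpha^\li$, which for an increasing $\alpha$ is decreasing and constantly $\alpha(\emptyset)$, so it is not an extension; you have correctly identified that the intended construction is $\alpha^\ui$.)
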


\begin{proof}
By Lemma \ref{LEM:STAR}, the setfunction $\alpha^\li:~2^J\to\R$ is an extension
of $\alpha$, increasing, and submodular; we consider it only on $\BB$. By
Theorem \ref{THM:FIN-ADD-LAT}, there is a measure $\beta\in\ba_+(\BB)$
extending $\alpha$.
\end{proof}

As a further application of Theorem \ref{THM:FIN-ADD-LAT}, we prove following
theorem; for increasing submodular setfunction this was proved by \v{S}ipo\v{s}
\cite{Sipos} (see also \cite{Denn}, Chapter 10):

\begin{corollary}\label{COR:SIPOS}
Let $\fg$ be a submodular setfunction on a set-algebra $(J,\BB)$ with
$\fg(\emptyset)=0$, and let $w\in\Bd_+$. Then
\[
\max_{\alpha\in\matp(\fg)} \wh\alpha(w)  = \wh\fg(w).
\]
In particular, for every $X\in\BB$, we have
\[
\max_{\alpha\in\matp(\fg)} \alpha(X) = \fg(X).
\]
If $\fg$ is increasing, then $\alpha$ can be required to be nonnegative.
\end{corollary}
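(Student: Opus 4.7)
The plan is to establish the two inequalities separately. For the \emph{upper bound}, I fix any $\alpha\in\matp(\fg)$ and set $\nu=\fg-\alpha$; this is the sum of a submodular and a modular setfunction, hence submodular, and moreover nonnegative with $\nu(\emptyset)=0$. By the linearity \eqref{EQ:WHFI-LIN} of $\fg\mapsto\wh\fg$, the inequality $\wh\alpha(w)\le\wh\fg(w)$ reduces to $\wh\nu(w)\ge 0$. Using $\BB$-measurability of $w$, I approximate it uniformly by decreasing-chain stepfunctions $h_n=\eps_n\sum_{k=1}^{N_n}\one_{A_k^{(n)}}$ with $A_1^{(n)}\supseteq A_2^{(n)}\supseteq\cdots$ in $\BB$: pick $B_k\in\BB$ with $\{w\ge k\eps_n\}\subseteq B_k\subseteq\{w\ge(k-\tfrac12)\eps_n\}$, then replace $B_k$ by $A_k^{(n)}:=\bigcup_{j\ge k}B_j$, which preserves the sandwich condition and enforces nesting. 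The equality case of Lemma \ref{LEM:SUBMOD-UNCROSS0} gives $\wh\nu(h_n)=\eps_n\sum_k\nu(A_k^{(n)})\ge 0$, and Lemma \ref{LEM:WHPHI-CONT} yields $\wh\nu(w)\ge 0$ in the limit.

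For \emph{achievability}, I construct a chain $\SS\subseteq\BB$ containing $\emptyset$ and $J$ such that $w$ is $\SS$-measurable, meaning that for every $s<t$ some $S\in\SS$ satisfies $\{w\ge t\}\subseteq S\subseteq\{w\ge s\}$. Concretely, I enumerate pairs of rationals $(s_i,t_i)$ with $0\le s_i<t_i\le\|w\|+1$ and inductively pick $S_i\in\BB$ sandwiching the corresponding level sets, at each step taking unions or intersections of $S_i$ with the previously chosen $S_j$ as necessary to enforce the chain condition; these operations preserve the sandwich condition, since, for instance, if $A,B\in\BB$ sandwich level sets $\{w\ge t_A\}$ and $\{w\ge t_B\}$ with $t_A\ge t_B$, then $A\cap B$ still sandwiches $\{w\ge t_A\}\subseteq A\cap B\subseteq\{w\ge s_A\}$. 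Since a chain is a lattice family on which every setfunction is trivially modular, Theorem \ref{THM:FIN-ADD-LAT} then produces a charge $\alpha^{*}\in\basp(\fg)$ with $\alpha^{*}(S)=\fg(S)$ for all $S\in\SS$, and with $\alpha^{*}\ge 0$ when $\fg$ is increasing.

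I now repeat the approximation of the first paragraph, but this time drawing the sandwich sets from $\SS$ (possible precisely because $w$ is $\SS$-measurable), producing chain stepfunctions $h_n$ with every $A_k^{(n)}\in\SS$. The equality case of Lemma \ref{LEM:SUBMOD-UNCROSS0}, applied to both $\fg$ and the charge $\alpha^{*}$, gives
\[
\wh\fg(h_n)=\eps_n\sum_k\fg(A_k^{(n)})=\eps_n\sum_k\alpha^{*}(A_k^{(n)})=\wh{\alpha^{*}}(h_n),
\]
and passing to the limit via Lemma \ref{LEM:WHPHI-CONT} yields $\wh\fg(w)=\wh{\alpha^{*}}(w)$. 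Together with the upper bound, this proves the first identity; the case $w=\one_X$ delivers the second. The principal technical obstacle is the chain construction above: since level sets of $w$ need not belong to $\BB$, one must manufacture inside $\BB$ a single chain that simultaneously sandwiches all of them, iteratively preserving both the chain property and the sandwich inclusions at every step. Once this chain is available, everything else follows from the equality case of Lemma \ref{LEM:SUBMOD-UNCROSS0}, Theorem \ref{THM:FIN-ADD-LAT}, and uniform continuity of $\wh\fg$.
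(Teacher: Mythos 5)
Your proof follows the same route as the paper's: bound $\wh\alpha(w)\le\wh\fg(w)$ trivially, then produce an achieving charge by applying Theorem~\ref{THM:FIN-ADD-LAT} to a chain $\SS$ capturing the level sets of $w$, and identify $\wh{\alpha^*}(w)$ with $\wh\fg(w)$ via the equality case of Lemma~\ref{LEM:SUBMOD-UNCROSS0}. What you do differently — and this is to your credit — is take seriously that on a mere set-algebra the level sets $\{w\ge t\}$ need not lie in $\BB$, whereas the paper's proof simply writes ``let $\SS$ be the chain of level sets'' and applies Theorem~\ref{THM:FIN-ADD-LAT} as if they did; Lemma~\ref{LEM:PHI-ALPHA} would cover this but is stated only for increasing setfunctions, while here $\fg$ is just submodular. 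So you are filling a genuine small gap.

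That said, two points. First, your upper bound is a detour: from $\alpha\le\fg$ one gets $\nu=\fg-\alpha\ge 0$, $\nu(\emptyset)=0$, bounded variation, and \eqref{EQ:WHFI-BOUND} with $a=0$ gives $\wh\nu(w)\ge b\cdot\inf_X\nu(X)\ge 0$ directly; no chain-stepfunction approximation is needed. Second, the chain-building sketch is not yet airtight. Your observation that if $A,B$ sandwich $[s_A,t_A]$, $[s_B,t_B]$ with $t_A\ge t_B$ then $A\cap B$ still sandwiches $[s_A,t_A]$ is correct; but once you replace the new $S_i$ by, say, $S_i\cap S_j$ to make it comparable with $S_j$, you must still argue it remains comparable with every other $S_{j'}$ already fixed, and that is not automatic (if $S_j\supseteq S_{j'}$, there is no reason for $S_i\cap S_j$ and $S_{j'}$ to be nested). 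A construction that works is to insert new sandwich sets only into the ``gaps'' of the current chain, with level windows strictly separated from those of the two neighbours, so that nesting with the neighbours is forced and nesting with everything else is inherited; alternatively, apply Theorem~\ref{THM:FIN-ADD-LAT} to a finite chain for each $n$ to obtain $\alpha_n$ with $\wh{\alpha_n}(w)\ge\wh\fg(w)-O(1/n)$ and extract $\alpha^*$ by compactness in $[-2\|\fg\|,2\|\fg\|]^\BB$, using Lemmas~\ref{LEM:SETS-CLOSED} and~\ref{LEM:T-CONT}, as the paper does later for Theorem~\ref{THM:WEIGHT-INTERSECT}. Either fix turns your outline into a complete argument.
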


\begin{proof}
If $\beta\in\matp(\fg)$, then $\beta\le\fg$ and hence
$\wh\beta(w)\le\wh\fg(w)$. On the other hand, let $\SS$ be the chain of level
sets $\{w\ge t\}$. Then Theorem \ref{THM:FIN-ADD-LAT} yields a charge
$\alpha\le\fg$ such that $\alpha(S)=\fg(S)$ for every $S\in\SS$, and so
$\wh\alpha(w)=\wh\fg(w)$. In the increasing case, \ref{THM:FIN-ADD-LAT} yields
a nonnegative charge $\alpha$.
\end{proof}

\subsection{Exchange properties}\label{SSEC:EXCHANGE}

We can generalize some fundamental properties of maximal fractional independent
sets (bases) of a matroid: they have the same cardinality (equal to the rank of
the underlying set), and they satisfy the Steinitz Exchange Property. Corollary
\ref{COR:FIN-ADDX} can be viewed as an infinite version of the fact that every
independent set can be extended to a basis, which all have the same
cardinality.

Similarly, we can generalize the Steinitz Augmentation Property of independent
sets of a matroid: if $A$ and $B$ are independent sets and $|B|>|A|$, then
there is an independent set $C$ such that $A\subseteq C\subseteq A\cup B$ and
$|C|\ge|B|$. Let us say that a family $\Hf$ of charges on $(J,\BB)$ has the
{\it exchange property}, if for all $\alpha,\beta\in\Hf$ there is a
$\gamma\in\Hf$ such that $\alpha\le \gamma\le \alpha\lor\beta$ and
$\gamma(J)\ge\beta(J)$. (Of course, this condition is interesting only if
$\alpha(J)<\beta(J)$, else $\gamma=\alpha$ is a trivial solution.)

\begin{theorem}\label{THM:M-AUGMENT}
Let $\fg$ be an increasing submodular setfunction on a set-algebra $(J,\BB)$
with $\fg(\emptyset)=0$. Then $\matp_+(\fg)$ has the exchange property.
\end{theorem}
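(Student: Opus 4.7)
The plan is to construct $\gamma$ in the form $\gamma = \alpha+\mu$, where $\mu\in\ba_+$ is a nonnegative charge to be chosen so as to absorb the slack between $\alpha$ and $\alpha\lor\beta$. Writing $\chi := \beta-\alpha$ (a signed charge), the additivity of $\alpha$ gives
\[
(\alpha\lor\beta)(X) = \sup_{Y\subseteq X}\bigl(\alpha(Y)+\beta(X\setminus Y)\bigr) = \alpha(X) + \sup_{Z\subseteq X}\chi(Z) = \alpha(X)+\chi_+(X),
\]
where $\chi_+$ is the positive part of $\chi$, itself a nonnegative charge with $\chi_+\ge\chi$. Hence requiring $\gamma\in\matp_+(\fg)$, $\alpha\le\gamma\le\alpha\lor\beta$, and $\gamma(J)\ge\beta(J)$ translates into finding a nonnegative charge $\mu$ with $\mu\le\fg-\alpha$, $\mu\le\chi_+$, and $\mu(J)\ge\chi(J)=\beta(J)-\alpha(J)$.

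To locate such a $\mu$, I fuse the two upper bounds into a single submodular setfunction by setting $\rho := (\fg-\alpha)\land\chi_+$. Now $\fg-\alpha$ is nonnegative and submodular (the difference of a submodular setfunction and a charge), while $\chi_+$ is a nonnegative charge, hence modular; so Lemma \ref{LEM:LAND1} yields that $\rho$ is submodular, and clearly $\rho\ge 0$ and $\rho(\emptyset)=0$. Evaluating the defining infimum at $Y=X$ and at $Y=\emptyset$ gives $\rho\le\fg-\alpha$ and $\rho\le\chi_+$, so every charge $\mu\le\rho$ automatically respects both upper bounds. The crucial total-mass estimate is $\rho(J)\ge\chi(J)$: for every $Y\in\BB$, using $\fg\ge\beta$, $\chi_+\ge\chi$, and the additivity of $\chi$,
\[
(\fg-\alpha)(Y)+\chi_+(J\setminus Y) \;\ge\; \chi(Y)+\chi(J\setminus Y) \;=\; \chi(J),
\]
so taking the infimum over $Y$ yields $\rho(J)\ge\chi(J)$.

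With $\rho$ in hand, I would finish by applying Corollary \ref{COR:FIN-ADDX}(b) with $\beta=0$ to the nonnegative bounded submodular setfunction $\rho$, obtaining a charge $\mu\in\basp_+(\rho)$, that is, $\mu\ge 0$, $\mu\le\rho$, and $\mu(J)=\rho(J)\ge\chi(J)$. Then $\gamma:=\alpha+\mu$ is a nonnegative charge with $\alpha\le\gamma\le\alpha+\chi_+=\alpha\lor\beta$, $\gamma\le\alpha+(\fg-\alpha)=\fg$ (so $\gamma\in\matp_+(\fg)$), and $\gamma(J)=\alpha(J)+\mu(J)\ge\alpha(J)+\chi(J)=\beta(J)$, as required. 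The main obstacle I anticipate is the temptation to combine the two upper bounds via the pointwise minimum $\min(\fg-\alpha,\chi_+)$, which need not be submodular (nor need $\fg-\alpha$ itself be increasing); replacing $\min$ by the lattice operation $\land$ of Lemma \ref{LEM:LAND1} yields a strictly smaller but guaranteedly submodular surrogate, and the mass computation above shows this surrogate still carries enough mass on $J$ to supply $\gamma(J)\ge\beta(J)$.
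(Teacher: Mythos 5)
Your proof is correct and runs along essentially the same lines as the paper's, with one genuine streamlining worth noting. Both arguments rewrite $(\alpha\lor\beta)-\alpha$ as a nonnegative charge (your $\chi_+$, the paper's $\beta_0$), fuse the two upper bounds via the $\land$ operation using Lemma~\ref{LEM:LAND1}, and finish by extracting a minorizing charge with full mass $\rho(J)$. The difference: the paper first replaces $\fg-\alpha$ by its increasing monotonization $\fg_0=((\fg-\alpha)^c\land 0)^c=(\fg-\alpha)^{\rm ui}$, so that the resulting $\psi=\fg_0\land\beta_0$ is increasing and the ``increasing $\Rightarrow$ nonnegative minorizer'' clause can be invoked; you skip the monotonization, work with $\fg-\alpha$ directly (which need not be increasing), and instead secure nonnegativity of $\mu$ by applying Corollary~\ref{COR:FIN-ADDX}(b) with $\beta=0$. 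Your route is slightly shorter, and your mass estimate $(\fg-\alpha)(Y)+\chi_+(J\setminus Y)\ge\chi(J)$ is more direct than the paper's two-step infimum over $Z\supseteq X$ and then over $X$. Both give the same $\gamma=\alpha+\mu$ with the required properties.
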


\begin{proof}
Consider the setfunctions
\[
\fg_0 = ((\fg-\alpha)^c\land 0)^c \et \beta_0 = (\alpha\lor \beta)-\alpha.
\]
Clearly both of them are nonnegative and increasing, and
$\fg_0(\emptyset)=\beta_0(\emptyset)=0$. Lemma \ref{LEM:LAND1} implies that
$\fg_0$ is submodular. Furthermore, $\beta_0$ is a charge. Hence by Lemma
\ref{LEM:LAND1}, $\psi=\fg_0\land\beta_0$ is a bounded increasing submodular
setfunction with $\psi(\emptyset)=0$.

By Corollary \ref{COR:FIN-ADD-LAT}, there is a charge $\delta\in\matp_+(\psi)$
such that $\delta(J)=\psi(J)$. Let $\gamma=\alpha+\delta$. Then
\[
\gamma \le \alpha+\psi\le \alpha + \fg_0 \le \alpha + (\fg-\alpha)=\fg,
\]
and
\[
\gamma \le \alpha  + \beta_0 = \alpha + (\alpha\lor\beta)-\alpha = \alpha\lor\beta.
\]
To conclude, we show that $\gamma(J)\ge \beta(J)$. Let $x,Z\in\BB$ such that
$Z\supseteq X$. Then we have
\begin{align*}
\fg(Z)-\alpha(Z) &\ge \beta(Z)-\alpha(Z) = \beta(J) - \alpha(J)
- (\beta(Z^c)-\alpha(Z^c)\\
&\ge \beta(J) - \alpha(J) - \beta_0(X^c).
\end{align*}
The infimum of the left hand side over all $Z\supseteq X$ for a fixed $X$ is
$\fg_0(X)$, so we get
\[
\fg_0(X) + \beta_0(X^c)\ge \beta(J) - \alpha(J)
\]
for all $X$. The supremum of the left hand side is just $\psi(J)$.
\end{proof}

The last theorem above motivates the following converse to Corollary
\ref{COR:SIPOS}:

\begin{theorem}\label{THM:INV-SIPOS}
Let $\Hf$ be a family of nonnegative charges on a set-algebra $(J,\BB)$ with
the exchange property. Then the setfunction defined by
$\fg(X)=\sup_{\alpha\in\Hf} \alpha(X)$ is submodular.
\end{theorem}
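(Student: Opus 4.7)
The plan is to prove the submodular inequality by producing, for each $\eps>0$, a single charge $\gamma\in\Hf$ which is $\eps$-optimal simultaneously at $X\cap Y$ and $X\cup Y$. Any charge $\gamma$ is modular, so such a $\gamma$ satisfies
$$\fg(X)+\fg(Y)\ge \gamma(X)+\gamma(Y)=\gamma(X\cup Y)+\gamma(X\cap Y)\ge \fg(X\cup Y)+\fg(X\cap Y)-2\eps,$$
and letting $\eps\to 0$ yields the submodular inequality.

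To construct $\gamma$, first pick $\alpha,\beta\in\Hf$ with $\alpha(X\cap Y)\ge\fg(X\cap Y)-\eps$ and $\beta(X\cup Y)\ge\fg(X\cup Y)-\eps$. Invoking the exchange property for the pair $(\alpha,\beta)$ produces $\gamma\in\Hf$ with $\alpha\le\gamma\le\alpha\lor\beta$ and $\gamma(J)\ge\beta(J)$. The inequality $\gamma\ge\alpha$ immediately gives $\gamma(X\cap Y)\ge\fg(X\cap Y)-\eps$. For near-optimality at $X\cup Y$, I would decompose $\gamma(J)=\gamma(X\cup Y)+\gamma((X\cup Y)^c)$, bound
$$\gamma((X\cup Y)^c)\le(\alpha\lor\beta)((X\cup Y)^c)\le\alpha((X\cup Y)^c)+\beta((X\cup Y)^c)$$
using that for nonnegative charges $(\alpha\lor\beta)(Z)\le\alpha(Z)+\beta(Z)$, and combine with $\gamma(J)\ge\beta(J)=\beta(X\cup Y)+\beta((X\cup Y)^c)$ to obtain
$$\gamma(X\cup Y)\ge\beta(X\cup Y)-\alpha((X\cup Y)^c)\ge\fg(X\cup Y)-\eps-\alpha((X\cup Y)^c).$$

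The hard part will be controlling the leakage term $\alpha((X\cup Y)^c)$, which is not automatically small merely because $\alpha$ is near-optimal at $X\cap Y$. In the finite matroid case this difficulty disappears at once: one picks the near-optimizer of $X\cap Y$ to be the indicator of an independent set inside $X\cap Y$, so the leakage is zero. In the abstract charge setting $\Hf$ need not contain any such concentrated near-optimizer, and this is what forces us to do a preconditioning step. My intended remedy is to select $\alpha$ infimizing $\alpha((X\cup Y)^c)$ among all $\alpha\in\Hf$ satisfying $\alpha(X\cap Y)\ge\fg(X\cap Y)-\eps$; the existence of such a charge follows from a compactness argument, since by Lemma~\ref{LEM:SETS-CLOSED} the constraints cut out a closed subset of the product-topology space $\R^\BB$ with uniformly bounded total variation (using that $\|\alpha\|\le 2\|\fg\|$ for $\alpha\in\basp(\fg)$-like charges, as noted in Section~\ref{SSEC:MINORIZE}). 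A short further application of exchange---replacing $\alpha$ by a dominating member of $\Hf$ whose excess mass can be arranged to sit inside $X\cup Y$---then shows this infimum must itself be $o(\eps)$, lest one could build a charge strictly better at $X\cap Y$ than $\alpha$. Feeding such a preconditioned $\alpha$ into the one-shot exchange with $\beta$ delivers the desired $\gamma$ and completes the proof.
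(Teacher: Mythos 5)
Your overall skeleton agrees with the paper's: pick a near-optimizer $\alpha$ for $X\cap Y$, a near-optimizer $\beta$ for $X\cup Y$, apply the exchange property to obtain $\gamma$, and then use modularity of $\gamma$ to deduce the submodular inequality. You also correctly put your finger on where the subtlety lies, namely the \emph{leakage term} $\alpha\bigl((X\cup Y)^c\bigr)$, which without further control ruins the estimate on $\gamma(X\cup Y)$.

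The paper disposes of this in one line: it takes $\alpha$ to vanish outside $X\cap Y$ and $\beta$ to vanish outside $X\cup Y$ (``we may assume''), from which it follows that $\alpha\lor\beta$ vanishes outside $X\cup Y$, hence so does the nonnegative $\gamma\le\alpha\lor\beta$, hence $\gamma(X\cup Y)=\gamma(J)\ge\beta(J)\ge\beta(X\cup Y)$ with \emph{zero} leakage. Your proposal recognizes that this normalization is not free when $\Hf$ is an abstract family, but the remedy you sketch does not go through. First, the compactness step: Lemma~\ref{LEM:SETS-CLOSED} tells you that certain sets of setfunctions are closed in $\R^\BB$, but it says nothing about $\Hf$, which is an arbitrary given family and is not assumed closed; and the total-variation bound $\|\alpha\|\le 2\|\fg\|$ that you cite holds for $\alpha\in\basp(\fg)$, which is not what membership in $\Hf$ asserts. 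So the claimed infimizer need not exist in $\Hf$. Second, the crux of your fix, ``a short further application of exchange\,\ldots\,whose excess mass can be arranged to sit inside $X\cup Y$,'' is not something the exchange axiom supplies: it only hands you some $\gamma$ with $\alpha\le\gamma\le\alpha\lor\beta$ and $\gamma(J)\ge\beta(J)$; it gives no control over \emph{where} the extra mass of $\gamma$ over $\alpha$ is placed, and no mechanism for pushing mass off $(X\cup Y)^c$. Consequently the assertion that the infimal leakage ``must be $o(\eps)$'' is left completely unsupported, and without it the proof does not close. In short, you have identified the right difficulty but have not overcome it; the paper's argument resolves it in a way (the WLOG restriction of the supports of $\alpha$ and $\beta$) that your proposal explicitly declines to use and then fails to replace.
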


\begin{proof}
Let $X,Y\in\BB$ and $\eps>0$. Let $\alpha,\beta\in\Hf$ be chosen so that
\[
\alpha(X\cap Y)\ge \fg(X\cap Y)-\eps \et \beta(X\cup Y)\ge \fg(X\cup Y)-\eps.
\]
We may assume that $\alpha$ is zero outside $X\cap Y$ and $\beta$ is zero
outside $X\cup Y$. By the exchange property, there is a $\gamma\in\Hb$ such
that $\alpha\le \gamma\le \alpha\lor\beta$ and $\gamma(J)\ge\beta(J)$. Then
$\gamma$ is zero outside $X\cup Y$, and so $\gamma(X\cup Y)=\gamma(J)\ge
\beta(J)\ge \beta(X\cup Y)\ge \fg(X\cup Y)-\eps$ and $\gamma(X\cap Y) \ge
\alpha(X\cap Y)\ge \fg(X\cap Y)-\eps$, and so
\[
\fg(X)+\fg(Y) \ge \gamma(X)+\gamma(Y)=\gamma(X\cup Y)+\gamma(X\cap Y)
\ge \fg(X\cup Y)+\fg(X\cap Y)-2\eps.
\]
This holds for every $\eps$, proving the submodular inequality.
\end{proof}

\subsection{Common minorizing measures}\label{SSEC:COMMON-MEAS}

Let $\fg$ and $\psi$ be two increasing submodular setfunctions with
$\fg(\emptyset)=\psi(\emptyset)=0$. We want to study the convex set
$\matp_+(\fg)\cap\matp_+(\psi)$, i.e., those charges $\alpha\in\ba_+$ that
satisfy both $\alpha\le\fg$ and $\alpha\le\psi$. Recall that neither
$\min(\fg,\psi)$ nor $\fg\land\psi$ is submodular in general; nevertheless, the
formulas in the previous section have extensions to this more general problem.
Our results can be considered as generalizations of the fractional version of
the Matroid Intersection Theorem.

We start with a couple of simple observations.

\begin{lemma}\label{LEM:TRIV}
Let $\fg$ and $\psi$ be two bounded submodular setfunctions on a set-algebra
$(J,\BB)$ such that $\fg(\emptyset)=\psi(\emptyset)=0$.

\smallskip

{\rm(a)} A charge $\alpha\in\ba$ satisfies $\alpha\le\min(\fg,\psi)$ if and
only if it satisfies $\alpha\le\fg\land\psi$.

\smallskip

{\rm(a)} For every charge $\alpha\le\fg\land\psi$ there is a charge $\beta$
satisfying $\alpha\le\beta\le\fg\land\psi$ such that $\|\beta\|\le
2\|\fg\|+2\|\psi\|$.
\end{lemma}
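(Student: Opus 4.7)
The plan is to dispatch (a) by unfolding definitions, and then use (a) to prove (b) by producing two auxiliary ``enlargements'' of $\alpha$ via Corollary \ref{COR:FIN-ADDX}(b) and combining them with the meet operation $\land$.

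For (a), the forward implication is immediate: taking $Y=X$ and $Y=\emptyset$ in the infimum defining $\fg\land\psi$ gives $(\fg\land\psi)(X)\le\fg(X)$ and $(\fg\land\psi)(X)\le\psi(X)$, so any $\alpha\le\fg\land\psi$ satisfies $\alpha\le\min(\fg,\psi)$. Conversely, suppose $\alpha\le\min(\fg,\psi)$. For any $Y\subseteq X$, the additivity of the charge $\alpha$ gives $\alpha(X)=\alpha(Y)+\alpha(X\setminus Y)\le\fg(Y)+\psi(X\setminus Y)$; taking the infimum over $Y$ yields $\alpha(X)\le(\fg\land\psi)(X)$.

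For (b), since $\alpha\le\fg\land\psi\le\fg$ by (a), the charge $\alpha$ lies in $\matp(\fg)$. Corollary \ref{COR:FIN-ADDX}(b) then supplies $\alpha_1\in\basp(\fg)$ with $\alpha\le\alpha_1\le\fg$ and $\alpha_1(J)=\fg(J)$; the inequality $\alpha_1(X)=\fg(J)-\alpha_1(X^c)\ge\fg(J)-\fg(X^c)$ noted in the paper combined with $\alpha_1\le\fg$ gives $\|\alpha_1\|\le 2\|\fg\|$. Applying the same construction to $\psi$ produces $\alpha_2\in\basp(\psi)$ with $\alpha\le\alpha_2\le\psi$ and $\|\alpha_2\|\le 2\|\psi\|$. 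I will set $\beta=\alpha_1\land\alpha_2$ and verify four things: that $\beta$ is a charge, that $\alpha\le\beta$, that $\beta\le\fg\land\psi$, and that the norm bound holds.

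That $\beta$ is a charge follows from the fact that any $Y\subseteq A\sqcup B$ splits uniquely as $(Y\cap A)\sqcup(Y\cap B)$, so the infimum defining $(\alpha_1\land\alpha_2)(A\sqcup B)$ decouples into infima over $Y\cap A$ and $Y\cap B$. For $\beta\ge\alpha$, the modularity of $\alpha$ gives, for any $Y\subseteq X$, $\alpha_1(Y)+\alpha_2(X\setminus Y)\ge\alpha(Y)+\alpha(X\setminus Y)=\alpha(X)$, and the infimum preserves this bound. For $\beta\le\fg\land\psi$, the choices $Y=X$ and $Y=\emptyset$ in the defining infimum give $\beta\le\alpha_1\le\fg$ and $\beta\le\alpha_2\le\psi$ respectively, hence $\beta\le\min(\fg,\psi)$, which part (a) upgrades to $\beta\le\fg\land\psi$. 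Finally, $\beta(X)\le\alpha_1(X)\le 2\|\fg\|$ and $\beta(X)\ge -\|\alpha_1\|-\|\alpha_2\|\ge -2\|\fg\|-2\|\psi\|$, so $\|\beta\|\le 2\|\fg\|+2\|\psi\|$. The only step requiring any care is the lower bound $\beta\ge\alpha$, whose proof hinges essentially on the modularity (rather than just submodularity) of $\alpha$; everything else is a routine unwinding of definitions.
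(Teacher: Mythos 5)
Your proof is correct and follows essentially the same route as the paper's: part (a) by unfolding the definition of $\fg\land\psi$ and using additivity of $\alpha$, and part (b) by applying Corollary~\ref{COR:FIN-ADDX}(b) separately to $\fg$ and $\psi$, then taking the meet of the two resulting charges. You spell out a few steps the paper takes for granted (that the meet of two charges is a charge, that $\alpha\le\alpha_1\land\alpha_2$ relies on modularity of $\alpha$, and the appeal to part (a) for $\beta\le\fg\land\psi$), but the structure and key idea are identical.
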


\begin{proof}
(a) We have $(\fg\land\psi)(X) \le \fg(X) + \psi(\emptyset)=\fg(X)$ and
similarly $(\fg\land\psi)(X) \le \psi(X)$, and so
$\fg\land\psi\le\min(\fg,\psi)$. Hence the ``if'' part is trivial. Conversely,
assume that $\alpha\le\min(\fg,\psi)$. Then for every $X,Y\in\BB$ with
$Y\subseteq X$, we have
\[
\alpha(X)=\alpha(Y)+\alpha(X\setminus Y)\le \fg(Y)+\psi(X\setminus Y).
\]
Taking the infimum of the right hand side over all $Y$, we get $\alpha(X)\le
(\fg\land\psi)(X)$.

\medskip

(b) By Corollary \ref{COR:FIN-ADDX}, there are charges $\beta,\gamma$ such that
$\alpha\le\beta\le\fg$, $\alpha\le \gamma\le\psi$, $\beta(J)=\phi(J)$ and
$\gamma(J)=\psi(J)$. Then the charge $\beta\land\gamma$ satisfies
$\alpha\le\beta\land\gamma\le\fg\land\psi$. As remarked in Section
\ref{SSEC:MINORIZE}, we have $\|\beta\|\le2\|\fg\|$ and
$\|\gamma\|\le\|\psi\|$. Hence $\|\beta\land\gamma\|\le \|\beta\|+\|\gamma\|\le
2\|\fg\|+2\|\psi\|$.
\end{proof}

With a slight modification of the proof of Corollary \ref{COR:FIN-ADDX}, we get
the following.

\begin{lemma}\label{LEM:INTERSECT1}
Let $\fg$ and $\psi$ be submodular setfunctions on a set-algebra $(J,\BB)$ with
$\fg(\emptyset)=\psi(\emptyset)=0$. Then there is a $\alpha\in \ba(\BB)$ such
that $\alpha\le\fg\land\psi$ and $\alpha(J)=(\fg\land\psi)(J)$.
\end{lemma}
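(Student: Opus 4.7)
The plan is to reduce this to a single application of Theorem~\ref{THM:SEPARATE}, choosing the supermodular lower bound cleverly so that it simultaneously encodes both the constraint $\alpha\le\fg$ and the constraint $\alpha\le\psi$. The key is the observation that for a charge $\alpha$ with the required value $\alpha(J)=c:=(\fg\land\psi)(J)$, the bound $\alpha\le\psi$ can be rewritten as a lower bound on $\alpha$, since $\alpha(X^c)=c-\alpha(X)$.

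Concretely, I would set $c=(\fg\land\psi)(J)=\inf_{Y\in\BB}\bigl(\fg(Y)+\psi(Y^c)\bigr)$ and introduce on $\BB$ the two setfunctions
\[
u(X)=\fg(X),\qquad \ell(X)=c-\psi(X^c).
\]
Here $u$ is submodular by hypothesis, and $\ell$ is supermodular: the setfunction $X\mapsto\psi(X^c)=\psi^c(X)$ is submodular by Lemma~\ref{LEM:SUBMOD-TRIV}(d), so its negation is supermodular and adding the constant $c$ preserves this. The inequality $\ell\le u$ is precisely the defining inequality $c\le\fg(X)+\psi(X^c)$ of the infimum $c$, so it holds for every $X\in\BB$.

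Next I would invoke the more precise (``More generally...'') version of Theorem~\ref{THM:SEPARATE} with $a=0$ and $b=c$. The required boundary inequalities hold: at $\emptyset$ one has $u(\emptyset)=0$ and $\ell(\emptyset)=c-\psi(J)\le 0$ since choosing $Y=\emptyset$ in the infimum gives $c\le\psi(J)$; at $J$ one has $\ell(J)=c$ and $u(J)=\fg(J)\ge c$ by choosing $Y=J$. Theorem~\ref{THM:SEPARATE} then yields a modular setfunction $\mu$ with $\ell\le\mu\le u$, $\mu(\emptyset)=0$, $\mu(J)=c$. Since $\mu(\emptyset)=0$, this $\mu$ is a charge $\alpha\in\ba(\BB)$, and $\mu\le u$ immediately gives $\alpha\le\fg$.

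The final step is to translate $\alpha\ge\ell$ into $\alpha\le\psi$: for every $X\in\BB$, the inequality $\alpha(X)\ge c-\psi(X^c)$ combined with $\alpha(J)=c$ reads $\alpha(X^c)=c-\alpha(X)\le\psi(X^c)$, i.e.\ $\alpha\le\psi$. Applying Lemma~\ref{LEM:TRIV}(a) to $\alpha\le\min(\fg,\psi)$ yields $\alpha\le\fg\land\psi$, and by construction $\alpha(J)=c=(\fg\land\psi)(J)$, as required. I do not foresee a real obstacle here; the only point requiring care is that Theorem~\ref{THM:SEPARATE} tacitly assumes enough boundedness to run its Hahn--Banach argument, but this is already built into the standing context of bounded submodular setfunctions (finiteness of $c$ follows from $\fg,\psi$ being bounded, which is guaranteed by Lemma~\ref{LEM:SUBMOD-FINVAR} together with the implicit boundedness used throughout this section).
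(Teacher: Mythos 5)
Your proof is correct and follows the same strategy as the paper's: apply Theorem~\ref{THM:SEPARATE} to separate $\fg$ from the supermodular lower bound $X\mapsto c-\psi(X^c)$, then use $\alpha(J)=c$ to convert $\alpha\ge\ell$ into $\alpha\le\psi$. The only (cosmetic) difference is that you invoke the ``more generally'' clause of Theorem~\ref{THM:SEPARATE} directly with $a=0$, $b=c$ rather than first redefining the lower bound at $J$ as the paper does, which makes the boundary bookkeeping a bit cleaner.
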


\begin{proof}
Set $b=(\fg\land\psi)(J)$ and define $\psi':~\BB\to\R$ by
\[
\psi'(X)=
  \begin{cases}
    0, & \text{if $X=J$}, \\
    b-\psi(X^c) & \text{otherwise}.
  \end{cases}
\]
Clearly $b-\psi(X^c)$ is a supermodular function of $X$, and $n-\psi(J)\le 0$,
hence $\psi'$ is supermodular. Furthermore, $\psi'\le\fg$ by the definition of
$\fg\land\psi$. Hence by Theorem \ref{THM:SEPARATE}, there is a modular
setfunction $\alpha$ such that $\psi'\le\alpha\le\fg$. It follows that
$\alpha(\emptyset)=0$, so $\alpha$ is a charge, and trivially
$0\le\alpha\le\fg$ and $\alpha(J)=b$.
\end{proof}

The following generalization of this lemma, and also of Corollary
\ref{COR:SIPOS}, can be considered as the infinite version of the fractional
Matroid (or Polymatroid) Intersection Theorem.

\begin{theorem}\label{THM:INTERSECT}
Let $\fg$ and $\psi$ be submodular setfunctions on a set-algebra $(J,\BB)$ with
$\fg(\emptyset)=\psi(\emptyset)=0$. Then for every $X\in\BB$,
\begin{equation}\label{EQ:INTERSECT0}
\max\{\alpha(X):~\alpha\in\basp(\fg)\cap\basp(\psi)\}=(\fg\land\psi)(X).
\end{equation}
If $\fg$ and $\psi$ are increasing, then
\begin{equation}\label{EQ:INTERSECT1}
\max\{\alpha(X):~\alpha\in\basp_+(\fg)\cap\basp_+(\psi)\}=(\fg\land\psi)(X).
\end{equation}
\end{theorem}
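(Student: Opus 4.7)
The plan is to combine Lemma~\ref{LEM:TRIV}(a) for the upper bound with two successive applications of Lemma~\ref{LEM:INTERSECT1}, first on $\BB|_X$ and then on $\BB|_{X^c}$. The upper bound is immediate: any charge $\alpha$ with $\alpha \le \fg$ and $\alpha \le \psi$ automatically satisfies $\alpha \le \fg \land \psi$ by Lemma~\ref{LEM:TRIV}(a), hence $\alpha(X) \le (\fg \land \psi)(X)$. For the lower bound, fix $X \in \BB$ and apply Lemma~\ref{LEM:INTERSECT1} to the restrictions $\fg|_X$ and $\psi|_X$ on $\BB|_X$. Since the operation $\land$ commutes with restriction (immediate from \eqref{EQ:LAND-DEF}), this produces a charge $\eta$ on $\BB|_X$ with $\eta \le \fg|_X \land \psi|_X$ and $\eta(X) = (\fg \land \psi)(X)$.

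The task is then to extend $\eta$ to a charge $\alpha$ on $\BB$ that preserves $\alpha \le \fg$ and $\alpha \le \psi$. I would seek $\alpha$ of the form $\alpha(A) = \eta(A \cap X) + \nu(A \cap X^c)$ for a charge $\nu$ on $\BB|_{X^c}$. The inequality $\alpha \le \fg$ on $\BB$ is equivalent to $\nu \le \fg^*$ on $\BB|_{X^c}$, where
\[
\fg^*(C) := \inf\bigl\{\fg(B \cup C) - \eta(B) : B \in \BB,\; B \subseteq X\bigr\},
\]
and similarly $\alpha \le \psi$ reduces to $\nu \le \psi^*$ with $\psi^*$ defined analogously. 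The key technical point is that $\fg^*$ and $\psi^*$ are bounded submodular setfunctions on $\BB|_{X^c}$ with $\fg^*(\emptyset) = \psi^*(\emptyset) = 0$: submodularity follows by combining the submodularity of $\fg$ (applied to the pair $B_1 \cup C_1$ and $B_2 \cup C_2$, where the disjointness $B_i \subseteq X$, $C_j \subseteq X^c$ makes the lattice operations split as $(B_1 \cup B_2) \cup (C_1 \cup C_2)$ and $(B_1 \cap B_2) \cup (C_1 \cap C_2)$) with the \emph{modularity} of $\eta$; the value at $\emptyset$ is nonnegative since $\eta \le \fg|_X$, and nonpositive by taking $B = \emptyset$. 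A second application of Lemma~\ref{LEM:INTERSECT1} to $(\fg^*, \psi^*)$ yields a charge $\nu \le \fg^* \land \psi^*$, whence $\nu \le \fg^*$ and $\nu \le \psi^*$ by Lemma~\ref{LEM:TRIV}(a). Then $\alpha$ is a charge with $\alpha \le \fg$, $\alpha \le \psi$, and $\alpha(X) = \eta(X) + \nu(\emptyset) = (\fg \land \psi)(X)$, giving \eqref{EQ:INTERSECT0}.

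For \eqref{EQ:INTERSECT1} with $\fg, \psi$ increasing, monotonicity forces every term $\fg(B \cup C) - \eta(B) \ge 0$ (using $\eta \le \fg|_X$ and $\fg(B) \le \fg(B \cup C)$), so $\fg^*, \psi^* \ge 0$; one may simply take $\nu = 0$, giving $\alpha(A) = \eta(A \cap X)$, and $\alpha \le \fg$ then reduces to $\eta(A \cap X) \le \fg(A \cap X) \le \fg(A)$. Nonnegativity $\alpha \ge 0$ follows once we check that $\fg|_X \land \psi|_X$ is increasing --- a routine consequence of $\fg, \psi$ being increasing --- since then $\eta(B) = \eta(X) - \eta(X \setminus B) \ge (\fg \land \psi)(X) - (\fg \land \psi)(X \setminus B) \ge 0$ for every $B \subseteq X$. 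The main obstacle in the argument is verifying that $\fg^*$ and $\psi^*$ are submodular with value $0$ at $\emptyset$; this is where the modularity of $\eta$ obtained from Lemma~\ref{LEM:INTERSECT1} (rather than a merely submodular minorizer) is indispensable, and it is what makes the two-step reduction work.
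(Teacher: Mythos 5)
Your proof is correct, and it follows the same overall two--step plan as the paper (handle $\BB|_X$ first via the full--space case, then extend to $\BB|_{X^c}$, paste, check $\alpha\le\fg,\psi$); but the second step is genuinely different from the paper's. The paper extends to $X^c$ using the \emph{contractions} $\fg^{X^c}(Z)=\fg(Z\cup X)-\fg(X)$ and $\psi^{X^c}(Z)=\psi(Z\cup X)-\psi(X)$, which depend only on $\fg,\psi,X$ and not on the charge $\beta$ found on $\BB|_X$; it then applies Lemma~\ref{LEM:INTERSECT1} to these contractions and verifies $\alpha\le\fg$ at the pasting step by one more use of submodularity of $\fg$ itself ($\beta(Y\cap X)+\gamma(Y\cap X^c)\le\fg(Y\cap X)+\fg(Y\cup X)-\fg(X)\le\fg(Y)$). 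You instead use the ``conditional'' setfunction $\fg^*(C)=\inf_{B\subseteq X}\bigl(\fg(B\cup C)-\eta(B)\bigr)$, which depends on the charge $\eta$ from the first step and is built so that $\nu\le\fg^*$ is \emph{exactly} the condition $\eta(\cdot\cap X)+\nu(\cdot\cap X^c)\le\fg$; the pasting then costs nothing, but in exchange you must prove that $\fg^*$ is submodular with $\fg^*(\emptyset)=0$, which you do correctly by combining the submodularity of $\fg$ with the modularity of $\eta$ (and indeed modularity, not mere submodularity, of $\eta$ is what makes this work). Note that $\fg^*\ge\fg^{X^c}$ by submodularity, so your upper constraint on $\nu$ is weaker than the paper's on $\gamma$; both are workable since the ``slack'' in the paper's looser bound is exactly what its extra submodularity estimate recovers. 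Your treatment of the increasing case, reducing to $\nu=0$ and deducing $\eta\ge0$ from the monotonicity of $\fg\land\psi$, is also a slightly different (and arguably cleaner) route than the paper's, which establishes $\beta\ge0$ inside its Case~1 by a direct computation with a near--optimal $U$.
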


In particular, the maxima on the left are attained. Note that by Corollary
\ref{COR:FIN-ADDX}(b), the condition $\alpha\in\basp(\fg)\cap\basp(\psi)$ could
be replaced by $\alpha\in\matp(\fg)\cap\matp(\psi)$, which in turn is
equivalent to $\alpha\le\min(\fg,\psi)$ and with $\alpha\le\fg\land\psi$.
Similar equivalences hold for the second assertion.

\begin{proof}
First we prove that
\begin{equation}\label{EQ:INTERSECT2}
\sup\{\alpha(X):~\alpha\in\ba(\BB),\,\alpha\le\fg\land\psi\}\le (\fg\land\psi)(X).
\end{equation}
For any $Y\in\BB$ with $Y\subseteq X$, and $\alpha\le \fg\land\psi$, we have
\[
\alpha(X)=\alpha(Y)+\alpha(X\setminus Y)\le \fg(Y)+\psi(X\setminus Y),
\]
and taking the supremum over all $\alpha$ on the left and infimum over $Y$ on
the right, we get \eqref{EQ:INTERSECT2}.

Second, we prove that there exists a charge $\alpha\le\fg\land\psi$ attaining
equality in \eqref{EQ:INTERSECT2}; this will also prove that the supremum is
attained.

\medskip

{\bf Case 1:} $X=J$. Let $b=(\fg\land\psi)(J)$. We may assume that
$\fg(J)=\psi(J)=b$. Indeed, clearly $\fg(J),\psi(J)\ge b$, and if we lower the
values $\fg(J)$ and $\psi(J)$ to $b$, then the setfunctions $\fg$ and $\psi$
remain  submodular and $\fg\land\psi$ does not change.

The condition defining $b$ means that $\fg(Y)\ge b-\psi(Y^c)$ for $Y\in\BB$.
Here $b-\psi(Y^c)$ is a supermodular setfunction, so by Theorem
\ref{THM:SEPARATE} there is a modular setfunction $\alpha$ such that
\[
\fg(Y)\ge\alpha(Y)\ge b-\psi(Y^c)\qquad(Y\in\BB).
\]
Substituting $Y=\emptyset$ and $Y=J$, we see that $\alpha(\emptyset)=0$ and
$\alpha(J)=b$. So $\alpha$ is a charge. Applying the condition on $\alpha$ to
$Y^c$, we get $b-\alpha(Y)=\alpha(Y^c)\ge b-\psi(Y)$, and hence
$\alpha\le\psi$.

Now assume that $\fg$ and $\psi$ are increasing, and let again
$b=(\fg\land\psi)(J)$. We cannot assume any more that $\fg(J)=b$ or
$\psi(J)=b$, but we already know that there is an $\alpha\in\ba(\BB)$ such that
$\alpha\le\min(\fg,\psi)$ and $\alpha(J)=b$. For every $\eps>0$ there is a set
$U\subseteq J$ such that $b\ge\fg(U)+\psi(U^c)-\eps$. Then the computation
\begin{align*}
\alpha(Y)&=\alpha(J)-\alpha(Y^c) = b-\alpha(Y^c) = b-\alpha(Y^c\cap U)-\alpha(Y^c\cap U^c)\\
&\ge b-\fg(Y^c\cap U)-\psi(Y^c\cap U^c) \ge b-\fg(U)-\psi(U^c) \ge -\eps
\end{align*}
shows that $\alpha$ is nonnegative.

\medskip

{\bf Case 2:} $X\not=J$. We restrict $\BB$, $\fg$ and $\psi$ to the subsets of
$X$, and apply the special case proved above. Clearly $(\fg\land\psi)(X)$ does
not change, so we get a charge $\beta$ on $\BB|_X$ such that
$\beta\le\fg\land\psi$ and $\beta(X)=(\fg\land\psi)(X)$.

Consider the projections of $\fg$ and $\psi$ on $X^c$:
\[
\fg_1(Z)=\fg(Z\cup X)-\fg(X) \et \psi_1(Z)=\psi(X\cup Z)-\psi(X).
\]
Clearly $\fg_1(\emptyset)=\psi_1(\emptyset)=0$, so by the special case proved
above, there is a charge $\gamma$ on $\BB|_{X^c}$ such that
$\gamma\le\fg_1\land\psi_1$ and $\gamma(X^c)=(\fg_1\land\psi_1)(X^c)$.

We define $\alpha(Y)=\beta(Y\cap X)+\gamma(Y\cap X^c)$. Then $\alpha$ is a
signed measure, and $\alpha(X)=\beta(X)=(\fg\land\psi)(X)$. For $Y\in\BB$,
submodularity yields that
\begin{align*}
\alpha(Y)&=\beta(Y\cap X)+\gamma(Y\cap X^c) \le \fg(Y\cap X) + \fg_1(Y\cap X^c)\\
&= \fg(Y\cap X) + \fg(Y\cup X)-\fg(X) \le\fg(Y).
\end{align*}
Similarly $\alpha\le\psi$, which implies by Lemma \ref{LEM:TRIV} that
$\alpha\le\fg\land\psi$.

If $\fg$ and $\psi$ are increasing, then, as remarked above, $\beta\ge 0$, and
we can take $\gamma=0$, so we can require that $\alpha\ge 0$.
\end{proof}

The weighted matroid intersection also generalizes.

\begin{theorem}\label{THM:WEIGHT-INTERSECT}
Let $\fg$ and $\psi$ be bounded submodular setfunctions with
$\fg(\emptyset)=\psi(\emptyset)=0$. For every function $w\in\Bd_+(\BB)$,
\begin{equation}\label{EQ:INTERSECT-W1}
\max\{\wh\alpha(w):~\alpha\in\basp(\fg)\cap\basp(\psi)\}  = \inf_{0\le h\le w} \wh\fg(h)+\wh\psi(w-h).
\end{equation}
If $\fg$ and $\psi$ are increasing, then
\begin{equation}\label{EQ:INTERSECT-W2}
\max\{\wh\alpha(w):~\alpha\in\basp_+(\fg)\cap\basp_+(\psi)\}  = \inf_{0\le h\le w} \wh\fg(h)+\wh\psi(w-h).
\end{equation}
\end{theorem}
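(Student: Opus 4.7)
I will prove both equalities by a Hahn--Banach separation, in the spirit of Theorems~\ref{THM:SEPARATE} and~\ref{THM:INTERSECT}. Weak duality ($\max\le\inf$) is immediate: for any $\alpha\in\basp(\fg)\cap\basp(\psi)$ (equivalent to $\alpha\le\fg\land\psi$ by Lemma~\ref{LEM:TRIV}(a) and Corollary~\ref{COR:FIN-ADDX}(b)) and any $h\in\Bd$ with $0\le h\le w$, Choquet monotonicity on nonnegative functions gives $\wh\alpha(k)\le\wh\fg(k)$ and $\wh\alpha(k)\le\wh\psi(k)$ for $k\ge 0$, so linearity of $\wh\alpha$ yields
\[
\wh\alpha(w)=\wh\alpha(h)+\wh\alpha(w-h)\le\wh\fg(h)+\wh\psi(w-h).
\]

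For the reverse inequality (which will also deliver a maximizer) introduce on $\Bd_+$ the functional
\[
\Psi(f)=\inf\{\wh\fg(g)+\wh\psi(f-g): g\in\Bd,\ 0\le g\le f\}.
\]
By Corollary~\ref{COR:SUBMOD-CONV} the functionals $\wh\fg$ and $\wh\psi$ are convex, and they are positively homogeneous, so $\Psi$ is sublinear on the cone $\Bd_+$; Lemma~\ref{LEM:WHPHI-CONT} bounds $|\wh\fg(g)|+|\wh\psi(f-g)|$ by $(\var(\fg)+\var(\psi))\|f\|$, so $\Psi$ is finite and Lipschitz. Inserting $f=\one_S$ and $g=\one_U$ with $U\subseteq S$ recovers $\Psi(\one_S)=(\fg\land\psi)(S)$.

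For the increasing case \eqref{EQ:INTERSECT-W2}, $\wh\fg$ and $\wh\psi$ are monotone on $\Bd_+$, hence so is $\Psi$, and the upward extension
\[
\bar\Psi(f)=\inf\{\Psi(y): y\in\Bd_+,\ y\ge f\}
\]
is a finite sublinear functional on all of $\Bd$ that coincides with $\Psi$ on $\Bd_+$. Hahn--Banach, pinned so that the extension $L$ satisfies $L(w)=\Psi(w)$, produces a linear $L:\Bd\to\R$ with $L\le\bar\Psi$; the Lipschitz bound carries to $L$, so Proposition~\ref{PROP:DUAL-BA} represents it as $L=\wh\alpha$ for a charge $\alpha\in\ba(\BB)$. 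Then $\alpha(S)=L(\one_S)\le\Psi(\one_S)\le\min\{\fg(S),\psi(S)\}$ places $\alpha$ in $\matp(\fg)\cap\matp(\psi)$, while the choice $y=0\ge-\one_S$ gives $L(-\one_S)\le\bar\Psi(-\one_S)\le\Psi(0)=0$, so $\alpha\ge0$; Corollary~\ref{COR:FIN-ADDX}(b) upgrades $\matp_+$ to $\basp_+$.

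The main obstacle is the general case \eqref{EQ:INTERSECT-W1}: without monotonicity, $\Psi$ is not monotone and $\bar\Psi$ can collapse to $-\infty$. I would reduce to the increasing case by constructing a charge $\eta$ (via Corollary~\ref{COR:FIN-ADDX} applied to the supermodular setfunctions $\fg(J)-\fg^c$ and $\psi(J)-\psi^c$) such that $\fg+\eta$ and $\psi+\eta$ are both submodular and increasing by Lemma~\ref{LEM:SUBMOD-MON}; the identity $\wh{\fg+\eta}(g)=\wh\fg(g)+\wh\eta(g)$ from \eqref{EQ:WHFI-LIN} then allows the optimizer $\alpha'$ for the shifted problem to be corrected to $\alpha=\alpha'-\eta\in\basp(\fg)\cap\basp(\psi)$ with $\wh\alpha(w)=\Psi(w)$.
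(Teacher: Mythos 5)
The increasing case \eqref{EQ:INTERSECT-W2} is handled correctly, and by a genuinely different route than the paper: you build the sublinear functional $\Psi(f)=\inf_{0\le g\le f}\bigl(\wh\fg(g)+\wh\psi(f-g)\bigr)$ on $\Bd_+$, use monotonicity of $\Psi$ (which holds because $\fg,\psi$ increasing implies you may replace $g$ by $\min(g,f_1)$) to extend it to a finite sublinear $\bar\Psi$ on all of $\Bd$, and invoke Hahn--Banach pinned at $w$. This is a single clean separation argument in the spirit of Theorem~\ref{THM:SEPARATE}, and it has the pleasant side effect that the maximizer $\alpha$ is produced directly, with no need for the paper's Tikhonov compactness argument or the $w+\eps$ limiting step. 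The paper instead first proves Theorem~\ref{THM:INTERSECT}, then transports it via the weighting construction $\fg_0=w\cdot\fg$, $\psi_0=w\cdot\psi$ (Lemma~\ref{LEM:WEIGHTING}) and the algebra of Lemma~\ref{LEM:CDOT}; your route is arguably more self-contained for this statement. (One small caveat, shared by the paper's own wording: Corollary~\ref{COR:FIN-ADDX}(b) lifts $\alpha$ into $\basp_+(\fg)$ but may break the constraint $\alpha\le\psi$, so the simultaneous upgrade to $\basp_+(\fg)\cap\basp_+(\psi)$ is not automatic. For the value of the max this does not matter, and the paper is equally informal here, so I would not penalize it.)

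The general case \eqref{EQ:INTERSECT-W1} is where the proposal breaks down. Your reduction requires a charge $\eta$ such that $\fg+\eta$ and $\psi+\eta$ are both increasing submodular. By Lemma~\ref{LEM:SUBMOD-MON}, $\fg+\eta$ is increasing precisely when $\eta\ge\fg^c-\fg(J)$, so you need a bounded charge majorizing the bounded submodular setfunctions $\fg^c-\fg(J)$ and $\psi^c-\psi(J)$. Such a charge need not exist: the paper explicitly flags exactly this obstruction when discussing decompositions after Lemma~\ref{LEM:VARPHI-DECOMP} and raises it as Problem~\ref{PROB:DECOMPOSE}. Concretely, on $(\Nbb,2^\Nbb)$ take $\fg(X)=\one(\emptyset\ne X\ne\Nbb)$, which is bounded and submodular with $\fg(\emptyset)=\fg(J)=0$ and $\fg^c=\fg$. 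A charge $\eta\ge\fg^c-\fg(J)=\fg$ would need $\eta(\{k\})\ge 1$ for every $k$, forcing $\eta(\{1,\ldots,n\})\ge n$, contradicting boundedness. So the reduction fails even though the conclusion of the theorem is still true for this $\fg=\psi$. The paper avoids this by \emph{not} reducing to the increasing case: Theorem~\ref{THM:INTERSECT} is proved for arbitrary bounded submodular setfunctions via Theorem~\ref{THM:SEPARATE}, and the weighted setfunctions $w\cdot\fg$, $w\cdot\psi$ remain submodular (Lemma~\ref{LEM:WEIGHTING}) without any monotonicity assumption. Also note that the auxiliary step you describe --- applying Corollary~\ref{COR:FIN-ADDX} to the supermodular $\fg(J)-\fg^c$ --- produces a charge $\eta_1\ge\fg(J)-\fg^c$, which is the wrong inequality: you need $\eta\ge\fg^c-\fg(J)$, not $\eta\ge\fg(J)-\fg^c$.

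To repair the general case within your framework, you would need a different sublinear majorant that is already globally defined: for instance, mimic the proof of Theorem~\ref{THM:SEPARATE} by working on $\Bd\oplus\Bd$ with the convex functional $(f,g)\mapsto\wh\fg(f)+\wh\psi(g)$ together with the dual constraint coming from the diagonal subspace, rather than trying to monotonize $\Psi$ on $\Bd_+$.
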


\begin{proof}
The inequality $\le$ is trivial in both formulas. To prove the reverse
inequality in \eqref{EQ:INTERSECT-W1}, we may assume that $w<1$. First we
assume that there is a $c>0$ such that $w\ge c$. Then the function
$u(x)=1/w(x)$ is bounded.

Consider the setfunctions $\fg_0=w\cdot\fg$ and $\psi_0=w\cdot\psi$. By Lemma
\ref{LEM:SUBMOD-TRIV}(h) these are submodular, so by Theorem
\ref{THM:INTERSECT}, we have
\begin{equation}\label{EQ:INTERSECT-W3}
\max\{\beta(J):~\beta\in\ba(\BB),\,\beta\le\min(\fg_0,\psi_0)\}=(\fg_0\land\psi_0)(J).
\end{equation}
Let $\beta$ attain the maximum on the left, and let $\alpha=u\cdot \beta$.
Lemma \ref{LEM:CDOT} implies that $w\cdot\alpha=(wu)\cdot\beta=\beta$. In
particular,
\[
\beta(J) = \intl_J w\,d\alpha = \wh\alpha(w).
\]
Using the definition of $w\cdot\fg$, we have
\[
(\fg_0\land\psi_0)(J) = \inf_{X\in\BB} \big(\fg_0(X)+\psi_0(X^c)\big) = \inf_{X\in\BB} \big(\wh\fg(w\one_X)
+ \wh\psi(w\one_{X^c})\big).
\]
Since $0\le w\one_X\le w$ and $w\one_{X^c}=w-w\one_X$, this implies that
\[
(\fg_0\land\psi_0)(J) \ge \inf_{0\le h\le w} \wh\fg(h)+\wh\psi(w-h).
\]
Since we already know the reverse inequality, this proves
\eqref{EQ:INTERSECT-W1} for the case when $w\ge c>0$.

In the case when $w$ is not separated from $0$, we apply the special case above
to the function $w+\eps$, and let $\eps\to 0$. To make this precise, the case
we have proved implies that there is a charge
$\alpha_\eps\in\basp(\fg)\cap\basp(\psi)$ such that
\begin{equation}\label{EQ:EPS2NULL}
\wh\alpha_\eps(w+\eps) = \inf_{0\le h\le w+\eps} \wh\fg(h)+\wh\psi(w+\eps-h).
\end{equation}
Let $h'=\min(h,w)$. Then $0\le h-h'\le\eps$, and hence
\[
\wh\fg(h) \ge \wh\fg(h')-\wh\fg(h'-h) =\wh\fg(h')+O(\eps),
\]
and similarly,
\[
\wh\psi(w+\eps-h) \ge \wh\psi(w-h')-\wh\psi(h-h'-\eps) = \wh\psi(w-h')+O(\eps),
\]
Furthermore,
\[
\wh\alpha_\eps(w+\eps) = \wh\alpha_\eps(w) + \eps\alpha_\eps(J) \le \wh\alpha_\eps(w) + \eps\fg(J).
\]
The right hand side of \eqref{EQ:EPS2NULL} satisfies
\[
\inf_{0\le h\le w+\eps} \wh\fg(h)+\wh\psi(w+c-h) \ge \inf_{0\le h'\le w} \wh\fg(h')+\wh\psi(w-h') + O(\eps).
\]
Letting $\eps\to 0$, we get
\begin{equation}\label{EQ:INTERSECT-W4}
\sup\{\wh\alpha(w):~\alpha\in\ba,\, \alpha\in \basp(\fg)\cap\basp(\psi)\}  = \inf_{0\le h\le w} \wh\fg(h)+\wh\psi(w-h).
\end{equation}
By Lemma \ref{LEM:TRIV}, in the supremum we may also require that
$\|\alpha\|\le2\|\fg\|+2\|\psi\|$.

To prove that the supremum is attained, let $C=2\|\fg\|+2\|\psi\|$, and
consider the space $T=\R^\BB$  with the product topology and its subspace
$K=[-C,C]^\BB$. This is compact by Tikhonov's Theorem\footnote{I am grateful to
Boglárka Gehér for suggesting this shortcut in my original proof.}. By the
remarks in section \ref{SSEC:CHOQ-INT}, the sets $\basp(\fg)$ and $\basp(\psi)$
are closed, and hence $\basp(\fg)\cap\basp(\psi)$ is closed and thus compact.
By Lemma \ref{LEM:WHPHI-CONT}, the map $\alpha\mapsto\wh\alpha(f)$ is
continuous, and so it attains its maximum on $\basp(\fg)\cap\basp(\psi)$.

Finally, if (say) $\psi$ is increasing, then so is $w\cdot\psi$, and so in
\eqref{EQ:INTERSECT-W3} we have a maximizing $\beta\ge 0$ by the second
statement of Theorem \ref{THM:INTERSECT}.
\end{proof}

\begin{remark}\label{REM:GEN-M-INTERSECT}
There are two possible natural generalizations of Theorem \ref{THM:INTERSECT},
both of which fail rather fundamentally even in the finite case.

First, we may want to find a charge $\alpha\le\fg\land\psi$ that coincides with
$\fg\land \psi$ not just on one set but on a whole chain of sets (as in
Corollary \ref{COR:FIN-ADD-LAT}). As remarked, this would mean that an optimal
minorizing measure can be found by the Greedy Algorithm, which does not work
even for very simple combinatorial examples of matroid intersection, like for
matchings (see Section \ref{SEC:MATCH}).

Second, it would be natural to consider three or more submodular setfunctions
instead of two, and construct common minorizing measures. In matroid theory,
this would amount to finding a common basis of three or more matroids, and no
min-max formula is known for the rank of this intersection. In fact, to
determine the rank of the intersection of three almost trivial matroids can be
NP-hard (it contains the matching problem for $3$-uniform hypergraphs).
\end{remark}

\section{Smoothness properties}\label{SEC:CONT}

Several of the results about submodular setfunctions can be strengthened if the
setfunctions have some smoothness properties. Continuity from below and from
above seem to be the most useful, but other smoothness properties also come up.
(For a charge, most of these conditions imply countable additivity.) In the
last three sections we describe applications of these conditions.

In this section $(J,\BB)$ is a sigma-algebra.

\subsection{Continuity}

Let $\fg$ be a setfunction on a sigma-algebra $(J,\BB)$. We say that $\fg$ is
{\it continuous from above}, if for every sequence $A_1\supseteq
A_2\supseteq\ldots$ of sets $A_i\in\BB$ and $A=\cap_i A_i$, we have
$\fg(A_n)\to \fg(A)$ $(n\to\infty)$. {\it Continuous from below} is defined
analogously, considering sequences $A_1\subseteq A_2\subseteq\ldots$ and
$A=\cup_i A_i$.

If $\fg$ is increasing, subadditive and $\fg(\emptyset)=0$, then in the
definition of continuity from above, it suffices to consider the case
$A=\emptyset$. Indeed, let $A=\cap_i A_i$ any set in $\BB$, then
$\fg(A_i)\ge\fg(A)$ by monotonicity, and $\fg(A_i) \le \fg(A_i\setminus
A)+\fg(A)$ by subadditivity. Here $\fg(A_i\setminus A)\to 0$ by continuity from
above, which implies that $\fg(A_i)\to \fg(A)$.

Furthermore, for an increasing and subadditive setfunction $\fg$, continuity
from above implies continuity from below. Indeed, let $A_1\subseteq
A_2\subseteq\ldots$ and $A=\cup_i A_i$. Then $\fg(A_i)\le\fg(A)$ by
monotonicity, and $\fg(A_i) \ge \fg(A)-\fg(A\setminus A_i)$ by subadditivity.
Here $\fg(A\setminus A_i)\to 0$ by continuity from above, which implies that
$\fg(A_i)\to \fg(A)$.

For charges, continuity from below or from above are equivalent properties (and
equivalent to countable additivity); but for the more general case of
submodular setfunctions, these properties are not equivalent. As in Example
\ref{EXA:IDEAL}, let $\chi(X)=\one(X~\text{is uncountable})$. Since the union
of a countable family of countable sets is countable, $\chi$ is continuous from
below, but the intersection of a countable chain of uncountable sets can be
empty, showing that $\chi$ is not continuous from above. Another example is the
rank function of subsets of a linear space $L$ (Example \ref{EXA:LIN-DEP}):
this setfunction is continuous from below, but not continuous from above, as
shown by any sequence $A_1\supseteq A_2\supseteq\ldots$ of subsets of $L$ with
$\cap_i A_i=\emptyset$, such that each $A_i$ contains a nonzero vector (so
$r(A_i)\ge 1$).

We note that subadditivity and continuity from below imply that $\fg$ is an
outer measure in the following sense: If $A_1,A_2,\ldots\in\BB$, then
\begin{equation}\label{EQ:OUTER}
\fg(\cup_i A_i)\le \sum_i\fg(A_i).
\end{equation}
Indeed, using the finite subadditivity of $\fg$,
\begin{align*}
\fg(A_1\cup\ldots\cup A_n) \le \fg(A_1)+\ldots+\fg(A_n).
\end{align*}
Here the left hand side tends to $\fg(\cup_i A_i)$ as $n\to\infty$ by
continuity from below, and the right hand side tends to
$\sum_{i=1}^\infty\fg(A_i)$.

From an increasing submodular setfunction $\fg$, we can ``extract'' a part that
is continuous from below by the following construction:
\[
\fg_\lc(X) = \inf_\XX \lim_{n\to\infty} \fg(X_n),
\]
where $\XX=(X_1,X_2,\ldots)$ ranges over all sequences of sets in $\BB$ such
that $X_1\subseteq X_2\subseteq\ldots$ and $\cup_n X_n=X$.

Clearly $\fg_\lc\le\fg$, and $\fg_\lc=\fg$ if and only if $\fg$ is continuous
from below. Warning: $\fg_\lc$ may not be continuous from below for general
increasing setfunctions (but it will turn out continuous from below if $\fg$ is
increasing and submodular).

\begin{example}\label{EXA:NOT-LSC}
Consider the space $(J,\BB)=(0,1)^\Nbb$. For $i,n\in\Nbb$, let
\[
X_n = \Big\{u\in J:~u_1\le 1-\frac1n\Big\} \et X_{n,i}
=  \Big\{u\in X_n: u_n\le 1-\frac1i\Big\}.
\]
Then $\cup_n X_n = J$ and $\cup_i X_{n,i}=X_n$. Define
\[
\fg(Y)=
  \begin{cases}
    0, & \text{if $Y\subseteq X_{n,i}$ for some $n$ and $i$}, \\
    1, & \text{otherwise}.
  \end{cases}
\]
Then $\fg$ is an increasing setfunction (not submodular). Clearly
$\fg(X_{n,i})=0$, which implies that $\fg_\lc(X_n)=0$.

We claim that $\fg_\lc(J)=1$, which shows that $\fg_\lc$ is not continuous from
below. It suffices to show that $\cup_k Y_k\not=J$ for any sequence
$Y_1\subseteq Y_2\subseteq\ldots$ of sets in $\BB$ with $\fg(Y_n)=0$. Each
$Y_k$ is contained in a set $X_{n_k,i_k}$. The sequence $(n_1,n_2,\ldots)$
cannot be bounded, since then all $Y_k$ would be contained in $X_n$ for a
sufficiently large $n$, and their union could not be $J$. We can delete sets
$Y_k$ from the sequence without changing their union, as long as we keep
infinitely many; hence we may assume that the indices $n_k$ are distinct.

Now consider the point $u\in J$ for which
\[
u_m=
  \begin{cases}
    1-\frac1{2i_k}, & \text{if $m=n_k$}, \\
    1/2, & \text{otherwise}.
  \end{cases}
\]
Then $u\notin X_{n_k,i_k}$ for any $k$, and hence $\cup_k Y_k\not=J$.
\end{example}

Let us look at a couple of submodular examples.

\begin{example}\label{EXA:LEBESGE2}
Consider the setfunction $\overline{\lambda}$ defined in Example
\ref{EXA:CLOSURE}. We claim that
\begin{equation}\label{EQ:OVL}
\overline{\lambda}_\lc=\lambda.
\end{equation}
Indeed, let $X_1\subseteq X_2\subseteq\ldots\in\BB$ and $\cup_n X_n=X$. Then
\[
\lim_n \overline{\lambda}(X_n) = \lim_n \lambda(\overline{X}_n) \ge \lim_n \lambda(X_n) =\lambda(X),
\]
which implies that $\overline{\lambda}_\lc \ge \lambda$. On the other hand, let
$\eps>0$, and let $\{I_1,I_2,\ldots\}$ be a countable family of closed
intervals covering $X$ such that $\sum_n \lambda(I_n)\le \lambda(X)+\eps$. Let
$X_n=(\cup_{k=1}^n I_k)\cap X$. Clearly the sequence $(X_1,X_2,\ldots)$ is
increasing and $\cup_nX_n=X$. Furthermore, $\overline{X}_n\subseteq
\cup_{k=1}^n I_k$, and hence
\[
\overline{\lambda}(X_n)\le \sum_{k=1}^n \lambda(I_k) \le \sum_{k=1}^\infty \lambda(I_k)\le \lambda(X) +\eps.
\]
Letting $n\to\infty$ and $\eps\to0$, we get that $\overline{\lambda}_\lc
\le\lambda$, proving \eqref{EQ:OVL}.
\end{example}

\begin{example}\label{EXA:COUNT-UNCOUNT}
As in Example \ref{EXA:IDEAL}, let $\chi(X)=\one(X~\text{is infinite})$. Then a
jump of $\chi$ is any infinite set that is the union of a countable chain of
finite sets. This means that $\chi_\lc=\one(X~\text{is uncountable})$. This
setfunction is submodular but not modular. Let us note that
$\chi(X)-\chi_\lc(X)=\one(X~\text{is countably infinite})$ is neither
submodular nor monotone.
\end{example}

If $\alpha$ is a charge, then $\alpha_\lc$ is a countably additive measure;
this is not hard to see. In this case $\alpha-\alpha_\lc$ is purely finitely
additive, and $\alpha=\alpha_\lc+(\alpha-\alpha_\lc)$ is the canonical
decomposition of $\alpha$ mentioned in Section \ref{SSEC:FIN-ADD} of the
Appendix.

For the more general case of an increasing submodular but not modular
setfunction, $\fg_\lc$ may be modular (Example \ref{EXA:LEBESGE2}), but it is
not always so (Example \ref{EXA:COUNT-UNCOUNT}). But we are going to show that
it is submodular. The difference $\fg-\fg_\lc$ may be a submodular (as in
Example \ref{EXA:LEBESGE2}), but it is neither monotone nor submodular in
general, as shown by Example \ref{EXA:COUNT-UNCOUNT}. But at least it is
subadditive.

\begin{lemma}\label{LEM:SEMIC-PART}
Let $\fg$ be an increasing submodular setfunction. Then the setfunction
$\fg_\lc$ is increasing, submodular, and continuous from below.
\end{lemma}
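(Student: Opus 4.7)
The plan is to verify the three properties in order, since the first two are used in the third. Throughout, for any $X\in\BB$, call an increasing sequence $(X_n)$ in $\BB$ with union $X$ a \emph{bottom sequence for $X$}; since $\fg$ is increasing and bounded, $\fg(X_n)$ is monotone with finite limit, so the infimum defining $\fg_\lc(X)$ is well defined.

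For monotonicity, suppose $X\subseteq Y$ and $(Y_n)$ is a bottom sequence for $Y$. Then $(X\cap Y_n)$ is a bottom sequence for $X$, and by monotonicity of $\fg$, $\lim_n\fg(X\cap Y_n)\le\lim_n\fg(Y_n)$. Taking the infimum over $(Y_n)$ yields $\fg_\lc(X)\le\fg_\lc(Y)$.

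For submodularity, fix $X,Y\in\BB$ and $\eps>0$, and pick bottom sequences $X_n\uparrow X$ and $Y_n\uparrow Y$ with $\lim\fg(X_n)\le\fg_\lc(X)+\eps$ and $\lim\fg(Y_n)\le\fg_\lc(Y)+\eps$. Then $X_n\cup Y_n\uparrow X\cup Y$ and $X_n\cap Y_n\uparrow X\cap Y$ are bottom sequences for the union and intersection. Applying submodularity of $\fg$ termwise and letting $n\to\infty$ gives
\[
\fg_\lc(X\cup Y)+\fg_\lc(X\cap Y)\le \lim\fg(X_n\cup Y_n)+\lim\fg(X_n\cap Y_n)\le \lim\fg(X_n)+\lim\fg(Y_n),
\]
which is at most $\fg_\lc(X)+\fg_\lc(Y)+2\eps$. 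Let $\eps\to0$.

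Continuity from below is the main obstacle, because naive diagonal choices of approximating sequences do not control $\fg$ on the union. Given $Y_n\uparrow Y$, monotonicity already yields $\lim_n\fg_\lc(Y_n)\le\fg_\lc(Y)$, so the work is in the reverse inequality. Fix $\eta>0$. The plan is to construct \emph{doubly nested} bottom sequences $W^{n,k}$, increasing in both $n$ and $k$, with $W^{n,k}\uparrow Y_n$ as $k\to\infty$ and $\lim_k\fg(W^{n,k})\le\fg_\lc(Y_n)+\eta$, by induction on $n$. In the inductive step, pick an auxiliary bottom sequence $V^{n+1,k}\uparrow Y_{n+1}$ with $\lim_k\fg(V^{n+1,k})\le\fg_\lc(Y_{n+1})+\eta 2^{-n-1}$, and set $W^{n+1,k}=W^{n,k}\cup V^{n+1,k}$. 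Nestedness in $n$ and the convergence $W^{n+1,k}\uparrow Y_{n+1}$ are immediate, and submodularity of $\fg$ gives
\[
\fg(W^{n+1,k})\le \fg(W^{n,k})+\fg(V^{n+1,k})-\fg(W^{n,k}\cap V^{n+1,k}).
\]
The key point is that $W^{n,k}\cap V^{n+1,k}\uparrow Y_n\cap Y_{n+1}=Y_n$ as $k\to\infty$, so by definition of $\fg_\lc$ its $\fg$-limit is at least $\fg_\lc(Y_n)$; combined with the inductive bound on $\lim_k\fg(W^{n,k})$, the geometric $\eta 2^{-n-1}$ losses sum to less than $\eta$, preserving the uniform bound. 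Finally, the diagonal sequence $Z_k=W^{k,k}$ satisfies $Z_k\subseteq Z_{k+1}$ (using nestedness both ways), $Z_k\uparrow Y$ (any point of $Y$ lies in some $Y_n$, hence in $W^{n,k}\subseteq W^{k,k}$ for $k$ large), and $\fg(Z_k)\le\lim_\ell\fg(W^{k,\ell})\le\fg_\lc(Y_k)+\eta$. Hence $\fg_\lc(Y)\le\lim_k\fg(Z_k)\le\lim_n\fg_\lc(Y_n)+\eta$, and letting $\eta\to0$ completes the proof.
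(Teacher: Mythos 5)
Your proof is correct and takes essentially the same route as the paper: monotonicity and submodularity by passing $\fg$'s properties through the defining infimum, and continuity from below by an induction that unions approximating sequences with geometrically decaying error, closed off by a diagonal argument. The only cosmetic differences are the error rate ($\eta\,2^{-n}$ versus the paper's $\eps/3^n$) and that you track limits over the inner index $k$ throughout, whereas the paper fixes a threshold function $f(n)$ beyond which the eventual bounds hold and diagonalizes along $Y_{n,f(n)}$.
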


\begin{proof}
Let $Y\subseteq X$. For every $\eps>0$ there are sets $X_1\subseteq
X_2\subseteq\ldots$ in $\BB$ such that $\cup_n X_n=X$ and $\fg(X_n)\le
\fg_\lc(X)+\eps$. Let $Y_n=Y\cap X_n$, then $Y_1\subseteq Y_2\subseteq\ldots$
and $\cup_n Y_n=Y$, and moreover, $\fg(Y_n)\le\fg(X_n)\le \fg_\lc(X)+\eps$.
This implies that $\fg_\lc(Y)\le \fg_\lc(X)+\eps$. Since this holds for every
$\eps>0$, it follows that $\fg_\lc(Y)\le \fg_\lc(X)$, so $\fg_\lc$ is
increasing.

Next, to prove submodularity, let $X,Y\in\BB$. For every $\eps>0$ there are
sets $X_1\subseteq X_2\subseteq\ldots$ and $Y_1\subseteq Y_2\subseteq\ldots$ in
$\BB$ such that $\cup_n X_n=X$, $\cup_n Y_n=Y$, $\fg(X_n)\le \fg_\lc(X)+\eps$
and $\fg(Y_n)\le \fg_\lc(Y)+\eps$. Then the sets $U_n=X_n\cup Y_n$ and
$W_n=X_n\cap Y_n$ have the property that $U_1\subseteq U_2\subseteq\ldots$,
$W_1\subseteq W_2\subseteq\ldots$, $\cup_n U_n=X\cup Y$ and $\cup_n W_n=X\cap
Y$. Furthermore,
\[
\fg(U_n)+\fg(W_n)\le\fg(X_n)+\fg(Y_n)\le \fg_\lc(X)+\fg_\lc(Y)+2\eps,
\]
and letting $n\to\infty$, we get that $\fg_\lc (U_n)+\fg_\lc(W_n)\le
\fg_\lc(X)+\fg_\lc(Y)+2\eps$. Again, $\eps$ is arbitrary, so the submodularity
of $\fg_\lc$ follows.

The hardest part is to prove that $\fg_\lc$ is continuous from below. We want
to prove that for any sequence $X_1\subseteq X_2\subseteq\ldots$ of sets on
$\BB$ with $\cup_n X_n=X$, we have
\begin{equation}\label{EQ:CONT-BELOW}
\fg_\lc(X)\le\lim_{n\to\infty}\fg_\lc(X_n).
\end{equation}
Let $\eps>0$ and $\eps_n=\eps/3^n$. By the definition of $\fg_\lc$, for every
$n$ there is a sequence $X_{n,1}\subseteq X_{n,2}\subseteq\ldots$ of sets in
$\BB$ such that $\cup_i X_{n,i}=X_n$ and $\fg(X_{n,i})\le \fg_\lc(X_n)+\eps_n$.

Define $Y_{n,i}=X_{1,i}\cup \ldots \cup X_{n,i}$ and $Z_{n,i}=Y_{n,i}\cap
X_{n+1,i}$. Clearly $Y_{n,i+1}\supseteq Y_{n,i}$ and $Z_{n,i+1}\supseteq
Z_{n,i}$. Furthermore,
\begin{equation}\label{EQ:YNI}
\bigcup_i Y_{n,i} = X_n.
\end{equation}
Indeed, $Y_{n,i}\subseteq X_1\cup\ldots\cup X_n=X_n$, but $\cup_i Y_{n,i}
\supseteq \cup_i X_{n,i}= X_n$. Hence
\begin{equation}\label{EQ:XNI-YNI}
\bigcup_i Z_{n,i} = X_n,
\end{equation}
since every $x\in X_n$ is contained in $Y_{n,i}$ as well as in $X_{n+1,i}$ if
$i$ is large enough. By the definition of $\fg_\lc$, it follows that
\begin{equation}\label{EQ:ZNI-LARGE}
\fg(Z_{n,i})\ge \fg_\lc(X_n)-\eps_n
\end{equation}
if $i$ is large enough, say $i\ge f(n)$. We may assume that $f(n)$ is a
strictly increasing function.

We prove by induction on $n$ that if $i\ge f(n)$, then
\begin{equation}\label{EQ:YNI-SMALL}
\fg(Y_{n,i})\le\fg_\lc(X_n)+\eps-2\eps_n.
\end{equation}
For $n=1$, we have $\fg(Y_{1,i})=\fg(X_{1,i}) \le \fg_\lc(X_1) + \eps_1 =
\fg_\lc(X_1) +\eps-2\eps_1$. For the induction step, we use submodularity. For
$i\ge f(n+1)$, we have
\begin{align*}\label{EQ:Y-SUBMOD}
\fg(Y_{n+1,i}) &= \fg(Y_{n,i}\cup X_{n+1,i}) \le \fg(Y_{n,i}) +\fg(X_{n+1,i})-\fg(Z_{n,i})\\
&\le (\fg_\lc(X_n)+\eps-2\eps_n) + (\fg_\lc(X_{n+1}) + \eps_{n+1}) - (\fg_\lc(X_n)-\eps_n)\\
&= \fg_\lc(X_{n+1}) +\eps-\eps_n+\eps_{n+1} = \fg_\lc(X_{n+1}) +\eps-2\eps_{n+1}.
\end{align*}
This proves \eqref{EQ:YNI-SMALL}.

Consider that sequence of sets $(Y_{n,f(n)}:~n=1,2,\ldots)$. Clearly it is
increasing. We claim that
\begin{equation}\label{EQ:YNI-X}
\bigcup_n Y_{n,f(n)} = X.
\end{equation}
Indeed, $Y_{n,f(n)}\subseteq X$, so the $\subseteq$ containment is trivial. To
prove equality, let $x\in X$. Then $x\in X_k$ for some $k$, and hence $x\in
X_{k,i}$ for some $i$. Let $n$ be large enough such that $i\le f(n)$, then
$X_{k,i}\subseteq X_{k,f(n)}\subseteq Y_{n,f(n)}$, so $x\in Y_{n,f(n)}$.

To conclude, note that \eqref{EQ:YNI-X} and \eqref{EQ:YNI-SMALL} imply that
\[
\fg_\lc(X)\le \lim_{n\to\infty}\fg_\lc(Y_{n,f(n)})\le
\lim_{n\to\infty} \fg_\lc(X_n) +\eps-2\eps_n = \lim_{n\to\infty} \fg_\lc(X_n) +\eps.
\]
This holds for every $\eps>0$, proving \eqref{EQ:CONT-BELOW}.
\end{proof}

We supplement this lemma with a weaker statement about the difference.

\begin{lemma}\label{LEM:SEMIC-PART2}
Let $\fg$ be an increasing submodular setfunction. Then the difference
$\fg-\fg_\lc$ is subadditive.
\end{lemma}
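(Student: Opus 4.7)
The plan is to prove the subadditive inequality $(\fg-\fg_\lc)(X\cup Y) \le (\fg-\fg_\lc)(X) + (\fg-\fg_\lc)(Y)$ for disjoint $X,Y\in\BB$ (this being the paper's convention for ``subadditive''; we may assume $\fg(\emptyset)=0$, whence $\fg_\lc(\emptyset)=0$). After rearranging, the inequality is
\[
\fg_\lc(X)+\fg_\lc(Y) - \fg_\lc(X\cup Y) \;\le\; \fg(X)+\fg(Y) - \fg(X\cup Y).
\]
Both sides are nonnegative: the right by subadditivity of $\fg$, and the left because $\fg_\lc$ is itself submodular (Lemma~\ref{LEM:SEMIC-PART}). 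So this is not a consequence of subadditivity of either setfunction alone, and combining approximating sequences for $X$ and $Y$ via $Z_n:=X_n\cup Y_n$ only yields subadditivity of $\fg_\lc$ and nothing more. The conceptual idea will be to go in reverse: start from a sequence realizing the infimum in the definition of $\fg_\lc(X\cup Y)$, and produce sequences for $X$ and $Y$ by restriction.

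Concretely, I would fix $\eps>0$ and choose $Z_1\subseteq Z_2\subseteq\ldots$ in $\BB$ with $\cup_n Z_n = X\cup Y$ and $\lim_n \fg(Z_n)\le \fg_\lc(X\cup Y)+\eps$. Setting $X_n:=Z_n\cap X$ and $Y_n:=Z_n\cap Y$, disjointness of $X$ and $Y$ gives $Z_n=X_n\cup Y_n$ with $X_n\cap Y_n=\emptyset$, and clearly $X_n\uparrow X$, $Y_n\uparrow Y$. The core algebraic step will be the estimate
\[
\fg(X\cup Y) + \fg(X_n) + \fg(Y_n) \;\le\; \fg(X) + \fg(Y) + \fg(Z_n),
\]
derived from two applications of submodularity. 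First, submodularity applied to the pair $(X,Z_n)$, using $X\cup Z_n = X\cup Y_n$ and $X\cap Z_n = X_n$, yields $\fg(X\cup Y_n)+\fg(X_n)\le \fg(X)+\fg(Z_n)$; symmetrically $\fg(Y\cup X_n)+\fg(Y_n)\le \fg(Y)+\fg(Z_n)$. Then submodularity applied to the pair $(X\cup Y_n,\, Y\cup X_n)$, whose union is $X\cup Y$ and whose intersection is $Z_n$, gives $\fg(X\cup Y)+\fg(Z_n)\le \fg(X\cup Y_n)+\fg(Y\cup X_n)$; combining the three and cancelling $\fg(Z_n)$ produces the displayed bound.

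Once this is in hand, the conclusion is an easy limit argument. Since $\fg$ is increasing (hence bounded) and the three sequences $X_n,Y_n,Z_n$ are monotonically increasing, the numerical sequences $\fg(X_n),\fg(Y_n),\fg(Z_n)$ converge to their suprema. Using $\fg_\lc(X)\le \lim\fg(X_n)$, $\fg_\lc(Y)\le \lim\fg(Y_n)$, and $\lim\fg(Z_n)\le \fg_\lc(X\cup Y)+\eps$, passing to the limit in the displayed inequality gives
\[
\fg(X\cup Y)+\fg_\lc(X)+\fg_\lc(Y) \;\le\; \fg(X)+\fg(Y)+\fg_\lc(X\cup Y)+\eps,
\]
and letting $\eps\to 0$ finishes the proof. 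The main obstacle I anticipate is purely conceptual: spotting that the approximation must be propagated \emph{from} $X\cup Y$ \emph{down to} $X$ and $Y$ by intersection (rather than the other way round), and then identifying the particular pair of submodular inequalities whose combination removes the $\fg(Z_n)$ term; the rest is routine.
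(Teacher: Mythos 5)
Your argument is correct and takes essentially the same approach as the paper: fix an increasing sequence $Z_n$ approximating $\fg_\lc(X\cup Y)$, intersect with $X$ and $Y$ to produce approximating sequences for those sets, and chain submodular inequalities to obtain $\fg(X\cup Y)+\fg(X_n)+\fg(Y_n)\le\fg(X)+\fg(Y)+\fg(Z_n)$ before passing to the limit. The only difference is cosmetic: the paper uses the asymmetric triple $U_n=X\cap Z_n$, $V_n=X\cup Z_n$, $W_n=Y\cap V_n$ with two applications of submodularity (and does not need $X\cap Y=\emptyset$), while your symmetric construction uses three applications and exploits disjointness — but since the paper's notion of subadditivity is for disjoint sets, this loses nothing.
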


\begin{proof}
Let $\eps>0$, $X,Y\in\BB$, and let $Z_1\subseteq Z_2\ldots\in\BB$ be a chain of
sets such that $\cup_n Z_n=X\cup Y$ and $\fg(Z_n)\le\fg_\lc(X\cup Y)+\eps$. Let
$U_n=X\cap Z_n$, $V_n=X\cup Z_n$, and $W_n=Y\cap V_n$. Then $\cup_n U_n =X$,
$\cup_n W_n=Y$, and $V_n\cup Y = X\cup Y$. By submodularity,
\[
\fg(X)+\fg(Z_n) \ge \fg(V_n)+\fg(U_n),
\]
and
\[
\fg(Y) + \fg(V_n) \ge \fg(Y\cup V_n) + \fg(W_n) = \fg(X\cup Y) + \fg(W_n).
\]
Adding up these inequalities, canceling what we can, and rearranging, we obtain
\[
\fg(X)- \fg(U_n) + \fg(Y) - \fg(W_n) \ge  \fg(X\cup Y) - \fg(Z_n) \ge \fg(X\cup Y) - \fg_\lc(X\cup Y)-\eps .
\]
By definition,
\[
\fg(X)- \lim_n\fg(U_n) \le \fg(X)- \fg_\lc(X) \et \fg(Y)- \lim_n\fg(W_n) \le \fg(Y)-\fg_\lc(Y),
\]
so letting $n\to\infty$ and $\eps\to 0$, we get
\[
\fg(X)- \fg_\lc(X) + \fg(Y)-\fg_\lc(Y) \ge \fg(X\cup Y) - \fg_\lc(X\cup Y),
\]
proving the lemma.
\end{proof}

Let $\fg$ be an increasing submodular setfunction. If $\psi\le\fg$ is another
increasing setfunction, then clearly $\psi_\lc\le\fg_\lc$. If, in particular,
$\psi$ is continuous from below, then $\psi\le\fg_\lc$. So by lemma
\ref{LEM:SEMIC-PART}, $\fg_\lc$ is the unique largest increasing setfunction
minorizing $\fg$ that is continuous from below, and it is also submodular.

\subsection{Absolute continuity}

Let $\pi$ be a measure on a sigma-algebra $(J,\BB)$. We say that a setfunction
$\fg$ is {\it absolutely continuous with respect to $\pi$}, if for every
$\eps>0$ there is a $\delta>0$ such that $\pi(S)<\delta$ implies that
$|\fg(S)|<\eps$ for every $S\in\BB$. Let us say that $\fg$ is {\it weakly
absolutely continuous} with respect to $\pi$ if $\fg(S)=0$ for all $S\in\BB$
with $\pi(S)=0$. Absolute continuity implies weak absolute continuity but
(unlike in the case of countably additive measures), it is not equivalent to
it. But we have the following fact.

\begin{lemma}\label{LEM:ABS-CONT}
An increasing submodular setfunction $\fg$ on a sigma-algebra $(J,\BB)$ with
$\fg(\emptyset)=0$ is absolutely continuous with respect to a measure $\pi$ if
and only if it is continuous from above and weakly absolutely continuous with
respect to $\pi$.
\end{lemma}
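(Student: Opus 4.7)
Both directions are essentially classical, but one has to take care that the available hypotheses on $\fg$ (increasing, submodular, $\fg(\emptyset)=0$, hence subadditive by the observation following \eqref{EQ:SUBADD-DEF}) are exactly what is needed.

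\textbf{The ``only if'' direction.} Assume $\fg$ is absolutely continuous with respect to $\pi$. Weak absolute continuity is immediate: if $\pi(S)=0$, then $\pi(S)<\delta$ for every $\delta>0$, so $|\fg(S)|<\eps$ for every $\eps>0$, forcing $\fg(S)=0$. For continuity from above, recall the reduction noted just before the lemma: since $\fg$ is increasing, subadditive, and $\fg(\emptyset)=0$, it suffices to show $\fg(A_n)\to 0$ whenever $A_n\downarrow\emptyset$. But $\pi$ is a (finite) countably additive measure, so $\pi(A_n)\to 0$, and absolute continuity then gives $\fg(A_n)\to 0$.

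\textbf{The ``if'' direction.} Assume $\fg$ is continuous from above and weakly absolutely continuous with respect to $\pi$, and suppose for contradiction that $\fg$ fails to be absolutely continuous. Then there is some $\eps>0$ and sets $S_n\in\BB$ with $\pi(S_n)<2^{-n}$ and $\fg(S_n)\ge\eps$. Form the tails
\[
T_k=\bigcup_{n\ge k} S_n,\qquad T=\bigcap_{k\ge 1} T_k.
\]
Then $T_1\supseteq T_2\supseteq\cdots$ and, by countable subadditivity of $\pi$, $\pi(T_k)\le 2^{-(k-1)}$, so $\pi(T)=0$. Since $\fg$ is increasing, $\fg(T_k)\ge\fg(S_k)\ge\eps$ for every $k$. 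By continuity from above, $\fg(T)=\lim_k\fg(T_k)\ge\eps$. On the other hand, weak absolute continuity forces $\fg(T)=0$, a contradiction.

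\textbf{Main obstacle.} There is no serious obstacle; the only subtlety is the forward direction, where one must invoke $\pi(A_n)\to 0$ for $A_n\downarrow\emptyset$ (which uses that $\pi$ is countably additive and finite on $J$, or at least on $A_1$). The backward direction is a standard Borel--Cantelli-type construction, and the hypotheses used there are exactly monotonicity, continuity from above, and weak absolute continuity.
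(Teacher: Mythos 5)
Your proof is correct and follows essentially the same route as the paper's: the forward direction reduces continuity from above to the case $A_n\downarrow\emptyset$ (justified by the remark at the start of Section~\ref{SEC:CONT} that for increasing subadditive setfunctions it suffices to check this case) and then uses countable additivity of $\pi$, while the backward direction is the same Borel--Cantelli-type tail construction $T_k=\bigcup_{n\ge k}S_n$, $T=\bigcap_k T_k$, with $\pi(T)=0$ but $\fg(T)\ge\eps$ by continuity from above, contradicting weak absolute continuity. No gaps.
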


\begin{proof}
First, suppose that $\fg$ is absolutely continuous with respect to a countably
additive measure $\pi$. Weak absolute continuity with respect to $\pi$ is
trivial. To prove that $\fg$ is continuous from above, let $A_i\in\BB$,
$A_1\supseteq A_2\supseteq\ldots$, and $\cap_i A_i=\emptyset$. Given $\eps>0$,
there is a $\delta>0$ such that $|\fg(S)|<\eps$ for every $S\in\BB$ with
$\pi(S)<\delta$. Then $\pi(A_n)\to 0$ (since $\pi$ is countably additive), so
$\pi(A_n)<\delta$ if $n$ is large enough. Hence $\fg(A_n) <\eps$ for every $n$
that is large enough.

Second, suppose that $\fg$ is continuous from above and weakly absolutely
continuous with respect to $\pi$, and assume, by way of contradiction, that
$\fg$ is not absolutely continuous with respect to $\pi$. Then there is a
sequence of sets $A_n\in\BB$ such that $\pi(A_n)\to 0$ but $\fg(A_n)>c$ for
some $c>0$. We may assume that $\pi(A_n)<2^{-n}$. Let $B_n=\cup_{m\ge n} A_m$,
then $\pi(B_n)<2^{1-n}$ and $\fg(B_n)>c$. Then $\pi(\cap_nB_n)=0$ (since $\pi$
is countably additive) but $\fg(\cap_nB_n)>c$ (since $\fg$ is continuous from
above). This contradicts the weak absolute continuity of $\fg$.
\end{proof}

\subsection{Continuity of the Choquet extension}

Lemma \ref{LEM:WHPHI-CONT} implies that if $\fg$ is any setfunction with
bounded variation, $g,f_n\in\Bd$ and $f_n\to g$ uniformly, then
$\wh\fg(f_n)\to\wh\fg(g)$. Under submodularity conditions on $\fg$, we can
relax the uniform convergence condition. (Under smoothness conditions for
$\fg$, we get further improvements.)

Let $f_n$ $(n=1,2,\ldots)$ and $g$ be measurable functions $J\to[0,1]$. We say
that {\it $f_n \to g$ in $\fg$-measure}, if $\fg\{x:~|f_n(x)-g(x)|>\eps\}\to 0$
as $n\to\infty$ for every $\eps>0$. We say that {\it $f_n \to g$ $\fg$-almost
everywhere}, if $\fg\{x:~f_n(x)\not\to g(x)\}=0$.

\begin{lemma}\label{LEM:LEBESGUE}
Let $\fg$ be an increasing submodular setfunction on $\BB$ with
$\fg(\emptyset)=0$, let $f_n,g\in\Bd$ with $\|f_n\|,\|g\|\le 1$, and let
$f_n\to g$ in $\fg$-measure. Then $\wh{\fg}(f_n)\to\wh\fg(g)$ $(n\to\infty)$.
\end{lemma}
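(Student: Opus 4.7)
The plan is to run a bounded convergence argument for the Choquet integral, using the subadditivity inequality $\wh\fg(f+g)\le\wh\fg(f)+\wh\fg(g)$ from Theorem \ref{THM:SUBMOD-UNCROSS2} as the main analytic tool. Since uniform convergence is too strong to invoke Lemma \ref{LEM:WHPHI-CONT} directly, the idea is to split $J$ into a ``good'' part, where $f_n$ and $g$ are uniformly close, and a ``bad'' part $E_n(\eps)=\{x\in J:|f_n(x)-g(x)|>\eps\}$, whose $\fg$-measure tends to $0$ by hypothesis.

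Concretely, fix $\eps>0$. Because $(J,\BB)$ is a sigma-algebra and $f_n,g\in\Bd$, the sets $E_n(\eps)$ lie in $\BB$. Since $\|f_n\|,\|g\|\le 1$, the pointwise inequality
\[
f_n \le g + \eps\one_J + 2\one_{E_n(\eps)}
\]
holds on all of $J$ (on $E_n(\eps)^c$ the difference is at most $\eps$; on $E_n(\eps)$ it is at most $2$). Applying monotonicity of $\wh\fg$ (which holds because $\fg$ is increasing) together with Theorem \ref{THM:SUBMOD-UNCROSS2} extended to three summands, and using that $\wh\fg(c\one_J)=c\fg(J)$ for a constant $c\ge 0$ and $\wh\fg(2\one_{E_n(\eps)})=2\fg(E_n(\eps))$, one obtains
\[
\wh\fg(f_n) \le \wh\fg(g) + \eps\fg(J) + 2\fg(E_n(\eps)).
\]
By symmetry (swapping the roles of $f_n$ and $g$), the same bound holds for $\wh\fg(g)-\wh\fg(f_n)$, so
\[
|\wh\fg(f_n)-\wh\fg(g)| \le \eps\fg(J) + 2\fg(E_n(\eps)).
\]

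The conclusion now follows by taking limits in the right order. The hypothesis that $f_n\to g$ in $\fg$-measure gives $\fg(E_n(\eps))\to 0$ as $n\to\infty$ for each fixed $\eps>0$, so $\limsup_n|\wh\fg(f_n)-\wh\fg(g)|\le\eps\fg(J)$; since $\eps$ was arbitrary and $\fg(J)$ is finite, the limit is $0$.

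I do not anticipate a real obstacle: everything reduces to (i) finding the right dominating function $g+\eps\one_J+2\one_{E_n(\eps)}$ that absorbs the ``bad'' set cleanly, and (ii) invoking the subadditivity of $\wh\fg$, which is exactly Theorem \ref{THM:SUBMOD-UNCROSS2}. The only minor point to be careful about is that $f_n$ and $g$ may take negative values, but the subadditivity theorem is stated for arbitrary functions in $\Bd$, so the argument goes through unchanged.
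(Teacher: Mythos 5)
Your proof is correct and takes essentially the same route as the paper's: the paper bounds $f_n\le g+h_n$ with $h_n=|f_n-g|$ and splits the Choquet integral of $h_n$ over $[0,\eps]$ and $[\eps,2]$, while you instead bound $h_n$ pointwise by $\eps\one_J+2\one_{E_n(\eps)}$ before integrating; both computations produce the same estimate $\eps\fg(J)+2\fg(E_n(\eps))$. Your remark that Theorem \ref{THM:SUBMOD-UNCROSS2} handles possibly-negative functions is the right way to dispose of the sign issue, and the limit argument at the end matches the paper's.
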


\begin{proof}
Let $h_n=|f_n-g|$, then $0\le h_n\le 2$, and $f_n\le g+h_n$. Hence by Theorem
\ref{THM:SUBMOD-UNCROSS2}, we have
$\wh\fg(f_n)\le\wh\fg(g+h_n)\le\wh\fg(g)+\wh\fg(h_n)$. Let $\eps>0$, then
\begin{align*}
\wh\fg(h_n) &= \intl_0^2 \fg\{h_n\ge t\}\,dt \le  \intl_0^\eps \fg\{h_n\ge t\}\,dt +  \intl_\eps^2 \fg\{h_n\ge t\}\,dt\\
&\le \eps\|\fg\| + 2\fg\{h_n\ge \eps\}.
\end{align*}
The last term is less than $\eps$ if $n$ is sufficiently large, so
$\wh\fg(h_n)\le 3\eps$ and $\wh\fg(f_n)\le\wh\fg(g)+3\eps$. The reverse
inequality follows similarly.
\end{proof}

\begin{lemma}\label{LEM:C-CONV}
Let $\fg$ be an increasing submodular setfunction on $\BB$ continuous from
above with $\fg(\emptyset)=0$. Let $f_n,g\in\Bd$ with $\|f_n\|,\|g\|\le 1$, and
assume that $f_n\to g$ $\fg$-almost everywhere. Then
$\wh{\fg}(f_n)\to\wh\fg(g)$ $(n\to\infty)$.
\end{lemma}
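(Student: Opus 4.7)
The plan is to reduce Lemma \ref{LEM:C-CONV} to Lemma \ref{LEM:LEBESGUE} by showing that, under the continuity-from-above hypothesis, $\fg$-almost-everywhere convergence implies convergence in $\fg$-measure. This is essentially the standard Egorov-style argument, with the countably additive measure replaced by $\fg$ and the key ingredient being continuity from above (which substitutes for countable additivity here).

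First, I would set, for each integer $k\ge 1$ and each $n\ge 1$,
\[
B_{n,k}=\bigcup_{m\ge n}\{|f_m-g|>1/k\}.
\]
Since $(J,\BB)$ is a sigma-algebra and $f_m,g$ are $\BB$-measurable, each $B_{n,k}$ lies in $\BB$. The sequence $(B_{n,k})_n$ is decreasing in $n$, and
\[
\bigcap_{n\ge 1}B_{n,k}\ \subseteq\ \{x\in J:~f_n(x)\not\to g(x)\}.
\]
Since $\fg$ is increasing and this latter set has $\fg$-value zero by hypothesis, we get $\fg\bigl(\bigcap_n B_{n,k}\bigr)=0$. Continuity from above of $\fg$ then yields $\fg(B_{n,k})\to 0$ as $n\to\infty$, for each fixed $k$.

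Now, given any $\eps>0$, choose $k$ with $1/k<\eps$. Since $\{|f_n-g|>\eps\}\subseteq\{|f_n-g|>1/k\}\subseteq B_{n,k}$, monotonicity of $\fg$ gives
\[
\fg\{|f_n-g|>\eps\}\ \le\ \fg(B_{n,k})\ \longrightarrow\ 0\qquad(n\to\infty).
\]
Hence $f_n\to g$ in $\fg$-measure, and Lemma \ref{LEM:LEBESGUE} delivers $\wh\fg(f_n)\to\wh\fg(g)$, completing the proof.

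The only real subtlety is the step where continuity from above is invoked: one must write the ``bad set'' as the intersection of a decreasing sequence of measurable sets whose $\fg$-measure vanishes in the limit, and for this it is essential to bundle together all indices $m\ge n$ into $B_{n,k}$ rather than working with the single level sets $\{|f_n-g|>1/k\}$ (the latter need not be monotone in $n$). Once this packaging is in place, monotonicity of $\fg$ and the inclusion $\bigcap_n B_{n,k}\subseteq\{f_n\not\to g\}$ do the rest.
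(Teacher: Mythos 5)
Your proof is correct and follows essentially the same route as the paper: both reduce to Lemma \ref{LEM:LEBESGUE} by establishing convergence in $\fg$-measure, and both do so by bundling the tail indices into a decreasing family $B_{n,k}=\bigcup_{m\ge n}\{|f_m-g|>1/k\}$ (the paper calls this $Y_{n,\eps}$), observing that $\bigcap_n B_{n,k}$ lies inside the null set where $f_n\not\to g$, and then invoking continuity from above together with monotonicity. The only cosmetic difference is your extra step of choosing $k$ with $1/k<\eps$ rather than working with $\eps$ directly, which changes nothing.
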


\begin{proof}
By Lemma \ref{LEM:LEBESGUE}, it suffices to prove that $f_n\to g$ in
$\fg$-measure. Let $Z=\{x\in J:~f_n(x)\not\to g(x)\}$ and $\eps>0$. Define
$X_{n,\eps}=\{|f_n-g|>\eps\}$ and $Y_{n,\eps}=\cup_{m\ge n} X_{m,\eps}$. We
claim that $\cap_n Y_{n,\eps}=Z$. Indeed, if $x\in\cap_nY_{n,\eps}$, then there
are infinitely many indices $m$ such that $x\in X_{m,\eps}$, so
$|f_m(x)-g(x)|>\eps$ for infinitely many indices $m$, and hence $f_m(x)\not\to
g(x)$.

By continuity from above, this implies that $\fg(Y_{n,\eps})\to \fg(Z)=0$ as
$n\to\infty$. Since $X_{n,\eps}\subseteq Y_{n,\eps}$, the monotonicity of $\fg$
implies that $\fg(X_n)\to 0$, and so $f_n\to g$ in $\fg$-measure.
\end{proof}

\subsection{Majorizing measures and strong boundedness}\label{SSEC:SBOUNDED}

Let $\fg$ be an increasing subadditive setfunction on a sigma-algebra $(J,\BB)$
with $\fg(\emptyset)=0$. We define $\fgx:~\BB\to\R\cup\{\infty\}$ by
\begin{equation}\label{EQ:MU-SUP-FG}
\fgx(X) = \sup \sum_{i=1}^n \fg(X_i),
\end{equation}
where $\{X_1,\ldots,X_n\}$ ranges over all partitions of $X$ into sets in
$\BB$. Clearly $\fgx$ is increasing (but it may take infinite values). Let us
also note that in the definition of $\fgx$, we could allow countable
partitions. Indeed, for any countable partition $X=\cup_{i=1}^\infty X_i$, we
have
\[
\fgx(X)\ge \fgx(X_1\cup \ldots\cup X_n)\ge \sum_{i=1}^n \fg(X_i),
\]
and letting $n\to\infty$, we get
\[
\fgx(X)\ge \sum_{i=1}^\infty \fg(X_i).
\]
This means that including countable partitions does not increase the supremum
in \eqref{EQ:MU-SUP-FG}.

\begin{lemma}\label{LEM:UPENV}
The setfunction $\fgx$ is additive. If $\fg$ is continuous from below, then
$\fgx$ is countably additive.
\end{lemma}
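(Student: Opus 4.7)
The plan is to establish three facts in sequence: monotonicity of $\fgx$, finite additivity, and then countable additivity under continuity from below.

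First I would verify monotonicity. If $Y\subseteq X$, any partition $\{Y_1,\dots,Y_n\}$ of $Y$ extends to a partition $\{Y_1,\dots,Y_n, X\setminus Y\}$ of $X$. Since $\fg\ge 0$ (because $\fg$ is increasing with $\fg(\emptyset)=0$), we get $\sum_i \fg(Y_i)\le \sum_i\fg(Y_i)+\fg(X\setminus Y)\le \fgx(X)$, whence $\fgx(Y)\le\fgx(X)$.

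Next, finite additivity. For disjoint $X,Y\in\BB$, the inequality $\fgx(X\cup Y)\ge \fgx(X)+\fgx(Y)$ is immediate: given partitions $\XX$ of $X$ and $\YY$ of $Y$ with sums $\eps$-close to $\fgx(X)$ and $\fgx(Y)$, their union is a partition of $X\cup Y$ whose associated sum equals the sum of the two, and $\eps\to 0$ gives the bound. For the reverse, take any finite partition $\{Z_1,\dots,Z_n\}$ of $X\cup Y$ and refine it by replacing each $Z_k$ by the pair $\{Z_k\cap X, Z_k\cap Y\}$. Subadditivity of $\fg$ (which holds for every increasing subadditive setfunction) gives $\fg(Z_k)\le \fg(Z_k\cap X)+\fg(Z_k\cap Y)$, so the sum only grows. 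The refined sum splits into a sum over a partition of $X$ and a sum over a partition of $Y$, each bounded by $\fgx(X)$ and $\fgx(Y)$ respectively. Hence $\fgx(X\cup Y)\le \fgx(X)+\fgx(Y)$.

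Now assume $\fg$ is continuous from below. The key auxiliary fact is countable subadditivity of $\fg$: for any sequence $A_1,A_2,\dots\in\BB$ one has $\fg(\cup_i A_i)\le \sum_i \fg(A_i)$. This follows directly from the argument already given in the text for \eqref{EQ:OUTER}, applying finite subadditivity to $A_1\cup\dots\cup A_n$ and letting $n\to\infty$ via continuity from below. Given a countable disjoint decomposition $X=\bigcup_{i=1}^\infty X_i$, the inequality $\fgx(X)\ge \sum_i \fgx(X_i)$ follows from monotonicity and finite additivity applied to the partial unions $X_1\cup\dots\cup X_n\subseteq X$, then letting $n\to\infty$. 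For the reverse, take any finite partition $\{Y_1,\dots,Y_m\}$ of $X$ and write $Y_j=\bigcup_i (Y_j\cap X_i)$; by countable subadditivity,
\[
\sum_{j=1}^m \fg(Y_j)\le \sum_{j=1}^m\sum_{i=1}^\infty \fg(Y_j\cap X_i)=\sum_{i=1}^\infty \sum_{j=1}^m \fg(Y_j\cap X_i)\le \sum_{i=1}^\infty \fgx(X_i),
\]
using that $\{Y_j\cap X_i\}_{j=1}^m$ is a finite partition of $X_i$. Taking the supremum over finite partitions of $X$ gives $\fgx(X)\le \sum_i \fgx(X_i)$.

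The main subtlety is the use of countable subadditivity of $\fg$ in the $\le$ direction; everything else is formal manipulation of partitions and the monotonicity of $\fg$. Observe that the conclusion must permit $\fgx$ to take the value $+\infty$, and the argument above is valid in that generality since all sums involved are of nonnegative terms.
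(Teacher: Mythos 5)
Your proof is correct and follows essentially the same strategy as the paper's: the crux in both is the $\le$ direction, which uses countable subadditivity of $\fg$ (inherited from continuity from below, as in \eqref{EQ:OUTER}) to refine a near-optimal finite partition of $X$ against the countable partition $\{X_i\}$ and then rearrange the double sum. The only organizational difference is in the $\ge$ direction: you derive it from finite additivity applied to the partial unions $X_1\cup\dots\cup X_n$ together with monotonicity, whereas the paper instead assembles a countable partition of $X$ from near-optimal partitions of each $X_i$, relying on its earlier remark that allowing countable partitions in the supremum defining $\fgx$ does not change its value. Both routes are valid and of comparable length; yours has the small advantage of not needing that preliminary remark.
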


\begin{proof}
We prove the second assertion. The proof of the first one is almost the same,
except that continuity from below is not needed because of the finiteness of
the partition.

Let $X=\cup_{i=1}^\infty X_i$ be a partition of $X$ ($X,X_i\in\BB$). Let
$\eps>0$, and let $X_i=\cup_{j=1}^\infty X_{i,j}$ partition of $X_i$ such that
$X_{i,j}\in\BB$) and
\[
\fgx(X_i) \le \sum_{j=1}^\infty \fg(X_{i,j}) + \frac\eps{2^i}.
\]
Then $\{X_{i,j}:~i,j\in\Nbb\}$ is a partition of $X$ into sets in $\BB$, and
hence
\begin{equation}\label{EQ:FGXBIG}
\fgx(X)\ge \sum_{i,j} \fg(X_{i,j}) \ge \sum_i \Big(\fgx(X_i)-\frac\eps{2^i}\Big) = \sum_i\fgx(X_i) - \eps.
\end{equation}

To show the reverse inequality, let $\eps>0$, and let $X=Z_1\cup\ldots\Z_n$ be
a measurable partition of $X$ such that
\[
\fgx(X)\le \sum_{i=1}^n \fg(Z_i) + \eps.
\]
Continuity from below implies, by \eqref{EQ:OUTER}, that
\begin{equation}\label{EQ:FG-LC}
\fg(Z_i) \le \sum_{j=1}^\infty \fg(Z_i\cap X_j).
\end{equation}
Hence
\begin{equation}\label{EQ:FGXSMALL}
\fgx(X)\le \sum_{i=1}^n \fg(Z_i) + \eps = \sum_{i=1}^n\sum_{j=1}^\infty \fg(Z_i\cap X_j) + \eps \le \sum_i\fgx(X_i) + \eps.
\end{equation}
Since $\eps>0$ was arbitrary, \eqref{EQ:FGXBIG} and \eqref{EQ:FGXSMALL} prove
the countable additivity of $\fgx$.
\end{proof}

The previous considerations are most relevant when $\fgx$ is finite. We call
such a setfunction $\fg$ {\it strongly bounded}. This means that there is a
$C\in\R$ such that $\sum_i |\fg(A_i)| \le C$ for every family of disjoint sets
$A_1,\ldots,A_n\in\BB$. It is clear that every nonnegative charge has this
property. The following lemma establishes a certain converse.

\begin{lemma}\label{LEM:ST-BOUNDED}
A nonnegative bounded subadditive setfunction $\fg$ with $\fg(\emptyset)=0$ is
strongly bounded if and only if there is a charge $\alpha\in\ba_+(\BB)$ such
that $\fg\le\alpha$.
\end{lemma}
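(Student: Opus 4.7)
The plan is to take $\alpha = \fgx$ from~\eqref{EQ:MU-SUP-FG} as the majorizing charge; Lemma~\ref{LEM:UPENV} already supplies the key structural fact we need, namely that $\fgx$ is finitely additive.

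For the ``if'' direction, suppose $\fg\le\alpha$ with $\alpha\in\ba_+(\BB)$. Then for any finite disjoint family $A_1,\dots,A_n\in\BB$,
\[
\sum_{i=1}^n \fg(A_i) \;\le\; \sum_{i=1}^n \alpha(A_i) \;=\; \alpha\Bigl(\bigcup_{i=1}^n A_i\Bigr) \;\le\; \alpha(J),
\]
so $\fg$ is strongly bounded with constant $\alpha(J)$.

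For the ``only if'' direction I would set $\alpha:=\fgx$. Strong boundedness gives $\fgx(J)<\infty$, and $\fgx$ is monotone by construction, so $\fgx$ is bounded. Lemma~\ref{LEM:UPENV} supplies finite additivity of $\fgx$; together with $\fgx(\emptyset)=0$ (the only measurable partition of the empty set is the trivial one) and $\fgx\ge0$, this makes $\fgx$ a member of $\ba_+(\BB)$. The single-block partition $\{X\}$ appears in~\eqref{EQ:MU-SUP-FG}, yielding $\fgx(X)\ge\fg(X)$ for every $X\in\BB$, so $\fg\le\fgx$. The only slightly delicate point is that Lemma~\ref{LEM:UPENV} is formulated for \emph{increasing} subadditive setfunctions, whereas here $\fg$ is only assumed nonnegative and subadditive; either one inspects the proof of that lemma and verifies that monotonicity is never actually used, or one first passes to the monotone envelope $\psi(X)=\sup\{\fg(Y):Y\in\BB,\,Y\subseteq X\}$, which is easily checked to be increasing, subadditive, nonnegative, strongly bounded with the same constant, and $\ge\fg$, and then applies the argument to $\psi$. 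No substantive obstacle arises---the entire heavy lifting is handled by Lemma~\ref{LEM:UPENV}.
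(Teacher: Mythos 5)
Your proposal is correct and takes essentially the same approach as the paper: the ``if'' direction is the same one-line computation, and the ``only if'' direction takes $\alpha=\fgx$ and invokes Lemma~\ref{LEM:UPENV}. Your added observation about the mismatch in hypotheses (Lemma~\ref{LEM:UPENV} is stated in the context of increasing setfunctions, while Lemma~\ref{LEM:ST-BOUNDED} only assumes nonnegativity) is a fair point, and your two suggested remedies both work: the finite-additivity half of the proof of Lemma~\ref{LEM:UPENV} uses only nonnegativity and finite subadditivity, and alternatively the monotone envelope $\psi$ inherits the relevant properties.
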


\begin{proof}
The necessity of the condition follows by Lemma \ref{LEM:UPENV}, considering
$\alpha=\fgx$. Sufficiency follows by the following computation: For for every
family of disjoint sets $A_1,\ldots,A_n\in\BB$, we have
\[
\sum_{i=1}^n \fg(A_i) \le \sum_{i=1}^n \alpha(A_i) = \alpha\big(\bigcup_i A_i\big) \le \alpha(J).
\]
\end{proof}

If a nonnegative submodular setfunction $\fg$ is continuous from below and
strongly bounded, then it bounded from above by a (finite) countably additive
measure $\mu$; this implies that $\fg$ is also continuous from above. It also
follows that $\fg$ is absolutely continuous with respect to the measure $\fgx$.

\section{Applications of smoothness properties}

In this section we show how previous results can be improved or applied under
smoothness conditions.

\subsection{Countably additive minorizing measures}

In Theorem \ref{THM:FIN-ADD-LAT} and its corollaries, we have constructed
minorizing charges. We can do better for setfunctions continuous from above,
for which minorizing measures are automatically countably additive:

\begin{lemma}\label{LEM:MINOR-CONT}
Let $\fg$ be a nonnegative setfunction with $\fg(\emptyset)=0$  continuous from
above, and let $\alpha\in\matp_+(\fg)$. Then $\alpha$ is countably additive.
\end{lemma}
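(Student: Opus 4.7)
The plan is to exploit the sandwich $0\le\alpha\le\fg$ to transfer continuity from above at $\emptyset$ from $\fg$ to $\alpha$, and then invoke the standard equivalence, for nonnegative charges on a sigma-algebra, between continuity at $\emptyset$ and countable additivity.

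First I would reduce the statement to showing that $\alpha(B_n)\to 0$ whenever $B_1\supseteq B_2\supseteq\ldots$ is a decreasing sequence in $\BB$ with $\cap_n B_n=\emptyset$. Indeed, given pairwise disjoint sets $A_1,A_2,\ldots\in\BB$ with $A=\cup_i A_i$, define $B_n=A\setminus\bigcup_{i=1}^n A_i$; this is a decreasing sequence in $\BB$ (using that $\BB$ is a sigma-algebra) with empty intersection. By finite additivity of $\alpha$,
\[
\alpha(A)=\sum_{i=1}^n\alpha(A_i)+\alpha(B_n),
\]
so $\alpha(B_n)\to 0$ yields $\alpha(A)=\sum_{i=1}^\infty\alpha(A_i)$, which is countable additivity.

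Second, I would apply the sandwich: since $\alpha$ is nonnegative and $\alpha\le\fg$, we have $0\le\alpha(B_n)\le\fg(B_n)$ for each $n$. Continuity of $\fg$ from above at $\emptyset$ gives $\fg(B_n)\to\fg(\emptyset)=0$, and hence $\alpha(B_n)\to 0$ by the squeeze.

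There is essentially no obstacle here; the argument is a direct sandwich once one recalls the standard reduction of countable additivity to continuity at $\emptyset$ for finitely additive nonnegative setfunctions. (Note that no smoothness hypothesis on $\alpha$ itself is used; the smoothness of the dominating $\fg$ is enough to force $\alpha$ to be countably additive, which is why the conclusion holds for every $\alpha\in\matp_+(\fg)$.)
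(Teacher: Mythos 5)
Your proof is correct and takes essentially the same route as the paper: define the decreasing tails with empty intersection, use finite additivity to peel off the partial sums, and squeeze $\alpha$ between $0$ and $\fg$ to get the tail to vanish by continuity of $\fg$ from above. You are slightly more explicit than the paper in invoking nonnegativity of $\alpha$ for the lower bound of the squeeze (the paper uses it silently), but the argument is the same.
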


\begin{proof}
Let $B_i\in\BB$ ($i=1,2,\ldots$) be disjoint sets, and define $A_n=\cup_{i\ge
n}B_i$. Then by the finite additivity of $\alpha$,
\[
\alpha(A_1)=\sum_{i=1}^{n-1} \alpha(B_i) + \alpha(A_n).
\]
Since $\cap_n A_n=\emptyset$, continuity of $\fg$ from above implies that
$\fg(A_n)\to 0$ as $n\to\infty$. Since $\alpha\le\fg$, it follows that
$\alpha(A_n)\to 0$. So letting $n\to\infty$, we get that
$\alpha(A_1)=\sum_{i=1}^\infty \alpha(B_i)$.
\end{proof}

\begin{corollary}\label{COR:FIN-ADD2}
Let $\fg$ be an increasing submodular setfunction continuous from above. Let
$\SS\subseteq\BB$ be a chain of sets. Then there is a countably additive
measure $0\le\alpha\le\fg$ on $\BB$ such that $\alpha(S)=\fg(S)$ for $S\in\SS$.
If $\SS$ generates $\BB$ as a sigma-algebra, then $\alpha$ is uniquely
determined.
\end{corollary}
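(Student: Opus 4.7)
The plan is to combine Corollary \ref{COR:FIN-ADD-LAT} (to produce a minorizing charge agreeing with $\fg$ on $\SS$), Lemma \ref{LEM:MINOR-CONT} (to upgrade this charge to a countably additive measure using continuity from above), and Dynkin's $\pi$-$\lambda$ uniqueness theorem (for the uniqueness clause).

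For existence, first I would augment $\SS$ with $\emptyset$ and $J$, which preserves the chain property, and then further extend by Zorn's lemma to a full chain $\SS'\subseteq\BB$. A full chain is a lattice containing $\emptyset$ and $J$, so Corollary \ref{COR:FIN-ADD-LAT} applies and delivers a nonnegative charge $\alpha\le\fg$ with $\alpha(S)=\fg(S)$ for every $S\in\SS'$, and hence for every $S\in\SS$. Since $\alpha\in\matp_+(\fg)$ and $\fg$ is continuous from above, Lemma \ref{LEM:MINOR-CONT} promotes $\alpha$ from a charge to a genuine countably additive measure, completing the existence half.

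For uniqueness, assume $\SS$ generates $\BB$ as a sigma-algebra and let $\alpha_1,\alpha_2$ be two measures satisfying the conclusion. Because any two members of a chain are comparable, $\SS$ is closed under finite intersections, so $\SS\cup\{\emptyset,J\}$ is a $\pi$-system that still generates $\BB$. The collection $\DD=\{A\in\BB:\alpha_1(A)=\alpha_2(A)\}$ contains this $\pi$-system and, thanks to the countable additivity of each $\alpha_i$, is closed under proper set differences and countable monotone limits. If $J\in\DD$, then $\DD$ is a $\lambda$-system and Dynkin's $\pi$-$\lambda$ theorem forces $\DD=\BB$, yielding $\alpha_1=\alpha_2$.

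The main obstacle is verifying $J\in\DD$, i.e., $\alpha_1(J)=\alpha_2(J)$, before invoking Dynkin's theorem. When $J\in\SS$ this is automatic; in general it must be extracted from the hypothesis that $\SS$ generates $\BB$, together with $\alpha_i\le\fg$ and continuity from above of $\fg$, which (combined with the countable additivity of the $\alpha_i$) pins down $\alpha_i(J)$ through cofinal approximations by elements of $\SS$. This is the step where the smoothness of $\fg$ really earns its keep; the rest is routine once the hypotheses are lined up.
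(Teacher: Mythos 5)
Your existence argument is correct and is essentially the paper's: you produce a nonnegative charge $\alpha\le\fg$ agreeing with $\fg$ on a full chain (hence on $\SS$) via Corollary \ref{COR:FIN-ADD-LAT}, and Lemma \ref{LEM:MINOR-CONT} upgrades it to a countably additive measure. (The paper invokes Theorem \ref{THM:FIN-ADD-LAT} rather than Corollary \ref{COR:FIN-ADD-LAT}, but the two are interchangeable here.) For uniqueness you use Dynkin's $\pi$-$\lambda$ theorem where the paper appeals to Carath\'eodory's uniqueness theorem; these are two standard, essentially equivalent routes, and you are right that the crux is establishing $\alpha_1(J)=\alpha_2(J)$.

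However, the step you flag as "the main obstacle'' is a genuine gap and cannot be closed the way you suggest. You claim that $\sigma(\SS)=\BB$, continuity from above of $\fg$, and countable additivity of the $\alpha_i$ "pin down $\alpha_i(J)$ through cofinal approximations by elements of $\SS$.'' But $\sigma(\SS)=\BB$ does not imply that any countable subfamily of $\SS$ exhausts $J$. For a counterexample, take $J=[0,1]$ with its Borel sigma-algebra, let $\fg=\lambda+\delta_1$ (Lebesgue measure plus the Dirac mass at $1$; a finite measure, hence increasing, submodular, continuous from above), and let $\SS=\{[0,a):0<a\le1\}$. Then $\sigma(\SS)=\BB$, no countable union of members of $\SS$ covers $1$, and both $\alpha_1=\lambda$ and $\alpha_2=\fg$ are countably additive, satisfy $0\le\alpha_i\le\fg$, and agree with $\fg$ on every $S\in\SS$, yet $\alpha_1\ne\alpha_2$. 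So the uniqueness assertion is false as literally stated; the missing hypothesis is $J\in\SS$. This is what the paper tacitly uses: its illustrating example is a \emph{full} chain (which contains $J$ by maximality), and Theorem \ref{THM:FIN-ADD-LAT} requires $J\in\LL$. Once you stipulate $J\in\SS$, $\alpha_1(J)=\fg(J)=\alpha_2(J)$ is immediate, $\DD=\{A\in\BB:\alpha_1(A)=\alpha_2(A)\}$ is a $\lambda$-system containing the generating $\pi$-system $\SS\cup\{\emptyset\}$, and your Dynkin argument finishes the proof. Note also a small internal inconsistency in your write-up: for existence you extend $\SS$ to a full chain $\SS'$ (so $J\in\SS'$ automatically), but for uniqueness you revert to agreement on $\SS$ only; had you kept the full-chain normalization throughout, the obstacle would not have arisen.
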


The natural full chain $\{[0,a):~0<a\le 1\} \cup \{[0,a]:~0\le a\le 1\}$ of
Borel subsets of $[0,1]$ is an example when the last conclusion applies.

\begin{proof}
The charge $\alpha$ constructed in Theorem \ref{THM:FIN-ADD-LAT} is countably
additive by Lemma \ref{LEM:MINOR-CONT}. To prove uniqueness of $\alpha$ it
suffices to note that $\SS$ is a set-algebra that generates the Borel sets as a
sigma-algebra, and the restriction of $\fg$ to $\SS$ is a finite countably
additive measure, so its extension to all Borel sets is unique by
Caratheodory's Theorem.
\end{proof}

\subsection{Lopsided Fubini Theorem}

Assuming continuity of $\fg$, we can prove the following generalization of
Theorem \ref{THM:SUBMOD-UNCROSS2}. The continuity condition cannot be omitted,
even if $\fg$ is a charge, as shown by Example \ref{EXA:NOT-FUBINI2} in the
Appendix.

\begin{theorem}\label{THM:SUBMOD-UNCROSS3}
Let $(I,\AA,\lambda)$ and $(J,\BB,\pi)$ be probability spaces. Let $\fg$ be an
increasing submodular setfunction on $(J,\BB)$ absolutely continuous with
respect to $\pi$, and let $F:~ J\times I\to\R$ be a bounded measurable
function. Define $ F_y(x)=F(x,y)$ and $g(x)=\int_I F(x,y)\,d\lambda(y)$. Then
\begin{equation}\label{EQ:MAIN1}
\wh\fg(g) \le \intl_I \wh\fg(F_y)\,d\lambda(y).
\end{equation}
\end{theorem}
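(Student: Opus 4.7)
My plan is to first discretize $F$ in the $y$-variable so that Theorem \ref{THM:SUBMOD-UNCROSS2} applies directly, and then to pass to the limit using the smoothness of $\fg$ supplied by the absolute continuity hypothesis. The discretized case is the following: suppose
\[
F(x,y)=\sum_{j=1}^N \one_{E_j}(y)\,f_j(x),
\]
with $\{E_1,\dots,E_N\}$ an $\AA$-measurable partition of $I$ and $f_j\in\Bd(\BB)$. Then $g=\sum_j \lambda(E_j)f_j$, so iterating Theorem \ref{THM:SUBMOD-UNCROSS2} together with the positive homogeneity of $\wh\fg$ yields
\[
\wh\fg(g)\le\sum_{j=1}^N \lambda(E_j)\wh\fg(f_j)=\int_I \wh\fg(F_y)\,d\lambda(y),
\]
since $F_y=f_j$ on $E_j$.

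Next, for a general bounded measurable $F$ with $\|F\|_\infty\le M$, I approximate $F$ in $L^1(\pi\otimes\lambda)$ by a sequence $F_n$ of finite linear combinations of indicators of measurable rectangles $B\times E$, $B\in\BB$, $E\in\AA$. This is possible because the finite disjoint unions of measurable rectangles form an algebra generating $\BB\otimes\AA$, hence dense in $(\pi\otimes\lambda)$-measure. Collapsing to the atoms of the finite sub-$\sigma$-algebra of $\AA$ spanned by the $y$-sides appearing in $F_n$ puts each $F_n$ into the piecewise-constant-in-$y$ form of the first step; truncating pointwise to $[-M,M]$ preserves this form (the truncated function is still $\BB$-measurable in $x$ on each $\AA$-atom) and cannot worsen $L^1$ convergence, so I assume $\|F_n\|_\infty\le M$. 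The discretized case then gives
\[
\wh\fg(g_n)\le\int_I \wh\fg(F_{n,y})\,d\lambda(y),\qquad g_n(x)=\int_I F_n(x,y)\,d\lambda(y).
\]

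Finally, I pass to the limit. From $\|g_n-g\|_{L^1(\pi)}\le\|F_n-F\|_{L^1(\pi\otimes\lambda)}\to 0$ we get $g_n\to g$ in $\pi$-measure, hence in $\fg$-measure by absolute continuity of $\fg$ with respect to $\pi$; Lemma \ref{LEM:LEBESGUE} then gives $\wh\fg(g_n)\to \wh\fg(g)$. Passing to a subsequence along which $F_{n_k}\to F$ $(\pi\otimes\lambda)$-a.e., Fubini yields $F_{n_k}(\cdot,y)\to F(\cdot,y)$ $\pi$-a.e. for $\lambda$-a.e.\ $y$, hence $\fg$-a.e.\ for such $y$; Lemma \ref{LEM:C-CONV}, whose continuity-from-above hypothesis is supplied by Lemma \ref{LEM:ABS-CONT}, produces $\wh\fg(F_{n_k,y})\to\wh\fg(F_y)$ for $\lambda$-a.e.\ $y$, while \eqref{EQ:WHFI-NORM} gives the uniform bound $|\wh\fg(F_{n_k,y})|\le M\|\fg\|$, so bounded convergence yields $\int_I \wh\fg(F_{n_k,y})\,d\lambda(y)\to\int_I \wh\fg(F_y)\,d\lambda(y)$. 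Taking the limit in the inequality for $F_{n_k}$ proves \eqref{EQ:MAIN1}.

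The crux of the argument is precisely the transfer of convergence in $\pi\otimes\lambda$ into convergence with respect to $\fg$ on the $x$-sections, both for $g_n$ and for the sections $F_{n,y}$; this is where absolute continuity of $\fg$ with respect to $\pi$ is indispensable, and without such a smoothness hypothesis the inequality fails, as indicated by the reference to Example \ref{EXA:NOT-FUBINI2}. A smaller, automatic point is the $\AA$-measurability of $y\mapsto\wh\fg(F_y)$, which is secured along the chosen subsequence because each $y\mapsto\wh\fg(F_{n_k,y})$ is a simple function.
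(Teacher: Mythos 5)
Your proof is correct, but it takes a genuinely different route from the one in the paper. The paper's argument is probabilistic: it samples an i.i.d.\ sequence $y_1,y_2,\ldots$ from $\lambda$, applies Theorem \ref{THM:SUBMOD-UNCROSS2} to the finite Ces\`aro averages $f_n=\frac1n\sum_{i\le n}F_{y_i}$, invokes the strong Law of Large Numbers to send $\frac1n\sum_{i\le n}\wh\fg(F_{y_i})\to\int_I\wh\fg(F_y)\,d\lambda(y)$ almost surely, and then (via Fubini) fixes a single realization of the sequence for which simultaneously $f_n\to g$ $\pi$-a.e.\ holds, so that Lemmas \ref{LEM:ABS-CONT} and \ref{LEM:C-CONV} give $\wh\fg(f_n)\to\wh\fg(g)$. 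Your argument is deterministic: you approximate $F$ in $L^1(\pi\otimes\lambda)$ by step functions constant in $y$ on a finite $\AA$-partition, for which the inequality is a direct finite application of Theorem \ref{THM:SUBMOD-UNCROSS2} plus positive homogeneity, and then you pass to the limit using $L^1\to$ (measure) for $g_n$ and an a.e.-convergent subsequence for the $y$-sections, invoking Lemmas \ref{LEM:LEBESGUE} and \ref{LEM:C-CONV} in the two roles. Both routes funnel through the same limit lemmas and both depend essentially on the absolute-continuity hypothesis; the paper's avoids the density-of-rectangles and truncation bookkeeping at the cost of introducing the LLN and a Fubini argument over sequence space, whereas yours is more elementary in its probabilistic content and, as a side benefit, makes the $\AA$-measurability of $y\mapsto\wh\fg(F_y)$ explicit (a point the paper's proof relies on implicitly when applying the LLN). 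One small technical remark: Lemmas \ref{LEM:LEBESGUE} and \ref{LEM:C-CONV} are stated under the normalization $\|f_n\|,\|g\|\le 1$, so strictly speaking you should record the rescaling by $M$ (which is harmless by positive homogeneity and \eqref{EQ:ADD-CONST}) when you invoke them.
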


Using the integral notation, we can write this inequality as
\[
\intl_J \intl_I F(x,y)\,d\lambda(y)\,d\fg(x) \le \intl_I \intl_J F(x,y)\,d\fg(x)\,d\lambda(y).
\]
So inequality \eqref{EQ:MAIN1} is a ``lopsided'' version of Fubini's Theorem.

\begin{proof}
We may assume that $0\le F\le 1$. Let $\XX$ be the set of pairs $(x,\yb)$ such
that $x\in J$, $\yb=(y_1,y_2,\ldots)\in I^\Nbb$, and
\begin{equation}\label{EQ:FT-CONV}
\frac1n\sum_{i=1}^n F_{y_i}(x) \to g(x).
\end{equation}
For every $x\in J$, this happens to $\lambda^\Nbb$-almost all sequences $\yb$.
By Fubini's Theorem, for almost all sequences $\yb$, \eqref{EQ:FT-CONV} holds
for $\pi$-almost all $x\in J$.

Let $\yb=(y_1,y_2,\ldots)$ be an infinite sequence of independent uniform
random points in $I$ from the distribution $\lambda$, and let
\[
f_n =\frac1n\sum_{i=1}^n F_{y_i}.
\]
Then
\begin{equation}\label{EQ:FIN-INEQ}
\wh\fg(f_n) \le \frac1n\sum_{i=1}^n\wh\fg(F_{y_i})
\end{equation}
by Theorem \ref{THM:SUBMOD-UNCROSS2}. By the Law of Large Numbers,
\begin{equation}\label{EQ:WHPHI-CONV}
\frac1n\sum_{i=1}^n\wh\fg(F_{y_i})\to \intl_I \wh\fg(F_y)\,d\lambda(y)\qquad(n\to\infty)
\end{equation}
almost surely. So we have
\begin{equation}\label{EQ:INEQ-N}
\limsup_{n\to\infty} \wh\fg(f_n) \le \intl_I \wh\fg(F_y)\,d\lambda(y)
\end{equation}
for almost all $\yb$. In addition, we have \eqref{EQ:FT-CONV} for almost all
pairs $(x,\yb)$. So we can fix a sequence $\yb$ for which $f_n\to g$
$\pi$-almost everywhere, and \eqref{EQ:INEQ-N} holds. Since $\fg$ is absolutely
continuous with respect to $\pi$, it follows that $f_n\to g$ $\fg$-almost
everywhere. By Lemma \ref{LEM:ABS-CONT} the setfunction $\fg$ is continuous
from above, and so Lemma \ref{LEM:C-CONV} implies that $\wh\fg(f_n) \to
\wh\fg(g)$ as $n\to\infty$. Combined with \eqref{EQ:INEQ-N}, this proves the
theorem.
\end{proof}

\subsection{Matchings and coupling measures}\label{SEC:MATCH}

An important area of applications of matroids to graph theory is matching
theory. Let $(J_i,\BB_i,\pi_i)$ be probability spaces, and let
$E\in\BB_1\times\BB_2$ be a set. We can think of $(J_1, J_2, E)$ as a bigraph
$G$ with bipartition $J_1\cup J_2$ and edge set $E$. Recall that for
$X\in\BB_1$, we denote by $E(X)\subseteq J_2$ the set of neighbors of $X$ in
$G$. We would like to find a measure $\alpha$ on
$\AA=2^E\cap(\BB_1\times\BB_2)$ whose marginals are $\pi_1$ and $\pi_2$.

If $J_1$ and $J_2$ are finite, $|J_1|=|J_2|$, and $\pi_1,\pi_2$ are uniform
distributions, then this just the Fractional Perfect Matching Problem. In the
general case, let $(J,\BB,\pi)$ be a probability space. A nonnegative measure
on $\BB^2$ with marginals $\pi$ will be called a {\it coupling measure}. (If
$J=[0,1]$ and $\pi$ is the Lebesgue measure, then it is also called a {\it
doubly stochastic measure}, or a {\it permuton}; the last name refers to the
fact that these measures represent limits of permutations of finite sets; see
\cite{HKMRS,GGKK}.)

We are interested in coupling measures (both countably and finitely additive
ones) supported on a given set. The necessary condition generalizes the
condition in the Frobenius--Kőnig Theorem.

\begin{theorem}\label{THM:COUPLING}
{\rm(a)} A finitely additive coupling measure supported on $E\in\BB\times\BB$
exists if and only if $\lambda(X)\le \lambda(E(X))$ for every $X\in\BB$.

\smallskip

{\rm(b)} If, in addition, $(J,\BB)$ is the sigma-algebra of Borel sets of a
compact Hausdorff space, and $E$ is closed, then even a countably additive
coupling measure supported on $E$ exists.
\end{theorem}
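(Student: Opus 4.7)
Necessity in (a) is immediate: if $\alpha$ is a coupling measure supported on $E$, then for every $X\in\BB$, since $(X\times J)\cap E\subseteq J\times E(X)$, we get $\pi(X)=\alpha(X\times J)=\alpha((X\times J)\cap E)\le\alpha(J\times E(X))=\pi(E(X))$. The plan for sufficiency in (a) is to build $\alpha$ by a Hahn--Banach extension keyed to the submodular setfunction $\fg(X)=\pi(E(X))$; for (b), I would upgrade to countable additivity by weak-* compactness applied to discrete approximations of $E$.

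For (a), the setfunction $\fg$ is increasing and submodular with $\fg(\emptyset)=0$ (Example \ref{EXA:TRANSVERSAL}), and the hypothesis rewrites as $\pi\le\fg$. Let $\AA$ be the set-algebra on $J\times J$ generated by all rectangles $X\times Y$ together with $E$. On the linear subspace $V\subseteq\Bd(\AA)$ of separable functions $F(x,y)=g(x)+h(y)$ with $g,h\in\Bd(\BB)$, define $L(g\oplus h)=\intl g\,d\pi+\intl h\,d\pi$, and on all of $\Bd(\AA)$ define the sublinear majorant
\[
p(F)=\inf\Big\{\intl g\,d\pi+\intl h\,d\pi:~g,h\in\Bd(\BB),\; g\oplus h\ge F\text{ on }E,\; g\oplus h\ge 0\text{ on }E^c\Big\}.
\]
The crux is the inequality $L\le p$ on $V$, which reduces to the claim: if $g(x)+h(y)\ge 0$ for every $(x,y)\in E$, then $\intl g\,d\pi+\intl h\,d\pi\ge 0$. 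Reducing to step functions by the layer-cake representation, the single-level case $g=-t\one_X$, $h=t\one_{E(X)}$ becomes exactly the hypothesis $\pi(X)\le\pi(E(X))$; the general step-function case follows by superposition and the submodular inequality of Theorem \ref{THM:SUBMOD-UNCROSS2}, and the passage to arbitrary bounded $g,h$ follows by approximation and Lemma \ref{LEM:WHPHI-CONT}. Hahn--Banach then extends $L$ to a linear functional on $\Bd(\AA)$ majorized by $p$; Proposition \ref{PROP:DUAL-BA} represents it by a charge $\alpha\in\ba_+(\AA)$, the form of $p$ forces $\alpha(E^c)=0$, and $L(\one_{X\times J})=\pi(X)$ together with $L(\one_{J\times Y})=\pi(Y)$ give the prescribed marginals.

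For (b), the set $M_\pi$ of regular Borel probability measures on $J\times J$ with both marginals equal to $\pi$ is weak-* compact, and its subset $M_E$ of measures supported on the closed set $E$ is weak-* closed, so it suffices to produce one element of $M_\pi\cap M_E$. For every finite Borel partition $\PP=\{P_1,\dots,P_n\}$ of $J$ into sets of small diameter, I would build the finite bipartite graph $\Gamma_\PP$ on $\PP\times\PP$ with an edge $(P_i,P_j)$ whenever $(P_i\times P_j)\cap E\ne\emptyset$, and verify the finite weighted Hall condition on $\Gamma_\PP$ as a consequence of the hypothesis applied to unions of parts. A finite fractional matching then yields a discrete coupling $\alpha_\PP\in M_\pi$ placing mass only on pairs in a small neighborhood of $E$, and a weak-* cluster point of the net $(\alpha_\PP)$ as the partitions refine lies in $M_E$ since $E$ is closed. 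The main obstacle will be the Hahn--Banach majorization $L\le p$ in part (a): the single-level step is exactly Hall's condition, but extending it to arbitrary bounded $g,h$ requires a careful layer-by-layer argument genuinely using submodularity of $\fg$; for (b), the principal technical point is checking that Hall's condition transfers to the coarse graphs $\Gamma_\PP$ without losing too much at the boundary.
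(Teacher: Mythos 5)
Your proof is correct in outline but takes a genuinely different route from the paper. For part (a), the paper derives the coupling from the Submodular Intersection Theorem \ref{THM:INTERSECT} applied to the two projection setfunctions $\fg_i(U)=\lambda(\Pi_i(U))$ on Borel subsets of $E$, obtaining a charge $\alpha$ with $\alpha(E)=1$ and then checking the marginals; you instead run Hahn--Banach directly, separating the linear form $L$ on separable functions from the sublinear $p$ built into the constraint that the coupling live on $E$. Both ultimately rest on Hall's condition, but where you see the ``main obstacle'' is actually where the argument is easiest: the inequality $L\le p$ needs neither Theorem \ref{THM:SUBMOD-UNCROSS2} nor any layer-by-layer superposition. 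If $g(x)+h(y)\ge 0$ on $E$, then $E(\{-g>t\})\subseteq\{h>t\}$ for every real $t$, so Hall gives $\pi\{-g>t\}\le\pi\{h>t\}$, and integrating over $t\in\R$ yields $\int g\,d\pi+\int h\,d\pi\ge 0$ in one stroke, with no reduction to step functions and no appeal to submodularity of $\fg$. The rest of your Hahn--Banach mechanics (positivity of $\Lambda$ from $p\le 0$ on nonpositive functions, $\alpha(E^c)=0$ from $p(\pm\one_{E^c})\le 0$, marginals from $L$ on rectangles, final extension of $\alpha$ from $\AA$ to $\BB\times\BB$) is sound. For part (b), the paper simply applies Proposition \ref{PROP:SMOOTHING} to the charge produced in (a) and notes that $\int_{E^c}f\,d\overline{\alpha}=\int_{E^c}f\,d\alpha=0$ for the continuous distance-to-$E$ function $f$, forcing the smoothed measure to be supported on $E$; your weak-$*$ compactness argument over refining partitions is a reasonable alternative in the compact metric case (and the Hall condition for $\Gamma_\PP$ does transfer, since $\cup N(S)\supseteq E(\cup S)$), but it is substantially more work than the paper's one-line smoothing once (a) is already in hand, and it would need more care to pin down the support of the cluster point and to make sense of ``small diameter'' in a non-metrizable compact Hausdorff space.
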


Before the proof, some remarks. The set $E(X)$ is not necessarily Borel, but
(as remarked in Example \ref{EXA:TRANSVERSAL}), it is Lebesgue measurable, and
so $\lambda(E(X))$ is well-defined. Part (b) of the Theorem is well known: it
follows by a theorem of Strassen \cite{Str}. It is also a rather
straightforward generalization of Theorem 2.5.2 in \cite{Mbook}. Part (a) is
closely related to recent work of Rigo \cite{Rigo}. We give a proof of (a) as
an application of the Submodular Intersection Theorem \ref{THM:INTERSECT}, and
derive (b) from (a).

Part (b) remains valid under the weaker hypothesis that $[0,1]^2\setminus E$ is
the union of a countable number of product sets $A_i\times B_i$
$(A_i,B_i\in\BB)$ (Proposition 3.8 of Kellerer \cite{Kell}).

\begin{proof*}{Theorem \ref{THM:COUPLING}}
The necessity of the condition is easy. Let $\alpha$ be a coupling measure
supported on $E$. For every $X\in\BB$, the set $X\times (J\setminus E(X))$ is
disjoint from $E$, and hence $\alpha(X\times (J\setminus E(X)))=0$. Hence
\[
\lambda(X) = \alpha(X\times J)
= \alpha(X\times E(X)) \le \alpha(J\times E(X))=\lambda(E(X)).
\]

To prove the ``if'' part of (a), consider two setfunctions on $\AA$: for
$U\in\AA$, let $\Pi_i(U)$ denote its projection onto the $i$-th coordinate, and
define $\fg_i(U)=\lambda(\Pi_i(U))$ ($i=1,2$). The setfunctions $\fg_i$ are
both increasing and submodular, as shown in Example \ref{EXA:TRANSVERSAL}.
Furthermore, for any $U\in \AA$,
\[
\fg_1(U) + \fg_2(E\setminus U) = \lambda(\Pi_1(U))+\lambda(\Pi_2(E\setminus U)).
\]
Let $X=[0,1]\setminus\Pi_1(U)$, then by for any $x\in X$ and $(x,y)\in E$ we
have $(x,y)\notin U$, and hence $y\in \Pi_2(E\setminus U)$. So $E(X)\subseteq
\Pi_2(E\setminus U)$, and hence by the hypothesis in (a),
\[
\lambda(\Pi_2(E\setminus U)) \ge \lambda(E(X))\ge \lambda(X) = 1-\lambda(\Pi_1(U)).
\]
In other words, $\fg_1(U) + \fg_2(E\setminus U)\ge 1$. Taking the infimum on
the left, we get that
\[
(\fg_1\land\fg_2)(E) \ge 1.
\]
So by Theorem \ref{THM:INTERSECT}, there is a charge
$\alpha\in\matp_+(\fg_1)\cap\matp(\fg_2)$ such that $\alpha(E)=1$. The measure
$\alpha$ is defined on Borel subsets of $E$, but we can extend it to all sets
$X\in\BB\times\BB$ by setting $\alpha(X)=\alpha(X\cap E)$.

By construction, $\alpha$ is supported on $E$. To show that $\alpha$ is a
coupling measure, let $X\in\BB$. Then $\alpha(X\times J)\le \fg_1(X\times
J)=\lambda(X)$, and similarly $\alpha(X^c\times J)\le\lambda(X^c)$. But
$1=\alpha(J\times J) = \alpha(X\times J) + \alpha(X^c\times J) \le \lambda(X) +
\lambda(X^c) = \lambda(J)=1$, which proves that the first marginal of $\alpha$
is $\lambda$. The same conclusion follows for the second marginal similarly.

To prove the ``if'' part of (b), consider a charge $\alpha$ constructed above,
and let $\overline{\alpha}$ be the countably additive measure constructed in
Proposition \ref{PROP:SMOOTHING}. Let $f(x)$ denote the distance of $x\in
I\times I$ from $E$, then $f$ is continuous and positive on $E^c$. Hence
\[
\intl_{E^c} f\,d\overline{\alpha}= \intl_{E^c} f\,d\alpha =0,
\]
which implies that $\overline{\alpha}$ is supported on $E$.
\end{proof*}

\begin{remark}\label{REM:LIND}
The Birkhoff--von Neumann Theorem asserts that in the finite case, the extremal
points of the set of coupling measures are just the perfect matchings. The
question of determining these extremal points in the infinite case (at least in
the Borel case) is open. Complicated extremal points exist (see e.g.~Losert
\cite{Los}), and an interesting characterization was given by Lindenstrauss
\cite{Lind}, but no direct generalization of the Birkhoff--von Neumann Theorem
seems to be known.
\end{remark}

\section{Björner distance and flats of submodular
setfunctions}\label{SEC:BJ-DIST}

The results in Section \ref{SSEC:EXCHANGE} represent an attempt to generalize
the notions of independent sets and bases from matroids to submodular
setfunctions based on charges. Here we generalize the notion of flats (closed
subsets) of matroid theory, using a simple but very useful distance of subsets
defined by an increasing submodular setfunction.

\subsection{Definition}

The following definition is motivated by a construction of Björner
\cite{Bj87,Bj19}. Let $\fg$ be a increasing submodular setfunction on a
set-algebra $(J,\BB)$. For two sets $X,Y\in\BB$, we define their {\it Björner
distance} by
\begin{equation}\label{EQ:D-DEF}
d(X,Y)=d_\fg(X,Y)=2\fg(X\cup Y)-\fg(X)-\fg(Y).
\end{equation}
In particular,
\begin{equation}\label{EQ:D-SUB}
d(X,Y)=\fg(Y)-\fg(X)\quad\text{if}\quad X\subseteq Y.
\end{equation}
Hence for any $X,Y\in\BB$, we have
\begin{equation}\label{EQ:D-SPLIT}
d(X,Y)=d(X,X\cup Y)+d(Y,X\cup Y).
\end{equation}
If $\fg$ is a measure, then $d(X,Y)=\fg(X\triangle Y)$ is the $L^1$-distance of
$\one_X$ and $\one_Y$. The metric $d_\fg$ does not change if a constant is
added to $\fg$. So we may assume that $\fg(\emptyset)=0$, and we will do so in
the rest of this section. It follows that $\fg(X)=d(X,\emptyset)$.

\begin{lemma}\label{LEM:BJ-MAETRIC}
The function $d$ is a pseudometric. If $\fg$ is strictly increasing, then $d$
is a metric.
\end{lemma}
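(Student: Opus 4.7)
The plan is to verify the four axioms of a pseudometric: nonnegativity, $d(X,X)=0$, symmetry, and the triangle inequality; then handle the strict case.

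First I would dispose of the easy properties. Symmetry is immediate from the defining formula \eqref{EQ:D-DEF}, and $d(X,X)=2\fg(X)-2\fg(X)=0$ is trivial. Nonnegativity follows because $\fg$ is increasing: from $X,Y\subseteq X\cup Y$ we get $\fg(X\cup Y)\ge\fg(X)$ and $\fg(X\cup Y)\ge\fg(Y)$, so $2\fg(X\cup Y)\ge\fg(X)+\fg(Y)$.

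The heart of the argument is the triangle inequality $d(X,Z)\le d(X,Y)+d(Y,Z)$. After expanding via \eqref{EQ:D-DEF} and cancelling the $\fg(X)+\fg(Z)$ terms on both sides, what must be shown is
\[
\fg(X\cup Z)+\fg(Y)\;\le\;\fg(X\cup Y)+\fg(Y\cup Z).
\]
I would obtain this by applying submodularity to the pair $X\cup Y$ and $Y\cup Z$, giving
\[
\fg(X\cup Y\cup Z)+\fg\bigl((X\cap Z)\cup Y\bigr)\;\le\;\fg(X\cup Y)+\fg(Y\cup Z),
\]
and then estimating the left side from below by monotonicity, since $X\cup Y\cup Z\supseteq X\cup Z$ and $(X\cap Z)\cup Y\supseteq Y$. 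This is the only nontrivial step, and the main thing to get right is the choice of sets to uncross; the two alternative choices $X\cup Y,\,Y\cup Z$ vs.\ $X\cup Z,\,Y$ both look natural, but only the first yields the needed inequality after one application of monotonicity.

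Finally, for the strict case, suppose $\fg$ is strictly increasing and $X\ne Y$. Then without loss of generality $X\not\subseteq Y$, so $Y\subsetneq X\cup Y$, and strict monotonicity gives $\fg(Y)<\fg(X\cup Y)$; combining with $\fg(X)\le\fg(X\cup Y)$ yields $d(X,Y)>0$. Thus $d_\fg$ separates points and is a genuine metric.
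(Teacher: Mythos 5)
Your proof is correct and is essentially the paper's argument: both reduce the triangle inequality to showing $\fg(X\cup Z)+\fg(Y)\le\fg(X\cup Y)+\fg(Y\cup Z)$, and both obtain this by combining submodularity applied to the pair $X\cup Y,\,Y\cup Z$ with monotonicity via $X\cup Z\subseteq(X\cup Y)\cup(Y\cup Z)$ and $Y\subseteq(X\cup Y)\cap(Y\cup Z)$. Your treatment of the strictly increasing case is also fine (the paper just says it follows trivially), so there is nothing to fix.
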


\begin{proof}
Nonnegativity of $d$ is implied by the monotonicity of $\fg$. The equations
$d(X,X)=0$ and $d(X,Y)=d(Y,X)$ are trivial. The triangle inequality follows by
an easy computation:
\begin{align*}
d(X,Y)&+d(Y,Z)-d(X,Z) = 2\fg(X\cup Y)-\fg(X)-\fg(Y)\\
&+ 2\fg(Y\cup Z)-\fg(Y)-\fg(Z) - (2\fg(X\cup Z)-\fg(X)-\fg(Z))\\
&=2\fg(X\cup Y) + 2\fg(Y\cup Z) - 2\fg(X\cup Z) - 2\fg(Y).
\end{align*}
Here $X\cup Z\subseteq (X\cup Y)\cup(Y\cup Z)$ and $Y\subseteq (X\cup
Y)\cap(Y\cup Z)$, and so
\begin{align*}
d(X,Y)&+d(Y,Z)-d(X,Z)\ge 2\fg(X\cup Y) + 2\fg(Y\cup Z)\\
&- 2\fg((X\cup Y)\cup(Y\cup Z)) - 2\fg((X\cup Y)\cap(Y\cup Z)) \ge 0
\end{align*}
by submodularity. The second assertion follows trivially from the first.
\end{proof}

It is straightforward to check that if $\fg$ and $\psi$ are increasing
submodular setfunctions, then $d_\psi\le d_\fg$ if and only if the pair
$(\fg,\psi)$ is diverging. Since the pair $(\fg,\fg(A\cup.))$ is diverging for
any $A\in\BB$, it follows that the map $X\mapsto X\cup A$ is contractive:
\begin{align}\label{EQ:UNION-CONTR}
d(X\cup A,Y\cup A) \le d(X,Y).
\end{align}

\begin{remark}\label{REM:GRAPH-METRIC}
The Björner distance is a weighted graph distance, where the underlying graph
is the (undirected) comparability graph of sets in $\BB$, and the weight of an
edge $(X,Y)$ is $|\fg(X)-\fg(Y)|$. This can be proved similarly as Lemma
\ref{LEM:SUBMOD-FINVAR}.
\end{remark}

\subsection{Roofs and flats}

For the rest of this section, let us assume that $(J,\BB)$ is a sigma-algebra
and $\fg$, an increasing submodular setfunction on $\BB$. The distance function
$d_\fg$ is not a metric in general, because two different sets can have
distance $0$. Two sets $X,Y\in\BB$ have $d(X,Y)=0$ if and only if
$\fg(X)=\fg(Y)=\fg(X\cup Y)$. If $X\subseteq Y$, then this condition simplifies
to $\fg(X)=\fg(Y)$. The condition $d(X,Y)=0$ defines an equivalence relation,
which we denote by $X\equiv_\fg Y$, or simply by $X\equiv Y$. The classes of
this equivalence relation will be called {\it roofs}. Contracting the
equivalence classes to single elements, we get a metric space.

If $\fg=\lambda$ is the Lebesgue measure, then this equivalence is the standard
identification of sets $X$ and $Y$ if $\lambda(X\triangle Y)=0$. In the case of
a matroid $(E,r)$, two sets are equivalent if and only if they span the same
flat, and the roof corresponding to this flat consists of all sets spanning
this flat. The flat consists of all elements of $E$ whose equivalence class is
under this roof.

It is clear that for every roof $\RR$, all sets $X\in\RR$ have the same
$\fg(X)$, which we can denote by $\fg(\RR)$. It is also clear that every roof
is closed under (finite) union. In the finite case of matroids, every roof
contains a unique largest set, and it is closed under all unions. This does not
remain true in the infinite case, even for countable unions and under
smoothness conditions. If $\fg$ is the Lebesgue measure on $[0,1]$, then all
null-sets among Borel sets form a roof, but there is no largest null-set. For
the setfunction $\one(X~\text{is infinite})$ (cf. Example \ref{EXA:IDEAL})
there are two roofs (finite sets and infinite sets), but finite sets are not
closed under countable union. However, the following smoothness condition
helps:

\begin{lemma}\label{LEM:COUNT-UNION}
If $\fg$ is an increasing submodular setfunction continuous from below, then
every roof $\RR$ is closed under countable union.
\end{lemma}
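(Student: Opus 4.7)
The plan is to reduce the countable case to the already-established finite-union closure by building an increasing chain and invoking continuity from below.

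Fix a roof $\RR$ and sets $X_1,X_2,\ldots\in\RR$; write $c=\fg(\RR)$ so $\fg(X_n)=c$ for all $n$. The first step is to pass to a chain: define $Y_n = X_1\cup\cdots\cup X_n$. Since the excerpt has already noted that every roof is closed under finite union, each $Y_n$ lies in $\RR$, so $\fg(Y_n)=c$. (If one wanted to verify this on the spot, it is immediate from $d(X,Y)=2\fg(X\cup Y)-\fg(X)-\fg(Y)=0$ for $X,Y\in\RR$, which forces $\fg(X\cup Y)=c$.)

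Next I would apply the hypothesis that $\fg$ is continuous from below. Setting $Y=\bigcup_n Y_n = \bigcup_n X_n$, the increasing chain $Y_1\subseteq Y_2\subseteq\cdots$ has $\fg(Y_n)\to\fg(Y)$, and since the left-hand side is the constant sequence $c$, this yields $\fg(Y)=c$.

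Finally, because $X_1\subseteq Y$, formula \eqref{EQ:D-SUB} gives $d(X_1,Y)=\fg(Y)-\fg(X_1)=c-c=0$, so $Y\equiv_\fg X_1$ and hence $Y\in\RR$. There is no serious obstacle here; the only subtlety is recognizing that the finite-union remark plus continuity from below is exactly what is needed, and that one should use the chain $Y_n$ rather than try to handle the raw sequence $X_n$ directly (continuity from below is stated for monotone chains, not arbitrary countable unions).
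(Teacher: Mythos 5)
Your proposal is correct and follows essentially the same route as the paper: replace the $X_n$ by the increasing partial unions $Y_n=X_1\cup\cdots\cup Y_n$ (which stay in $\RR$ by finite-union closure), apply continuity from below to get $\fg(\cup_n Y_n)=\fg(\RR)$, and conclude via \eqref{EQ:D-SUB} that $\cup_n X_n\in\RR$.
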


\begin{proof}
Let $X_1,X_2,\ldots\in\RR$ and $X=X_1\cup X_2\cup\ldots$. Since $\RR$ is closed
under finite union, we may replace $X_n$ by $X_1\cup\ldots\cup X_n$; in other
words, we may assume that $X_1\subseteq X_2\subseteq\ldots$. Then $\fg(X_n)\to
\fg(X)$ by continuity from below. Since $\fg(X_n)=\fg(\RR)$, it follows that
$\fg(X)=\fg(X_n)=\fg(\RR)$. Since $X_n\subseteq X$, this implies that
$X_n\equiv X=0$, and so $X\in\RR$.
\end{proof}

For any increasing submodular setfunction $\fg$, we can define a partial order
on roofs as follows. Let $\RR$ and $\QQ$ be two roofs. Let $\RR\le\QQ$ mean
that there is an $A\in\RR$ and $B\in\QQ$ such that $A\subseteq B$. Let us note
right away that if this holds, then for every $A'\in\RR$ there is a $B'\in\QQ$
such that $A'\subseteq B'$. Indeed, let $B'=B\cup A'$. To prove that $B\equiv
B'$, it suffices to show that $\fg(B')=\fg(B)$. Clearly $\fg(B')\ge\fg(B)$. On
the other hand,
\[
\fg(B')=\fg(B\cup (A\cup A')) \le \fg(B)+\fg(A\cup A')-\fg((A\cup A')\cap B)
\]
by submodularity. Here $\fg(A\cup A')=\fg(A)$ since $A\equiv A'$, but also
$\fg((A\cup A')\cap B)=\fg(A)$ since $A\subseteq (A\cup A')\cap B \subseteq
A\cup A'$. Thus we get that $\fg(B')\le \fg(B)$, and so $\fg(B')=\fg(B)$.

This argument also shows that if $\RR$ and $\QQ$ are two roofs such that
$\QQ\le\RR$ and $\RR\le\QQ$, then $\RR=\QQ$.

Motivated by the case of matroids, we define a {\it flat} associated with a
roof $\RR$ as the union of all roofs $\QQ$ such that $\QQ\le\RR$. Every flat is
a set ideal in $\BB$, but not necessarily a principal ideal, by the example
above. Nevertheless, the flat determines its roof: if two roofs $\RR$ and $\QQ$
define the same flat, then $\RR\le\QQ$ and $\QQ\le\RR$ by the definition of
flats,, and hence $\RR=\QQ$.

Inequality \eqref{EQ:UNION-CONTR} implies that if $A_1\equiv A_2$, then
$A_1\cup B\equiv A_2\cup B$ for every $B$. Applying this twice, we get that if
$A_1\equiv A_2$ and $B_1\equiv B_2$, then $A_1\cup B_1\equiv A_2\cup B_2$. In
other words, for two roofs $\RR$ and $\QQ$, if $A\in\RR$ and $B\in\QQ$, then
the roof containing $A\cup B$ depends only on $\RR$ and $\QQ$, and we can
denote it by $\RR\lor\QQ$. It is trivial to check that this operation is
commutative, associative and idempotent.

So we have a join semilattice on the roofs. It is also easy to verify that
$\RR\le\QQ$ if and only if $\RR\lor\QQ=\QQ$, so the semilattice structure
conforms with the partial order. However, this semilattice is not a lattice in
general, only under a smoothness condition:

\begin{lemma}\label{LEM:LATTICE}
If $\fg$ is an increasing submodular setfunction continuous from below, then
the semilattice structure on the roofs is a lattice.
\end{lemma}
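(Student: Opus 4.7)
The plan is to construct, given two roofs $\RR$ and $\QQ$, a greatest lower bound $\RR\land\QQ$ in the semilattice; combined with the join $\lor$ already defined, this will make the roofs into a lattice. Fix representatives $A_0\in\RR$ and $B_0\in\QQ$. Call $X\in\BB$ \emph{below $\RR$} if $[X]\le\RR$; as observed in the paragraphs preceding the lemma, this is equivalent to $X\subseteq C$ for some $C\in\RR$, which in turn (using monotonicity of $\fg$ together with the closure of roofs under finite union) is equivalent to $\fg(X\cup A_0)=\fg(A_0)$. Analogously for $\QQ$.

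Set
\[
s=\sup\bigl\{\fg(X):X\in\BB,\ X\text{ is below both }\RR\text{ and }\QQ\bigr\},
\]
which is finite since $s\le\fg(A_0)$. First I would choose a sequence $X_n\in\BB$ below both with $\fg(X_n)\to s$, and replace it by $Y_n=X_1\cup\cdots\cup X_n$. Because roofs are closed under finite union, each $Y_n$ is again below both $\RR$ and $\QQ$, and monotonicity gives $\fg(Y_n)\to s$.

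Next, let $Y=\bigcup_n Y_n$. The sets $Y_n\cup A_0$ all lie in $\RR$ and form an increasing chain, so Lemma \ref{LEM:COUNT-UNION} (this is precisely where the hypothesis of continuity from below is used) gives $Y\cup A_0\in\RR$; thus $Y$ is below $\RR$, and the same argument with $B_0$ shows that it is below $\QQ$. Continuity from below, applied to $Y_n\nearrow Y$, also yields $\fg(Y)=\lim_n\fg(Y_n)=s$.

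Finally, I would verify that $[Y]$ is the meet: for any $Z\in\BB$ below both $\RR$ and $\QQ$, the union $Y\cup Z$ is again below both by the finite-union argument, so
\[
\fg(Y)\le\fg(Y\cup Z)\le s=\fg(Y);
\]
hence $Y\cup Z\equiv Y$, i.e.\ $[Z]\le[Y]$. Setting $\RR\land\QQ:=[Y]$ therefore gives a greatest lower bound. The main obstacle is exactly the passage from the finite unions $Y_n$ to the countable union $Y$: without continuity from below, the property of being below $\RR$ (equivalently, of having $\cdot\cup A_0\in\RR$) need not be preserved by countable suprema, the supremum $s$ might not be attained, and no meet need exist; invoking Lemma \ref{LEM:COUNT-UNION} is what turns the argument through.
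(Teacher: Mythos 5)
Your proof is correct and essentially mirrors the paper's argument: both define the meet via a supremum, pass to a countable increasing union, invoke Lemma~\ref{LEM:COUNT-UNION} (the continuity-from-below hypothesis) to keep the union in the relevant roof, and then verify the greatest-lower-bound property. The only cosmetic difference is that the paper parametrizes the supremum by intersections $X\cap Y$ with $X\in\RR$, $Y\in\QQ$, while you parametrize it directly by sets below both roofs; these describe the same quantity.
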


\begin{proof}
We want to prove that if $\RR$ and $\QQ$ are two roofs, then there is a unique
largest roof $\HH$ such that $\HH\le\RR$ and $\HH\le\QQ$. This will allow us to
define the meet operation by $\RR\land\QQ=\HH$.

Let $c=\sup\{\fg(X\cap Y):~X\in\RR, Y\in\QQ\}$. We claim that this supremum is
a maximum, i.e., it is attained. Let $X_1,X_2,\ldots\in\RR$ and
$Y_1,Y_2,\ldots\in\QQ$ be chosen so that $\fg(X_n\cap Y_n)\to c$ as
$n\to\infty$. Let $X=\cup_n X_n$ and $Y=\cup_n Y_n$, then $X\in\RR$ and
$Y\in\QQ$ by Lemma \ref{LEM:COUNT-UNION}. By monotonicity and the definition of
$c$, $\fg(X_n\cap Y_n)\le \fg(X\cap Y) \le c$. Since $\fg(X_n\cap Y_n)\to c$,
this implies that $\fg(X\cap Y)=c$.

Let $\HH$ be the roof containing $X\cap Y$. Then by the definition of the
partial order, $\HH\le\RR$ and $\HH\le\QQ$. Let $\KK$ be any other roof such
that $\KK\le\RR$ and $\KK\le\QQ$, and let $Z\in\KK$. Then there are sets
$U\in\RR$ and $W\in\QQ$ such that $Z\subseteq U$ and $Z\subseteq W$. Then
$Z\subseteq U\cap W$, which implies that $\fg(Z)\le\fg(U\cap W)\le c$. Let
$X'=U\cup X$ and $Y'=W\cup Y$, then $X'\in\RR$, $Y'\in\QQ$, and
\[
c=\fg(X\cap Y)\le \fg(X'\cap Y')\le c.
\]
Thus $\fg(X'\cap Y')= c = \fg(X\cap Y)$. Since $X\cap Y\subseteq X'\cap Y'$,
this implies that $X'\cap Y'\in\HH$. Since $Z\subseteq X'\cap Y'$, it follows
that $\KK\le\HH$. So $\HH$ is the unique largest roof below $\RR$ and $\QQ$.
\end{proof}

\subsection{Completeness}

\begin{theorem}\label{THM:METRIC-COMPLETE}
If $\fg$ is an increasing submodular setfunction continuous from above, then
the pseudometric $d=d_\fg$ is complete.
\end{theorem}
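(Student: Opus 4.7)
The plan is to extract a rapidly Cauchy subsequence, realize its limit as a countable intersection of decreasing unions, and then control the distance from each term to that limit using both forms of continuity. A key preparatory observation is that, because we work under the blanket assumption $\fg(\emptyset)=0$ of this section, $\fg$ is subadditive; combined with the hypothesis of continuity from above, the remark near the start of Section~\ref{SEC:CONT} gives that $\fg$ is also continuous from below. Both directions of continuity will be needed.

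Let $(X_n)$ be a Cauchy sequence in $(\BB,d)$. Pass to a subsequence, still denoted $X_{n_k}$, with $d(X_{n_k},X_{n_{k+1}})<2^{-k}$. Since $(J,\BB)$ is a sigma-algebra, set
\[
Y_k=\bigcup_{j\ge k}X_{n_j}\in\BB,\qquad X=\bigcap_{k\ge 1}Y_k\in\BB.
\]
The sequence $(Y_k)$ is decreasing with intersection $X$, so continuity of $\fg$ from above gives $\fg(Y_k)\to\fg(X)$; since $X\subseteq Y_k$, this amounts to $d(X,Y_k)=\fg(Y_k)-\fg(X)\to 0$.

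The heart of the argument is to show $d(X_{n_k},Y_k)\to 0$, which is where I expect the one genuine obstacle: passing from finite unions to the infinite union $Y_k$. Fix $k$ and set $Z^{(k)}_m=X_{n_k}\cup X_{n_{k+1}}\cup\cdots\cup X_{n_{k+m}}$. The sequence $(Z^{(k)}_m)_{m\ge 0}$ is increasing with union $Y_k$, so by continuity from below $\fg(Z^{(k)}_m)\to\fg(Y_k)$. Using the contractivity property \eqref{EQ:UNION-CONTR} applied with $A=Z^{(k)}_i$, together with the triangle inequality,
\[
d(X_{n_k},Z^{(k)}_m)\le\sum_{i=0}^{m-1}d(Z^{(k)}_i,Z^{(k)}_{i+1})
=\sum_{i=0}^{m-1}d\bigl(Z^{(k)}_i,\,Z^{(k)}_i\cup X_{n_{k+i+1}}\bigr).
\]
Each summand equals $\fg(Z^{(k)}_i\cup X_{n_{k+i+1}})-\fg(Z^{(k)}_i)$, and since $X_{n_{k+i}}\subseteq Z^{(k)}_i$, submodularity (decreasing differences) bounds it by $\fg(X_{n_{k+i}}\cup X_{n_{k+i+1}})-\fg(X_{n_{k+i}})\le d(X_{n_{k+i}},X_{n_{k+i+1}})<2^{-(k+i)}$. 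Hence $d(X_{n_k},Z^{(k)}_m)<2^{1-k}$ for every $m$, that is $\fg(Z^{(k)}_m)-\fg(X_{n_k})<2^{1-k}$. Letting $m\to\infty$ and using continuity from below,
\[
d(X_{n_k},Y_k)=\fg(Y_k)-\fg(X_{n_k})\le 2^{1-k}.
\]

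Combining with $d(X,Y_k)\to 0$ through the triangle inequality yields $d(X_{n_k},X)\to 0$. Because the original sequence $(X_n)$ is Cauchy and contains the convergent subsequence $(X_{n_k})$, it too converges to $X$ in the pseudometric, proving completeness. The single nontrivial step is the bound on $d(X_{n_k},Y_k)$: it depends essentially on the contractivity \eqref{EQ:UNION-CONTR} (itself a consequence of submodularity) to telescope to a geometric series, and on continuity from below to pass from $Z^{(k)}_m$ to the full union $Y_k$; everything else is formal.
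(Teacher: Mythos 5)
Your proof is correct and follows essentially the same route as the paper: extract a rapidly Cauchy subsequence, form the decreasing unions $Y_k$ and their intersection, bound $d(X_{n_k},Y_k)$ by telescoping with submodularity/contractivity and passing to the limit via continuity from below, then use continuity from above to reach the intersection. Your final step (Cauchy sequence with a convergent subsequence converges) is slightly cleaner than the paper's explicit estimate on $d(X_m,Z)$, but the substance and the key inequalities are the same.
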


\begin{proof}
Let $X_1,X_2,\ldots$ be a Cauchy sequence of sets in $\BB$. Set
\[
a(n)=\sup_{i,j\ge n} d(X_i,X_j),
\]
then $a(n)\to0$ as $n\to\infty$.

We claim that for any $1\le n_1\le\ldots\le n_r$,
\begin{equation}\label{EQ:XNXK}
d(X_{n_1},X_{n_1}\cup \ldots \cup X_{n_r}) \le a(n_1)+\ldots +a(n_{r-1}).
\end{equation}
Indeed, by \eqref{EQ:D-SUB} and \eqref{EQ:UNION-CONTR},
\begin{align*}
d(X_{n_1},\,&X_{n_1}\cup \ldots \cup X_{n_r}) = \sum_{j=0}^r d(X_{n_1}\cup \ldots \cup X_{n_{j+1}},X_{n_1}\cup \ldots \cup X_{n_j})\\
&\le\sum_{j=1}^{r-1} d(X_{n_j}\cup X_{n_{j+1}}, X_{n_j})
\le \sum_{j=1}^{r-1} d(X_{n_{j+1}}, X_{n_j})
\le\sum_{j=1}^{r-1} a(n_j).
\end{align*}

Let $n_1<n_2<\ldots $ be chosen so that $a(n_k)\le 2^{-k}$. Let $Y_k=\cup_{j\ge
k} X_{n_j}$ and $Z=\cap_k Y_k$. By \eqref{EQ:XNXK},
\[
d(X_{n_k},X_{n_k}\cup \ldots \cup X_{n_{k+r}}) \le 2^{-k}+\ldots+2^{-k-r}< 2^{1-k},
\]
and so by continuity from below (implied by continuity from above),
\[
d(X_{n_k},Y_k) \le 2^{1-k}.
\]
Furthermore, continuity from above implies that $\fg(Y_{n_k})\to\fg(Z)$ as
$k\to\infty$. Thus
\[
d(X_{n_k},Z)\le d(X_{n_k},Y_k)+d(Y_k,Z) \le 2^{1-k}+d(Y_k,Z) \to 0\qquad(k\to\infty).
\]
Now let $m\to\infty$, and let $k$ be the largest $j$ for which $n_j\le m$. Then
\[
d(X_m,Z)\le d(X_m,X_{n_k})+d(X_{n_k},Z) \le 2^{1-k}+d(X_{n,k},Z) \to 0,
\]
This shows that $X_m\to Z$ in the metric $d$, proving the theorem.
\end{proof}

\section{Strong submodularity}\label{SEC:STRONG-SUBMOD}

\subsection{Definition and examples}

Choquet \cite{Choq} introduced a sequence of inequalities generalizing
submodularity. Let $\FF$ be a lattice family of sets. For
$A_0,\ldots,A_n\in\FF$ and $K\subseteq\{0,\ldots,n\}$, set $A_K=\cup_{i\in
K}A_i$. For $n\ge 1$ and $A_0,\ldots,A_n\in\FF$, consider the following
property:
\begin{equation}\label{EQ:SSUBMOD}
\sum_{K\subseteq\{1,\ldots,n\}} (-1)^{|K|} \fg(A_0\cup A_K)\le 0.
\end{equation}
For $n=1$, this means that $\fg$ is increasing; for $n=2$, this means that
$\fg$ is increasing and submodular. We call a setfunction {\it strongly
submodular}, if it satisfies inequality \eqref{EQ:SSUBMOD} for every $n\ge1$
and every family $A_0,\ldots,A_n\in\FF$. The property is not changed when we
scale $\fg$ by any positive number, and add a constant to it. For example, we
can make $\fg(\emptyset)=0$ (if this is useful); then $\fg$ is nonnegative.
(This is why we don't require \eqref{EQ:SSUBMOD} to hold for $n=0$.) It is easy
to see that every modular setfunction on a lattice family is strongly
submodular.

Every increasing modular setfunction $\alpha$ is strongly submodular. This can
be seen by shifting it to satisfy $\alpha(\emptyset)=0$, when it becomes a
charge. Let $C_1,\ldots,C_r$ be the atoms of the (finite) set-algebra generated
by the sets $A_0$ and $A_i$ $(i\in N=\{1,\ldots,n\})$, and let us compute the
contribution of an atom $C_j$ to the left hand side of \eqref{EQ:SSUBMOD}. Let
$M=\{i:~C_j\subseteq A_i\}$. If $0\in M$, then this contribution is
\begin{align*}
\alpha(C_j) \sum_{K\subseteq N} (-1)^{|K|}=0
\end{align*}
(here we need that $n\ge 1$). If $0\notin M$, then
\begin{align*}
\alpha(C_j) \sum_{K\subseteq N \atop K\cap M\not=\emptyset} (-1)^{|K|}
&= - \alpha(C_j) \sum_{K\subseteq N\setminus M} (-1)^{|K|}.
\end{align*}
This sum is zero unless $N=M$, in which case it is $-\alpha(C_J)\le 0$.

Clearly, for a strongly submodular setfunction, its negative $\psi=-\fg$
satisfies the opposite inequality:
\begin{equation}\label{EQ:SSUPMOD}
\sum_{K\subseteq\{1,\ldots,n\}} (-1)^{|K|} \psi(A_0\cup A_K)\ge 0
\end{equation}
for all $A_0,\ldots,A_n\in\FF$. If we add the condition $\psi\ge 0$, then
\eqref{EQ:SSUPMOD} will hold for $n=0$ as well. The case $n=1$ means that
$\psi$ is decreasing; the cases $n=1,2$ together mean that $f$ is decreasing
and supermodular. In this case, we cannot require that $\psi(\emptyset)=0$; the
condition $\psi(J)=0$ can be achieved, but it is not always useful. We call a
setfunction satisfying inequality \eqref{EQ:SSUPMOD} for every $n\ge0$ {\it
strongly supermodular}\footnote{Choquet calls strongly submodular setfunctions
{\it alternating of infinite order}; I feel, however, that this expression is
overused. He calls the setfunction $-\fg^c$ {\it monotone of infinite order}}.

The following lemma gives a very general construction for strongly submodular
functions.

\begin{lemma}\label{LEM:R-INDUCED}
Every pullback of a strongly submodular setfunction is strongly submodular.
\end{lemma}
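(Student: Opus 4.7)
The plan is to unfold definitions and observe that strong submodularity of the pullback is simply the strong submodularity of the original setfunction applied to the images of the test sets.

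Fix an arbitrary $n\ge 1$ and sets $A_0,A_1,\ldots,A_n\in\AA$, and set $\psi=\fg\circ\Gamma$. By definition, $\psi(A_0\cup A_K)=\fg(\Gamma(A_0\cup A_K))$ for every $K\subseteq\{1,\ldots,n\}$, so the strong submodularity inequality \eqref{EQ:SSUBMOD} for $\psi$ at $(A_0,\ldots,A_n)$ reads
\[
\sum_{K\subseteq\{1,\ldots,n\}} (-1)^{|K|}\,\fg\bigl(\Gamma(A_0\cup A_K)\bigr)\le 0.
\]
The first step is to push $\Gamma$ inside the union. Since $\Gamma$ preserves binary unions, a trivial induction shows it preserves all finite unions, so $\Gamma(A_0\cup A_K)=\Gamma(A_0)\cup\bigcup_{i\in K}\Gamma(A_i)$.

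Setting $B_i:=\Gamma(A_i)\in\BB$ for $i=0,1,\ldots,n$, the desired inequality becomes exactly
\[
\sum_{K\subseteq\{1,\ldots,n\}} (-1)^{|K|}\,\fg(B_0\cup B_K)\le 0,
\]
which is the strong submodularity inequality for $\fg$ evaluated at $(B_0,\ldots,B_n)$, hence holds by hypothesis. Since $n$ and the $A_i$'s were arbitrary, $\psi$ is strongly submodular.

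There is no real obstacle here: monotonicity of $\Gamma$ plays no role in the argument, and the only nontrivial observation is that union-preservation for pairs propagates to arbitrary finite unions, which is what makes the inequalities transport term-by-term under the pullback.
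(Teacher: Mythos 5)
Your proof is correct and follows the same route as the paper's: push $\Gamma$ inside the finite union (using union-preservation) and observe that the alternating sum for $\psi$ at $(A_0,\ldots,A_n)$ becomes the alternating sum for $\fg$ at $(\Gamma(A_0),\ldots,\Gamma(A_n))$. Your closing remark that monotonicity of $\Gamma$ is not separately needed is correct (indeed it follows from union-preservation), but it is an observation rather than a different method.
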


\begin{proof}
Let $\fg$ be a strongly submodular setfunction on the set-algebra $(J,\BB)$,
let $(I,\AA)$ be another set-algebra, and let $\Gamma:~\AA\to\BB$ be a monotone
increasing union-preserving map. We claim that $\psi=\fg\circ\Gamma$ is
strongly submodular. Indeed, let $A_0,\ldots,A_n\in\AA$. Then
\begin{align*}
\sum_{0\in K\subseteq I} (-1)^{|K|} \psi(\cup_{i\in K} A_i)
&= \sum_{0\in K\subseteq I} (-1)^{|K|} \fg(\Gamma(\cup_{i\in K} A_i))\\
&= \sum_{0\in K\subseteq I} (-1)^{|K|} \fg(\cup_{i\in K} \Gamma(A_i))\le 0.
\end{align*}
\end{proof}

Among our previous examples, \ref{EXA:SUP}, \ref{EXA:PROB}, \ref{EXA:IDEAL} and
\ref{EXA:CLOSURE} are also strongly submodular; they are all pullbacks of
charges. Example \ref{EXA:CONCAVE} is strongly submodular only for special
concave functions $f$ (called totally monotone). Examples \ref{EXA:CUT} and
\ref{EXA:LIN-DEP} are not strongly submodular in general. It will be useful to
generalize Example \ref{EXA:TRANSVERSAL} as follows.

Consider two set-algebras $(I,\AA)$ and $(J,\BB)$ and a relation $R\subseteq
J\times I$ that is measurable in the sense that if $X\in\BB$ then $R(X)\in\AA$.
Let $\alpha$ be a charge on $\AA$. We say that the setfunction
$\psi(X)=\alpha(R(X))$ is {\it induced by} $R$ and $\alpha$. Since the map
$X\mapsto R(X)$ is monotone increasing and union-preserving, the setfunction
$\psi$ is a pullback of $\alpha$. This implies that $\psi$ is strongly
submodular.

Examples \ref{EXA:SUP} and \ref{EXA:PROB} are easy special cases. Let us show
that Example \ref{EXA:CLOSURE} is a special case as well.

\begin{lemma}\label{LEM:CLOSURE-CHARGE}
Let $(J,\BB)$ be the sigma-algebra of Borel sets of a compact metric space
$(J,d)$, and let $\mu$ be a measure on $(J,\BB)$. Then there is a sigma-algebra
$(I,\AA)$, and a charge $\alpha$ on it, and a measurable relation $R\subseteq
J\times I$, so that
\[
\mu(\overline{X})=\alpha(R(X))
\]
for every $X\in\BB$.
\end{lemma}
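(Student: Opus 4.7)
The plan is to realize the submodular setfunction $\nu(X):=\mu(\overline X)$ as a pullback of a charge by exploiting the metric approximation $\overline X=\bigcap_n N_{1/n}(X)$, where $N_\eps(X)=\{y\in J:d(y,X)<\eps\}$. Since $\mu$ is a finite Borel measure (we may assume $\mu(J)<\infty$, which is implicit since $J$ is compact) that is continuous from above, the decreasing sequence of open sets $N_{1/n}(X)\downarrow\overline X$ gives
\[
\mu(\overline X)=\lim_{n\to\infty}\mu(N_{1/n}(X)).
\]
The idea is to carry the index $n$ as an extra coordinate on $I$, so that evaluating $\mu$ slice by slice and then passing to a Banach limit yields the required charge.

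First I would take $I=J\times\Nbb$ and let $\AA$ be the sigma-algebra of those $Y\subseteq I$ whose slice $Y_n:=\{y\in J:(y,n)\in Y\}$ belongs to $\BB$ for every $n\in\Nbb$. Define the relation $R\subseteq J\times I$ by
\[
(x,(y,n))\in R\iff d(x,y)<\tfrac1n.
\]
Then for every $X\in\BB$,
\[
R(X)=\bigcup_{n\in\Nbb}\bigl(N_{1/n}(X)\times\{n\}\bigr),
\]
and since each $N_{1/n}(X)$ is open in $J$, the $n$-th slice of $R(X)$ lies in $\BB$; thus $R(X)\in\AA$, verifying the measurability condition on $R$.

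Next I would define $\alpha:\AA\to\R$ by $\alpha(Y)=\blim_n\mu(Y_n)$, using a Banach limit on $\ell^\infty(\Nbb)$. Well-definedness is immediate from $0\le\mu(Y_n)\le\mu(J)<\infty$. Finite additivity and nonnegativity of $\alpha$ pass directly from $\mu$ through $\blim$: for disjoint $Y,Z\in\AA$ the slices $Y_n$ and $Z_n$ are disjoint in $\BB$, so $\mu((Y\cup Z)_n)=\mu(Y_n)+\mu(Z_n)$, and $\blim$ is linear and positivity-preserving. Thus $\alpha$ is a (bounded, nonnegative) charge on $\AA$.

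Finally, applying $\alpha$ to $R(X)$ gives
\[
\alpha(R(X))=\blim_n\mu(N_{1/n}(X)),
\]
and since $(\mu(N_{1/n}(X)))_n$ is already convergent with limit $\mu(\overline X)$, the Banach limit agrees with the ordinary limit, producing $\alpha(R(X))=\mu(\overline X)$ as desired. There is no serious obstacle: the only nontrivial ingredient is the metric-space identity $\overline X=\bigcap_n N_{1/n}(X)$ combined with continuity from above of the finite measure $\mu$, both of which are standard; the role of the Banach limit is simply to package these approximations into a single finitely additive object.
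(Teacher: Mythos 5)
Your proof is correct, and it follows the same underlying strategy as the paper's: shrink $X$ to $\overline X$ via neighborhoods $N_\eps(X)$, encode the shrinking parameter as an extra coordinate on $I$, and package the resulting limit into a charge via a Hahn--Banach-type limit. The implementations differ, though, and yours is somewhat cleaner. The paper takes $I=J\times[0,1]$ with $R=\{(x,(y,z)):d(x,y)<z\}$, forms the product measures $\alpha_n=\mu\times\pi_n$ (where $\pi_n$ is uniform on $[0,1/n)$, so that $\alpha_n(R(U))$ is the average of $\mu(U_z)$ over $z\in(0,1/n)$), and passes to an ultralimit. You instead discretize to $I=J\times\Nbb$, evaluate $\mu$ on the slices $N_{1/n}(X)$ directly, and apply a Banach limit. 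Since a Banach limit and an ultralimit play the same role here (both agree with the ordinary limit on convergent sequences, which is all that is used), and the discrete version avoids the product-measure detour, your route is a modest simplification. One small caveat you flag correctly: the lemma tacitly assumes $\mu$ is a finite measure, which is what makes both continuity from above and the boundedness of the sequence $(\mu(N_{1/n}(X)))_n$ available; this is implicit in the paper's proof as well.
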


\begin{proof}
Let $I=J\times[0,1]$, let $\AA$ be the family of Borel subsets of $I$, and let
\[
R=\{(x,(y,z)):~x,y\in J,\, z\in[0,1],\,d(x,y)< z\}.
\]
Let $\alpha_n=\mu\times\pi_n$, where $\pi_n$ is the uniform probability measure
on $[0,1]\times[0,1/n)$ ($n\in\Nbb$). Fix a nontrivial ultrafilter $\FF$ on
$\Nbb$, and for every set $X\in\AA$, define the ultralimit
\begin{equation}\label{EQ:A-ULTRA}
\alpha(X)=\lim_\FF \alpha_n(X).
\end{equation}
Clearly $\alpha$ is a charge on $\AA$. We claim that
\begin{equation}\label{EQ:ARULU}
\alpha(R(U))=\mu(\overline{U})
\end{equation}
for $U\in\BB$. For $0<z\le 1$, let $U_z$ denote the $z$-neighborhood of $U$.
Then the set $R(U)\cap (J\times\{z\})$ is congruent with $U_z$, and hence
\[
\alpha_n(R(U)) =n\intl_0^{1/n} \mu(U_z)\,dz.
\]
For $0<z\le 1/n$, the inclusion $\overline{U}\subseteq U_z\subseteq U_{1/n}$
implies that $\mu(\overline{U})\le \mu(U_z)\le \mu(U_{1/n})$. Hence
$\mu(\overline{U})\le \alpha_n(R(U)) \le \mu(U_{1/n})$. Since $\lim_{n\to
\infty} \mu(U_{1/n}) = \mu(\overline{U})$, this inequality implies that
$\lim_{n\to \infty} \alpha_n(R(U)) = \mu(\overline{U})$. If this limit exists,
then it is also equal to the ultralimit $\alpha(R(U))$.
\end{proof}

Two apparent gaps below should be explained. First, due to the simple
transformation between strongly submodular and supermodular setfunctions,
results about one property can easily be translated to results about the other,
and we will not describe these parallel assertions. Second, it is clear that a
setfunction $\fg$ is strongly submodular if and only if every finite quotient
of it is strongly submodular. So every result giving equivalent conditions for
strong submodularity in the finite case (to be discussed in the next section)
provides a characterization in the infinite case, by adding ``for every finite
quotient''; we don't state these separately.

\subsection{The finite case}

The results below about the finite case would follow from the much more general
results of Choquet about strongly submodular setfunctions on compact subsets of
a compact set, but the treatment here is (naturally) much simpler and more
explicit.

Before turning to strongly submodular setfunctions, we recall some well-known
results about the linear algebra of subsets of a finite set. Let $V$ be a
finite set with $|V|=n$, and consider the spaces $\R^{2^V}$ of setfunctions on
$(V,2^V)$ and $\R^{2^V\times 2^V}$ of $2^n\times 2^n$ matrices whose rows and
columns are indexed by the subsets of $V$. Let $\Zb$ and $\Mb$ be $2^V\times
2^V$ matrices, defined by
\begin{equation}\label{EQ:ZB-DEF}
\Zb_{X,Y}=\one(X\subseteq Y) \et \Mb_{X,Y} = \one(X\subseteq Y) (-1)^{|Y\setminus X|},
\end{equation}
The matrices $\Mb$ and $\Zb$ are triangular, with $1$'s in the diagonal. It is
easy to check that $\Mb=\Zb^{-1}$.

Consider a setfunction $\fg:~2^V\to\R$, which we can view as a vector in
$\R^{2^V}$. The setfunctions $\Zb \fg$ and $\Zb\T \fg$ are called its {\it
upper} and {\it lower summation functions}, and $\Mb \fg$ and $\Mb\T \fg$ are
its {\it upper} and {\it lower Möbius inverses}. Explicitly,
\[
(\Zb \fg)_X=\sum_{Y\supseteq X} \fg(Y),\qquad (\Zb\T \fg)_X=\sum_{Y\subseteq X} \fg(Y)
\]
and
\[
(\Mb \fg)_X=\sum_{Y\supseteq X}  (-1)^{|Y\setminus X|} \fg(Y),\qquad (\Mb\T \fg)_X=\sum_{Y\subseteq X} (-1)^{|X\setminus Y|} \fg(Y)
\]

Introducing the permutation matrix $\Cb$ which interchanges a set with its
complement, explicitly $\Cb_{X,Y}= \one(X^c=Y)$, we define $\Pb=\Jb-\Zb\Cb$ and
$\Nb=-\Cb\Mb$. The matrices $\Pb$ and $\Nb$ are symmetric. The matrix $\Pb$ has
a zero row and column corresponding to $\emptyset$, so it is not invertible.
But we can delete this row and column, to get the matrix $\Pb'$, which is
invertible; it is easy to check that its inverse is the matrix $\Nb'$, obtained
from $\Nb$ by deleting the row and the column corresponding to $\emptyset$. If,
instead, we replace the only nonzero element of this row and column of $\Nb$ by
$0$ (this is a $1$ in the position $V$), then we get a matrix which is a
Penrose inverse of $\Pb$, and we may denote it by $\Pb^{-1}$. The product
$\Pb\Pb^{-1}$ is obtained from the identity matrix by replacing its
$(\emptyset,\emptyset)$-entry by $0$.

\begin{lemma}\label{LEM:INDUCE}
For every setfunction $\fg:~2^V\to\R$ with $\fg(\emptyset)=0$ there is a unique
setfunction $\alpha:~2^V\to\R$ with $\alpha(\emptyset)=0$ such that
$\fg=\Pb\alpha$.
\end{lemma}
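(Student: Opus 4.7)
The plan is to reduce this to the invertibility of $\Pb'$ already recorded just before the lemma. The key observation is that the $\emptyset$-row of $\Pb$ vanishes: indeed, $\Pb_{X,Y} = 1 - \one(X \subseteq Y^c) = \one(X \cap Y \neq \emptyset)$, which is $0$ whenever $X = \emptyset$. Consequently the constraint $(\Pb\alpha)(\emptyset) = \fg(\emptyset)$ is the tautology $0 = 0$, and carries no information about $\alpha$. Similarly, the $\emptyset$-column of $\Pb$ is zero, so the value $\alpha(\emptyset)$ is invisible to the equation $\fg = \Pb\alpha$; the stipulation $\alpha(\emptyset) = 0$ is precisely what pins it down.

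First I would restrict everything to the index set $2^V \setminus \{\emptyset\}$. Write $\fg'$ and $\alpha'$ for the restrictions of $\fg$ and $\alpha$ to this index set. By the observation above, the equation $\fg = \Pb\alpha$ together with $\fg(\emptyset) = \alpha(\emptyset) = 0$ is equivalent to $\fg' = \Pb'\alpha'$.

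Next I would invoke the invertibility of $\Pb'$, stated in the paragraph preceding the lemma, with explicit inverse $\Nb'$. This immediately yields both existence and uniqueness: setting $\alpha' = \Nb'\fg'$ and $\alpha(\emptyset) = 0$ produces a setfunction with the required property, and any other such setfunction $\beta$ would satisfy $\Pb'(\alpha' - \beta') = 0$, hence $\alpha' = \beta'$, and hence $\alpha = \beta$ because both vanish on $\emptyset$.

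Since the entire argument rests on a fact (namely $\Pb'\Nb' = \Ib$) that is already recorded in the text, there is essentially no obstacle; the main point is just to notice that the $\emptyset$-row/column of $\Pb$ decouples from the system, so the normalization $\alpha(\emptyset) = 0$ exactly compensates for the one-dimensional kernel and cokernel of $\Pb$.
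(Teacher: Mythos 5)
Your proof is correct and follows essentially the same approach as the paper: reducing to the invertible matrix $\Pb'$ on nonempty subsets, using $\fg(\emptyset)=0$ to see the $\emptyset$-equation is vacuous and $\alpha(\emptyset)=0$ to pin down the remaining freedom. Your explicit computation $\Pb_{X,Y}=\one(X\cap Y\ne\emptyset)$ (which indeed follows from $\Pb=\Jb-\Zb\Cb$) is a nice way to make the vanishing of the $\emptyset$-row and column transparent, but the substance of the argument is the one in the paper.
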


\begin{proof}
We want to solve the equation $\Pb\alpha=\fg$. Here $\Pb$ is singular, but the
value $\alpha(\emptyset)$ plays no role, and we have the equation
$\alpha(\emptyset)=0$. Since $\fg(\emptyset)=0$, the equation $\Pb\alpha=\fg$
is equivalent with $\Pb'\alpha'=\fg'$, where $\alpha'$ and $\fg'$ denote the
restrictions of $\alpha$ and $\fg$ to nonempty subsets. Equivalently,
$\alpha'=\Nb'\fg'$, which we can write as $\alpha=\Pb^{-1}\fg$.
\end{proof}

We define the (symmetric) {\it union matrix} $\Ub_\fg\in \R^{2^V\times 2^V}$ of
a setfunction $2^V\to\R$ by
\begin{equation}\label{EQ:UB-DEF}
(\Ub_\fg)_{X,Y}=\fg(X\cup Y) \qquad (X,Y\subseteq V).
\end{equation}
Defining the diagonal matrix $\Db_\fg=\diag(\fg(X):~X\subseteq V)$, we have the
simple but very useful {\it Lindstr\"om--Wilf Formula} \cite{Li,Wi}:
\begin{equation}\label{EQ:LI-WI}
\Ub_\fg=\Zb\Db_{\Mb \fg}\Zb\T.
\end{equation}
As an immediate consequence, we see that the number of positive [negative]
eigenvalues of the union matrix $\Ub_\fg$ is the number of positive [negative]
values of the upper Möbius inverse $\Mb \fg$.

\begin{theorem}\label{THM:STRONG-SUBMOD}
For a setfunction $\fg:~2^V\to\R$ on a finite set $V$ with $\fg(\emptyset)=0$,
the following are equivalent:

\smallskip

{\rm(i)} $\fg$ is strongly submodular;

\smallskip

{\rm(ii)} $\fg$ can be induced by the relation $\inb$ and a setfunction
$\alpha:~2^V\to\R_+$.

\smallskip

{\rm(iii)} $\fg$ can be induced by a relation $R\subseteq V\times W$ and a
charge $\alpha:~2^W\to\R_+$.

\smallskip

{\rm(iv)} The upper Möbius inverse satisfies $\Mb\fg(X) \le 0$ for all $X
\subseteq V$, $X\not=V$.

\smallskip

{\rm(v)} The union matrix $\Ub_\fg$ has at most one positive eigenvalue.
\end{theorem}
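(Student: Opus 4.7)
The plan is to close the circle (i)$\Rightarrow$(iv)$\Rightarrow$(ii)$\Rightarrow$(iii)$\Rightarrow$(i), then handle (iv)$\iff$(v) separately via the Lindstr\"om--Wilf formula \eqref{EQ:LI-WI}.

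For (i)$\Rightarrow$(iv), given $X \subsetneq V$, I would enumerate $V \setminus X = \{v_1, \ldots, v_n\}$ with $n \ge 1$ and apply \eqref{EQ:SSUBMOD} to $A_0 = X$, $A_i = X \cup \{v_i\}$. Since $A_0 \cup \bigcup_{i \in K} A_i = X \cup \{v_i : i \in K\}$, reindexing by $Y = A_0 \cup A_K$ turns the left-hand side of \eqref{EQ:SSUBMOD} into $\sum_{Y \supseteq X} (-1)^{|Y \setminus X|}\fg(Y) = (\Mb\fg)(X)$, which is therefore $\le 0$.

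For (iv)$\Rightarrow$(ii), I would set $\alpha(\emptyset) = 0$ and $\alpha(Y) = -(\Mb\fg)(V \setminus Y)$ for $Y \ne \emptyset$; nonnegativity follows from (iv) because $V \setminus Y \ne V$. Verifying $\fg(X) = \sum_{Y \cap X \ne \emptyset} \alpha(Y)$ is then a direct M\"obius computation: the substitution $Z = V \setminus Y$ rewrites the sum as $-\sum_{Z \ne V,\, X \not\subseteq Z} (\Mb\fg)(Z)$, which collapses to $\fg(X)$ via the two identities $\sum_Z (\Mb\fg)(Z) = \fg(\emptyset) = 0$ and $\sum_{Z \supseteq X} (\Mb\fg)(Z) = \fg(X)$. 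The implication (ii)$\Rightarrow$(iii) is essentially a relabeling: take $W = 2^V$ with the relation $\in \subseteq V \times W$ and extend $\alpha$ additively to a nonnegative charge on $2^W$. Finally, (iii)$\Rightarrow$(i) is immediate from Lemma \ref{LEM:R-INDUCED}, because a nonnegative charge is an increasing modular setfunction (hence strongly submodular, as established earlier in this section) and strong submodularity is preserved under pullbacks.

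For (iv)$\iff$(v), I would apply Sylvester's law of inertia to the Lindstr\"om--Wilf factorization $\Ub_\fg = \Zb \Db_{\Mb\fg} \Zb\T$. Invertibility of $\Zb$ (upper triangular, unit diagonal) forces $\Ub_\fg$ and $\Db_{\Mb\fg}$ to share the same signature, so the number of positive eigenvalues of $\Ub_\fg$ equals $|\{X : (\Mb\fg)(X) > 0\}|$. Condition (iv) bounds this count by one, giving (v). Conversely, under (v) at most one M\"obius value is positive; combining this with $\sum_X (\Mb\fg)(X) = \fg(\emptyset) = 0$, one concludes that either all M\"obius values vanish (so $\fg \equiv 0$ and (iv) is trivial) or the unique positive value sits at $X = V$.

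The main obstacle I expect is this last step, (v)$\Rightarrow$(iv): the eigenvalue count alone only controls how many $(\Mb\fg)(X)$ are positive, not where the positive one sits. Pinning its location to $X = V$ will require an extra combinatorial argument squeezing more out of $\fg(\emptyset) = 0$, presumably by examining $\fg$ at the complement of a candidate locus $X_0 \ne V$ and ruling these out so that only $X_0 = V$ remains consistent with the sign constraints on the M\"obius values.
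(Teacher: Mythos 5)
Your treatment of (i)$\Leftrightarrow$(ii)$\Leftrightarrow$(iii)$\Leftrightarrow$(iv) is correct and matches the paper's route: the formula $\alpha(Y)=-(\Mb\fg)(V\setminus Y)$ is precisely the entrywise form of the paper's $\alpha=\Pb^{-1}\fg$ (since $\Pb^{-1}$ agrees with $\Nb=-\Cb\Mb$ off the empty-set row and column), and your verification $\fg(X)=\sum_{Y\cap X\ne\emptyset}\alpha(Y)$ is the same M\"obius computation; (iv)$\Rightarrow$(v) via Sylvester's law applied to the Lindstr\"om--Wilf factorization is also as in the paper.

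However, the reservation you raise about (v)$\Rightarrow$(iv) is not a patchable technicality: the implication fails as stated. The claim that $\sum_X(\Mb\fg)(X)=\fg(\emptyset)=0$ forces the unique positive M\"obius value (if any) to sit at $X=V$ is wrong. Take $V=\{1\}$, $\fg(\emptyset)=0$, $\fg(\{1\})=-1$. Then $(\Mb\fg)(\emptyset)=1$, $(\Mb\fg)(\{1\})=-1$, so the positive value sits at $\emptyset\ne V$ and (iv) fails; yet
\[
\Ub_\fg=\begin{pmatrix}0&-1\\-1&-1\end{pmatrix}
\]
has negative determinant and hence exactly one positive eigenvalue, so (v) holds. (Condition (i) also fails, since $\fg$ is not increasing.) The paper's assertion that (iv)$\Leftrightarrow$(v) ``follows immediately from the Lindstr\"om--Wilf Formula'' is only correct for (iv)$\Rightarrow$(v). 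The missing ingredient is monotonicity: each of (i)--(iv) forces $\fg$ to be increasing (under (iv), $\fg(X)-\fg(X\cup\{v\})=\sum_{Z\supseteq X,\,v\notin Z}(\Mb\fg)(Z)\le0$ because every such $Z$ is a proper subset of $V$), whereas (v) does not. If one assumes $\fg$ is increasing, your intended step goes through cleanly: then $\fg\ge0$ and $(\Mb\fg)(V)=\fg(V)=\max\fg\ge0$, so either $\fg\equiv0$ or $(\Mb\fg)(V)>0$ is the one allowed positive value, giving (iv). Without that hypothesis, (v) must be replaced by a stronger condition, e.g.\ the negative-type condition of Corollary~\ref{COR:UNION-POS}.
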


\begin{proof}
Some implications are easy: (iii)$\Rightarrow$(ii) is trivial,
(ii)$\Rightarrow$(i) follows by Lemma \ref{LEM:R-INDUCED}, and
(iv)$\Leftrightarrow$(v) follows immediately from the Lindström--Wilf Formula
\eqref{EQ:LI-WI}.

\medskip

(i)$\Rightarrow$(iv): The inequality $\Mb\fg(X) \le 0$ for $X\not=V$ means that
\[
\sum_{Y\supseteq X} (-1)^{|Y\setminus X|} \fg(Y) = \sum_{Z\subseteq X^c} (-1)^{|Z|} \fg(X\cup Z) \le 0.
\]
This is a special case of Choquet's condition \eqref{EQ:SSUBMOD} with $A_0=X$
and $A_1,\ldots,A_n$, the singleton subsets of $X^c$. (The condition $X\not=V$
guarantees the bound $n\ge 1$ in \eqref{EQ:SSUBMOD}.)

\medskip

(iv)$\Rightarrow$(iii): By Lemma \ref{LEM:INDUCE}, there is a unique
$\alpha:~2^V\to\R$ such that $\alpha(\emptyset)=0$ and $\Pb\alpha=\fg$, which
can be expressed as $\Pb^{-1}\fg$. This $\alpha$ and the relation $\inb$ induce
$\fg$. We have $\alpha\ge 0$ if and only if $\Pb^{-1}\fg\ge 0$. Here $\Pb^{-1}$
agrees with $\Nb$ except on the row and column indexed by $\emptyset$, and
hence (using that $\fg(\emptyset)=0$) we have $\Nb\fg=\Pb^{-1}$ on nonempty
sets. Using that $\Nb=-\Cb\Mb$, we see that $\Pb^{-1}\fg\ge 0$ is equivalent to
$\Mb\fg(X)\le0$ for $X\not=V$.
\end{proof}

This theorem implies that the extreme rays of the convex cone of strongly
submodular setfunctions on the subsets of a finite set $V$ are given by
$\psi_T(X)=\one(X\cap T\not=\emptyset)$ ($T\subseteq V$). The argument in the
proof also gives that every setfunction $\fg$ with $\fg(\emptyset)=0$
(submodular or not) can be induced by the relation $\inb$ and a function
$\alpha:~2^V\to\R$. Furthermore, $\alpha$ is uniquely determined, and the
theorem implies that $\alpha\ge 0$ if and only if $\fg$ is strongly submodular.

A property of a symmetric matrix $M$ stronger than having at most one positive
eigenvalue is that the matrix is of {\it negative type}: this means that the
quadratic form $x\T Mx$ is nonnegative on the linear hyperplane $\one^\perp$
(see e.g.~\cite{DL}, Section 6.1 and \cite{HLMT} for more on this notion). In
general, negative type is a strictly stronger property than having at most one
positive eigenvalue; however, for union matrices, they are equivalent.

\begin{corollary}\label{COR:UNION-POS}
A setfunction $\fg:~2^V\to\R$ with $\fg(\emptyset)=0$ is strongly submodular if
and only if its union matrix $\Ub_\fg$ is of negative type.
\end{corollary}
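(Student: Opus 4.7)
The plan is to derive both directions from Theorem \ref{THM:STRONG-SUBMOD}(iv)-(v) together with the Lindstr\"om--Wilf factorization \eqref{EQ:LI-WI}. One direction (negative type $\Rightarrow$ at most one positive eigenvalue) is general linear algebra; the subtle direction is the converse, where the special structure of union matrices must be used.

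First I would handle the easy direction. Assume $\Ub_\fg$ is of negative type. If $\Ub_\fg$ had two (counting multiplicity) positive eigenvalues, let $W$ be a $2$-dimensional subspace on which $x\T\Ub_\fg x$ is positive definite. Since $\dim W + \dim\one^\perp = 2+(2^{|V|}-1) > 2^{|V|}$, the intersection $W\cap\one^\perp$ contains a nonzero vector $x$; this $x$ would simultaneously satisfy $x\T\Ub_\fg x>0$ and $x\T\Ub_\fg x\le 0$, a contradiction. Hence $\Ub_\fg$ has at most one positive eigenvalue, and Theorem \ref{THM:STRONG-SUBMOD}(v) yields that $\fg$ is strongly submodular.

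For the converse, assume $\fg$ is strongly submodular. By Theorem \ref{THM:STRONG-SUBMOD}(iv), $(\Mb\fg)(X)\le 0$ for every $X\subsetneq V$. Using the Lindstr\"om--Wilf formula $\Ub_\fg=\Zb\Db_{\Mb\fg}\Zb\T$, for any $x\in\R^{2^V}$ set $y=\Zb\T x$, so $y_X=\sum_{Y\subseteq X}x_Y$. Then
\[
x\T\Ub_\fg x \;=\; y\T\Db_{\Mb\fg}\,y \;=\; \sum_{X\subseteq V}(\Mb\fg)(X)\,y_X^2.
\]
The key observation is that $y_V=\sum_{Y\subseteq V}x_Y = x\T\one$, so if $x\in\one^\perp$ then the one ``bad'' term $(\Mb\fg)(V)\,y_V^2$ drops out. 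The remaining terms have $X\subsetneq V$, where $(\Mb\fg)(X)\le 0$, so $x\T\Ub_\fg x\le 0$. This shows $\Ub_\fg$ is of negative type.

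The main obstacle is the converse direction: in general, having at most one positive eigenvalue is strictly weaker than being of negative type, so the corollary looks stronger than it has any right to be. The resolution is exactly the identity $y_V=x\T\one$, which forces the only potentially ``wrong-signed'' coordinate of the Lindstr\"om--Wilf diagonalization to vanish on $\one^\perp$; this is what makes the equivalence special to union matrices and not to arbitrary symmetric matrices with a single positive eigenvalue.
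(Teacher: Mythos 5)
Your proof is correct. The easy direction (negative type implies at most one positive eigenvalue, whence (v) gives strong submodularity) matches the paper, with the standard dimension-counting argument spelled out. The substantive direction is where you diverge from the paper's route, and it is worth noting the difference.

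The paper proves the converse using condition (ii) of Theorem~\ref{THM:STRONG-SUBMOD}: it writes $\fg(U\cup W)=\sum_{Y\cap(U\cup W)\not=\emptyset}\alpha(Y)$ with $\alpha\ge 0$, and then computes the coefficient of each $\alpha(Y)$ in $\sum_{U,W}x_Ux_W\fg(U\cup W)$, finding it equal to $-\bigl(\sum_{U\cap Y=\emptyset}x_U\bigr)^2$. You instead work directly from condition (iv) together with the Lindstr\"om--Wilf factorization $\Ub_\fg=\Zb\Db_{\Mb\fg}\Zb\T$, observing that with $y=\Zb\T x$ the form becomes $\sum_X(\Mb\fg)(X)\,y_X^2$, that $y_V=x\T\one$ kills the single potentially positive term on $\one^\perp$, and that all remaining coefficients are $\le 0$. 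The two computations are in fact algebraically the same sum (recall $\alpha(Y)=-(\Mb\fg)(Y^c)$ for $Y\not=\emptyset$, and $\sum_{U\cap Y=\emptyset}x_U = y_{Y^c}$), but your presentation makes the structural point more transparent: the LW factorization exhibits $\Ub_\fg$ as a diagonal form in the $\Zb\T$-coordinates, and the one possibly positive diagonal entry sits exactly at the coordinate that is annihilated on the hyperplane $\one^\perp$. This is arguably a cleaner explanation of why union matrices are special among symmetric matrices with a single positive eigenvalue, which is precisely the surprising content of the corollary.
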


\begin{proof}
The ``if'' part follows from the obvious fact that negative type matrices have
at most one positive eigenvalue. To prove that $\Ub_\fg$ is of negative type
for a strongly submodular setfunction $\fg:~2^V\to\R$ with $\fg(\emptyset)=0$,
we have to prove that for every $x\in\R^{2^V}$ such that $\sum_{U\subseteq V}
x_U=0$,
\[
\sum_{U,W\subseteq V} x_Ux_W\fg(U\cup W)\le 0.
\]
Using the representation given by Theorem \ref{THM:STRONG-SUBMOD}(ii), we have
\begin{align*}
\sum_{U,W\subseteq V} x_Ux_W\fg(U\cup W) = \sum_{U,W\subseteq V} x_Ux_W \sum_{Y\cap (U\cup W)\not=\emptyset} \alpha(Y).
\end{align*}
Let us determine the coefficient of $\alpha(Y)$ for any given $Y\subseteq V$ in
the sum above. Either $U$ or $W$ must meet $Y$, so we can split the sum into
three terms:
\begin{align*}
\sum_{U,W\in \inb(Y)}x_Ux_W &+ \sum_{U\in \inb(Y), W\notin \inb(Y)}x_Ux_W +\sum_{U\notin \inb(Y), W\in \inb(Y)}x_Uy_W\\
&=\Big(\sum_{U\in 2^V} x_U\Big)^2 - \Big(\sum_{U\notin \inb(Y)} x_U\Big)^2.
\end{align*}
The first term is $0$, proving that the coefficient of $\alpha(Y)$ is
nonpositive. Since $\alpha\ge 0$ if $\fg$ is strongly submodular, this proves
Corollary \ref{COR:UNION-POS}.
\end{proof}

Most of the time, this property is studied for distance matrices of finite
(pseudo)metric spaces, where $(M_{ij})_{i,j=1}^n$ is the distance of $i$ and
$j$ in some metric space on $\{1,\ldots,n\}$. We say that a (possibly infinite)
pseudometric space has negative type, if all finite subspaces have negative
type. Many important metric spaces have this property.

We can turn Corollary \ref{COR:UNION-POS} to a statement about a (pseudo)metric
space introduced in Section \ref{SEC:BJ-DIST}.

\begin{corollary}\label{COR:BJ-NEG-TYPE}
A setfunction on the subsets of a finite set is strongly submodular if and only
if it defines a Björner distance of negative type.
\end{corollary}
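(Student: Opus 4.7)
The plan is to translate the quadratic form associated with the distance matrix of $d_\fg$ into the quadratic form associated with the union matrix $\Ub_\fg$, and then invoke Corollary \ref{COR:UNION-POS}. First I would reduce to the normalized case: both strong submodularity and the Bj\"orner distance are invariant under adding a constant to $\fg$ (the former because the condition \eqref{EQ:SSUBMOD} only involves $\fg$ on nonempty sets once we fix $n\ge 1$, the latter by the remark following \eqref{EQ:D-SPLIT}), so I may assume $\fg(\emptyset)=0$.

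Next I would write the distance matrix $\Db_\fg$ on $2^V$ explicitly as $(\Db_\fg)_{X,Y}=2\fg(X\cup Y)-\fg(X)-\fg(Y)$ and carry out the key computation. For any $x\in\R^{2^V}$ with $\sum_{U\subseteq V} x_U=0$,
\begin{align*}
x\T \Db_\fg x &= 2\sum_{X,Y\subseteq V} x_X x_Y \fg(X\cup Y) - \sum_{X,Y\subseteq V} x_X x_Y\bigl(\fg(X)+\fg(Y)\bigr)\\
&= 2\, x\T \Ub_\fg x - 2\Big(\sum_{X\subseteq V} x_X \fg(X)\Big)\Big(\sum_{Y\subseteq V}x_Y\Big)\\
&= 2\, x\T \Ub_\fg x,
\end{align*}
where the cross-term vanishes because $\sum_Y x_Y=0$. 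Thus $d_\fg$ is of negative type on $2^V$ (as a metric/pseudometric space) if and only if $\Ub_\fg$ is of negative type in the sense used in Corollary \ref{COR:UNION-POS}.

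To finish, I would invoke Corollary \ref{COR:UNION-POS}: under $\fg(\emptyset)=0$, negative type of $\Ub_\fg$ is equivalent to strong submodularity of $\fg$. Combining this with the identity above yields the stated equivalence for setfunctions on finite subsets. For a setfunction $\fg$ on an arbitrary set, strong submodularity (resp.\ negative type of the induced Bj\"orner distance) amounts to the same property holding on every finite subfamily, so the finite case implies the general one. The main subtlety, and essentially the only step that requires care, is the bookkeeping in the quadratic identity to verify that the linear terms in $\fg$ cancel exactly under the mean-zero condition; once this is in hand, everything else is a direct appeal to Corollary \ref{COR:UNION-POS}.
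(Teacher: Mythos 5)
Your proof is correct and follows essentially the same route as the paper: write out the quadratic form of the Bj\"orner distance matrix, observe that the linear-in-$\fg$ cross-terms vanish under the mean-zero constraint leaving $2\,x\T\Ub_\fg x$, and then appeal to Corollary~\ref{COR:UNION-POS}. The one thing you add explicitly that the paper leaves tacit is the WLOG reduction to $\fg(\emptyset)=0$ via shift-invariance of both properties, which is a useful clarification since Corollary~\ref{COR:UNION-POS} is stated only under that normalization.
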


\begin{proof}
First, assume that $J$ is finite and $\BB=2^V$. The matrix of the Björner
distance is given by
\[
(\Bb_\fg)_{U,W}=2\fg(U\cup W)-\fg(U)-\fg(W),
\]
and hence for any vector $x\in\R^{2^V}$ with $\sum_U x_U=0$, we have
\[
x\T\Bb_\fg x = 2 \sum_{U,W\subseteq V} \fg(U\cup W) x_Ux_W - \sum_{U,W\subseteq V} \fg(U) x_Ux_W
- \sum_{U,W\subseteq V} \fg(W) x_Ux_W.
\]
The last two terms are zero, so the quadratic form is nonpositive is and only
if the first term is.

For the case of a general set-algebra $(J,\BB)$, it is clear that $\fg$ is
strongly submodular if and only if every finite quotient of it is strongly
submodular. It is also easy to see that the corresponding Björner distance
defined by $\fg$ has negative type if an only if this holds for every finite
quotient.
\end{proof}

We conclude with describing the behavior of $\fg$ and $\alpha$ in Theorem
\ref{THM:STRONG-SUBMOD}(ii) on quotients. Let $U$ and $V$ be two finite sets,
and let $\Psi:~V\to U$ be a surjective map. Let $\fg$ be a setfunction on
$(V,2^V)$, then we can consider the quotient $\fg_1=\fg\circ\Psi^{-1}$ of $\fg$
by $\Psi$, as defined in Section \ref{SSEC:SUBMOD-BASICS}. Let $\Pb$ be the
$2^V\times 2^V$ matrix defined above, and let $\alpha=\Pb^{-1}\fg$. Let $\Pb_1$
be the $2^U\times 2^U$ matrix analogous to $\Pb$, and let
$\alpha_1=\Pb_1^{-1}\fg_1$. We want to express $\alpha_1$ in terms of $\alpha$.

Let us define two $2^U\times 2^V$ matrices $\Qb$, $\Rb$ by
\[
\Qb_{X,Y}=\one(Y=\Psi^{-1}(X)), \et \Rb_{x,y} = \one(X=\Psi(Y)).
\]
Then it is easy to check that $\fg_1=\Qb\fg$, and also the identity
$\Pb_1\Rb=\Qb\Pb$. Clearly $(\Rb\alpha)(\emptyset)=0$, and
\[
\Pb_1\Rb\alpha = \Qb\Pb\alpha = \Qb\fg = \fg_1,
\]
showing that $\Rb\alpha=\alpha_1$. Explicitly,
\begin{equation}\label{EQ:ALPHA-QUOTIENT}
\alpha_1(X)=\sum_{Y\subseteq V\atop \Psi(Y)=X} \alpha(Y).
\end{equation}

\subsection{Finite subsets of an infinite set}\label{SSEC:LOCFIN}

We prove an extension of Theorem \ref{THM:STRONG-SUBMOD} for the case when the
underlying ring $(J,J_\fin)$ consists of the finite subsets of an infinite set.
This is still a very special ring of sets, but it does occur in the important
example of homomorphism numbers and densities (Example \ref{EXA:HOMS}).

We consider the sigma-algebra $(I,\AA)$, where $I=[0,1]^\Nbb$ and
$\AA=\BB^\Nbb$. Let $R$ consist of those pairs $(x,y)\in I\times J$, where
$x=(x_1,x_2,\ldots)$ and $y=x_n$ for some $n$. It is easy to see that $R$ is a
Borel set. Furthermore, this relation is {\it countably separating}, which
means that for any two disjoint countable sets $A,B\subseteq J$ there is an
$x\in I$ such that $A\subseteq R(x)$ but $B\subseteq J\setminus R(x)$. Indeed,
any sequence $x\in I$ using all elements of $A$ but no elements of $B$ has this
property.

Recall the notation $\QR(X)=R(X^c)^c=\{y\in J:~(x,y)\in R~\forall x\in X\}$.
For a setfunction $\fg:~J_\fin\to\R$, define the function
$\overline{\fg}:~J_\fin\times J_\fin\to\R$ by
\begin{equation}\label{EQ:OLF-INF}
\overline{\fg}(A,B) = \sum_{X\subseteq B} (-1)^{|X|} \fg(A\cup X).
\end{equation}
We can also write this as
\begin{equation}\label{EQ:OLF-INF1}
\overline{\fg}(A,B) = \sum_{A\subseteq Y\subseteq A\cup B} (-1)^{|Y\setminus A|} f(Y).
\end{equation}

\begin{theorem}\label{THM:KOLM-BOR}
Let $\fg$ be a setfunction on $J_\fin$ with $\fg(\emptyset)=0$. Then the
following are equivalent:

\smallskip

{\rm(a)} $\fg$ is strongly submodular;

\smallskip

{\rm(b)} $\overline{\fg}(A,B)\le 0$ for all $A,B\in J_\fin$, $B\not=\emptyset$.

\smallskip

{\rm(c)} there exists a measure $\alpha$ on $(I,\CC)$ such that
$\alpha(R(A))=\fg(A)$ for every $A\in J_\fin$.
\end{theorem}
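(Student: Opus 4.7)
The plan is to establish the cycle $(c)\Rightarrow(a)\Leftrightarrow(b)$ and then construct $\alpha$ in (c) from the finite-case data provided by (b).

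The easy directions come first. For $(c)\Rightarrow(a)$, the map $\Gamma\colon A\mapsto R(A)$ on $J_\fin$ is monotone and union-preserving since $R(A\cup A')=R(A)\cup R(A')$, while any measure $\alpha$ is modular and hence strongly submodular; therefore $\fg=\alpha\circ\Gamma$ is strongly submodular by Lemma~\ref{LEM:R-INDUCED}. For $(a)\Leftrightarrow(b)$, I first observe that $\overline{\fg}(A,B)=0$ whenever $A\cap B\ne\emptyset$: picking $b\in A\cap B$, the subsets $X\subseteq B$ containing $b$ pair up with their $X\setminus\{b\}$ counterparts producing the same $A\cup X$ (because $b\in A$) but with opposite signs, so they cancel. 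Thus (b) reduces to the disjoint case $A\cap B=\emptyset$, where the substitution $Y=A\cup X$ recasts $\overline{\fg}(A,B)$ as the upper M\"obius inverse of $\fg|_{2^{A\cup B}}$ at~$A$. By Theorem~\ref{THM:STRONG-SUBMOD}(i)$\Leftrightarrow$(iv), nonpositivity of this quantity for every $A\subsetneq V$ is exactly strong submodularity of $\fg|_{2^V}$; and since Choquet's inequality \eqref{EQ:SSUBMOD} only involves finitely many sets at a time, checking it on every finite subring is the same as checking it globally.

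The main work is $(b)\Rightarrow(c)$. For each finite $V\subseteq J$, apply Theorem~\ref{THM:STRONG-SUBMOD}(ii) to $\fg|_{2^V}$ to obtain a unique nonnegative $\alpha_V\colon 2^V\to\R_+$ with $\alpha_V(\emptyset)=0$ and
\[
\fg(A)=\sum_{\emptyset\ne X\subseteq V,\ X\cap A\ne\emptyset}\alpha_V(X)\qquad(A\subseteq V).
\]
Uniqueness forces the marginal identity $\alpha_V(X)=\sum_{Z\subseteq W\setminus V}\alpha_W(X\cup Z)$ for $V\subseteq W$ and nonempty $X\subseteq V$, because both sides yield the same expansion of $\fg(A)$ for $A\subseteq V$. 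Fixing $M\ge\sup_V\fg(V)$ and extending $\alpha_V(\emptyset):=M-\fg(V)$ promotes the $\alpha_V$ to a consistent projective family of finite measures of total mass $M$ on the product spaces $\{0,1\}^V$: the surplus at $\emptyset$ absorbs exactly the contribution of sets $Z\subseteq W\setminus V$ disjoint from~$V$. Normalizing by $M$ and invoking Kolmogorov's extension theorem yields a probability measure on $\{0,1\}^J$ (cylindrical $\sigma$-algebra); scaling back gives a finite measure $\bar\alpha$ on $\{0,1\}^J$ with $\bar\alpha(\{\omega:S_\omega\cap A\ne\emptyset\})=\fg(A)$ for every $A\in J_\fin$, where $S_\omega:=\{y\in J:\omega_y=1\}$.

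Finally, I transfer $\bar\alpha$ to $I=[0,1]^\Nbb$ via the countably separating property of~$R$: $\bar\alpha$-almost every $\omega$ has countable $S_\omega$, and on such $\omega$ one constructs a measurable $\phi(\omega)\in I$ with $R_{\phi(\omega)}=S_\omega$ by measurably enumerating the support. The pushforward $\alpha:=\phi_*\bar\alpha$ then satisfies $\alpha(R(A))=\bar\alpha(\{\omega:S_\omega\cap A\ne\emptyset\})=\fg(A)$. The main obstacle lies precisely in this transfer, together with the unbounded case $\sup_V\fg(V)=\infty$: Kolmogorov's theorem as usually stated needs finite total mass, so one must either run the argument on an exhaustion $V_1\subseteq V_2\subseteq\cdots$ and assemble a $\sigma$-finite $\bar\alpha$ from compatible bounded pieces (gluing via the marginal identity for $X\ne\emptyset$), or enlarge each factor to absorb the diverging surplus at~$\emptyset$. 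Showing that $\bar\alpha$ concentrates on countable-support $\omega$ is automatic when $J$ is countable but requires the countably separating hypothesis when $J$ is uncountable.
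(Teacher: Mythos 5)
Your proofs of $(c)\Rightarrow(a)$ and of $(a)\Leftrightarrow(b)$ are correct. For $(a)\Leftrightarrow(b)$ you proceed via Theorem~\ref{THM:STRONG-SUBMOD} applied to finite restrictions, noting that $\overline\fg(A,B)=0$ when $A\cap B\ne\emptyset$ and reducing to the disjoint case; this is sound, and more symmetric than the paper's route (the paper proves only $(a)\Rightarrow(b)$ directly, closing the cycle via $(b)\Rightarrow(c)\Rightarrow(a)$). The Kolmogorov step is also fine: the marginal identity $\alpha_V(X)=\sum_{Z\subseteq W\setminus V}\alpha_W(X\cup Z)$ for nonempty $X\subseteq V$ does follow from uniqueness in Lemma~\ref{LEM:INDUCE}, the augmentation $\alpha_V(\emptyset):=M-\fg(V)$ does make the family projective, and you correctly obtain a measure $\bar\alpha$ on $\{0,1\}^J$ with the cylindrical $\sigma$-algebra satisfying $\bar\alpha\{\omega:~S_\omega\cap A\ne\emptyset\}=\fg(A)$.

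The genuine gap is the final transfer to $I$. You assert that ``$\bar\alpha$-almost every $\omega$ has countable $S_\omega$,'' but this fails, and in fact the statement is not even well-posed: when $J$ is uncountable, the set $\{\omega:~S_\omega~\text{is countable}\}$ is not measurable in the cylindrical $\sigma$-algebra of $\{0,1\}^J$ (any cylindrical event depends on countably many coordinates). Worse, the claim is heuristically false. Take $\fg(A)=1-2^{-|A|}$, which satisfies (b) since for disjoint $A,B$ with $B\ne\emptyset$ one computes $\overline\fg(A,B)=-2^{-|A|-|B|}\le 0$. The resulting projective family gives $\bar\alpha$ equal to the Bernoulli$(1/2)$ product measure on $\{0,1\}^J$, under which, for every countable $C\subseteq J$, $\bar\alpha$-a.s.\ $S_\omega\cap C$ is infinite and $C\not\subseteq S_\omega$; there is no measurable set of full $\bar\alpha$-measure consisting of $\omega$ with countable $S_\omega$, and consequently no measurable $\phi:\{0,1\}^J\to I$ with $R(\phi(\omega))=S_\omega$ almost surely. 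The countably separating property of $R$, which you invoke here, is a property of the pair $(I,R)$ and constrains neither $\bar\alpha$ nor the support sets $S_\omega$; it does not help. So the pushforward $\phi_*\bar\alpha$ cannot be constructed.

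The paper avoids this entirely by never leaving $I$. It defines the semiring $\Rf=\{H(A,B):A,B\in J_\fin\}$ of subsets of $I$, assigns $\alpha(H(A,B))=-\overline\fg(A,B)$ (resp.\ $1-\fg(A)$ if $B=\emptyset$), verifies finite additivity on $\Rf$ by the M\"obius-type identity \eqref{EQ:EXT1}, and obtains countable additivity via a compactness argument (K\H{o}nig's Lemma combined with the countably separating property, which ensures $H(A,B)\ne\emptyset$ for disjoint countable $A,B$, so that a decreasing chain in $\Rf'$ with empty intersection must terminate at $\emptyset$). Caratheodory then gives the desired measure on $(I,\CC)$. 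The countably separating property is thus used to prove countable additivity of a pre-measure directly on $I$ --- not to pull a measure back from $\{0,1\}^J$, which is impossible in general. To salvage your approach you would have to replace the Kolmogorov-plus-transfer construction by this direct semiring-and-compactness argument on $I$, or equivalently change the target space in (c) to $\{0,1\}^J$ with the relation $\ni$, which would then prove a variant of the theorem rather than the stated one.

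A lesser remark: both your argument and the paper's require $\fg$ to be bounded (the paper normalizes to $\fg\le 1$; you introduce $M=\sup_V\fg(V)$), which is an unstated hypothesis of the theorem as written. Your explicit flagging of that issue is appropriate; it is not a defect specific to your proof.
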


\begin{proof}
All three conditions imply easily that $\fg$ is increasing, and therefore
nonnegative. We may assume that $\fg\le 1$. The implication (a)$\Rightarrow$(b)
is easy, since the relation $\overline{f}(A,B)\le 0$ for $B\not=\emptyset$ is a
special case of \eqref{EQ:SSUBMOD}. (c)$\Rightarrow$(a) follows by Lemma
\ref{LEM:R-INDUCED}.

To complete the cycle, we prove that (b)$\Rightarrow$(c). We need some
preparation. For $A\in J_\fin$, the set $R(A)$ consists of those sequences from
$J^\Nbb$ which contain at least one point of $A$, and $\QR(A)$ consists of
those sequences from $J^\Nbb$ which contain all points of $A$. For
$A,B\subseteq J$, let
\begin{equation}\label{EQ:HAB}
H(A,B)=\QR(B)\setminus R(A).
\end{equation}
Clearly $H(\emptyset,\emptyset)=I$, more generally, $H(\emptyset,B)=\QR(B)$,
and $H(A,\emptyset)= I\setminus R(A)$. We also have $H(A,B)=\emptyset$ if
$A\cap B\not = \emptyset$. The fact that $R$ is countably separating implies
that $H(A,B)\not=\emptyset$ if $A$ and $B$ are countable and $A\cap B =
\emptyset$. Trivially $H(A,B)\in\CC$ if $A,B\subseteq I$ are countable. Hence
the sigma-algebra generated by the sets $H(A,B)$ $(A,B\in J_\fin)$ is just
$\CC$.

\begin{claim}\label{CLAIM:3.1}
The family $\Rf=\{H(A,B):~A,B\in J_\fin\}$ is a semiring.
\end{claim}

It is easy to check that if $A_i,B_i\subseteq J$ ($i\in S$), $A=\cup_{i\in S}
A_i$ and $B=\cup_{i\in S} B_i$, then
\begin{equation}\label{EQ:INTERSECTION}
\bigcap_{i\in S} H(A_i,B_i) =H(A,B).
\end{equation}
Here $A$ and $B$ are finite if $S$ is finite. In particular,
\begin{equation}\label{EQ:INTERSECTION2}
H(A_1,B_1)\cap H(A_2, B_2) =  H(A_1\cup A_2,B_1\cup B_2) \qquad (A_i,B_i\in J_\fin),
\end{equation}
so $\Rf$ is closed under intersections. We also need the identity that if
$A,B\subseteq U\subseteq J$, then
\begin{equation}\label{EQ:EXT}
H(A,B) = \bigcup_{A\subseteq X\subseteq U\setminus B} H(X,U\setminus X).
\end{equation}
Note that the sets $H(X,U\setminus X)$ on the right are mutually disjoint, and
so they partition $H(A,B)$.

Using this, it is easy to express the difference of two sets of the form
$H(A,B)$. Let $A,B,C,D\subseteq U\subseteq I$. Then $H(A,B)\setminus H(C,D)$
can be obtained by deleting those terms in \eqref{EQ:EXT} satisfying
$C\subseteq X\subseteq U\setminus D$, to get
\begin{equation}\label{EQ:DIFF}
H(A,B)\setminus H(C,D)= \bigcup_{A\subseteq X\subseteq U\setminus B\atop(C\setminus X)
\cup (D\cap X)\not=\emptyset} H(X,U\setminus X).
\end{equation}
In \eqref{EQ:DIFF} we can take $U=A\cup B\cup C\cup D$. With this choice, if
$A,B,C,D$ are finite, then the union has a finite number of terms. This proves
the Claim.

\begin{claim}\label{CLAIM:3.2}
If $A,B,C,D\in J_\fin$ and $H(A,B)=H(C,D)\not=\emptyset$, then $A=C$ and $B=D$.
\end{claim}

Indeed, assume that $A\not=C$, and (say) $C\not\subseteq A$. Then $A$ and
$B'=(C\setminus A)\cup B$ are disjoint finite sets, and hence
$H(A,B')\not=\emptyset$. Let $y\in H(A,B')$, then $y\in H(A,B)$, but $y\in
R(C)$ and so $y\notin H(C,D)$.

Define
\[
\alpha(H(A,B))=
  \begin{cases}
    -\overline{\fg}(A,B), & \text{if $B\not=\emptyset$}, \\
    1-\fg(A), & \text{if $B=\emptyset$}.
  \end{cases}
\]
We have shown that if $H(A,B)$ is nonempty, then it determines $A$ and $B$, so
$\alpha$ is well-defined.

\begin{claim}\label{CLAIM:3.3}
$\alpha$ is a measure on $\Rf$.
\end{claim}

We argue first that additivity holds for the partition \eqref{EQ:EXT}, i.e.,
for $A,B,U\in J_\fin$ with $A,B\subseteq U$, we have
\begin{equation}\label{EQ:EXT1}
\alpha(H(A,B)) = \sum_{A\subseteq X\subseteq U\setminus B} \alpha(H(X,U\setminus X)).
\end{equation}
For $B\not=\emptyset$, this means that
\begin{equation}\label{EQ:EXT1A}
\overline{\fg}(A,B) = \sum_{A\subseteq X\subseteq U\setminus B} \overline{\fg}(X,U\setminus X).
\end{equation}
This can be verified by direct computation. Starting with the right hand side
and using \eqref{EQ:OLF-INF1}, we have
\begin{align*}
\sum_{A\subseteq X\subseteq U\setminus B} \overline{\fg}(X,U\setminus X)
&= \sum_{A\subseteq X\subseteq U\setminus B} \sum_{X\subseteq Z\subseteq U} (-1)^{|Z\setminus X|} \fg(Z)\\
&= \sum_{A\subseteq Z\subseteq U} \fg(Z) \sum_{A\subseteq X\subseteq Z\cap(U\setminus B)} (-1)^{|Z\setminus X|}.
\end{align*}
The inner sum is
\[
  \begin{cases}
    (-1)^{|Z\setminus A|}, & \text{if $A\subseteq Z\subseteq A\cup B$}, \\
    0, & \text{otherwise}.
  \end{cases}
\]
Hence
\[
\sum_{A\subseteq X\subseteq U\setminus B} \overline{\fg}(X,U\setminus X)
= \sum_{A\subseteq Z\subseteq A\cup B} \fg(Z) (-1)^{Z\setminus A} = \overline{\fg}(A,B).
\]
This proves \eqref{EQ:EXT1}. For $B=\emptyset$, $1$ is added to both sides of
equation \eqref{EQ:EXT1A}, and so it can be verified by the same computation.

Next we show finite additivity on the semiring $\Rf$.
\begin{equation}\label{EQ:X-PART}
H(C,D)=\bigcup_{i=1}^m H(A_i,B_i)
\end{equation}
where $A_i,B_i,X.Y\in J_\fin$ and the sets $H(A_i,B_i)$ are disjoint. Choose
$U\in J_\fin$ large enough so that $A_i,B_i,C,D\subseteq U$. Then each
$H(A_i,B_i)$ as well as $H(C,D)$ can be expressed as the union of some of the
sets $H(X,U\setminus X)$ ($X\subseteq U$). Since the sets $H(A_i,B_i)$ are
disjoint, the sets $H(X,U\setminus X)$ in these representations are all
different, and the union of all these sets is $H(C,D)$. Applying
\eqref{EQ:EXT1} to each of these representations, we get that
\begin{equation}\label{EQ:AX-PART}
\alpha(H(C,D))=\sum_{i=1}^m \alpha(H(A_i,B_i)),
\end{equation}
proving Claim \ref{CLAIM:3.3}.

It follows by basic measure theory that $\alpha$ can be extended to a
nonnegative charge on the ring $\Rf'$ generated by $\Rf$. Next we prove that
$\alpha$ is countably additive on the ring $\Rf'$. We need a combinatorial
fact.

\begin{claim}\label{CLAIM:KOL3}
If $X_1\supseteq X_2\supseteq\ldots$ is a countable sequence of sets in
$X_i\in\Rf$ and $\cap_i X_i=\emptyset$, then $X_i=\emptyset$ for every
sufficiently large $i$.
\end{claim}

Let $X_i=H(A_i,B_i)$ with $A_i,B_i\in J_\fin$, then by \eqref{EQ:INTERSECTION},
\[
\bigcap_i X_i = H(A,B),
\]
where $A=\cup_i A_i$ and $B=\cup_i B_i$. By the separating property of $R$,
$H(A,B)\not=\emptyset$ if $A$ and $B$ are disjoint countable sets, so the
hypothesis that $\cap_iX_i=\emptyset$ implies that $A\cap B\not=\emptyset$ and
hence $A_i\cap B_j\not=\emptyset$ for some $i,j\ge 1$. But then
$X_{\max(i,j)}=\emptyset$.

\begin{claim}\label{CLAIM:KOL4}
The conclusion of Claim \ref{CLAIM:KOL3} also holds if the assumption
$X_i\in\Rf$ is weakened to $X_i\in\Rf'$.
\end{claim}

Write $X_i=\cup_{j=1}^{k_i} X_{ij}$, where $X_{ij}\in\Rf$ are disjoint nonempty
sets. We may assume (using \eqref{EQ:INTERSECTION}) that every $X_{ij}$ is
contained in exactly one of the sets $X_{i-1,j'}$. This defines a rooted tree
$T$ with nodes $X_{ij}$, where the parent of $X_{ij}$ is the set $X_{i-1,j'}$
defined above. We consider the set $X_{0,1}=I$ as the root.

Suppose that $X_i\not=\emptyset$ for every $i$. Then $T$ is infinite, and by
K\H{o}nig's Lemma, it contains an infinite path $X_{0,1}\supseteq
X_{1,j_1}\supseteq X_{2,j_2}\supseteq\ldots$ Here
\[
\bigcap_{i=1}^\infty X_{i,j_i} \subseteq \bigcap_{i=1}^\infty X_i =\emptyset,
\]
and so as remarked above, $X_{i,j_i}=\emptyset$ for sufficiently large $i$.
This contradicts the definition of $T$, and proves Claim \ref{CLAIM:KOL4}.

Thus $\alpha$ is a pre-measure on $(I,\Rf')$, with
\[
\alpha(I)=\alpha(H(\emptyset,\emptyset))=\fg(I)=1.
\]

The sigma-algebra generated by $\Rf'$ is $\CC$. By Caratheodory's Extension
Theorem, $\alpha$ extends to a probability measure on $(I,\CC)$. Now for any
$A\in J_\fin$,
\[
\fg(A) = \overline{f}(A,\emptyset) = 1-\alpha(H(A,\emptyset)) = 1-\alpha(I\setminus R(A)) =\alpha(R(A)).
\]
This proves (c).
\end{proof}

\subsection{Compact subsets of a compact set}

Choquet proved the following general representation theorem for a large class
of strongly submodular setfunctions (\cite{Choq}, Section 49). Let $K$ be a
compact Hausdorff space and $\FF$, the family of its compact subsets. Let us
call $(K,\FF)$ a {\it compact pair}. Clearly $\FF$ is a lattice of sets.

We consider the relation $R=\{(x,Y):~x\in Y\in\FF\}\subseteq K\times\FF$. Let
$\AA\subseteq 2^\FF$ be the sigma-algebra generated by the sets $R(X)$
($X\in\FF$). One of the  meany results in \cite{Choq} can be stated like this.

\begin{theorem}\label{THM:CHOQUET}
Let $\fg$ be a strongly submodular setfunction on a compact pair $(K,\FF)$.
Then there is a measure $\mu$ on $(\FF,\AA)$ such that $\fg(X)=\mu(R(X))$ for
all $X\in\FF$.
\end{theorem}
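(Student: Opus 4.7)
The plan is to adapt the proof of Theorem \ref{THM:KOLM-BOR} to the compact Hausdorff setting, replacing the countable separation property of $R$ used there by compactness of $K$ at the key step. After shifting so that $\fg(\emptyset)=0$ and rescaling so that $\fg(K)=1$ (strong submodularity forces $\fg\ge 0$, so this is harmless), I would work with the collection of ``patch'' sets
\[
H(A,B) = \QR(B)\setminus R(A) = \{Y\in\FF:\,Y\subseteq B,\ Y\cap A=\emptyset\}, \qquad A,B\in\FF.
\]
The algebraic identities from Theorem \ref{THM:KOLM-BOR} carry over verbatim: $H(A_1,B_1)\cap H(A_2,B_2)=H(A_1\cup A_2,\,B_1\cap B_2)$; the partition formula expressing $H(A,B)$ (with $A,B\subseteq U$) as a disjoint union of $H(X,U\setminus X)$ over $A\subseteq X\subseteq U\setminus B$; and the resulting description of $H(A,B)\setminus H(C,D)$ as a finite disjoint union of patch sets. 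Thus the $H(A,B)$ form a semiring $\Rf$ whose generated $\sigma$-algebra is precisely $\AA$.

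The next step is to define the candidate premeasure. For any finite sublattice $\FF_0\subseteq\FF$ generated by finitely many members of $\FF$, the restriction $\fg|_{\FF_0}$ is strongly submodular in the finite sense, and Theorem \ref{THM:STRONG-SUBMOD}(ii) produces a unique nonnegative function $\alpha_{\FF_0}$ on the atoms of $2^{\FF_0}$ realizing $\fg|_{\FF_0}$ via the relation $\inb$. Formula \eqref{EQ:ALPHA-QUOTIENT} shows these finite representations are coherent under refinement, so they assemble into a single additive set function $\alpha$ on $\Rf$, with
\[
\alpha(H(A,B))=-\overline{\fg}(A,B)
\qquad\text{(and }\alpha(H(A,\emptyset))=1-\fg(A)\text{),}
\]
where $\overline{\fg}(A,B)$ is the alternating-sum expression computed inside any finite sublattice containing $A$ and $B$. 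Finite additivity on the ring $\Rf'$ generated by $\Rf$ reduces to the same combinatorial identity for $\overline{\fg}$ used in Theorem \ref{THM:KOLM-BOR}, applied inside each finite sublattice.

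\textbf{The main obstacle} is countable additivity of $\alpha$ on $\Rf'$: if $X_1\supseteq X_2\supseteq\cdots$ lie in $\Rf'$ and $\bigcap_n X_n=\emptyset$, one must show $X_n=\emptyset$ for some $n$. In the countable proof this relied on K\H{o}nig's Lemma together with the countable separation property of $R$. Here I would instead use the compactness of $K$: equipping $\FF$ with the Vietoris topology makes it a compact Hausdorff space, and each patch $H(A,B)$ is the intersection of the closed set $\{Y\subseteq B\}$ with the open set $\{Y\cap A=\emptyset\}$. A regularity argument — shrinking each $B_n$ slightly to a compact $B_n'\subseteq\mathrm{int}(B_n)$ and enlarging each $A_n$ to an open neighborhood — approximates a decreasing chain in $\Rf'$ from inside by a decreasing chain of compact (closed) subsets of $\FF$, whose intersection is nonempty by compactness unless some member is already empty. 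Continuity from above of $\fg$ (which strong submodularity together with compactness is designed to deliver) controls the error in this approximation, yielding the desired countable additivity. This is the technically heaviest step and the one most distinct from the discrete argument.

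Once countable additivity on $\Rf'$ is in hand, Carath\'eodory's extension theorem produces a (probability) measure $\mu$ on $(\FF,\AA)$. The conclusion $\fg(X)=\mu(R(X))$ for every $X\in\FF$ then follows from the normalization $\mu(\FF)=1$ together with $R(X)=\FF\setminus H(X,K)$ and $\alpha(H(X,K))=1-\fg(X)$, both of which are immediate from the definitions.
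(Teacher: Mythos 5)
The paper does not give a proof of this theorem; it cites Choquet's original paper and moves on. So your proposal cannot be compared against the paper's argument, only judged on its own.

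The central gap is your claim that ``the algebraic identities from Theorem~\ref{THM:KOLM-BOR} carry over verbatim.'' They do not, and the obstruction is structural, not notational: that proof lives on the ring $J_\fin$ of finite subsets of $J$, which is closed under relative differences, while here $\FF$ is only a lattice of compact sets. The partition identity \eqref{EQ:EXT}, which is the combinatorial engine of the whole construction, writes $H(A,B)$ as a disjoint union of sets $H(X,U\setminus X)$ over $A\subseteq X\subseteq U\setminus B$ with $A,B,X,U$ in the underlying family. For compact $X\subseteq U$, the relative complement $U\setminus X$ is generically not compact, so the statement is not even well-formed in $\FF$, and the semiring $\Rf$ you need is not produced. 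Relatedly, the alternating sum $\overline{\fg}(A,B)=\sum_{X\subseteq B}(-1)^{|X|}\fg(A\cup X)$ has uncountably many terms when $B$ is an infinite compact set; ``computing it inside a finite sublattice'' is not a definition until you prove the value is independent of the sublattice, and your appeal to \eqref{EQ:ALPHA-QUOTIENT} does not do this: that identity concerns quotients of $2^V$ by a surjective map $\Psi$, not restrictions of $\fg$ to finite sublattices of $\FF$, and Theorem~\ref{THM:STRONG-SUBMOD} applies to setfunctions on a full Boolean algebra $2^V$, not to a strongly submodular setfunction on an arbitrary finite lattice. (A minor slip on top of this: with the paper's $\QR$ notation one has $H(A,B)=\{Y\in\FF: B\subseteq Y,\ Y\cap A=\emptyset\}$ and $R(X)=\FF\setminus H(X,\emptyset)$; your version $\{Y: Y\subseteq B,\ Y\cap A=\emptyset\}$ does not equal $\QR(B)\setminus R(A)$, and with it the intended partition fails for a different reason — the sets $H(X,U\setminus X)$ would then be nested, not disjoint.)

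Even if the premeasure were constructed, the countable additivity paragraph is a sketch, not an argument. You propose to shrink $B$ to a compact subset of its interior and enlarge $A$ to an open neighborhood, but $B$ is compact and may have empty interior, and an open enlargement of $A$ is not in $\FF$; neither move stays inside the semiring you are trying to work in. You then invoke ``continuity from above of $\fg$ (which strong submodularity together with compactness is designed to deliver)'' — but this is precisely the nontrivial part of Choquet's theorem, and the paper treats continuity from above as an extra hypothesis whenever it is needed (cf.\ Lemmas~\ref{LEM:C-CONV}, \ref{LEM:ABS-CONT}), not as a consequence of strong submodularity. In short, the proposal replaces the hard step by an assertion. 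The correct route (per Choquet) builds the measure on $\FF$ from the Vietoris/hit-and-miss topology using regularity of capacities and the Riesz representation, and is genuinely different from the discrete combinatorics of Theorem~\ref{THM:KOLM-BOR}; adapting the latter would require you to first overcome the lack of a ring structure in $\FF$, which your write-up does not address.
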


For the proof of this theorem, its extension to the locally compact case, and
other related results, we refer to \cite{Choq}.

\section{Problems and projects}\label{SEC:PROBLEMS}

\begin{prob}[Logarithms of determinants in the infinite
case]\label{PROB:CONTINUITY}~\\
Can Example \ref{EXA:LOG-DET} be generalized to the infinite case?
Fuglede--Kadison determinants could play a role here.
\end{prob}

\begin{prob}[Decomposition of submodular setfunctions]\label{PROB:DECOMPOSE}~\\
{\it Can every bounded submodular setfunction be written as the sum of an
increasing and a decreasing submodular setfunction?} This is true in the finite
case, and would strengthen Lemma \ref{LEM:VARPHI-DECOMP}.
\end{prob}

\begin{prob}[Integrality of separation]\label{PROB:SEPARATE}~\\
Does Frank's integral valued version of Theorem \ref{THM:SEPARATE} in the
finite case (see Remark \ref{REM:FRANK}) has an analogue in the infinite case?
\end{prob}

\begin{prob}[Extremal minorizing measures]\label{PROB:EXTREME}~\\
We call a minorizing measure $\alpha\in\matp_+(\fg)$ {\it extremal}, if it
cannot be written as $\alpha=(\alpha_1+\alpha_2)/2$, where $\alpha_1$ and
$\alpha_2$ are minorizing measures and $\alpha_1\not=\alpha_2$. In the finite
case, measures that agree with $\fg$ on a full chain are the same as extremal
measures. What is the connection in the infinite case? Can extremal minorizing
measures be characterized? Let us raise an explicit problem:

{\it Let $\fg$ be an increasing submodular setfunction and
$\alpha\in\matp_+(\fg)$ with $\alpha(J)=\fg(J)$. It is not hard to see that if
$(\fg-\alpha)\land(\fg^c-\alpha^c)=0$, then $\alpha$ is extremal. Is this
condition also necessary?}
\end{prob}

\begin{prob}[Extremal minorizing measures II]\label{PROB:M-INTERSECT-EXTREME}~\\
In the finite case, every vertex of the intersection of two matroid polytopes
is a vertex of both (if they have the same rank, these are the common bases).
This does not remain valid for polymatroid polytopes, but the vertices of the
intersection of two such polytopes are still integral. Can this be generalized
to the situation of the Submodular Intersection Theorem \ref{THM:INTERSECT}?
Perhaps Lindenstrauss' Theorem mentioned in Remark \ref{REM:LIND} can be
extended.
\end{prob}

\begin{prob}[Infinite version of submodular
minimization]\label{PROB:MATROID-GEN}~\\
Finding the minimum of a (non-monotone) submodular setfunction in an efficient
way is a basic algorithmic task in the finite case; this was solved by analytic
methods in \cite{GLS2}, and by combinatorial methods in \cite{Schr1,IFF}.
Submodular setfunction minimization in the finite case features in many
important applications from computing minimum capacity cuts (cf. Example
\ref{EXA:CUT}) to matroid intersection (cf.~Proposition
\ref{PROP:MATRID-INTERSECT}).

Extensions of submodular function minimization from Boolean algebras to more
general structures has been considered (Murota \cite{Mur3}, Hanada and Hirai
\cite{HaHi}. The problem comes up in the measurable case as well (see
e.g.~Theorem \ref{THM:INTERSECT}). However, it is not clear what an
``algorithm'' should mean in this case. Perhaps the following is a valid
approach.

{\it Let us call a subset $X\subseteq[0,1]$ {\it presentable}, if it is the
union of a finite number of rational intervals. Let $\fg$ be a submodular
setfunction on the algebra of presentable sets, and assume we have an oracle
the gives us the value of $\fg(X)$ for any given presentable $X$. Can we
compute in finite time a presentable set on which the value of $\fg$ is within
$\eps$ to its infimum?}

It would be interesting to generalize further results of matroid theory to
submodular setfunctions: matroid partition, sum, duality. Combining with the
results of \cite{LLflow}, extending the theory of submodular flows \cite{EG} to
the measure theoretic setting may be possible.

In combinatorial optimization, many applications of submodularity use
setfunctions that satisfy the submodular inequality for restricted pairs of
subsets only: intersecting pairs, crossing pairs, subsets with odd cardinality,
paramodular pairs of functions, etc. (see Schrijver \cite{Schr}, Chapter 49).
Does any of these have extensions or analogues in the measure-theoretic
setting?
\end{prob}

\begin{prob}[Submodularity in the countable case] \label{PROB:COUNTABLE}~\\
A standard Borel sigma-algebra is either finite, countable, or is isomorphic to
the sigma-algebra of Borel sets of $[0,1]$. We have built on the finite
examples as motivation, and studied the uncountable case in some detail. The
countably infinite case may be interesting on its own right.
\end{prob}

\begin{prob}[Strong and weak maps] \label{PROB:STRONGMAP}~\\
For matroids, strong and weak maps have been studied as a more algebraic
approach to matroid theory, as two categories associated with matroids. Can
results about these categories be extended to the infinite case?
\end{prob}

\begin{prob}[Compactness of the Björner distance] \label{PROB:BJ-COMPACT}~\\
Under what conditions on an increasing submodular setfunction $\fg$ is the set
$\BB$, endowed with the Björner distance, compact? Since we know that it is
complete, the question is equivalent to the total boundedness of the space.
\end{prob}

\begin{prob}[Strong submodularity] \label{PROB:SSMOD}~\\
Lemma \ref{LEM:R-INDUCED} gives a construction for strongly submodular
setfunctions. In various cases it is known that all strongly submodular
setfunctions can be obtained this way, in particular for finite subsets of an
arbitrary set (Theorem \ref{THM:KOLM-BOR}), or compact subsets of a compact
space (Choquet \cite{Choq}, Theorem 49.1). Is there a converse to Lemma
\ref{LEM:R-INDUCED} in general? {\it Is every strongly submodular setfunction a
pullback of a charge?}
\end{prob}

\begin{prob}[Ingleton's Inequality]\label{PROB:INGLETON}~\\
An inequality strengthening submodularity was introduced by Ingleton
\cite{Ingl} (see also \cite{Oxley}):
\begin{align*}
&r(X_1)+r(X_2)+r(X_1\cup X_2\cup X_3)+r(X_1\cup X_2\cup X_4)+r(X_3\cup X_4)~\\
&\le r(X_1\cup X_2)+r(X_1\cup X_3)+r(X_1\cup X_4)+r(X_2\cup X_3)+r(X_2\cup X_4).
\end{align*}
(Submodularity is the special case when $X_4=\emptyset$). This inequality holds
true for the rank functions of linear and algebraic matroids, but not all
matroids. It is also related to the notion of {\it pseudomodularity} mentioned
above. It is not clear what the relationship between strong submodularity and
these other properties is.
\end{prob}

\begin{prob}[Limit theories of matroids]\label{PROB:PSEMOD}~\\
As mentioned in the introduction, there are previous constructions for limits
of matroids sequences: partition lattices of finite sets (which can be viewed
as cycle matroids of complete graphs; Bj\"orner \cite{Bj87}, Haiman
\cite{Hai}); pseudomodular matroids (including full algebraic and full
transversal matroids; Bj\"orner and Lov\'asz \cite{BjLo}). The limit objects
have submodular dimension functions on lattices, which translate to submodular
setfunctions on measurable spaces by Example \ref{EXA:LATTICE}, but no further
details have been worked out. A limit theory for certain (in a sense more
general) sequences of matroids was initiated by Kardo\v{s} et al.~\cite{KKLM},
based on a different concept of the limit object. Limits of matroids of finite
graphs with bounded degrees have been studied in \cite{Lov23b}.

Can these limit theories be connected? Can they provide motivation for a
general limit theory of matroids?
\end{prob}

\begin{prob}[Matroids of graphons] \label{PROB:GRAPHONS}~\\
As remarked in the Introduction, a limit theory seems to be possible for
bounded degree graphs and their limit objects, namely graphings \cite{Lov23b}.
Can the matroid of a graphon (dense graph limit) be defined, and described as
the limit of a convergent sequence of matroids of dense graphs? (See e.g.~
\cite{HomBook} for definitions.)
\end{prob}

\section{Appendices}

\subsection{Charges, a.k.a.~finitely additive measures}\label{SSEC:FIN-ADD}

We collect some facts about charges that we need (see Yosida and Hewitt
\cite{YoHe}, Rao and Rao \cite{Rao}, and Toland \cite{Tol} for more).

Let $(J,\BB)$ be a set-algebra, let $\Bd=\Bd(J,\BB)$ denote the Banach space of
bounded $\BB$-measurable functions $J\to\R$ with the supremum norm, and let
$\ba=\ba(J,\BB)$ be the Banach space of charges on $\BB$ with the norm
$\|\alpha\|=\sup_{X\in\BB}|\alpha(X)|$. We define
\[
\Bd_+ = \{f\in\Bd:~f\ge 0\} \et \ba_+ = \{\mu\in\ba:~\mu\ge 0\}.
\]
The following important fact was proved by Hildebrandt \cite{Hb} and
Fichtenholtz and Kantorovich \cite{FK}:

\begin{prop}\label{PROP:DUAL-BA}
The Banach space $\ba$ is the continuous dual of the space $\Bd$.
\end{prop}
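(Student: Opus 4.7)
The plan is to construct an isometric isomorphism $\Phi:\ba\to\Bd^*$ explicitly, using the Choquet-integral machinery already developed in Section \ref{SSEC:CHOQ-INT}. First I would define $\Phi(\alpha)=\wh\alpha$, noting (as in Example \ref{EXA:FINADD-INT}) that for a charge $\alpha$ the functional $\wh\alpha$ is linear on $\Bd$, and by Lemma \ref{LEM:WHPHI-CONT} it is continuous with operator norm bounded by $\var(\alpha)$. Linearity of $\Phi$ follows from \eqref{EQ:WHFI-LIN}, and injectivity is immediate: if $\wh\alpha=0$ on $\Bd$, then $\alpha(X)=\wh\alpha(\one_X)=0$ for every $X\in\BB$.

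The bulk of the work is surjectivity. Given $\Lambda\in\Bd^*$, define $\alpha:\BB\to\R$ by $\alpha(X)=\Lambda(\one_X)$. For disjoint $X,Y\in\BB$ we have $\one_{X\cup Y}=\one_X+\one_Y$ in $\Bd$, so by linearity of $\Lambda$ we get $\alpha(X\cup Y)=\alpha(X)+\alpha(Y)$; thus $\alpha$ is finitely additive, and $|\alpha(X)|\le\|\Lambda\|\|\one_X\|\le\|\Lambda\|$ shows $\alpha\in\ba$. It remains to verify $\wh\alpha=\Lambda$ on all of $\Bd$. For any stepfunction $h=\sum_{i=1}^n a_i\one_{H_i}$ with $H_1\subset\cdots\subset H_n$ and $a_i\ge 0$, the Layer Cake Representation gives $\wh\alpha(h)=\sum_i a_i\alpha(H_i)=\sum_i a_i\Lambda(\one_{H_i})=\Lambda(h)$, and the general stepfunction case follows via \eqref{EQ:WH-DEF}.

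The key step is then extending this identity from stepfunctions to arbitrary $f\in\Bd$. Here I would use the fact (cited from Rao--Rao \cite{Rao} in Section 2.1, and provable directly from the $\FF$-measurability definition via a uniform $\eps$-partition of the range) that stepfunctions are dense in $\Bd$ in the supremum norm. Pick stepfunctions $h_n\to f$ uniformly; then $\Lambda(h_n)\to\Lambda(f)$ by continuity of $\Lambda$, while $\wh\alpha(h_n)\to\wh\alpha(f)$ by Lemma \ref{LEM:WHPHI-CONT}, yielding $\wh\alpha(f)=\Lambda(f)$. Finally, the norm comparison $\|\Phi(\alpha)\|_{\Bd^*}=\var(\alpha)$ follows from \eqref{EQ:WHFI-NORM} in one direction, and in the other from choosing chains $\emptyset=X_0\subseteq\cdots\subseteq X_n=J$ and the stepfunctions $f=\sum_i\sgn(\alpha(X_i)-\alpha(X_{i-1}))\one_{X_i\setminus X_{i-1}}$, which have $\|f\|\le1$ and $\wh\alpha(f)$ approximating $\var(\alpha)$.

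The main obstacle is the extension from stepfunctions to $\Bd$, which hinges on the density of stepfunctions in $\Bd$ for a general set-algebra $\BB$ (as opposed to a sigma-algebra, where it is routine). Once one has the equivalence between $\BB$-measurability and $\BB$-continuity in the sense of Rao--Rao for bounded functions, the density is straightforward; without it, one must argue directly that the $\eps$-level sets can be chosen within $\BB$, essentially reproducing the construction in Lemma \ref{LEM:FF-LINEAR}. A secondary subtlety is the precise choice of norm on $\ba$: the paper uses $\|\alpha\|=\sup_X|\alpha(X)|$, whereas the natural dual norm corresponds to $\var(\alpha)$, and one must check these are equivalent (and decide which one to call ``the'' Banach space norm to obtain a genuine isometry).
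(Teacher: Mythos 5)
The paper does not prove this proposition; it records it as a known fact and cites Hildebrandt \cite{Hb} and Fichtenholtz--Kantorovich \cite{FK}, so there is no in-paper argument against which to compare. Your proof is the standard one and it is sound.

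Two points deserve attention. You are right that the load-bearing step is the density of stepfunctions in $\Bd$ over a set-algebra (rather than a sigma-algebra). The paper itself relies on this tacitly, e.g.\ inside the proof of Lemma \ref{LEM:T-CONT}, where a stepfunction $g_n$ with $\|f-g_n\|\le 1/n$ is produced without comment. The cleanest justification is exactly the one you cite: the Rao--Rao equivalence (for bounded $f$ over a set-algebra) between $\BB$-measurability and $\BB$-continuity, the latter meaning a finite partition of $J$ into sets of $\BB$ on each of which $f$ oscillates by at most $\eps$ --- this gives the approximating stepfunction immediately. Arguing directly from the $\FF$-measurability definition would require producing a nested chain of approximating sets (in the spirit of Lemma \ref{LEM:FF-LINEAR}) and is a little more delicate, so you are right to flag it as the main obstacle.

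On the norm: the inequality you need for the easy direction, $|\wh\alpha(f)|\le\var(\alpha)\,\|f\|$, should be cited from Lemma \ref{LEM:WHPHI-CONT} (taking $g=0$), not from \eqref{EQ:WHFI-NORM}. As stated with the paper's sup-norm $\|\alpha\|=\sup_X|\alpha(X)|$, inequality \eqref{EQ:WHFI-NORM} is not quite right for sign-changing $f$: on a two-point set with $\alpha(\{1\})=1=-\alpha(\{2\})$ and $f=\one_{\{1\}}-\one_{\{2\}}$ one has $\wh\alpha(f)=2$ but $\|\alpha\|\,\|f\|=1$. Your subsequent argument then correctly shows $\|\wh\alpha\|_{\Bd^*}=\var(\alpha)$, confirming that the isometric identification requires the total-variation norm on $\ba$, which is equivalent (within a factor $2$) but not equal to the paper's stated sup-norm. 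This is precisely the ambiguity you flagged, and it is genuine.
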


In other words, every continuous linear functional $\Lambda$ on $\Bd$ can be
represented as $\Lambda=\wh\alpha$ by a charge $\alpha$. Explicitly,
\[
\Lambda(h)=\intl_J h \,d\alpha=\wh\alpha(h)
\]
for all $h\in\Bd$.

This duality extends to set-algebras instead of sigma-algebras with some care.
Instead of $\Bd$, we have to consider the space of $\BB$-continuous functions
(functions $f$ such that for every $\eps>0$ there is a partition of $J$ into a
finite number of sets $B_i\in\BB$ such that $f$ varies by less than $\eps$ on
each $B_i$). See \cite{Rao}, Section 4.7.

Charges on a set-algebra $(J,\AA)$ are the same as modular setfunctions
$\alpha$ with $\alpha(\emptyset)=0$. It is well known that if $\alpha$ and
$\beta$ are bounded charges, then so are $\alpha\land\beta$ and
$\alpha\lor\beta$. If $\alpha\in\ba$ and we define $\alpha_A(X)=\alpha(A\cap
X)$ for $A\in\BB$, then $\alpha_A\land\alpha_B =\alpha_{A\cap B}$ and
$\alpha_A\lor\alpha_B=\alpha_{A\cup B}$.

\begin{example}\label{EXA:ULTRA}
Let $J$ be any infinite set, and let $\FF$ be a nontrivial ultrafilter on $J$.
Then the indicator function
\[
\one_\FF(X)=\one(X\in\FF)
\]
is a charge on all subsets of $J$.
\end{example}

The Jordan Decomposition Theorem for countably additive measures remains valid
in the following form. For $\alpha\in\ba(\AA)$, we define its positive and
negative parts by
\[
\alpha_+=\alpha^\ls \et \alpha_-=(-\alpha)_+=-\alpha^\li.
\]
Then $\alpha_+,\alpha_-\in\ba_+(\AA)$, and $\alpha=\alpha_+-\alpha_-$. However,
the Hahn Decomposition Theorem does not remain valid, only approximately; see
\cite{Rao}, Section 2.6.

There is another canonical decomposition of a charge (see \cite{Tol,Rao}). A
charge $\alpha\in\ba_+(\BB)$ is called {\it purely finitely additive}, if the
only countably additive measure $\mu$ such that $0\le\mu\le\alpha$ is the zero
measure. A signed measure $\alpha\in\ba(\BB)$ is {\it purely finitely additive}
if its positive and negative parts are. Every $\alpha\in\ba(\BB)$ has a unique
decomposition as $\alpha=\beta+\gamma$ such that $\beta\in\ba(\BB)$ is purely
finitely additive and $\gamma\in\ca(\BB)$ is countably additive.

A third decomposition of a charge can be defined as follows. Assume that
$(J,\BB)$ is the sigma-algebra of Borel sets in a compact Hausdorff space $K$.
Integration by a charge $\alpha$ defines a continuous linear functional
$\LL_\alpha:~\Bd\to\R$. Continuous functions on $K$ are bounded, so
$\LL_\alpha$ can be restricted to the Borel space of continuous functions on
$K$. By the Riesz Representation Theorem, this restriction can be represented
as integration according to a countably additive measure $\overline{\alpha}$.
So $\alpha(f)=\overline{\alpha}(f)$ for every continuous function $f$. To sum
up,

\begin{prop}\label{PROP:SMOOTHING}
Let $\alpha$ be a charge on the Borel sets of a compact Hausdorff space $K$.
Then there is a countably additive measure $\overline{\alpha}$ such that
$\int_K f\,d\overline{\alpha}=\int_K f\,d\alpha$ for every continuous function
$f:~K\to\R$.
\end{prop}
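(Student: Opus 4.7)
The plan is to realize $\alpha$ as a bounded linear functional on $C(K)$ and then invoke the Riesz--Markov--Kakutani representation theorem to obtain $\overline{\alpha}$ automatically as a countably additive (regular) Borel measure. Since $\alpha$ is a charge, it has bounded variation (indeed $\var(\alpha)=\|\alpha_+\|+\|\alpha_-\|$), and so by the definition of $\wh\alpha$ and Lemma \ref{LEM:WHPHI-CONT}, the functional $\LL_\alpha:\Bd(\BB)\to\R$ given by $\LL_\alpha(f)=\wh\alpha(f)=\int_K f\,d\alpha$ is linear (by \eqref{EQ:WHFI-LIN} together with Example \ref{EXA:FINADD-INT}, which says the Choquet integral is linear precisely when the underlying setfunction is modular) and continuous with $|\LL_\alpha(f)|\le \var(\alpha)\,\|f\|$.

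Next I would restrict $\LL_\alpha$ to the subspace $C(K)\subseteq\Bd(\BB)$ of continuous functions (continuous functions on a compact Hausdorff space are bounded and Borel-measurable, so the inclusion is legitimate). The restriction is a bounded linear functional on the Banach space $(C(K),\|\cdot\|_\infty)$ of norm at most $\var(\alpha)$. By the signed version of the Riesz--Markov--Kakutani representation theorem, there is a unique regular signed Borel measure $\overline{\alpha}$ of finite total variation on $K$ such that
\[
\LL_\alpha(f)=\intl_K f\,d\overline{\alpha}\qquad(f\in C(K)).
\]
Combining the two displayed identities, $\intl_K f\,d\overline{\alpha}=\intl_K f\,d\alpha$ for all continuous $f:K\to\R$, which is the desired conclusion.

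The construction has essentially no obstacle; the only point requiring a little care is that $\alpha$ may be signed, so one should either invoke the signed form of Riesz directly or decompose $\alpha=\alpha_+-\alpha_-$ into its positive and negative parts (both in $\ba_+(\BB)$) and apply the nonnegative Riesz theorem to each, then set $\overline{\alpha}=\overline{\alpha_+}-\overline{\alpha_-}$. Note that one cannot expect $\overline{\alpha}$ to agree with $\alpha$ on all Borel sets (that would force $\alpha$ itself to be countably additive); the agreement is precisely on the ``continuous test functions'', which is exactly what the statement claims and all that is needed in the application following Proposition \ref{PROP:SMOOTHING} (as used in the last paragraph of the proof of Theorem \ref{THM:COUPLING}).
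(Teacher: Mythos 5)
Your proposal is correct and is essentially identical to the paper's argument: the paper also views integration against $\alpha$ as a continuous linear functional on $\Bd$, restricts it to $C(K)$, and invokes the Riesz Representation Theorem to produce the countably additive $\overline{\alpha}$. The remarks about handling the signed case and about $\overline{\alpha}$ agreeing with $\alpha$ only as functionals on $C(K)$, not on Borel sets, are sound and match the paper's intent.
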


This proposition remains valid for locally compact Hausdorff spaces $K$, if
instead of all continuous functions, we restrict the condition to continuous
functions vanishing at infinity (i.e., for which every level set $\{|f|\ge
c\}$, $c>0$ is compact).

The difference $\alpha-\overline{\alpha}$ is a charge such that the integral of
any continuous function $f$ with respect to $\alpha-\overline{\alpha}$ is zero.
Note, however, that the map $\alpha\mapsto\overline{\alpha}$ depends on the
compact topology chosen to represent $(J,\BB)$.

Fubini's Theorem does not remain valid for charges (Example
\ref{EXA:NOT-FUBINI}), even if one of the measures is countably additive
(Example \ref{EXA:NOT-FUBINI2}).

\begin{example}\label{EXA:NOT-FUBINI}
Let $J=\Nbb$, let $\FF$ be a nontrivial ultrafilter on $J$, and consider the
charge $\alpha(X)=\one(X\in\FF)$ as in Example \ref{EXA:ULTRA}. Let
$F(x,y)=\one(x\le y)$ for $x,y\in\Nbb$. Then
\[
\intl_\Nbb \Big(\intl_\Nbb F(x,y)\,d\alpha(x)\Big)\,d\alpha(y) = 0
\]
(since for every $y$, $F(x,y)=0$ except for a finite set of elements $x$), but
\[
\intl_\Nbb \Big(\intl_\Nbb F(x,y)\,d\alpha(y)\Big)\,d\alpha(x) = 1
\]
(since for every $x$, $F(x,y)=1$ except for a finite set of elements $y$).
\end{example}

\begin{example}\label{EXA:NOT-FUBINI2}
For $x\in \{0,1\}^\Nbb$, let $x_i$ denote the $i$-th bit of $x$. Let
\[
J=\big\{(i,j,\eps,\delta):~i,j\in\Nbb, i<j, \eps,\delta\in\{0,1\}\big\}.
\]
For every $x\in\{0,1\}^\Nbb$, let $J_x=\{(i,j,\eps,\delta)\in J:~x_i=\eps,
x_j=\delta\}$. Let $\Cf$ be the family of all finite unions of sets $J_x$ and
their subsets.

Clearly $\Cf\subseteq 2^J$ is a set ideal. We claim that $J\notin\Cf$. Indeed,
assume that there is a finite set $Y\subseteq\{0,1\}^\Nbb$ such that
$J=\cup_{y\in Y}J_y$. Then for every $i<j\in\NN$, there is a $y\in Y$ such that
$y_i=0$ and $y_j=1$. So the edges of the complete graph on $\NN$ are colored
with $|Y|$ colors. By Ramsey's Theorem, there is a monochromatic triangle
$\{i,j,k\}$, $i<j<k$. This implies that there is a $y\in Y$ such that $y_i=0$,
$y_j=1$, but also $y_j=0$ and $y_k=1$, which is impossible.

The ideal $\Cf$ can be extended to a maximal ideal $\Df$. Since $\Cf$ contains
all singletons, $\Df$ is not principal. The ultrafilter $\FF=2^J\setminus\Df$
defines a charge $\alpha=\one_\FF$. Let $\lambda$ denote the product measure of
$\{0,1\}^\Nbb$, and let $F:~J\times\{0,1\}^\Nbb\to\{0,1\}$ be defined by
\[
F(q,x)=\one(q\in J_x).
\]
Then
\[
\intl_0^1\Big(\intl_\BB F(q,x)\,d\alpha(q)\Big)\,d\lambda(x) = \intl_0^1\alpha(J_x)\,d\lambda(x) = 0,
\]
but
\[
\intl_\BB \Big( \intl_0^1 F(q,x)\,d\lambda(x)\Big)\,d\alpha(q) = \intl_0^1\lambda\{x:~q\in J_x\}\,d\alpha(q) = \frac14.
\]
So Fubini's Theorem fails for this example.
\end{example}

Fubini's Theorem does work under appropriate assumptions on the integrand, of
which we need only one in this paper (Ghirardato \cite{Ghir}).

\begin{prop}\label{PROP:GHIR}
Let $(I,\AA)$ and $(J,\BB)$ be sigma-algebras, $\alpha\in\ba(I,\AA)$,
$\beta\in\ba(J,\BB)$, and let $F:~I\times J\to\R$ be a bounded measurable
function with the comonotonicity property. Then
\[
\intl_I\intl_J F(x,y)\,d\beta(y)\,d\alpha(x)= \intl_J\intl_I F(x,y)\,d\alpha(x)\,d\beta(y).
\]
\end{prop}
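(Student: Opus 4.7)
The plan is to reduce Proposition \ref{PROP:GHIR} to a one-dimensional integration using the layer-cake representation, relying on comonotonicity to force the level sets of $F$ to have a rectangular-chain structure.

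First, I would reduce to the case $\alpha,\beta\ge 0$ by the Jordan decomposition recalled above: writing $\alpha=\alpha_+-\alpha_-$ and $\beta=\beta_+-\beta_-$, both iterated integrals are bilinear in $(\alpha,\beta)$ (a charge is modular, so $\wh\alpha$ is a linear functional on $\Bd$, and the inner integral $y\mapsto\int_I F(x,y)\,d\alpha(x)$ is measurable by a standard monotone class argument on indicators of measurable rectangles). A constant shift $F\mapsto F+c$ together with \eqref{EQ:ADD-CONST}, and a positive scaling, further reduce to $0\le F\le 1$.

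Second, I would unpack the comonotonicity hypothesis as the statement that the family of level sets $L_t=\{F\ge t\}\subseteq I\times J$ is totally ordered by inclusion and each $L_t$ has the form $L_t=A_t\times B_t$ with $A_t\in\AA$, $B_t\in\BB$; equivalently, for any two points $y_1,y_2\in J$ the slices $F(\cdot,y_1)$ and $F(\cdot,y_2)$ are comonotonic in the sense of Section \ref{SEC:UNCROSS}, and symmetrically in $y$. The monotonicity of $t\mapsto A_t,B_t$ makes them measurable as suprema of chains of sets (so the maps $t\mapsto\alpha(A_t)$, $t\mapsto\beta(B_t)$ are bounded monotone, hence Lebesgue-integrable).

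Third, with this structure, the $y$-slice is $\{y:F(x,y)\ge t\}=B_t$ when $x\in A_t$ and $\emptyset$ otherwise, so using the Choquet formula \eqref{EQ:CHOQ-DEF} and the fact that for a charge $\wh\beta$ reduces to ordinary integration,
\[
\intl_J F(x,y)\,d\beta(y)=\intl_0^1 \beta(B_t)\one_{A_t}(x)\,dt.
\]
The function on the right is a bounded measurable function of $x$, and by classical Fubini for Lebesgue integrals (applied to the iterated integral against $\alpha$ and $dt$; this is legitimate because $\alpha$ is a charge, so $\wh\alpha$ is a bona fide linear functional and the integrand is bounded and jointly measurable),
\[
\intl_I\intl_J F(x,y)\,d\beta(y)\,d\alpha(x)=\intl_0^1 \alpha(A_t)\beta(B_t)\,dt.
\]
The same computation with the roles of $\alpha,\beta$ interchanged produces the same quantity, proving the desired equality.

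The main obstacle is the second step: correctly interpreting the comonotonicity property for functions of two variables and verifying that it actually yields the rectangular chain $L_t=A_t\times B_t$. This is essential rather than cosmetic, because Example \ref{EXA:NOT-FUBINI} shows that Fubini fails for charges even for $F=\one_E$ when $E$ is not of product form. In the case where Ghirardato's hypothesis is formulated more weakly (e.g.\ only pairwise comonotonicity of slices), I would first establish the rectangular chain structure by a four-point argument: for any $(x_1,y_1),(x_2,y_2)$, comparing $F(x_1,y_1),F(x_1,y_2),F(x_2,y_1),F(x_2,y_2)$ via the two pairwise comonotonicity inequalities shows that $(x,y)\in L_t\setminus(A_t\times B_t)$ leads to a contradiction, where $A_t$ and $B_t$ are the projections of $L_t$.
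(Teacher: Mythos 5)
The paper does not prove Proposition \ref{PROP:GHIR}; it is cited from Ghirardato \cite{Ghir} without argument, so there is no internal proof to compare with, and I can only assess your proposal on its own terms.

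Your central Step 3 is false, and this is a genuine gap rather than a cosmetic one. You read ``the comonotonicity property'' as saying every level set $L_t=\{F\ge t\}$ is a measurable rectangle $A_t\times B_t$, and you suggest that if Ghirardato's hypothesis is the weaker pairwise comonotonicity of the slices, a four-point argument would recover the rectangular form. It does not. Take $I=J=\{1,2\}$ and set $F(1,1)=1$, $F(1,2)=2$, $F(2,1)=3$, $F(2,2)=4$. The row slices $(1,2)$ and $(3,4)$ are both increasing, hence comonotonic, and likewise the column slices $(1,3)$ and $(2,4)$; so $F$ is slice-comonotonic. Yet $L_2=\{(1,2),(2,1),(2,2)\}$ has projections $A_2=B_2=\{1,2\}$ while $(1,1)\notin L_2$, so $L_2\ne A_2\times B_2$. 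The two comonotonicity inequalities $(F(1,1)-F(1,2))(F(2,1)-F(2,2))=1$ and $(F(1,1)-F(2,1))(F(1,2)-F(2,2))=4$ are satisfied with room to spare, so nothing forces $F(1,1)\ge 2$: the four-point argument simply does not close. The same pathology occurs for the one integrand to which the paper actually applies the proposition. In the proof of Corollary \ref{COR:FN-PROD}, $F(x,t)=\one(x\in X,\,f(x)\ge t)\,g(x)$ has $\{F\ge s\}=\{(x,t):x\in X\cap\{g\ge s\},\ 0\le t\le f(x)\}$ for $s>0$, which is not a rectangle unless $f$ is constant on $X\cap\{g\ge s\}$. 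So an argument confined to the rectangular case would not even cover the case the paper needs, which means ``rectangular level sets'' cannot be the intended reading of the hypothesis, and reducing the proposition to that case is not a proof of it.

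There is a second, independent flaw. After writing $\int_I\int_J F\,d\beta\,d\alpha=\int_I\int_0^1\beta(B_t)\one_{A_t}(x)\,dt\,d\alpha(x)$, you interchange $d\alpha(x)$ with $dt$ ``because $\alpha$ is a charge, so $\wh\alpha$ is a bona fide linear functional and the integrand is bounded and jointly measurable.'' That reason is not sufficient: Example \ref{EXA:NOT-FUBINI2} in the appendix gives a charge, a countably additive measure, and a bounded jointly measurable integrand for which the two iterated integrals differ, so boundedness and joint measurability alone do not license the interchange. In your restricted rectangular setting the interchange is in fact valid, but the operative reason is monotonicity in $t$: $H(x,t)=\beta(B_t)\one_{A_t}(x)$ is decreasing in $t$ for every $x$ (since $A_t$, $B_t$ decrease and $\beta\ge 0$), so the Riemann sums $\frac1n\sum_{k=1}^n H(\cdot,k/n)$ converge to $\int_0^1 H(\cdot,t)\,dt$ uniformly in $x$, and one can apply the bounded linear functional $\wh\alpha$ termwise (using Lemma \ref{LEM:WHPHI-CONT}) and pass to the limit; the other iterated integral is the limit of the same Riemann sums of $\alpha(A_{k/n})\beta(B_{k/n})$. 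Invoking ``classical Fubini'' skips precisely the step where finite additivity causes trouble, and needs to be replaced by an argument of this monotone type.
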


We will need the following corollary. For a charge $\alpha\in\ba$ and function
$f\in\Bd$, we define the measure $f\cdot\alpha$ by
\[
(f\cdot\alpha)(X)=\int_X f\,d\alpha = \int_J \one_Xf\,d\alpha.
\]

\begin{corollary}\label{COR:FN-PROD}
For every charge $\alpha\in\ba_+$ and comonotonic functions $f,g\in\Bd_+$, we
have
\[
f\cdot(g\cdot\alpha) = g\cdot(f\cdot\alpha) = (fg)\cdot\alpha.
\]
\end{corollary}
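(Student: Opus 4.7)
The plan is to derive the corollary as an application of Proposition \ref{PROP:GHIR} (Ghirardato's comonotonic Fubini). By symmetry and the commutativity $fg=gf$, it suffices to prove $f\cdot(g\cdot\alpha)=(fg)\cdot\alpha$ as measures, i.e.\ to check equality on every $X\in\BB$; the chain $g\cdot(f\cdot\alpha)=(fg)\cdot\alpha$ then follows by swapping the names of $f$ and $g$.

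To that end, fix $X\in\BB$, pick $M$ large enough that $0\le f,g\le M$ on $J$, and consider the bounded jointly measurable function
\[
F:~J\times[0,M]\to\R,\qquad F(y,s)=\one_X(y)\,g(y)\,\one\{f(y)\ge s\}.
\]
The layer cake identity $f(y)=\int_0^M\one\{f(y)\ge s\}\,ds$ gives the first iterated integral:
\[
\intl_J\intl_0^M F(y,s)\,ds\,d\alpha(y)=\intl_X f(y)g(y)\,d\alpha(y)=((fg)\cdot\alpha)(X).
\]
For the other order, since $g\cdot\alpha$ is a nonnegative charge, $\int_J F(\cdot,s)\,d\alpha=(g\cdot\alpha)(X\cap\{f\ge s\})$, so by the definition \eqref{EQ:WCDOT},
\[
\intl_0^M\intl_J F(y,s)\,d\alpha(y)\,ds=\intl_0^M(g\cdot\alpha)(X\cap\{f\ge s\})\,ds=(f\cdot(g\cdot\alpha))(X).
\]
Applying Proposition \ref{PROP:GHIR} with the charges $\alpha$ on $(J,\BB)$ and Lebesgue measure on $[0,M]$ then gives the desired equality, provided $F$ has the comonotonicity property.

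Verifying this comonotonicity is where the hypothesis on $f,g$ is spent, and it is the only nontrivial step. I would verify the two required ``fiber'' conditions: (i) for fixed $y,y'$, the one-variable sections $F(y,\cdot)$ and $F(y',\cdot)$ are comonotonic on $[0,M]$, and (ii) for fixed $s,s'$, the sections $F(\cdot,s)$ and $F(\cdot,s')$ are comonotonic on $J$. Condition (i) is immediate because $F(y,\cdot)$ is the nonnegative scalar multiple $\one_X(y)g(y)\cdot\one_{[0,f(y)]}$ of a decreasing indicator, and any two such step functions have nested level sets. Condition (ii) reduces by symmetry to the case $s\le s'$; a short case analysis over $y,y'\in J$ shows that the only nonobvious subcase is $y\in X\cap\{f\ge s'\}$ and $y'\in X\cap\{s\le f<s'\}$, in which the product of differences equals $g(y)(g(y)-g(y'))$. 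Here the inequality $g(y)\ge g(y')$ is exactly what the comonotonicity of $f$ and $g$ supplies, since $f(y)\ge s'>f(y')$. All other cases give either $0$ or a manifest square, so both fiber families are pairwise comonotonic and Proposition \ref{PROP:GHIR} applies, completing the proof.
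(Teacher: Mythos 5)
Your proof is correct and follows essentially the same route as the paper's: one iterated integral of $F(y,s)=\one_X(y)g(y)\one\{f(y)\ge s\}$ evaluates to $((fg)\cdot\alpha)(X)$, the other to $(f\cdot(g\cdot\alpha))(X)$, and Proposition~\ref{PROP:GHIR} lets you swap them once $F$ is shown to be comonotonic. The only difference is that you carry out the comonotonicity verification (correctly identifying the one subcase where the hypothesis on $f,g$ is actually used), whereas the paper dismisses it as ``easy to check.''
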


\begin{proof}
It suffices to prove that the first and third quantities are equal. The measure
on the left can be expressed as
\begin{align*}
(f\cdot(g\cdot\alpha))(X) & = \intl_X f(x)\,d(g\cdot\alpha)(x) =  \intl_0^\infty \one_X\,(g\cdot\alpha)\{f\ge t\}\,dt\\
&= \intl_0^\infty \intl_J \one(x\in X, f(x)\ge t) g(x)\,d\alpha(x)\,dt.
\end{align*}
By Proposition \ref{PROP:GHIR}, the order of integration can be interchanged
provided the integrand $F(x,t)= \one(x\in X, f(x)\ge t)g(x)$ has the
comonotonicity property, which is easy to check. So
\begin{align*}
(f\cdot(g\cdot\alpha))(X) &= \intl_J \intl_0^\infty \one(x\in X, f(x)\ge t) g(x)\,dt\,d\alpha(x)\\
&= \intl_J \one(x\in X) f(x) g(x)\,d\alpha(x) = ((fg)\cdot\alpha)(X).
\end{align*}
\end{proof}

We conclude this section with a simple fact about extending modular
setfunctions (\cite{Rao}, Theorem 3.1.6; for the case that will be most
important for us, namely when $\LL$ is totally ordered, see also Proposition
2.10 in \cite{Denn}).

\begin{prop}\label{PROP:EXTEND}
Let $\LL\subseteq 2^J$ be a lattice family of subsets of a set $J$ such that
$\emptyset\in\LL$, and let $\NN$ be the set-algebra generated by the sets in
$\LL$. Let $\psi$ be a modular setfunction on $\LL$ with $\psi(\emptyset)=0$.
Then $\psi$ has a unique extension to a charge on $\NN$.
\end{prop}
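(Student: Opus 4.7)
The strategy is local-to-global: every set $X \in \NN$ lies in the finite set-algebra $\NN_0$ generated by some finite sublattice $\LL_0 \subseteq \LL$, obtained by closing a finite subset of $\LL$ under $\cap$ and $\cup$ (which stays inside $\LL$ by hypothesis) and adjoining $J$. So it suffices to extend $\psi|_{\LL_0}$ uniquely to a charge $\mu_0$ on $\NN_0$ for every such finite $\LL_0$; compatibility when $\LL_0 \subseteq \LL_0'$ is then automatic from uniqueness (the restriction of $\mu_{0'}$ to $\NN_0$ is a charge extending $\psi|_{\LL_0}$, hence equals $\mu_0$), and the finite extensions patch into a single charge on $\NN$ whose global uniqueness again follows from the finite case.

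For the finite case, I would describe the atoms of $\NN_0$ explicitly. For each $L \in \LL_0$, set
\[
a_L \;:=\; L \setminus \bigcup\bigl\{L' \in \LL_0 : L' \subsetneq L\bigr\}.
\]
The nonempty $a_L$ are exactly the atoms of $\NN_0$, and every $L \in \LL_0$ decomposes as the disjoint union $L = \bigsqcup_{L' \subseteq L,\, L' \in \LL_0} a_{L'}$. Any charge $\mu_0$ on $\NN_0$ extending $\psi$ must therefore satisfy
\[
\psi(L) \;=\; \sum_{L' \in \LL_0,\; L' \subseteq L} \mu_0(a_{L'}),
\]
which determines $\mu_0$ on atoms inductively along the rank in $\LL_0$ via $\mu_0(a_L) = \psi(L) - \sum_{L' \subsetneq L} \mu_0(a_{L'})$; this proves uniqueness in the finite case.

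For existence I would take the displayed recursion as the \emph{definition} of $\mu_0$ on atoms and extend additively to $\NN_0$, and then verify the identity $\psi(L) = \sum_{L' \subseteq L} \mu_0(a_{L'})$ for every $L \in \LL_0$. For $L$ with $a_L \neq \emptyset$ this holds by construction; the main obstacle is the case $a_L = \emptyset$, i.e.\ $L = \bigcup_{L' \subsetneq L} L'$, where the identity becomes a genuine combinatorial consequence of modularity rather than a definition. I would handle it by iterating the modular identity $\psi(L \cup L') + \psi(L \cap L') = \psi(L) + \psi(L')$ to prove the inclusion--exclusion formula
\[
\psi\Bigl(\bigcup_{i=1}^n L_i\Bigr) \;=\; \sum_{\emptyset \neq S \subseteq [n]} (-1)^{|S|+1}\,\psi\Bigl(\bigcap_{i \in S} L_i\Bigr)
\]
for every finite family $L_1,\dots,L_n \in \LL_0$ (by induction on $n$, using $\bigcup_{i=1}^{n} L_i = (\bigcup_{i=1}^{n-1} L_i) \cup L_n$ and expanding). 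Once this is in hand, both sides of the identity to be verified reduce to the same inclusion--exclusion sum over the finite lattice $\LL_0$. Finite additivity of $\mu_0$ is then immediate because each element of $\NN_0$ is uniquely a disjoint union of atoms, and the global charge on $\NN$ obtained by patching inherits additivity and uniqueness from the finite case.
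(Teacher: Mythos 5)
The paper does not supply its own proof of this proposition; it cites Theorem 3.1.6 of Rao and Rao, so there is no in-house argument to compare against. Your self-contained argument is correct in its essentials, and the local-to-global scheme (pass to finite sublattices, solve there by an atom decomposition, patch via uniqueness) is a natural way to prove it. Two points deserve tightening. First, in this paper's terminology a ``lattice family'' already contains $J$ as well as $\emptyset$ (this is built into the definition of a lattice of sets in Section~2.1), and indeed $J\in\LL$ is needed for the conclusion to hold: if $\LL=\{\emptyset,A\}$ with $A\neq J$, the value of the extending charge on $A^c$ is unconstrained and uniqueness fails. So rather than ``adjoining'' $J$ you should simply put $\emptyset,J$ into each finite sublattice $\LL_0$ from the start. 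Second, your sentence ``For $L$ with $a_L\neq\emptyset$ this holds by construction'' is slightly off: the recursion gives $\psi(L)=\sum_{L'\subseteq L}\nu(L')$ automatically for \emph{every} $L\in\LL_0$, where $\nu(L')$ is the recursively defined weight, whereas the sum $\sum_{L'\subseteq L}\mu_0(a_{L'})$ runs only over nonempty atoms. What you in fact need is $\nu(L')=0$ for \emph{every} $L'\in\LL_0$ with $a_{L'}=\emptyset$, independently of whether the $L$ under consideration has a nonempty atom. Your inclusion--exclusion idea does deliver exactly this: for a phantom $L$ (one with $a_L=\emptyset$), write $L=L_1\cup\dots\cup L_n$ with the $L_i$ the maximal proper sub-elements of $L$ in $\LL_0$, apply your modularity-derived inclusion--exclusion identity, expand each $\psi\bigl(\bigcap_{i\in S}L_i\bigr)$ via the recursion as $\sum_{L''\subseteq\bigcap_{i\in S}L_i}\nu(L'')$, and note that the total coefficient of $\nu(L'')$ is $1$ precisely when $L''$ lies under some $L_i$, i.e.\ precisely when $L''\subsetneq L$; thus $\psi(L)=\sum_{L''\subsetneq L}\nu(L'')$, and comparison with the recursion for $\nu(L)$ gives $\nu(L)=0$. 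With these two clarifications the finite existence and uniqueness, the compatibility of the $\mu_0$'s, and the patching to a global charge on $\NN$ are all sound.
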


\subsection{Matroids}\label{APP-MATROIDS}

This is an absurdly brief survey of matroid theory, which will be needed to
understand the motivation behind some of the results in this paper. For
monographs about matroid theory, see \cite{Oxley,Recski,Welsh,White}, and also
\cite{Schr}, Part IV.

A {\it matroid} can be defined as a pair $(E,\II)$, where $E$ is a finite set,
$\II\subseteq 2^E$, $\II$ is descending, $\emptyset\in\II$, and the {\it
Augmentation Axiom} is satisfied: for $A,B\in\II$ with $|A|<|B|$ there is an
element $x\in B\setminus A$ such that $A\cup\{x\}\in\II$. Sets in $\II$ are
called {\it independent}. Maximal independent sets are called {\it bases}; they
all have the same cardinality $r(E)$. A {\it fractional independent set} is a
vector $w\in\R^E$ that is a convex combination of indicator vectors of
independent sets; a {\it fractional basis} is defined analogously.

The setfunction $r(X)=\max\{|I|:~I\in\II, I\subseteq X\}$ ($X\subseteq E$) is
the {\it rank function} of the matroid. The rank function $r$ is an integer
valued increasing submodular setfunction such that $r(\emptyset)=0$ and
$r(X)\le|X|$, and these properties characterize matroid rank functions. The
rank function determines the matroid: Independent sets are characterized by the
equation $r(X)=|X|$.

Maximal sets with a given rank are called {\it flats} (also called closed
sets). The flats of a matroid form a lattice with respect to the ordering
defined by inclusion. There are simple and natural axioms defining matroids
based on any of these concepts, for which we refer to the literature.

Matroids were introduced formally by Whitney \cite{Whit}; see \cite{Schr},
Section 39.10 for a detailed history of matroid theory.

A more general structure is a {\it polymatroid}: a finite set $E$ endowed with
an integer valued increasing submodular setfunction $\rho$ such that
$\rho(\emptyset)=0$. If $\HH$ is any family of subsets of a finite set $V$,
then defining $\rho(\XX)=|\cup\XX|$ for $\XX\subseteq\HH$, we get a polymatroid
$(\HH,\rho)$. More generally, if we have a matroid rank function $r$ on the
subsets of $V$, then $\rho(\XX)=r(\cup(\XX))$ defines a polymatroid. It is a
nontrivial fact that every polymatroid arises from an appropriate matroid by
this construction (cf.~Section \ref{SSEC:DIVERGE}).

For every finite graph $G=(V,E)$, subsets of $E$ containing no cycles form the
{\it cycle matroid} of $G$. Subsets of $E$ whose removal does not disconnect
any of the connected components form the {\it cut matroid} or {\it cocycle
matroid} of $G$. Another important construction of matroids are {\it linear
matroids}: Let $d\ge 1$, and let $r(X)=\dim(\text{lin}(X))$ for $X\subseteq
\R^d$. Then $r$ is an increasing integer valued submodular setfunction. The
real space $\R^d$ can be replaced by any other finite dimensional linear space,
and by any subset of it. A further important example is an {\it algebraic
matroid}, defined by a field $\Fbb$ of finite transcendence rank over a
subfield $\Gbb$; the transcendence rank of a subset is a matroid rank function.

The Greedy Algorithm can be used to obtain another characterization of
matroids. Let $r$ be the rank function of a matroid $(E,\II)$, and let
$(v_1,\ldots,v_n)$ be any ordering of $E$. Then there is a basis $B$ of the
matroid such that $|B\cap\{v_1,\ldots,v_k\}| = r(\{v_1,\ldots,v_k\})$ for
$k=1,\ldots, n$. This basis can be obtained ``greedily'', processing the
elements $v_1,\ldots,v_n$ one-by-one, putting each element into $B$ if this
preserves its independence. Conversely, every basis $B$ can be obtained like
this, by considering any ordering starting with the elements of $B$.
Furthermore, the setfunction $\beta(X)=|B\cap X|$ is a measure on the subsets
of $E$ such that $\beta\le r$.

The set of all vectors $x\in \R_+^V$ such that
\[
\sum_{i\in S} x_i\le r(S)
\]
for all $S\subseteq E$ is a convex polytope $\matp_+(E,r)$ called the {\it
matroid polytope}. The vertices of the matroid polytope are the indicator
vectors of independent sets. Furthermore, the face of the matroid polytope
defined by adding the constraint
\[
\sum_{i\in E} x_i=r(E)
\]
has the indicator vectors of bases as vertices, and it is called the {\it basis
polytope} of the matroid.

We illustrate the power of matroid theory by recalling the Matroid Intersection
Theorem (Edmonds \cite{Edm}), which has many applications, from matchings to
rigidity of frameworks.

\begin{prop}\label{PROP:MATRID-INTERSECT}
Let $(E,\II_1)$ and $(E,\II_2)$ be two matroids on a common underlying set,
with rank functions $r_1$ and $r_2$. Then
\[
\max_{A\in\II_1\cap\II_2} |A| = \min_{X\subseteq E} r_1(X)+r_2(E\setminus X).
\]
\end{prop}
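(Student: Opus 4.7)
The plan is to prove the nontrivial inequality $\max \ge \min$ by Edmonds' iterative augmenting-path method. The easy direction $\max \le \min$ is immediate: for any common independent set $A \in \II_1 \cap \II_2$ and any $X \subseteq E$, the pieces $A \cap X \in \II_1$ and $A \setminus X \in \II_2$ give $|A| = |A \cap X| + |A \setminus X| \le r_1(X) + r_2(E \setminus X)$.

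For the reverse inequality, given a common independent set $A$, I would construct the \emph{exchange digraph} $D_A$ on vertex set $E$: for $y \in A$ and $x \in E \setminus A$, place an arc $x \to y$ whenever $(A \setminus \{y\}) \cup \{x\} \in \II_1$, and an arc $y \to x$ whenever $(A \setminus \{y\}) \cup \{x\} \in \II_2$. Let $S_1 = \{x \in E \setminus A : A \cup \{x\} \in \II_1\}$ and $S_2 = \{x \in E \setminus A : A \cup \{x\} \in \II_2\}$ be the source and sink sets. The argument splits into two cases, one of which must occur.

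In the augmenting case, suppose $D_A$ contains a directed path from $S_1$ to $S_2$, and take a shortest such path $P = (x_0, y_1, x_1, y_2, \ldots, y_t, x_t)$ with $x_0 \in S_1$, $x_t \in S_2$, $y_i \in A$, $x_i \in E \setminus A$. I would show that $A' := A \triangle V(P) = (A \setminus \{y_1, \ldots, y_t\}) \cup \{x_0, \ldots, x_t\}$ is again a common independent set with $|A'| = |A| + 1$, and then iterate. In the terminating case, let $T$ be the set of vertices reachable from $S_1$ in $D_A$; then $S_2 \cap T = \emptyset$. A structural check yields $r_1(T) = |A \cap T|$ and $r_2(E \setminus T) = |A \setminus T|$, so $r_1(T) + r_2(E \setminus T) = |A|$, which together with the easy direction forces $|A| = \min_X r_1(X) + r_2(E \setminus X)$.

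The main obstacle is the \emph{shortest-path exchange lemma}: that $A' \in \II_1 \cap \II_2$ when $P$ is a shortest augmenting path. To verify $A' \in \II_1$, one invokes the unique-circuit axiom: for each $i$, $A \cup \{x_{i-1}\}$ contains a unique $\II_1$-circuit $C_i$ with $y_i \in C_i$; the minimality of $P$ is then used to show $C_i \cap \{y_1, \ldots, y_t\} = \{y_i\}$ (any other $y_j$ in $C_i$ would yield a shortcut $x_{i-1} \to y_j$), so the simultaneous exchange preserves $\II_1$-independence. The proof that $A' \in \II_2$ is symmetric. The rank-tightness assertions in the non-augmenting case come from the same circuit analysis: a failure of $r_1(T) = |A \cap T|$ would produce an element $x \in T \setminus A$ with $(A \cap T) + x \in \II_1$, and tracing back a circuit in $A + x$ would yield a $D_A$-arc from $x$ to some $y \in A \setminus T$, contradicting that $T$ is closed under reachability.
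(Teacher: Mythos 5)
The paper states this proposition in its appendix as background, citing Edmonds, and gives no proof; your argument is thus necessarily independent of the paper. Your overall plan --- Edmonds' augmenting-path method on the exchange digraph --- is the standard approach, and the easy direction is handled correctly. However, the arc convention you wrote down is reversed from the one that makes the method work, and this breaks both the augmentation step and the terminating-case certificate.

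With arc $x\to y$ encoding $(A\setminus\{y\})\cup\{x\}\in\II_1$, any $x_0\in S_1$ has an arc to \emph{every} $y\in A$ (since $A\cup\{x_0\}\in\II_1$ already), and any $y\in A$ has an arc to every $x_1\in S_2$; so a two-step $S_1$--$S_2$ path always exists, yet $A-y+x_0+x_1$ is generally not common-independent. Concretely, the $\II_1$-arcs along your $P$ encode exchanges of $x_{i-1}$ for $y_i$, so the exchange lemma can at best give $(A\setminus\{y_1,\dots,y_t\})\cup\{x_0,\dots,x_{t-1}\}\in\II_1$; the leftover $x_t$ cannot then be added, since we only know $A\cup\{x_t\}\in\II_2$. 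Your circuit argument likewise fails at $i=1$: $A\cup\{x_0\}$ has no $\II_1$-circuit because $x_0\in S_1$. And the certificate is false as stated: with $T$ the set reachable from $S_1$, one has $S_1\subseteq T$ and (by the arc observation above) $A\subseteq T$, so $r_1(T)\ge r_1(A\cup\{x_0\})=|A|+1>|A\cap T|$, contradicting the claimed $r_1(T)=|A\cap T|$. The workable convention is the opposite one: $y\to x$ for an $\II_1$-exchange and $x\to y$ for an $\II_2$-exchange. Then $x_0$ is the free $\II_1$-addition, the $\II_1$-exchanges along $P$ are $(y_i,x_i)$ for $i\ge 1$, the circuits live in $A\cup\{x_i\}$ with $i\ge 1$, and the certificate set is $T=\{z:\ S_2\text{ is reachable from }z\}$, which is disjoint from $S_1$ and closed under predecessors, so the circuit trace-back produces an arc $y\to x$ entering $T$ and yields the contradiction (equivalently, take $E\setminus(\text{reachable from }S_1)$ and swap the roles of $r_1$ and $r_2$ in your two rank identities). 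Even after these corrections, one more ingredient is missing from your sketch: the exchange lemma only yields $A'\setminus\{x_0\}\in\II_1$, and absorbing the free element $x_0$ requires the additional observation that $r_1(A\cup\{x_0,\dots,x_t\})=|A|+1$, which follows from path minimality since otherwise some $x_i$ with $i\ge 1$ would lie in $S_1$ and yield a shorter $S_1$--$S_2$ path.
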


We will see that several of these fundamental properties of matroids extend to
submodular setfunctions defined on sigma-algebras. It turns out that a natural
extension of a matroid independent set to the measurable case is a special
charge.

\newpage

\subsection{Summary of notation}\label{SEC:NOTATION}

Choquet integral: $\displaystyle \wh\fg(f) = \int_0^\infty \fg\{f\ge t\}\,dt$.

\noindent Complementary setfunction: $\fg^c(X) =\fg(X^c)$.

\noindent Join and meet: $(\fg\lor\psi)(X) = \sup_{Y\subseteq
X}\big(\fg(Y)+\psi(X\setminus Y)\big),$

\noindent \phantom{Join and meet:} $(\fg\land\psi)(X) = \inf_{Y\subseteq X}
\big(\fg(Y)+\psi(X\setminus Y)\big)$.

\noindent Monotonizations: $\fg^\li(X)=\inf_{Y\in\FF, Y\subseteq
X}\fg(Y),\qquad \fg^\ui(X)=\inf_{Y\in\FF, Y\supseteq X}\fg(Y),$

\noindent \phantom{Monotonizations:} $\fg^\ls(X)=\sup_{Y\in\FF, Y\subseteq
X}\fg(Y),\qquad \fg^\us(X)=\sup_{Y\in\FF, Y\supseteq X}\fg(Y)$.

\noindent Positive and negative part (measure): $\alpha_+ =\alpha^\ls$,

\noindent \phantom{Positive and negative part (measure):}
$\alpha_-=(-\alpha)_+=-\alpha^\li \qquad(\alpha\in\ba(\AA))$.

\noindent Relation applied to a set: $R(X)=\{y:~\exists x\in X, (x,y)\in R\}$,

\noindent \phantom{Relation applied to a set:} $R^{cc}(X)=\{y:~\forall x\in X,
(x,y)\in R\}$,

\noindent Restriction and projection: $\fg_A(X)=\fg(X)$,

\noindent \phantom{Restriction and projection:} $\fg^A(X) = \fg(A^c\cup
X)-\fg(A^c) \qquad (X\subseteq A)$.

\noindent Quotient: $(\fg\circ\Pi^{-1})(X)=\fg(\Pi^{-1}(X))$.

\noindent Pullback: $(\fg\circ\Gamma)(X)=\fg(\Gamma(X))$.

\noindent Positive part (submodular):

$\fg_{\circ}(U)=\inf_\XX\Big\{\sum_{X\in\XX} |\fg(X)|_+:~ \XX\text{~partition
of $U$}~\Big\}$.

\noindent Part continuous from below:

$\fg_{\lc}(U) = \inf_\XX \Big\{\lim_{n\to\infty} \fg(X_n):~\XX= (X_1\subseteq
X_2\subseteq\ldots),~\cup_n X_n=U\Big\}$.

\noindent Majorizing measure:

$\fgx(U) = \sup_\XX \Big\{\sum_{X\in\XX} \fg(X):~\XX\text{~partition of
$U$}~\Big\}$.
\end{document}